\let\originalleft\left
\let\originalright\right
\renewcommand{\left}{\mathopen{}\mathclose\bgroup\originalleft}
\renewcommand{\right}{\aftergroup\egroup\originalright}
\newlength{\bibitemsep}
\newlength{\bibparskip}\setlength{\bibparskip}{0pt}
\let\oldthebibliography\thebibliography
\renewcommand\thebibliography[1]{\oldthebibliography{#1}
  \setlength{\parskip}{\bibitemsep}
  \setlength{\itemsep}{\bibparskip}}
\newcommand{\N}{\mathbb{N}}
\newcommand{\Z}{\mathbb{Z}}
\newcommand{\Q}{\mathbb{Q}}
\newcommand{\R}{\mathbb{R}}
\newcommand{\C}{\mathbb{C}}
\newcommand{\T}{\mathbb{T}}
\renewcommand{\P}{\mathbb{P}}
\newcommand{\E}{\mathbb{E}}
\newcommand{\Ec}[1]{\mathbb{E} \left[#1\right]}
\newcommand{\Pp}[1]{\mathbb{P} \left(#1\right)}
\newcommand{\Ecsq}[2]{\mathbb{E} \left[#1\middle|#2\right]}
\newcommand{\Ppsq}[2]{\mathbb{P} \left(#1\middle|#2\right)}
\newcommand{\Varsq}[2]{\Var \left(#1\middle|#2\right)}
\newcommand{\Eci}[2]{\mathbb{E}_{#1} \left[#2\right]}
\newcommand{\Ppi}[2]{\mathbb{P}_{#1} \left(#2\right)}
\newcommand{\Ecii}[3]{\mathbb{E}_{#1}^{#2} \left[#3\right]}
\newcommand{\Ppsqi}[3]{\mathbb{P}_{#1} \left(#2\middle|#3\right)}
\newcommand{\Ecsqi}[3]{\mathbb{E}_{#1} \left[#2\middle|#3\right]}
\newcommand{\1}{\mathbbm{1}}
\newcommand{\cN}{\mathcal{N}}
\newcommand{\cC}{\mathcal{C}}
\newcommand{\cE}{\mathcal{E}}
\newcommand{\cL}{\mathcal{L}}
\newcommand{\cS}{\mathcal{S}}
\newcommand{\sF}{\mathscr{F}}
\newcommand{\e}{\mathrm{e}}
\newcommand{\diff}{\mathop{}\mathopen{}\mathrm{d}}
\DeclareMathOperator{\Var}{Var}
\newcommand{\abs}[1]{\left\lvert#1\right\rvert}
\newcommand{\restreinta}{\mathclose{}|\mathopen{}}
\newcommand\relphantom[1]{\mathrel{\phantom{#1}}}
\title{1-stable fluctuations in branching Brownian motion at critical temperature I: the derivative martingale}
\author{
Pascal \textsc{Maillard}\thanks{Laboratoire de Math\'ematiques d'Orsay, Univ.~Paris--Sud, CNRS, Universit\'e Paris--Saclay, 91405 Orsay, France. E-Mail: \texttt{pascal.maillard@u-psud.fr}. Partially supported by ANR Liouville (ANR-15-CE40-0013), ANR GRAAL (ANR-14-CE25-0014) and a CRM Simons Research Fellowship.}
~and Michel \textsc{Pain}\thanks{DMA, \'Ecole Normale Sup\'erieure, PSL, CNRS, 75005 Paris, France 
\& LPSM, Sorbonne Université, Sorbonne Paris Cité, CNRS, 75005 Paris, France.
Email: \texttt{michel.pain@ens.fr}.}
}
\date{June 19, 2018}
\theoremstyle{plain}  
\newtheorem{thm}{Theorem}[section]
\newtheorem{prop}[thm]{Proposition}
\newtheorem{lem}[thm]{Lemma}
\newtheorem{cor}[thm]{Corollary}
\newtheorem{conj}{Conjecture}
\theoremstyle{definition}
\theoremstyle{remark}
\newtheorem{rem}[thm]{Remark}
\begin{document}

\maketitle

\vspace{-0.5cm}

\begin{abstract}
\noindent Let $(Z_t)_{t\geq 0}$ denote the derivative martingale of branching Brownian motion, i.e.\@ the derivative with respect to the inverse temperature of the normalized partition function at critical temperature. A well-known result by
Lalley and Sellke [\textit{Ann. Probab.}, 15(3):1052--1061, 1987] says that this martingale converges almost surely to a limit $Z_\infty$, positive on the event of survival.
In this paper, our concern is the fluctuations of the derivative martingale around its limit. A corollary of our results is the following convergence, confirming and strengthening a conjecture by Mueller and Munier [\textit{Phys. Rev. E}, 90:042143, 2014]:
\[
\sqrt{t} \left( Z_\infty - Z_t + \frac{\log t}{\sqrt{2\pi t}} Z_\infty \right)
\xrightarrow[t\to\infty]{} S_{Z_\infty},
\quad \text{in law},
\]
where $S$ is a spectrally positive 1-stable L\'evy process independent of $Z_\infty$.

In a first part of the paper, a relatively short proof of (a slightly stronger form of) this convergence is given based on the functional equation satisfied by the characteristic function of $Z_\infty$ together with tail asymptotics of this random variable.
We then set up more elaborate arguments which yield a more thorough understanding of the trajectories of the particles contributing to the fluctuations. In this way, we can upgrade our convergence result to functional convergence. This approach also sets the ground for a follow-up paper, where we study the fluctuations of more general functionals including the renormalized critical additive martingale.

All proofs in this paper are given under the hypothesis $\Ec{L(\log L)^3} < \infty$, where the random variable $L$ follows the offspring distribution of the branching Brownian motion. We believe this hypothesis to be optimal.
\end{abstract}


\section{Introduction}

Branching Brownian motion (BBM) is a branching Markov process defined as follows. 
Initially, there is a single particle at the origin. 
Each particle moves according to a Brownian motion with variance $\sigma^2>0$ and drift $\rho \in \R$, during an exponentially distributed time of parameter $\lambda > 0$ and then splits into 
a random number of new particles, chosen according to a reproduction law~$\mu$.
These new particles start the same process from their place of birth, behaving independently of the others. 
The system goes on indefinitely, unless there is no particle at some time. For a detailed and formal construction, see e.g.~\cite{chauvin91,Hardy2009}.
The study of BBM dates back to \cite{moyal62-2,adkemoyal63} and has been initially motivated by the link with the F-KPP reaction-diffusion equation, established by McKean \cite{mckean75,mckean76}. 
BBM can also be seen as a Gaussian process, with covariance associated to the underlying Galton-Watson tree, and therefore is related to the generalized random energy model, introduced by Derrida and Gardner \cite{derridagardner86}, and to the mean-field spin glasses theory (see the recent book of Bovier \cite{bovier2017}).
A main focus on BBM in the last decades has been the properties of extremal particles, which are at a distance of order $1$ from the minimum of BBM, see Bramson \cite{bramson78,bramson83}, Lalley and Sellke \cite{lalleysellke87}, Aïdékon, Berestycki, Brunet and Shi \cite{abbs2013}, Arguin, Bovier and Kistler \cite{abk2013}.
A key object for understanding their behavior is the derivative martingale studied here.
Similar results have then been obtained for logarithmically correlated Gaussian fields such as the two-dimensional Gaussian Free Field: construction of the derivative martingale \cite{drsv2014-1,drsv2014-2} and study of the extremes of these fields \cite{madaule2015,BrDiZe2016,Ding2017,biskuplouidor2016,biskuplouidor2018,biskuplouidor2014arxiv}.
In this paper, we address the \emph{fluctuations} of the derivative martingale around its limit. We establish a functional convergence in law with speed of convergence $1/\sqrt{t}$ towards a randomly time-changed spectrally positive 1-stable L\'evy process. Unlike the derivative martingale, whose law depends on the model, we believe that these fluctuations are \emph{universal} and, to our knowledge, are proven here for the first time for such a model. In a follow-up paper \cite{fluctuations2}, based on the results from this article, we study the fluctuations of more general functionals including the renormalized critical additive martingale.

\subsection{Definitions and assumptions}

Let $L$ denote a random variable on $\N \coloneqq \{ 0, 1, \dots \}$ with law $\mu$. 
Our assumptions in this paper concerning the reproduction law are
\begin{equation} \label{hypothese 1}
\Ec{L} >1
\quad \text{ and } \quad
\Ec{L \log_+^3 L} < \infty,
\end{equation}
(throughout the paper we write $\log_+ x = (\log x)\vee 0$ and $\log_+^n x = (\log_+ x)^n$).
The first inequality implies that the underlying Galton-Watson tree is supercritical and the event $S$ of survival of the population has positive probability.

Let $\cN(t)$ be the set of particles\footnote{Formally, a particle is a word on the alphabet of natural numbers, i.e. an element of $\T = \bigcup_{n=0}^\infty \N^n$.} alive at time $t$ and $X_u(t)$ the position of particle $u$ at time~$t$ or of its ancestor alive at time $t$ (if it exists).
As in the branching random walk literature \cite{aidekon2013,aidekonshi2014} and as in \cite{abbs2013} for the BBM, we choose our parameters $\lambda$, $\rho$ and $\sigma$ such that, for every $t \geq 0$,
\begin{equation} \label{hypothese 2}
\E \Biggl[ \sum_{u \in \cN(t)} \e^{-X_u(t)} \Biggr] = 1
\quad \text{ and } \quad
\E \Biggl[ \sum_{u \in \cN(t)} X_u(t) \e^{-X_u(t)} \Biggr] = 0,
\end{equation}
which is equivalent to $\sigma^2 = \rho = 2\lambda \E[L-1]$ (see \cite{abbs2013}). 
Moreover, we require that, for any $t \geq 0$,
\begin{equation} \label{hypothese 3}
\E \Biggl[ \sum_{u \in \cN(t)} X_u(t)^2 \e^{-X_u(t)} \Biggr] = t,
\end{equation}
which is equivalent to $\sigma^2 = \rho = 1$ and $\lambda = 1/ (2 \E[L-1])$. One can always reduce to these parameters by a combination of translation in space and scaling in space and time. Under these assumptions, it is well-known (and an easy consequence of the convergence of $W_t$ defined below) that 
\begin{equation}
\label{eq:minimum}
 \min_{u\in\cN(t)} X_u(t) \to +\infty,\quad\text{almost surely as $t\to\infty$.}
\end{equation}

One of the main objects of study for the BBM has been the \textit{additive martingales},
introduced by McKean \cite{mckean75} and defined in our setting by
\begin{align*}
W_t(\theta) \coloneqq \sum_{u \in \cN(t)} \e^{-\theta X_u(t) - \frac{(\theta-1)^2}{2}t},
\quad t \geq 0,\ \theta\ge 0.
\end{align*}
For every $\theta$, the process $(W_t(\theta))_{t\ge0}$ is a positive martingale and therefore converges almost surely (a.s.) towards a limit $W_\infty(\theta)$. This limit is non-zero with positive probability if and only if $\theta < 1$, see \cite{neveu87} for BBM or \cite{biggins77,lyons95} for the branching random walk\footnote{One can apply results on branching random walks to BBM because a discrete-time skeleton of BBM is a branching random walk.}.
In particular, for the critical inverse temperature $\theta_c = 1$, the additive martingale
\begin{align*}
W_t \coloneqq W_t(1) = \sum_{u \in \cN(t)} \e^{-X_u(t)},
\quad t \geq 0,
\end{align*}
converges a.s.\@ to zero and one is rather interested in the so-called \textit{derivative martingale}
\begin{align*}
Z_t \coloneqq \sum_{u \in \cN(t)} X_u(t) \e^{-X_u(t)}, 
\quad t \geq 0.
\end{align*}
Indeed, it has been proved by Lalley and Sellke \cite{lalleysellke87} for binary branching and then by Yang and Ren \cite{yangren2011} under the optimal assumption $\E[L \log_+^2 L] < \infty$ that
\begin{align} \label{eq:convergence-Z_t}
Z_t \xrightarrow[t\to\infty]{} Z_\infty, \quad \text{a.s.,}
\end{align}
and $Z_\infty>0$ a.s.\@ on the event $S$ of survival.
The limit $Z_\infty$ appears in many limit theorems on branching Brownian motion. For example, Lalley and Sellke \cite{lalleysellke87}, relying on deep results by Bramson~\cite{bramson83}, proved that the 
distributional limit of the minimum of the BBM at time $t$ is a Gumbel law randomly shifted by $\log Z_\infty$:
\begin{align} \label{eq:cv-du-min-du-BBM}
\Pp{\min_{u\in\cN(t)} X_u(t) \geq \frac{3}{2} \log t + x} 
\xrightarrow[t\to\infty]{} 
\Ec{\e^{-c^* \e^x Z_\infty}},
\end{align}
for some positive constant $c^*$ (see also Aïdékon \cite{aidekon2013} for a proof under the optimal assumption $\E[L\log_+^2L]<\infty$, but for the branching random walk).
The critical additive martingale $W = (W_t)_{t\ge0}$ is also related to the derivative martingale by the following convergence
\begin{align} \label{eq:convergence-W_t}
\sqrt{t} W_t \xrightarrow[t\to\infty]{} \sqrt{\frac{2}{\pi}} Z_\infty, 
\quad \text{in probability},
\end{align}
proved for the branching random walk by A\"idékon and Shi \cite{aidekonshi2014}.
Their result applies to the BBM under the assumption $\E[L \log_+^2 L] < \infty$.
Some precise estimates have been proved recently for the tail of $Z_\infty$.
Berestycki, Berestycki and Schweinsberg \cite{bbs2013} proved in the case of binary branching that
\begin{align} \label{eq:tail-Z_infty}
\Pp{Z_\infty > x} \underset{x\to\infty}{\sim} \frac{1}{x}
\end{align}
and also that, for some constant $c_Z \in \R$, depending on the offspring distribution $\mu$,
\begin{align} \label{eq:tail-Z_infty-2}
\Ec{Z_\infty \1_{Z_\infty \leq x}} - \log x
\xrightarrow[x\to\infty]{} c_Z.
\end{align}
Moreover, Maillard \cite[Chap.\@ 2 Prop.\@ 4.1]{maillard2012thesis} proved that \eqref{eq:tail-Z_infty} holds as soon as $\E[L \log_+^2 L] < \infty$ and \eqref{eq:tail-Z_infty-2} holds if $\E[L \log_+^3 L] < \infty$. 
See also Buraczewski \cite{buraczewski2009} and Madaule \cite{madaule2016arxiv} for \eqref{eq:tail-Z_infty} in the case of the branching random walk.

As can be seen from \eqref{eq:tail-Z_infty}, 1-stable laws will play an important role and we recall now some definitions and facts. The totally asymmetric to the right $1$-stable distribution $\cS_1(\sigma,\mu)$, with parameters $\sigma > 0$ and $\mu \in \R$, has characteristic function $$
\Psi_{\sigma,\mu}(\lambda) = \exp(-\psi_{\sigma,\mu}(\lambda)),
$$
where (see e.g.~Samorodnitsky and Taqqu~\cite{samorodnitskytaqqu1994}):
\begin{align}
\label{eq:Psi_sigma_mu}
\psi_{\sigma,\mu}(\lambda) = 
\sigma \abs{\lambda} 
	\left[ 
	1 + i \frac{2}{\pi} \mathrm{sign}(\lambda) \log \abs{\lambda}
	\right]
	- i \mu \lambda
,\quad \lambda\in\R.
\end{align}
Since it is an infinitely divisible distribution, there is an associated Lévy process $(S_r)_{r\geq 0}$ called spectrally positive 1-stable process with parameters $(\sigma,\mu)$, starting at $S_0 = 0$. The characteristic functions of its one-dimensional marginals are given by
\begin{align}
 \label{eq:Psi_S_t}
 \Psi_{S_t}(\lambda) = \exp(-t \psi_{\sigma,\mu}(\lambda)) = \Psi_{t\sigma,t\mu}(\lambda),\quad t\ge0,\,\lambda\in\R.
\end{align}
In other words, $S_t$ follows the distribution $\cS_1(t\sigma,t\mu)$.

Denote by $\Psi_{Z_\infty}$ the characteristic function of $Z_\infty$, i.e. $\Psi_{Z_\infty}(\lambda) = \E[e^{i\lambda Z_\infty}]$, $\lambda\in\R$. Equations~\eqref{eq:tail-Z_infty} and \eqref{eq:tail-Z_infty-2} together with Lemma~\ref{lem:asymptotic} yield the following asymptotic for $\Psi_{Z_\infty}$: there exists a continuous function $g \colon \R \to \C$, with $g(0) = 0$, such that for every sufficiently small $\lambda$,
\begin{align} \label{eq:characteristic-function-Z_infty}
\Psi_{Z_\infty}(\lambda)
= \Psi_{\pi/2,\mu_Z}(\lambda) \e^{\lambda g(\lambda)},
\end{align}
where $\mu_Z = c_Z -\gamma$  with $c_Z$ the constant in \eqref{eq:tail-Z_infty-2} and $\gamma$ the Euler-Mascheroni constant.
In particular, the distribution of $Z_\infty$ belongs to the domain
of attraction of a totally asymmetric (to the right) $1$-stable law.

\subsection{Results}

Recall that our assumptions are \eqref{hypothese 1}, \eqref{hypothese 2} and \eqref{hypothese 3}.
Our main result is the functional convergence in law of the fluctuations of the derivative martingale, conditionally on the past. The notion of weak convergence in probability is recalled in Section~\ref{section:weak-convergence-in-probability}.
\begin{thm} \label{theorem}
Let $(S_t)_{t\ge0}$ denote a spectrally positive 1-stable Lévy process with parameters $(\sqrt{\pi/2},\mu_Z \sqrt{2/\pi})$, independent of $Z_\infty$. 
Then the conditional law of $(\sqrt{t} (Z_\infty - Z_{a t} +\frac{\log t}{\sqrt{2 \pi a t}} Z_\infty ))_{a\geq1}$ given $\sF_t$ converges weakly in probability (in the sense of finite-dimensional distributions) to the conditional law of $(S_{Z_\infty/\sqrt a})_{a\ge1}$ given $Z_\infty$. 
In other words, for every $n \geq 1$, $a_1, \dots, a_n \in [1,\infty)$ and $f \colon \R^n \to \R$ bounded and continuous, we have
\begin{align*}
& \Ecsq{ f \left( \sqrt{t} 
\left( Z_\infty - Z_{a_k t} + \frac{\log t}{\sqrt{2 \pi a_k t}} Z_\infty \right), 
1 \leq k \leq n
\right)}{\sF_t} \\
& \xrightarrow[t\to\infty]{} 
\Ecsq{f \left(S_{Z_\infty/\sqrt{a_k}}, 1 \leq k \leq n \right)}
	{Z_\infty},\quad\text{in probability.}
\end{align*}
In particular (take $n=1$ and $a_1=1$), the conditional law of $\sqrt{t} (Z_\infty - Z_t +\frac{\log t}{\sqrt{2 \pi t}} Z_\infty)$ given $\sF_t$ converges weakly in probability to the law $\cS_1(Z_\infty \sqrt{\pi/2},Z_\infty \mu_Z \sqrt{2/\pi})$ given $Z_\infty$.
\end{thm}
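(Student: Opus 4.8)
The strategy is to first prove the one-dimensional, fixed-time convergence statement (the case $n=1$, $a_1=1$) using the branching property and the functional equation for the characteristic function of $Z_\infty$, and then to bootstrap this to the full finite-dimensional statement using the Markov property at time $t$. Fix a large time $t$ and condition on $\sF_t$. By the branching property, for $u\in\cN(t)$ the subtree rooted at $u$ evolves as an independent copy of the whole process started from $X_u(t)$, so writing $Z^{(u)}_{s}$ for the derivative martingale of that subtree at time $s$ (with respect to its own time), one has for $a\ge1$
\[
Z_{at} = \sum_{u\in\cN(t)} \e^{-X_u(t)}\Bigl( X_u(t)\, W^{(u)}_{(a-1)t} + Z^{(u)}_{(a-1)t}\Bigr),
\]
and letting $s=(a-1)t\to\infty$ along with $t$, the almost-sure convergence $Z^{(u)}_s\to Z^{(u)}_\infty$, the convergence $\sqrt s\,W^{(u)}_s\to\sqrt{2/\pi}\,Z^{(u)}_\infty$ in probability from \eqref{eq:convergence-W_t}, and \eqref{eq:minimum} combine to give that, conditionally on $\sF_t$, the quantity $Z_\infty - Z_{at} + \frac{\log t}{\sqrt{2\pi a t}}Z_\infty$ is, up to a negligible error, a sum over $u\in\cN(t)$ of i.i.d.\ copies (given $\sF_t$) of $\e^{-X_u(t)}(Z^{(u)}_\infty - $ correction$)$, with the $\frac{\log t}{\sqrt{2\pi a t}}$ term absorbing the logarithmic shift coming from the $X_u(t)W^{(u)}_{(a-1)t}$ contribution. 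The point is that $\sqrt{t}\,(Z_\infty-Z_{at}+\cdots)$ becomes a triangular array of independent centered contributions whose sizes are governed by the tail \eqref{eq:tail-Z_infty}, i.e.\ a sum in the domain of attraction of a $1$-stable law.

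Concretely, I would compute the conditional characteristic function $\E[\exp(i\lambda\sqrt t(Z_\infty-Z_{at}+\cdots))\mid\sF_t]$ and show it is asymptotically $\prod_{u\in\cN(t)}\Psi_{Z_\infty}(\lambda\sqrt t\,\e^{-X_u(t)}/\sqrt a)$ up to the deterministic shift. Here is where \eqref{eq:characteristic-function-Z_infty} enters: since $\lambda\sqrt t\,\e^{-X_u(t)}$ is typically small (all particles are far to the right, $\min_u X_u(t)\to\infty$), we may substitute $\Psi_{Z_\infty}(\mu)=\Psi_{\pi/2,\mu_Z}(\mu)\e^{\mu g(\mu)}$ with $g$ continuous and $g(0)=0$. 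Taking logarithms,
\[
\sum_{u\in\cN(t)} \log\Psi_{Z_\infty}\!\Bigl(\tfrac{\lambda\sqrt t}{\sqrt a}\e^{-X_u(t)}\Bigr)
= -\psi_{\pi/2,\mu_Z}\!\Bigl(\tfrac{\lambda}{\sqrt a}\Bigr)\!\!\sum_{u\in\cN(t)}\!\sqrt t\,\e^{-X_u(t)} \;+\; (\text{error terms}),
\]
using the homogeneity $\psi_{\sigma,\mu}(c\lambda)=c\,\psi_{\sigma,\mu}(\lambda)+(\text{lower order with the }\log\text{ correction})$ — this is exactly where the $\tfrac{2}{\pi}\log|\lambda|$ term in \eqref{eq:Psi_sigma_mu} generates, after summing $\sqrt t\,\e^{-X_u(t)}$ many copies, an extra term of size $\sqrt t\,W_t\cdot\log(\sqrt t)\sim\sqrt{2/\pi}\,Z_\infty\cdot\tfrac12\log t$, and this is precisely the deterministic counterterm $\frac{\log t}{\sqrt{2\pi at}}Z_\infty$ that appears in the statement. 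Since $\sqrt t\,W_t\to\sqrt{2/\pi}\,Z_\infty$ in probability by \eqref{eq:convergence-W_t}, the main term converges to $-\tfrac{1}{\sqrt a}\psi_{\pi/2,\mu_Z}(\lambda)\cdot\sqrt{2/\pi}\,Z_\infty = -\psi_{Z_\infty\sqrt{\pi/2a},Z_\infty\mu_Z\sqrt{2/\pi a}}(\lambda)$, which by \eqref{eq:Psi_S_t} is exactly $\log\Psi_{S_{Z_\infty/\sqrt a}}(\lambda)$ for the stated L\'evy process. The factorization over distinct $a_k$ follows because, given $\sF_t$ and for each fixed $u$, the whole path $(Z^{(u)}_{(a-1)t})_{a\ge1}$ converges and the contributions across different $a$'s are driven by the same single limit $Z^{(u)}_\infty$ — so the limiting object is $(S_{Z_\infty/\sqrt a})_a$ for one common process $S$, and the finite-dimensional characteristic function factorizes correctly by independence of the $u$-summands.

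The main obstacle — and the reason the paper's "relatively short proof" still requires care — is controlling the error terms uniformly: one must show that the contributions of particles $u$ with $X_u(t)$ small (i.e.\ $\e^{-X_u(t)}$ not negligible), the difference between $Z^{(u)}_{(a-1)t}$ and $Z^{(u)}_\infty$, and the discrepancy $\sqrt{(a-1)t}\,W^{(u)}_{(a-1)t} - \sqrt{2/\pi}\,Z^{(u)}_\infty$, all vanish after multiplication by $\sqrt t$ and summation. The delicate point is that this is a heavy-tailed ($1$-stable) regime: second moments of the summands are infinite, so one cannot use an $L^2$ argument, and instead must truncate the summands at an appropriate level, estimate the truncated part via the refined tail information \eqref{eq:tail-Z_infty}–\eqref{eq:tail-Z_infty-2} (which is where the hypothesis $\E[L\log_+^3 L]<\infty$ is used, to guarantee \eqref{eq:tail-Z_infty-2} and hence the precise form \eqref{eq:characteristic-function-Z_infty} of $\Psi_{Z_\infty}$ near $0$, including the value of $\mu_Z$), and show the discarded large values contribute negligibly in probability. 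A further technical nuisance is that $W^{(u)}_s$ converges to $0$ only with a $\sqrt s$ normalization, so the term $X_u(t)W^{(u)}_{(a-1)t}$ in the decomposition of $Z_{at}$, after multiplication by $\sqrt t\,\e^{-X_u(t)}$, must be handled separately: it produces, via $\sqrt{at}\,W^{(u)}_{(a-1)t}\approx\sqrt{2/\pi}\,Z^{(u)}_\infty$, the term $\frac{1}{\sqrt{2\pi at}}\sum_u X_u(t)\e^{-X_u(t)}Z^{(u)}_\infty$, whose leading part is $\frac{Z_\infty}{\sqrt{2\pi at}}\cdot(\text{something} \sim \log t)$ — again recovering the logarithmic counterterm — with the fluctuating remainder absorbed into the stable limit. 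Assembling these pieces, taking $t\to\infty$ in the conditional characteristic function and invoking L\'evy's continuity theorem (conditionally, in the sense of weak convergence in probability recalled in Section~\ref{section:weak-convergence-in-probability}) yields the claimed convergence.
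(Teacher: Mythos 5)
Your proposal reproduces, in its first step, essentially the paper's short argument (Proposition~\ref{prop:one-dim}): conditioning on $\sF_t$, using $Z_\infty=\sum_{u\in\cN(t)}\e^{-X_u(t)}Z_\infty^{(u)}$, the expansion \eqref{eq:characteristic-function-Z_infty} and the homogeneity defect of $\psi_{\pi/2,\mu_Z}$. But note that this computation naturally yields the counterterm $\frac{\log t}{2}W_t$ (the $\xi\log\xi$ terms cancel exactly against $\frac{\log t}{2}W_t-Z_t$), not the counterterm $\frac{\log t}{\sqrt{2\pi t}}Z_\infty$ of the statement. To pass from one to the other you must show $\log t\cdot\lvert\sqrt t\,W_t-\sqrt{2/\pi}\,Z_\infty\rvert\to0$ in probability, and the plain convergence \eqref{eq:convergence-W_t}, which is all you invoke, gives no rate and hence does not imply this. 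The paper devotes Proposition~\ref{prop:control-W_t} and the whole of Section~\ref{section:control-W} (itself using the barrier estimates of Section~\ref{section:preliminary-results} and an a priori bound deduced from Proposition~\ref{prop:one-dim}) precisely to obtain a polynomial rate $t^{-\theta}$, $\theta<1/5$. This replacement step is a genuine gap in your argument, already for $n=1$, $a_1=1$.

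The second, more serious gap is the multidimensional bootstrap through the decomposition $Z_{at}=\sum_{u\in\cN(t)}\e^{-X_u(t)}\bigl(X_u(t)W^{(u)}_{(a-1)t}+Z^{(u)}_{(a-1)t}\bigr)$: this is exactly the route the paper flags as a dead end in the remark closing Section~\ref{section:short-proof-of-theorem}, and your accounting of the cross term is incorrect. Conditionally on $\sF_t$, the weights $X_u(t)\e^{-X_u(t)}$ are carried by particles at height of order $\sqrt t$, and the truncated mean of $Z_\infty^{(u)}$ at the natural cutoff $\e^{X_u(t)}$ is of order $X_u(t)\asymp\sqrt t$, so $\sum_u X_u(t)\e^{-X_u(t)}Z_\infty^{(u)}$ is of order $\sqrt t$ (not $\log t$ as you claim). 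Consequently $\sqrt t\sum_u\e^{-X_u(t)}X_u(t)W^{(u)}_{(a-1)t}\approx\sqrt{\tfrac{2}{\pi(a-1)}}\sum_u X_u(t)\e^{-X_u(t)}Z_\infty^{(u)}$ diverges like $\sqrt t$, and it must be cancelled, up to $o(1)$, by a matching drift hidden in $\sqrt t\sum_u\e^{-X_u(t)}\bigl(Z_\infty^{(u)}-Z^{(u)}_{(a-1)t}\bigr)$ — a drift produced by the $c\log c$ effect of rescaling (non-strictly) $1$-stable fluctuations by $\e^{-X_u(t)}$. Your sketch never identifies this cancellation, let alone controls it to the required precision; the "logarithmic counterterm" does not simply fall out of $\sqrt{at}\,W^{(u)}_{(a-1)t}\approx\sqrt{2/\pi}\,Z^{(u)}_\infty$. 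The paper circumvents the problem by a completely different construction: the barrier $\gamma_t=\frac12\log t+\beta_t$ starting at time $at$, the stopping lines $\cL^{at,\gamma_t}$ and the decomposition of Theorem~\ref{theorem-bis} into Propositions~\ref{prop:control-Ztilde-infty}, \ref{prop:control-fluctuations} and \ref{prop:control-F^t_bad}, where the fluctuations are read off the number of particles hitting the barrier rather than from the subtree martingales $W^{(u)}$, $Z^{(u)}$. As it stands, your multidimensional argument would not go through.
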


\begin{rem}
 One may wonder whether one actually has almost sure weak convergence in Theorem~\ref{theorem} instead of mere weak convergence in probability. This is not the case and due to the fact that the convergence in \eqref{eq:convergence-W_t} does not hold almost surely \cite{aidekonshi2014}.
\end{rem}

Removing the conditioning, we get the following corollary:
\begin{cor}
Let $(S_t)_{t\ge0}$ denote a spectrally positive 1-stable Lévy process with parameters $(\sqrt{\pi/2},\mu_Z \sqrt{2/\pi})$, independent of $Z_\infty$. 
Then, we have the following convergence in law with respect to finite-dimensional distributions:
\[
 \left(\sqrt{t} \left(Z_\infty - Z_{a t} +\frac{\log t}{\sqrt{2 \pi a t}} Z_\infty \right)\right)_{a\geq1}  \xrightarrow[t\to\infty]{\text{(law)}} \left(S_{Z_\infty/\sqrt a}\right)_{a\ge 1}.
\]
In particular, we have the following convergence in law:
\label{corollary}
 \begin{align*}
\sqrt{t} \left( Z_\infty - Z_t +\frac{\log t}{\sqrt{2 \pi t}} Z_\infty \right)
& \xrightarrow[t\to\infty]{\text{(law)}} 
\cS_1 \left( Z_\infty \sqrt{\pi/2},Z_\infty \mu_Z \sqrt{2/\pi} \right).
\end{align*}
\end{cor}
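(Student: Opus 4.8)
The plan is to obtain the Corollary as a direct consequence of Theorem~\ref{theorem}, by simply integrating out the conditioning on $\sF_t$. Fix $n \geq 1$, reals $a_1,\dots,a_n \in [1,\infty)$ and a bounded continuous $f \colon \R^n \to \R$, and set
\[
Y_t \coloneqq \left( \sqrt{t} \left( Z_\infty - Z_{a_k t} + \frac{\log t}{\sqrt{2\pi a_k t}}\, Z_\infty \right) \right)_{1\leq k \leq n}.
\]
By the tower property, $\Ec{f(Y_t)} = \Ec{\Ecsq{f(Y_t)}{\sF_t}}$, and Theorem~\ref{theorem} asserts precisely that $\Ecsq{f(Y_t)}{\sF_t}$ converges, in probability as $t\to\infty$, to $\Ecsq{f\left(S_{Z_\infty/\sqrt{a_k}},\, 1\leq k\leq n\right)}{Z_\infty}$.

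Since $\abs{f} \leq \norme{f}_\infty$, the family $\{\Ecsq{f(Y_t)}{\sF_t}\}_{t\ge0}$ is uniformly bounded, so its convergence in probability upgrades to convergence in $L^1$ by bounded convergence. Taking expectations and applying the tower property once more on the right-hand side yields
\[
\Ec{f(Y_t)} \xrightarrow[t\to\infty]{} \Ec{\Ecsq{f\left(S_{Z_\infty/\sqrt{a_k}},\, 1\leq k\leq n\right)}{Z_\infty}} = \Ec{f\left(S_{Z_\infty/\sqrt{a_k}},\, 1\leq k\leq n\right)}.
\]
As $n$, the $a_k$ and $f$ were arbitrary, this is exactly the claimed convergence of the finite-dimensional distributions of $(\sqrt{t}(Z_\infty - Z_{at} + \frac{\log t}{\sqrt{2\pi at}} Z_\infty))_{a\geq1}$ towards those of $(S_{Z_\infty/\sqrt a})_{a\geq1}$.

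For the last display of the Corollary, I would take $n=1$ and $a_1=1$, which gives $\sqrt{t}(Z_\infty - Z_t + \frac{\log t}{\sqrt{2\pi t}} Z_\infty) \to S_{Z_\infty}$ in law, and then identify the law of $S_{Z_\infty}$: conditionally on $Z_\infty$ (with respect to which $S$ is independent), $S_{Z_\infty}$ is the value at time $Z_\infty \geq 0$ of a spectrally positive $1$-stable L\'evy process with parameters $(\sqrt{\pi/2},\mu_Z\sqrt{2/\pi})$, so by \eqref{eq:Psi_S_t} it follows the distribution $\cS_1(Z_\infty\sqrt{\pi/2},\, Z_\infty\mu_Z\sqrt{2/\pi})$, as stated.

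There is no genuine obstacle at this stage: the entire substance lies in Theorem~\ref{theorem}, and passing from weak convergence in probability of conditional laws to ordinary weak convergence is the standard bounded-convergence argument above. The only minor points to keep in mind are that $Z_\infty \geq 0$ almost surely, so that $S_{Z_\infty/\sqrt a}$ is well defined (and vanishes on the extinction event, consistently with $Z_t \equiv 0$ there for $t$ large), and that testing against bounded continuous $f$ on $\R^n$ is enough to characterize convergence in law of the finite-dimensional distributions.
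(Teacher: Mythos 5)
Your argument is correct and is essentially the paper's own (implicit) proof: the paper treats the Corollary as immediate from Theorem~\ref{theorem} via the observation in Appendix~\ref{section:weak-convergence-in-probability} that weak convergence in probability of the conditional laws implies weak convergence of the annealed laws $\P\mu_t \to \P\mu_\infty$, which is exactly your tower-property plus bounded-convergence step. The identification of the law of $S_{Z_\infty}$ via \eqref{eq:Psi_S_t} is also the intended reading of the one-dimensional statement.
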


\begin{rem}
We believe the assumption $\E[L\log_+^3 L] < \infty$ to be optimal for our result. It should be compared to the assumption $\E[L\log_+^2 L] < \infty$ in previous results concerning the derivative martingale and the extremal particles. 
Our assumption is used to get the precise tail of $Z_\infty$ in \eqref{eq:tail-Z_infty-2} and also at three different places in Section \ref{section:preliminary-results}. It can apparently not be relaxed in any one of these places.
\end{rem}

Finally, we state a second result, giving an explicit control on the rate of convergence of $Z_t$ to $Z_\infty$. It will be proved with the same tools and can be of independent interest.
\begin{prop} \label{prop:speed-of-CV-Z_t}
There exists $C > 0$ such that, for any $0 < \delta \leq 1$ and $t \geq 2$, we have
\[
\Pp{\abs{Z_\infty-Z_t} \geq \delta}
\leq C \frac{(\log t)^2}{\delta \sqrt{t}}.
\]
\end{prop}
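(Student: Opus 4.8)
The plan is to work conditionally on $\sF_t$ and exploit the branching property together with the tail asymptotics~\eqref{eq:tail-Z_infty}--\eqref{eq:tail-Z_infty-2}. First, two harmless reductions: a probability is at most $1$ and $(\log t)^2/(\delta\sqrt t)\ge1$ as soon as $\delta\le(\log t)^2/\sqrt t$, so one may assume $t$ as large as needed and $c_0(\log t)^2/\sqrt t\le\delta\le1$ for a suitable constant $c_0$. By the branching property, $Z_\infty=\sum_{u\in\cN(t)}\e^{-X_u(t)}\widehat Z_\infty^{(u)}$ where, given $\sF_t$, the $\widehat Z_\infty^{(u)}$ are i.i.d.\ copies of $Z_\infty$; hence
\[
Z_\infty-Z_t=\sum_{u\in\cN(t)}\e^{-X_u(t)}\bigl(\widehat Z_\infty^{(u)}-X_u(t)\bigr).
\]
I would also fix the good event $G_t\coloneqq\{\min_{u\in\cN(t)}X_u(t)\ge\tfrac12\log t\}\in\sF_t$, for which $\Pp{G_t^c}\le C/\sqrt t$ by standard lower-tail estimates on the minimum of branching Brownian motion.

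On $G_t$ one has $\e^{-X_u(t)}\le t^{-1/2}$ for every particle, and the key step is a truncated second moment estimate. Truncate each copy at $K_u\coloneqq\tfrac\delta2\e^{X_u(t)}\ge\tfrac{c_0}2(\log t)^2$, and set $\mathcal A_t\coloneqq\{\widehat Z_\infty^{(u)}\le K_u\text{ for all }u\in\cN(t)\}$ and $T_t\coloneqq\sum_u\e^{-X_u(t)}\widehat Z_\infty^{(u)}\1_{\widehat Z_\infty^{(u)}\le K_u}$, so that $Z_\infty=T_t$ on $\mathcal A_t$. Using $\Pp{Z_\infty>x}\le C/x$ from~\eqref{eq:tail-Z_infty}, one gets $\Ppsq{\mathcal A_t^c}{\sF_t}\le\sum_{u\in\cN(t)}C/K_u=(2C/\delta)\,W_t$. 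From~\eqref{eq:tail-Z_infty-2}, applicable since all $K_u\to\infty$ uniformly on $G_t$, the conditional mean is $m_t\coloneqq\Ecsq{T_t-Z_t}{\sF_t}=\bigl(\log(\delta/2)+c_Z+o(1)\bigr)W_t$, so $\abs{m_t}\le c_1\bigl(1+\log(1/\delta)\bigr)W_t$; and from~\eqref{eq:tail-Z_infty} again, $\Ec{Z_\infty^2\1_{Z_\infty\le K}}\le CK$ for $K\ge1$, whence $\Varsq{T_t}{\sF_t}\le C\sum_u\e^{-2X_u(t)}K_u=\tfrac{C\delta}2W_t$. Chebyshev's inequality then gives, on $G_t\cap\{\abs{m_t}\le\delta/2\}$,
\[
\Ppsq{\abs{Z_\infty-Z_t}\ge\delta}{\sF_t}\le\Ppsq{\mathcal A_t^c}{\sF_t}+\Ppsq{\abs{T_t-Z_t-m_t}\ge\delta/2}{\sF_t}\le\frac{c_2}\delta\,W_t.
\]

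To finish, I would integrate over $\sF_t$. Since $\log(1/\delta)\le\tfrac12\log t$ in the range considered, $\{\abs{m_t}>\delta/2\}\subseteq\{W_t>c_3\delta/\log t\}$, so
\[
\Pp{\abs{Z_\infty-Z_t}\ge\delta}\le\Pp{G_t^c}+\Pp{W_t>\tfrac{c_3\delta}{\log t}}+\Ec{\1_{G_t}\min\!\Bigl(1,\tfrac{c_2}\delta W_t\Bigr)}.
\]
The proposition then follows from the a priori estimates for $W_t$ of Section~\ref{section:preliminary-results} --- of the form $\Pp{W_t\ge\lambda}\le C\log(2+\lambda\sqrt t)/(\lambda\sqrt t)$ and $\Ec{W_t\1_{W_t\le\lambda}}\le C(\log t)/\sqrt t$ for $0<\lambda\le1$ --- together with $\Pp{G_t^c}\le C/\sqrt t$: the middle term above is then $\le C'(\log t)^2/(\delta\sqrt t)$, the last one equals $\Pp{W_t\ge\delta/c_2}+\tfrac{c_2}\delta\Ec{W_t\1_{W_t<\delta/c_2}}\le C'(\log t)/(\delta\sqrt t)$, and $\delta\le1$ lets $\Pp{G_t^c}$ be absorbed. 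A slightly slicker route to the conditional bound uses the closed form
\[
\Ecsq{\e^{i\lambda(Z_\infty-Z_t)}}{\sF_t}=\exp\!\Bigl(-\tfrac\pi2\abs\lambda W_t+i\lambda W_t(\mu_Z-\log\abs\lambda)+\lambda\!\!\sum_{u\in\cN(t)}\!\!\e^{-X_u(t)}g(\lambda\e^{-X_u(t)})\Bigr),
\]
valid for $\abs\lambda\le2/\delta$ on $G_t$ by~\eqref{eq:characteristic-function-Z_infty} (the $-i\lambda Z_t$ drift cancels the one carried by $\psi_{\pi/2,\mu_Z}$), combined with a standard smoothing inequality and the continuity of $g$ at $0$; this is the analogue of the short proof of Theorem~\ref{theorem}.

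\textbf{The main difficulty} is not the Fourier/branching bookkeeping above but the uniform a priori control of $W_t$ (and of the minimum) invoked at the end. Because $\Pp{Z_\infty>x}\sim1/x$ is exactly non-integrable, $\Ec{Z_\infty}=\infty$ and, by Fatou, $\Ec{\sqrt tW_t}\to\infty$; a plain first-moment bound on $W_t$ is thus useless, and one is forced into truncated-moment estimates in which the logarithmic divergence of $\Ec{Z_\infty\1_{Z_\infty\le x}}$ in~\eqref{eq:tail-Z_infty-2} must be tracked carefully. This is the source of the two powers of $\log t$, and the place where the hypothesis $\Ec{L\log_+^3L}<\infty$ enters --- one of the three uses flagged for Section~\ref{section:preliminary-results}. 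A secondary but genuine point is that~\eqref{eq:characteristic-function-Z_infty}, and the expansion of $m_t$, are valid only when $\lambda\e^{-X_u(t)}$ is small and $K_u$ is large for every particle simultaneously, which is exactly what forces the good event $\min_uX_u(t)\ge\tfrac12\log t$ and the restriction $\delta\gtrsim(\log t)^2/\sqrt t$ (outside which the claimed bound is vacuous anyway).
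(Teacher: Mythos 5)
Your argument is correct, but it takes a genuinely different route from the paper's proof in Appendix~\ref{section:rate-of-cv-derivative-martingale}. The paper reuses the killing-barrier machinery of Theorem~\ref{theorem}: it takes $\beta_t=\frac12\log t$ (so $\gamma_t=\log t$) in the decomposition \eqref{eq:decompo-Z_infty-2} and bounds the four resulting pieces separately --- the truncated second-moment bound \eqref{ad} for $\widetilde Z^{t,\gamma_t}_\infty-\widetilde Z^{t,\gamma_t}_t$, Bramson's estimate \eqref{eq:local-min-of-the-BBM} to get $\cL^{t,\gamma_t}_{\text{bad}}=\varnothing$ with high probability, the first moment of $N^{t,\gamma_t}_{\mathrm{good}}$ together with $\E[Z_\infty\wedge x]\le C(1+\log_+x)$ (i.e.\ only \eqref{eq:tail-Z_infty}) for $F^{t,\gamma_t}_{\mathrm{good}}$, and the truncated first moment \eqref{eq:control-first-moment-W_t-barrier-at--L} of $W_t$. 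You instead work with the plain branching decomposition \eqref{eq:Z_infty_decomposition} at time $t$ (as in Section~\ref{section:short-proof-of-theorem}), truncate each copy at $\frac\delta2\e^{X_u(t)}$, and run a conditional first/second-moment--Chebyshev argument: \eqref{eq:tail-Z_infty} gives the truncated variance and the untruncated tail, and \eqref{eq:tail-Z_infty-2} gives the cancellation of $Z_t$ in the conditional mean (a two-sided $O(1)$ version, derivable from \eqref{eq:tail-Z_infty} alone, would in fact suffice there). Your route avoids stopping lines and the spine second-moment lemmas entirely, at the price of leaning more heavily on the tail asymptotics of $Z_\infty$; the paper's route needs more machinery but only the weak consequence \eqref{eq:tail-Z_infty-3} of the tail, and it recycles estimates already established for the main theorem. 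One caveat: the a priori bounds on $W_t$ you invoke at the end ($\P(W_t\ge\lambda)\le C\log(2+\lambda\sqrt t)/(\lambda\sqrt t)$ and $\E[W_t\1_{W_t\le\lambda}]\le C(\log t)/\sqrt t$) are not stated in Section~\ref{section:preliminary-results}; they do hold, but you should derive them as the paper's appendix proof does, by combining Doob's inequality \eqref{eq:global-min-of-the-BBM} (restricting to $\{\min_{s\ge0}\min_u X_u(s)\ge-\log t\}$) with the many-to-one/reflection computation \eqref{eq:control-first-moment-W_t-barrier-at--L}; with the resulting cruder factor $\log t$ in place of $\log(2+\lambda\sqrt t)$ your final bound is still $C(\log t)^2/(\delta\sqrt t)$. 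Similarly, your claim $\P(G_t^c)\le C/\sqrt t$ follows from \eqref{eq:local-min-of-the-BBM} with $x=\log t$. The characteristic-function variant you sketch at the end is consistent with \eqref{eq:characteristic-function-Z_infty} and Lemma~\ref{lem:Psi_algebra}, but it is not needed given the Chebyshev argument.
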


\subsection{Comments and heuristics}
\label{subsection:comments-and-heuristics}

Our motivation for studying the fluctuations of the derivative martingale $Z_t$ came from an article by Mueller and Munier \cite{muellermunier2014} in the physics literature. 
In this article, the authors mainly work with the additive martingale $W_t$. Their findings can be interpreted as the following conjecture:
\begin{align} \label{eq:one-dimensional-cv-for-W}
\sqrt{t} \left( \sqrt{\frac{2}{\pi}} Z_\infty - \sqrt{t} W_t \right)\quad\text{converges in law, as $t\to\infty$.}
\end{align}
In their appendix, they note that, for the derivative martingale $Z_t$, a corrective term of order $(\log t)/\sqrt{t}$ has to be added to get the convergence, a conjecture which they derived from numerical simulations.

Mueller and Munier give a phenomenological description of BBM in order to support their conjectures concerning the fluctuations of the front of BBM%
\footnote{We call here the \textit{front} of BBM the particles that mainly contribute to $Z_t$ (and to $W_t$): these are the particles at a position of order $\sqrt{t}$ at time $t$.
The extremal particles at time $at$ for some $a >1$ (those which are around $\frac{3}{2}\log(at) + O(1)$) mostly descend from particles in the front at time $t$.}. 
This picture is as follows: at early times of order $O(1)$, there are fluctuations due to the few number of particles, to whom the randomness of $Z_\infty$ is due. 
This random variable $Z_\infty$ determines the position of the minimum of the BBM at later times, as seen in \eqref{eq:cv-du-min-du-BBM}.
Then, after a large time of order $O(1)$, Mueller and Munier introduce the curved barrier $s \mapsto \frac{3}{2} \log s - \log Z_\infty$. Following previous works by Ebert and van Saarloos \cite{Ebert2000} on the rate of convergence of the F-KPP equation to a traveling wave, they go on to say that the density of the particles staying above this barrier can be approximated to sufficient precision by a deterministic function (with a random shift $\log Z_\infty$), whose expression is fairly intricate, involving hypergeometric functions. Mueller and Munier then argue that this density is randomly perturbed by the descendants of the particles that go below the barrier, in the spirit of previous works by Brunet, Derrida, Mueller and Munier \cite{Brunet2006}. Using the explicit form of the particle density and the stipulated law of the perturbations, Mueller and Munier are then able to derive \eqref{eq:one-dimensional-cv-for-W}. We stress that their approach, although ingenious, relies on several unjustified assumptions and approximations and uses quite intricate algebra.

Our approach is loosely inspired by Mueller and Munier \cite{muellermunier2014} but has several important differences. First, we found that instead of working with the martingale $W_t$, it is easier to work with the derivative martingale $Z_t$, which is the object of this article. Second, as in Mueller and Munier \cite{muellermunier2014} we introduce a killing barrier but which is very different from theirs. Our barrier, instead of ending at time $t$, \emph{starts at time $t$} and stays at a \emph{fixed position} $\gamma_t = \frac{1}{2} \log t + \beta_t$, for some slowly increasing function $\beta_t$ (one might think of it as a large constant $K$ and first let $t$, then $K$ go to infinity). 
The advantage of working with $Z_t$ is that the translated derivative martingale
\[
\sum_{u\in \cN(s)} (X_u(s) - \gamma_t) \e^{-X_u(t)} = Z_s - \gamma_t W_s,\quad s\ge t,
\]
is also a martingale when particles are killed when going below $\gamma_t$. With this killing, we show that the fluctuations of this martingale are of order $o(1/\sqrt t)$ and therefore negligible. Roughly speaking, this allows us to write $Z_\infty$ as the sum 
\[
 Z_\infty = Z_t - \gamma_t W_t + F_t + o(1/\sqrt t),
\]
where $F_t$ is the contribution to $Z_\infty$ from the particles going below the barrier. Estimating the number of particles hitting the barrier and using the tail asymptotics of $Z_\infty$ provided by \eqref{eq:tail-Z_infty} and \eqref{eq:tail-Z_infty-2} allows to obtain precise asymptotics on the characteristic function of $F_t$, from which one can derive the one-dimensional ($n=1$, $a_1=1$) case of Theorem \ref{theorem}. The functional limit requires slightly more work but follows along the same lines. More details of the proof are exposed in Section~\ref{section:proof-of-theorem}.

We remark that our proof shows that the fluctuations of the derivative martingale are due to the particles that come down exceptionally low: around $\frac{1}{2} \log t + O(1)$.
In fact, it is well-known known \cite{hushi2009,hu2016} that, although the minimum of the BBM is most of the time around $\frac{3}{2} \log t$, one has 
\[
\liminf_{t\to\infty} \frac{1}{\log t} \min_{u\in\cN(t)} X_u(t) = \frac{1}{2},
\quad \P\text{-a.s. on } S.
\]
These rare particles are exactly the ones leading to the limit in Theorem~\ref{theorem}.

We also remark that the one-dimensional case of Theorem~\ref{theorem} can by obtained by simpler means, namely by exploiting the decomposition
\[
 Z_\infty = \sum_{u\in \cN(t)} \e^{-X_u(t)} Z_\infty^u,
\]
where $(Z_\infty^u)_{u\in\cN(t)}$ are independent copies of $Z_\infty$, independent of $(X_u(t))_{u\in\cN(t)}$. This is done in Section~\ref{section:short-proof-of-theorem}. However, this method does not allow to obtain a functional convergence and also does not explain which particles contribute to the fluctuations.


\paragraph{Further outlook.}

In an upcoming work \cite{fluctuations2}, we consider random variables of the form
\[
 Z_t(f) = \sum_{u\in\cN(t)} X_u(t) \e^{-X_u(t)} f\left(\frac{X_u(t)}{\sqrt t}\right),
\]
for a large class of functions $f$. Special cases are the derivative martingale $Z_t$ and the renormalized additive martingale $\sqrt t W_t$, which correspond to $f\equiv 1$ and $f(x) = \frac 1 x$, respectively. We prove a limit theorem in law analogous to Theorem~\ref{theorem} for these random variables, relying on the results of the present paper. The limiting random variables are again of the form $S_{Z_\infty}$ for $(S_t)_{t\ge0}$ a 1-stable L\'evy process, but the asymmetry parameter of the process can be anything and depends on the function $f$. For example, in the case of the renormalized additive martingale, the process $(S_t)_{t\ge0}$ is a Cauchy process and the logarithmic correction term vanishes. In other words, we prove Mueller and Munier's conjecture \eqref{eq:one-dimensional-cv-for-W} and identify the limit.

In future work, we plan to study the fluctuations of the minimal position $M_t = \min_{u\in\cN(t)} X_u(t)$ in BBM. This should be related to the fluctuations of the martingale $W_t$. Specifically, we conjecture the following:
\begin{conj}
\label{conjecture}
 As $t\to\infty$, for some constant $C>0$,
\[
 M_t \stackrel{\mathrm{law}}{=} \frac 3 2 \log t - \log(CZ_\infty) - G + \frac 1 {\sqrt t} S_{Z_\infty} + o\left(\frac 1 {\sqrt t}\right),
\]
where $G$ is a standard Gumbel distributed random variable, $(S_t)_{t\ge0}$ is a Cauchy process and $Z_\infty$, $G$, $(S_t)_{t\ge0}$ are independent\footnote{The way to make this statement formal is in the language of \emph{mod-$\phi$-convergence} from \cite{Delbaen2015}.}.
\end{conj}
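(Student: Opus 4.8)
We sketch a possible route to Conjecture~\ref{conjecture}. Fix $x\in\R$ and write $m(t)=\tfrac12\cdot 3\log t=\tfrac32\log t$. The starting point is the branching property: decomposing at an intermediate time $s=s(t)$ with $1\ll s\ll t$,
\[
\Pp{M_t\ge m(t)+x\mid\sF_s}=\prod_{u\in\cN(s)}v\!\left(t-s,\,m(t)+x-X_u(s)\right),
\]
where $v(r,y)=\Pp{M_r\ge y}$ is a solution of the F-KPP equation. The first ingredient is a sharpening of Bramson's front asymptotics \cite{bramson83}: one needs not only $v(r,m(r)+z)\to\bar w(z)$, with $\bar w$ the critical traveling wave, but also the $1/\sqrt r$ correction predicted by Ebert and van Saarloos \cite{Ebert2000}, uniformly for $z$ in the relevant (moderately large) range. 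The crucial point is that, once this correction is inserted into the product and summed against the particle configuration, it is \emph{not} deterministic: it is driven by the rare particles that come down to height $\tfrac12\log(t-s)+O(1)$ --- exactly the trajectories identified in Section~\ref{section:proof-of-theorem} and the later sections of this paper as responsible for the $1$-stable fluctuations of $Z_t$ and $W_t$, and which also account for $\liminf_t M_t/\log t=\tfrac12$ \cite{hushi2009,hu2016}. This is why a $1$-stable (here, Cauchy) term at scale $1/\sqrt t$ is to be expected.

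To make this quantitative, I would re-run the barrier argument of the present paper --- a killing barrier at the fixed height $\gamma_t=\tfrac12\log t+\beta_t$ \emph{starting at time $t$} --- but now tracking the minimal displacement after time $t$ rather than $Z_\infty$. Conditionally on the configuration at time $t$ restricted to particles above $\gamma_t$, the minimum $M_T$ for $T\gg t$ is governed, through the Lalley--Sellke--Bramson shift \eqref{eq:cv-du-min-du-BBM}, by the truncated derivative martingale $Z_t-\gamma_t W_t$ together with the contributions $F_t$ of the particles crossing the barrier, which is precisely the decomposition $Z_\infty=Z_t-\gamma_t W_t+F_t+o(1/\sqrt t)$ already controlled here; letting $T\to\infty$ and using a tightness/consistency argument recovers $M_t$ itself. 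Since $\sqrt t\,(Z_\infty-(Z_t-\gamma_tW_t))$ has the $1$-stable limit of Theorem~\ref{theorem} combined with \eqref{eq:convergence-W_t}, feeding this through the map $y\mapsto-\log(c^*e^{x}y)$ coming from \eqref{eq:cv-du-min-du-BBM} produces a term of order $1/\sqrt t$ whose conditional law given $Z_\infty$ is $1$-stable; one then checks, exactly as in the follow-up paper \cite{fluctuations2} for the renormalized additive martingale, that the asymmetry of the functional involved forces the limiting process to be a Cauchy process and makes the logarithmic correction (the analogue of the $\tfrac{\log t}{\sqrt{2\pi a t}}Z_\infty$ term of Theorem~\ref{theorem}) vanish at the stated precision. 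The Gumbel variable $G$ then appears as a residual extreme-value layer: after the reduction above, the position of the single lowest particle is, to leading order, the bottom of a Poisson process of intensity proportional to $Z_\infty$, whose minimum is $-\log(\mathrm{const}\cdot Z_\infty)-G$. The constant $C$ collects $c^*$ from \eqref{eq:cv-du-min-du-BBM}, the factor $\sqrt{2/\pi}$ from \eqref{eq:convergence-W_t} and the constants from the tail asymptotics \eqref{eq:tail-Z_infty}--\eqref{eq:tail-Z_infty-2}.

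At the level of rigour, one works with the conditional characteristic function of $M_t$ given $Z_\infty$ and uses the language of mod-$\phi$-convergence of \cite{Delbaen2015}, with $\phi$ the law of a Cauchy increment: showing that the suitably renormalized conditional characteristic function converges simultaneously identifies the Gumbel part (the leading-order renormalization), the Cauchy process $(S_r)_{r\ge0}$ (the $\phi$-component), the random time-change $Z_\infty$, and the asymptotic independence of $G$, $(S_r)_{r\ge0}$ and $Z_\infty$. The independence of the extreme-value layer (giving $G$) from the bulk contribution of the rare low particles (giving $S$) is the analogue of the decoupling of the leftmost point from the intensity in the convergence of the extremal process \cite{abbs2013,abk2013}, and should follow from the same localization estimates for the low-lying trajectories.

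The main obstacle is the first ingredient: a rigorous, sharp expansion of the F-KPP front \emph{at scale $1/\sqrt t$ with its random fluctuation}. Bramson's theorem gives only $O(1)$ precision, and in the deterministic F-KPP setting the Ebert--van Saarloos correction is a fixed multiple of $1/\sqrt t$; one must show that for the random particle system this correction carries a genuine $1$-stable random component produced by the rare low particles. Establishing this means pushing the trajectory analysis of the later sections of this paper --- which describes precisely which particles contribute to the fluctuations of $Z_t$ and $W_t$ --- all the way to the extremal particle, and in particular controlling the joint law of the lowest particle and the below-barrier contributions $F_t$. A secondary difficulty is that $M_t$ is a finite-time quantity while the barrier argument most naturally produces $M_T$ with $T\to\infty$; bridging this gap requires uniform-in-$T$ estimates, for which Proposition~\ref{prop:speed-of-CV-Z_t} and refinements of it should be the right tool.
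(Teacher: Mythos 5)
The statement you were asked to prove is an open \emph{conjecture} in the paper (Conjecture~\ref{conjecture}); the authors explicitly defer its proof to future work, so there is no paper proof to compare against. With that caveat, your sketch is a reasonable research program and tracks the authors' own heuristics in Section~\ref{subsection:comments-and-heuristics} quite closely: the $1/\sqrt t$ Ebert--van Saarloos correction to the F-KPP front \cite{Ebert2000}, the localization of the fluctuation to particles near height $\tfrac12\log t + O(1)$ (cf.~\cite{hushi2009,hu2016}), the killing-barrier decomposition $Z_\infty = Z_t - \gamma_t W_t + F_t + o(1/\sqrt t)$, the appearance of a Cauchy process because the fluctuations couple to the renormalized $W_t$ (treated in \cite{fluctuations2}), and the mod-$\phi$-convergence formalization of \cite{Delbaen2015}. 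You are also candid in flagging the genuine gap: there is no rigorous $1/\sqrt t$ expansion of the conditional F-KPP front carrying a \emph{random} fluctuation --- Bramson's analysis gives $O(1)$ precision only, and the deterministic Ebert--van Saarloos term is non-random, so the central analytic input is missing.

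Two further points in your sketch deserve scrutiny before it could be regarded as more than a heuristic. First, pushing fluctuations of $Z_\infty$ through the Lalley--Sellke map $y\mapsto -\log(c^*\e^{x}y)$ to infer fluctuations of $M_t$ at scale $1/\sqrt t$ is not a rigorous step: the argument of that map is a finite-time quantity whose difference from $Z_\infty$ is itself of order $1/\sqrt t$, so the two error terms compete at the same scale and the ``tightness/consistency'' step from $M_T$, $T\to\infty$, back to $M_t$ is exactly where the difficulty concentrates. Second, the asymptotic independence of the Gumbel layer $G$ from $S_{Z_\infty}$ and $Z_\infty$ is asserted by analogy with the decoupling in \cite{abbs2013,abk2013}, but there the decoupled quantities are the extremal point and $Z_\infty$, both $O(1)$; here one of the quantities lives at scale $1/\sqrt t$, and establishing the joint control of the single lowest trajectory together with the below-barrier contributions $F_t$ at that finer scale is a genuinely new estimate, not a corollary of the extremal-process results. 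None of this is a proof, and you do not claim it to be; as a map of the problem, however, it is accurate and consistent with what the paper itself says about Conjecture~\ref{conjecture}.
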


We note that a rich literature exists on $1/\sqrt t$ corrections for solutions to the F-KPP equation and related equations, see e.g.~ \cite{Ebert2000,BeBrHaRo2017,BeBrDe2018,Nolen2016,Graham2017} and the references therein. Conjecture~\ref{conjecture} can be seen as a probabilistic version of these results. Analogous to the deterministic equation, we believe that the term $\frac 1 {\sqrt t} S_{Z_\infty}$ appearing in Conjecture~\ref{conjecture} is \emph{universal}, in that it is (up to scaling, translation and the term $Z_\infty$) the same for all models in the so-called \emph{F-KPP universality class} \cite{Brunet2006}. We also believe that there is a direct relation between the deterministic and probabilistic versions and plan to make this relation explicit in future work. 

Finally, the methods used in this article seem to be applicable to the case of the \emph{branching random walk}, with probably a few additional technical difficulties for the proof of the functional convergence. 
However, we emphasize that both the main method as well as the one from Section~\ref{section:short-proof-of-theorem} rely on the precise tail asymptotics of $Z_\infty$ in \eqref{eq:tail-Z_infty-2}, which so far has only been proved for BBM.

\subsection{Related literature}

Fluctuations of martingales have been studied for the Galton-Watson process by Heyde \cite{heyde70, heyde71}, when the reproduction law belongs to the domain of attraction of a $\alpha$-stable law with $1 < \alpha \leq 2$.
In that case, one needs an exponential scaling to get the convergence of the fluctuations towards a mixture of $\alpha$-stable laws.
See also Heyde and Brown \cite{heydebrown71} for a functional convergence
and Kesten-Stigum \cite{kestenstigum66-2}, Athreya \cite{athreya68} and Asmussen and Keiding \cite{asmussenkeiding78} for multitype branching processes (but only in the case $\alpha = 2$).
More recently, similar results have been proved for the additive martingale $W_t(\theta)$ of a \emph{branching random walk} in the subcritical regime $\theta < 1$.
Rösler, Topchii and Vatutin \cite{rtv2002} show, in the more general setting of stable weighted branching processes, the convergence of the fluctuations of $W_t(\theta)$ when $W_1(\theta)$ belongs to the domain of attraction of a $\alpha$-stable law with $1 < \alpha \leq 2$, with also an exponential scaling and a mixture of $\alpha$-stable laws as limit. More related to our results is yet unpublished work by Iksanov, Kolesko and Meiners \cite{IkKoMe2018} where $W_1(\theta)$ is light-tailed (and $\theta$ may be complex). In particular, if $\theta\in (1/2,1)$, they show that the fluctuations of $W_t(\theta)$ are exponentially small in $t$ and converge in law after rescaling to $S_{Z_\infty}$, where $(S_t)_{t\ge0}$ is a $1/\theta$-stable L\'evy process independent of $Z_\infty$. 

Note that the appearance of stable processes subordinated by $Z_\infty$ in branching random walks has been observed previously in the study of the martingales $W_t(\theta)$ in the  \emph{supercritical} regime $\theta > 1$, see e.g. \cite{DL1983,Guivarc'h1990,Barral2013a}.

Functional convergence results have been obtained in the branching random walk setting by Iksanov and Kabluchko \cite{iksanovkabluchko2016}, when $\Var(W_1(\theta)) < \infty$ and $\theta < 1/2$ (in our setting), and by Iksanov, Kolesko and Meiners \cite{ikm2017arxiv}, when $\P(W_1(\theta) \geq x) \sim c x^{-\alpha}$ and $(\theta \alpha - 1)^2 < \alpha (\theta - 1)^2$ for some $\theta < 1 < \alpha < 2$.
See also, Hartung and Klimovsky \cite{hartungklimovsky2017arxiv} for the case of complex BBM.
To our knowledge, $1$-stable fluctuations have not been studied yet.
Note also that all aforementioned results exhibit an exponential scaling, while it is polynomial here.
Fluctuations of the partition function have also been studied for other models related to BBM.
For directed polymers in random environment in the $L^2$-phase, there is also a Gaussian limit but with a polynomial scaling \cite{cometsliu2017}. 
Other results have been obtained for many different spin glasses models: for example, the Sherrington-Kirkpatrick model \cite{alr87}, the REM and the $p$-spin model \cite{bkl2002}, the GREM \cite{bovierklimovsky2008}, the complex REM \cite{kabluchkoklimovsky2014} and the spherical Sherrington-Kirkpatrick model \cite{baiklee2016,baiklee2017}. In comparing to our setting, one might argue that the situation is a bit different in these models in that the fluctuation is the \emph{first} random term appearing in the large-$N$ expansion of the partition function, whereas in our setting it is the \emph{second} or even \emph{third} random term (the first ones being $Z_\infty$ and $\frac{\log t}{\sqrt{2\pi t}} Z_\infty$). However, the conditioning in Theorem~\ref{theorem} effectively shows that one can consider $Z_\infty$ as a constant, making the term $S_{Z_\infty}$ the first ``truly'' random term in the large-$t$ expansion of $Z_t$. Another argument is that the term $S_{Z_\infty}$ is (as we believe) the first \emph{universal} term in the expansion, the term $Z_\infty$ depending on the offspring distribution and, in general, on the model. It is thus fair to say that the results from this article are  of the same nature as the results for the statistical mechanics models cited above.

%

\subsection{Organization of the paper}

In Section \ref{section:proof-of-theorem}, we state another Theorem \ref{theorem-bis} which is easier to prove than Theorem \ref{theorem}.
The fact that one can get Theorem \ref{theorem} from Theorem \ref{theorem-bis} (in the case $n=1$ or in the multi-dimensional case) follows from Proposition \ref{prop:control-W_t}, which is stated in Section \ref{section:short-proof-of-theorem} and proved in Section \ref{section:control-W}.
In Section \ref{section:short-proof-of-theorem}, we give a short proof of the case $n=1$ of Theorem \ref{theorem-bis}, which corresponds to Proposition \ref{prop:one-dim}.


The rest of the paper is dedicated to the proof of Theorem \ref{theorem-bis} in the multi-dimensional case.
In Section \ref{section:proof-of-theorem}, its proof is divided into three propositions, which are proved in Sections~\ref{section:control-without-fluctuations} and \ref{section:killed-particles}, using preliminary results stated and proved in Section \ref{section:preliminary-results}.

In Appendix \ref{section:weak-convergence-in-probability}, some theoretical definitions and results concerning weak convergence in probability for random measures are given. 
Appendix \ref{section:technical-results} contains some calculations concerning Brownian motion and the 3-dimensional Bessel process used in the paper. The lemma in Appendix \ref{section:asymptotic_Psi} provides an asymptotic for the characteristic function of a random variable satisfying \eqref{eq:tail-Z_infty} and \eqref{eq:tail-Z_infty-2}.
Appendix \ref{section:rate-of-cv-derivative-martingale} contains the proof of Proposition \ref{prop:speed-of-CV-Z_t}.

Throughout the paper, $C$ denotes a positive constant that does not depend on the parameters and can change from line to line. 
For $f \colon \R_+ \to \R$ and $g \colon \R_+ \to \R_+^*$, we say that $f(t) = o(g(t))$ as $t \to \infty$ if $\lim_{t\to\infty} f(t)/g(t) = 0$ and that $f(t) = O(g(t))$ as $t \to \infty$ if $\limsup_{t\to\infty} \abs{f(t)}/g(t) < \infty$.
Moreover, $(B_t)_{t\geq0}$ denotes a standard Brownian motion and $(R_t)_{t\geq 0}$ a 3-dimensional Bessel process.

\subsection{Acknowledgements}
We gratefully thank the organizers of the workshop ``Phase transitions on random trees'', which took place at TU Dortmund on July 13-14, 2017. The discussions with several participants, which we will not explicitly name in order not to forget anyone, gave us valuable input, in particular regarding the proof of the one-dimensional case of Theorem~\ref{theorem} from Section~\ref{section:short-proof-of-theorem}.

PM acknowledges support of ANR Liouville (ANR-15-CE40-0013), ANR GRAAL (ANR-14-CE25-0014) and a CRM Simons Research Fellowship.

\section{One-dimensional marginals: a (fairly) short proof}
\label{section:short-proof-of-theorem}

In this section, we give a relatively short proof of Theorem \ref{theorem} in the case $n=1$, $a_1 = 1$. It will follow from the following two results:

%

\begin{prop} \label{prop:one-dim}
The conditional law of $\sqrt{t} (Z_\infty - Z_t +\frac{\log t}{2} W_t)$ given $\sF_t$ converges weakly in probability to the law $\cS_1(Z_\infty \sqrt{\pi/2},Z_\infty \mu_Z \sqrt{2/\pi})$ given $Z_\infty$.
\end{prop}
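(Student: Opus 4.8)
The plan is to use the branching property at time $t$. Conditionally on $\sF_t$, the subtrees rooted at the particles of $\cN(t)$ are i.i.d.\ copies of the whole BBM; letting $s\to\infty$ in $Z_{t+s} = \sum_{u\in\cN(t)} \e^{-X_u(t)}\bigl(X_u(t) W_s^u + Z_s^u\bigr)$ and using $W_s^u\to0$ and $Z_s^u\to Z_\infty^u$ a.s.\ (the latter by \eqref{eq:convergence-Z_t}) gives
\[
Z_\infty = \sum_{u\in\cN(t)} \e^{-X_u(t)} Z_\infty^u,
\]
with $(Z_\infty^u)_{u\in\cN(t)}$ i.i.d.\ copies of $Z_\infty$ independent of $\sF_t$. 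Setting $c_u \coloneqq \sqrt t\,\e^{-X_u(t)}>0$ and $d_u \coloneqq \tfrac12\log t - X_u(t)$, this yields the identity $\sqrt t\bigl(Z_\infty - Z_t + \tfrac{\log t}{2} W_t\bigr) = \sum_{u\in\cN(t)} c_u(Z_\infty^u + d_u)$, so that, the summands being conditionally independent given $\sF_t$, its conditional characteristic function at $\lambda\in\R$ is $\prod_{u\in\cN(t)} \e^{i\lambda c_u d_u}\,\Psi_{Z_\infty}(\lambda c_u)$. By \eqref{eq:Psi_S_t}, the conditional law of $S_{Z_\infty}$ given $Z_\infty$ is $\cS_1(Z_\infty\sqrt{\pi/2}, Z_\infty\mu_Z\sqrt{2/\pi})$, with conditional characteristic function $\exp(-Z_\infty\psi_{\sqrt{\pi/2},\mu_Z\sqrt{2/\pi}}(\lambda))$; hence, by the criterion for weak convergence in probability of random measures recalled in Appendix~\ref{section:weak-convergence-in-probability}, it suffices to show that for each fixed $\lambda$ the above product converges in probability to $\exp(-Z_\infty\psi_{\sqrt{\pi/2},\mu_Z\sqrt{2/\pi}}(\lambda))$.

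The core of the argument is an exact cancellation. Fix $\lambda$, let $\varepsilon_0>0$ be such that \eqref{eq:characteristic-function-Z_infty} holds on $[-\varepsilon_0,\varepsilon_0]$, and set $A_t \coloneqq \{\abs{\lambda}\max_{u\in\cN(t)} c_u\le\varepsilon_0\}$. I would first check that $\P(A_t)\to1$: since $\max_u c_u = \sqrt t\,\e^{-\min_u X_u(t)}$ and $\min_u X_u(t)-\tfrac32\log t$ is tight by \eqref{eq:cv-du-min-du-BBM}, one has $\min_u X_u(t) - \tfrac12\log t\to+\infty$, hence $\max_u c_u\to0$, in probability. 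On $A_t$, \eqref{eq:characteristic-function-Z_infty} gives $\Psi_{Z_\infty}(\lambda c_u) = \exp\bigl(-\psi_{\pi/2,\mu_Z}(\lambda c_u) + \lambda c_u g(\lambda c_u)\bigr)$, and since $c_u>0$ and $\log(c_u\abs{\lambda}) = \log\abs{\lambda} + \tfrac12\log t - X_u(t) = \log\abs{\lambda} + d_u$, formula \eqref{eq:Psi_sigma_mu} yields
\[
\psi_{\pi/2,\mu_Z}(\lambda c_u) = \tfrac{\pi}{2}\abs{\lambda} c_u + i\lambda c_u\log\abs{\lambda} + i\lambda c_u d_u - i\mu_Z\lambda c_u ,
\]
so that the terms $i\lambda c_u d_u$ cancel against the prefactors $\e^{i\lambda c_u d_u}$. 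Taking the product over $u\in\cN(t)$ and using $\sum_u c_u = \sqrt t W_t$, we get on $A_t$
\[
\prod_{u\in\cN(t)} \e^{i\lambda c_u d_u}\,\Psi_{Z_\infty}(\lambda c_u) = \exp\!\Bigl(\bigl(-\tfrac{\pi}{2}\abs{\lambda} - i\lambda\log\abs{\lambda} + i\mu_Z\lambda\bigr)\sqrt t W_t + \lambda\sum_{u\in\cN(t)} c_u\, g(\lambda c_u)\Bigr).
\]
It is precisely the centering by $\tfrac{\log t}{2}W_t$ --- equivalently, the $\log c_u$ part of the stable exponent --- that makes the otherwise divergent terms $\sqrt t Z_t$ and $\tfrac{\log t}{2}\sqrt t W_t$ disappear.

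To conclude: by \eqref{eq:convergence-W_t}, $\sqrt t W_t\to\sqrt{2/\pi}\,Z_\infty$ in probability, so it is tight; and on $A_t$, $\bigl|\lambda\sum_u c_u g(\lambda c_u)\bigr|\le\abs{\lambda}\,\delta_t\,\sqrt t W_t$ with $\delta_t \coloneqq \sup_{\abs{s}\le\abs{\lambda}\max_v c_v}\abs{g(s)}\to0$ in probability (as $g$ is continuous, $g(0)=0$ and $\max_v c_v\to0$), so this remainder tends to $0$ in probability. Hence the exponent in the last display converges in probability to $(-\tfrac{\pi}{2}\abs{\lambda} - i\lambda\log\abs{\lambda} + i\mu_Z\lambda)\sqrt{2/\pi}\,Z_\infty = -Z_\infty\,\psi_{\sqrt{\pi/2},\mu_Z\sqrt{2/\pi}}(\lambda)$ --- using $\tfrac{\pi}{2}\sqrt{2/\pi}=\sqrt{\pi/2}$ and $\tfrac{2}{\pi}\sqrt{\pi/2}=\sqrt{2/\pi}$ --- and so, by continuity of $\exp$, the product converges in probability to $\exp(-Z_\infty\psi_{\sqrt{\pi/2},\mu_Z\sqrt{2/\pi}}(\lambda))$ on $A_t$. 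Since on $A_t^c$ both characteristic functions have modulus $\le1$ while $\P(A_t^c)\to0$, this convergence in probability holds on all of $\Omega$, and Appendix~\ref{section:weak-convergence-in-probability} concludes. The only genuinely delicate point --- and the one I expect to be the main obstacle --- is the uniform smallness $\max_u c_u\to0$, needed so that the local expansion \eqref{eq:characteristic-function-Z_infty} of $\Psi_{Z_\infty}$ may be applied simultaneously to all $u\in\cN(t)$; it reduces to the known fact that the minimal position of BBM eventually exceeds $\tfrac12\log t$ by an amount tending to $+\infty$.
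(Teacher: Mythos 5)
Your proposal is correct and follows essentially the same route as the paper's own proof: the decomposition $Z_\infty = \sum_{u\in\cN(t)} \e^{-X_u(t)} Z_\infty^{(u)}$, the truncation event on $\max_u \sqrt{t}\,\e^{-X_u(t)}$, the expansion \eqref{eq:characteristic-function-Z_infty}, and the convergences \eqref{eq:cv-du-min-du-BBM} and \eqref{eq:convergence-W_t}, concluded via Proposition~\ref{prop:weak-convergence-in-probability}. The only cosmetic difference is that you expand $\psi_{\pi/2,\mu_Z}(\lambda c_u)$ by hand to exhibit the cancellation of the $i\lambda c_u d_u$ terms, where the paper packages the same computation into the scaling identity of Lemma~\ref{lem:Psi_algebra}.
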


\begin{prop} \label{prop:control-W_t}
For any $\theta < 1/5$, we have
\[
\limsup_{t\to\infty}
\P \left( \abs{\sqrt{t} W_t - \sqrt{\frac{2}{\pi}} Z_\infty} 
	\geq t^{-\theta} \right)
= 0.
\]
\end{prop}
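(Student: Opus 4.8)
This is a quantitative sharpening of the convergence \eqref{eq:convergence-W_t}. Set $c_t = \sqrt{\pi t/2}$, so that $\sqrt{2/\pi}\,c_t = \sqrt t$ and
\[
\sqrt t\, W_t - \sqrt{\tfrac{2}{\pi}}\,Z_\infty \;=\; \sqrt{\tfrac{2}{\pi}}\Big[(Z_t - Z_\infty) - (Z_t - c_t W_t)\Big].
\]
The plan is to bound the two bracketed terms separately. For $Z_t - Z_\infty$, Proposition~\ref{prop:speed-of-CV-Z_t} gives, for every $\theta < 1/2$,
\[
\P\big(|Z_t - Z_\infty| \ge c\,t^{-\theta}\big) \le C(\log t)^2\, t^{\theta - 1/2} \xrightarrow[t\to\infty]{} 0,
\]
so this term is harmless, and the statement reduces to showing
\[
\P\big(|Z_t - c_t W_t| \ge t^{-\theta}\big) \xrightarrow[t\to\infty]{} 0 \qquad\text{for }\theta < 1/5,
\]
where $Z_t - c_t W_t = \sum_{u\in\cN(t)}(X_u(t) - c_t)\,\e^{-X_u(t)}$ is the derivative martingale recentred at the level $c_t$.

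For this recentred martingale I would run a truncated second-moment argument. One cannot estimate $Z_t - c_t W_t$ directly in $L^1$ or $L^2$ (its mean is $-c_t$ and its second moment is infinite, both because of rare configurations in which a particle descends far below the typical minimum), so the first step is to introduce a barrier, replacing $Z_t - c_t W_t$ by its restriction to particles $u$ whose trajectory $(X_u(s))_{0\le s\le t}$ stays between a lower barrier and an upper barrier. For this barriered quantity, the many-to-one lemma (the tilted spine being a Brownian motion; finer estimates use the $3$-dimensional Bessel process) and the many-to-two lemma make the first and second moments --- and the relevant conditional versions given $\sF_s$ at an intermediate time $s$ --- computable, and Chebyshev's inequality then yields the desired concentration at precision $t^{-\theta}$. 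It remains to remove the barriers: particles crossing the upper barrier are handled by a crude first-moment (Markov) bound, inexpensive provided the upper barrier grows like $\sqrt{\alpha s\log t}$ with $\alpha$ large; particles dropping below the lower barrier are controlled by bounding their number via the tail of the minimal position \eqref{eq:cv-du-min-du-BBM} and their total contribution via the tail asymptotics \eqref{eq:tail-Z_infty}--\eqref{eq:tail-Z_infty-2} of $Z_\infty$ --- which is where the hypothesis $\E[L\log_+^3 L] < \infty$ enters.

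The hard part will be the choice of barriers and the balancing of errors. The particles carrying the bulk of $W_t$ sit at height of order $\sqrt t$ at time $t$ and their trajectories fluctuate on that scale, so a lower barrier growing more slowly than $\sqrt s$ already destroys the bulk; on the other hand, since the tail of $Z_\infty$ has index exactly $1$, the contribution of the particles cut off below the lower barrier has infinite conditional expectation and can only be controlled through ``small with high probability'' estimates, at a cost in precision. Optimising between the upper-barrier Markov bound, the lower-barrier truncation, and the second-moment concentration is what limits the exponent to $\theta < 1/5$. (Any positive $\theta$ would already suffice for the application in Section~\ref{section:short-proof-of-theorem}, where one only needs $|\sqrt t\, W_t - \sqrt{2/\pi}\,Z_\infty| = o(1/\log t)$, but $1/5$ is what this argument delivers.)
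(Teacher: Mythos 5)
Your reduction is fine as far as it goes: the identity $\sqrt t\,W_t-\sqrt{2/\pi}\,Z_\infty=\sqrt{2/\pi}\,[(Z_t-Z_\infty)-(Z_t-c_tW_t)]$ is correct, and invoking Proposition~\ref{prop:speed-of-CV-Z_t} to dispose of $Z_t-Z_\infty$ is legitimate and non-circular (its proof in Appendix~\ref{section:rate-of-cv-derivative-martingale} does not use the present proposition; the paper itself uses the a priori bound \eqref{eq:first-control-Z_t-Z_infty}, derived from Proposition~\ref{prop:one-dim}, for the same purpose). But after this step everything that constitutes the actual content of the proposition --- a quantitative concentration estimate forcing $|Z_t-c_tW_t|\le t^{-\theta}$ with high probability --- is only gestured at, and the error accounting you sketch does not correspond to an argument that would deliver the exponent $1/5$. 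Concretely: (i) $Z_t-c_tW_t$ depends only on the configuration at time $t$, so the tail asymptotics \eqref{eq:tail-Z_infty}--\eqref{eq:tail-Z_infty-2} of $Z_\infty$ cannot be the relevant input for removing a lower barrier here; in the paper's proof of this proposition the hypothesis $\E[L\log_+^3L]<\infty$ enters through the offspring truncation (Lemmas~\ref{lem:not-in-B-for-W} and \ref{lem:second-moment-for-W}, where one needs $h(1)=C\,\E[L\log_+^3L]<\infty$), not through the tail of $Z_\infty$; (ii) no upper spatial barrier is needed at all --- the second-moment blow-up is caused by the reproduction law and is cured by the $\kappa$-truncation of Section~\ref{section:preliminary-results}, which an upper barrier at $\sqrt{\alpha s\log t}$ would not address; (iii) the assertion that ``optimising\dots is what limits the exponent to $\theta<1/5$'' is exactly the statement to be proved, and you give no computation supporting it.

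For comparison, the paper's proof pins the $1/5$ down to a precise two-way trade-off at an intermediate time $t^\alpha$. One conditions on $\sF_{t^\alpha}$ and kills particles at the origin from time $t^\alpha$ on (with high probability no particle is ever killed, by \eqref{eq:minimum}); Lemma~\ref{lem:control-first-moment-W} then gives
\[
\sqrt t\,\Ecsq{\overline W_t}{\sF_{t^\alpha}}=\sqrt{\tfrac{2}{\pi}}\,Z_{t^\alpha}+O_\P\left(t^{\alpha-1}\right),
\]
the error coming from the third-order term in $F(y)=\sqrt{2/\pi}\,y+O(y^3)$ with $y=X_v(t^\alpha)/\sqrt{t-t^\alpha}$; this requires $\alpha<1-\theta$. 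On the other hand, the conditional variance of the ($\kappa$-truncated) $\overline W_t$ is of order $\overline W_{t^\alpha}/t\approx t^{-1-\alpha/2}$, so Chebyshev at precision $t^{-\theta-1/2}$ requires $\theta<\alpha/4$. These two constraints are compatible precisely when $\theta<1/5$. Your sketch identifies neither constraint (indeed it never introduces the intermediate-time conditional first-moment computation that converts a $W$-type quantity into a $Z$-type one via killing at the origin), so the heart of the proposition is missing; what you have is a correct but essentially cost-free preliminary reduction plus an unproved claim.
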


The main point in Proposition~\ref{prop:one-dim} is the replacement of the term $\frac{\log t}{\sqrt{2 \pi t}} Z_\infty$ by $\frac{\log t}{2} W_t$, making its proof a lot easier. Below, we give a one-page proof of Proposition~\ref{prop:one-dim}, relying  on a direct calculation of the characteristic function based on the branching property and the asymptotic \eqref{eq:characteristic-function-Z_infty} on $\Psi_{Z_\infty}$, the characteristic function of $Z_\infty$. 
These arguments are similar to those used for the Galton-Watson process \cite{heyde70,heyde71} or for the subcritical additive martingales of the branching random walk \cite{rtv2002}.
Proposition~\ref{prop:control-W_t} on the other hand is quite technical and its proof is delegated to Section~\ref{section:control-W}, which also relies on results from Section~\ref{section:preliminary-results}. Proposition~\ref{prop:control-W_t} actually also uses Proposition~\ref{prop:one-dim} as an ingredient in order to get a certain a priori control of the speed of convergence of $Z_t$ to $Z_\infty$, but this could be replaced by more technical calculations.

%

\begin{proof}[Proof of the case $n=1$, $a_1=1$ of Theorem~\ref{theorem}]
It follows immediately from Proposition~\ref{prop:one-dim}, Proposition~\ref{prop:control-W_t} and Remark~\ref{rem:weak-convergence-in-proba}.
\end{proof}

The following lemma will be convenient for the proof of Proposition~\ref{prop:one-dim} and later. For $\sigma>0$ and $\mu\in\R$, recall that $\Psi_{\sigma,\mu}$  denotes the characteristic function of the law $\cS_1(\sigma,\mu)$ defined in \eqref{eq:Psi_sigma_mu}. 
\begin{lem}
 \label{lem:Psi_algebra} Let $\sigma>0$, $\mu\in\R$ and $\lambda\in\R$. Then, for every $x>0$,
\[
 \Psi_{\sigma,\mu}(\lambda x) = \Psi_{x\sigma,x(\mu - \sigma (2/\pi) \log x)}(\lambda) = \exp\left(-x\psi_{\sigma,\mu}(\lambda)-i\lambda \frac 2 \pi \sigma  x\log x\right).
\]
\end{lem}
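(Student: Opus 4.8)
The plan is to prove this by a direct substitution into the definition \eqref{eq:Psi_sigma_mu} of $\psi_{\sigma,\mu}$ and $\Psi_{\sigma,\mu}=\exp(-\psi_{\sigma,\mu})$. Since all three expressions in the statement are of the form $\exp(-(\,\cdot\,))$, it suffices to check the identity at the level of the exponents, i.e.\@ to show
\[
\psi_{\sigma,\mu}(\lambda x) = \psi_{x\sigma,\,x(\mu - \sigma (2/\pi)\log x)}(\lambda) = x\,\psi_{\sigma,\mu}(\lambda) + i\lambda\frac{2}{\pi}\sigma x\log x.
\]

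First I would handle the trivial case $\lambda = 0$ (all exponents vanish, using the usual convention $0\cdot\log 0 = 0$ implicit in \eqref{eq:Psi_sigma_mu}), and then assume $\lambda\neq 0$. For the first equality, I expand the left-hand side using that $x>0$ implies $|\lambda x| = x|\lambda|$, $\mathrm{sign}(\lambda x) = \mathrm{sign}(\lambda)$ and $\log|\lambda x| = \log x + \log|\lambda|$, so that
\[
\psi_{\sigma,\mu}(\lambda x) = \sigma x|\lambda|\Bigl[1 + i\tfrac{2}{\pi}\mathrm{sign}(\lambda)\bigl(\log x + \log|\lambda|\bigr)\Bigr] - i\mu\lambda x,
\]
and on the right-hand side I expand $\psi_{x\sigma,\,x(\mu - \sigma(2/\pi)\log x)}(\lambda)$ directly from \eqref{eq:Psi_sigma_mu}. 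Comparing the two, using the identity $|\lambda|\,\mathrm{sign}(\lambda) = \lambda$ to rewrite the cross term $\sigma x|\lambda|\,i\tfrac{2}{\pi}\mathrm{sign}(\lambda)\log x$ as $i\tfrac{2}{\pi}\sigma x\lambda\log x$, one sees that the two expressions agree term by term. The second equality is the same computation read differently: grouping the expanded form of $\psi_{\sigma,\mu}(\lambda x)$ so that the factor $x$ comes out of the ``$1 + i\tfrac{2}{\pi}\mathrm{sign}(\lambda)\log|\lambda|$'' and $-i\mu\lambda$ parts yields exactly $x\,\psi_{\sigma,\mu}(\lambda)$, and what remains is precisely the extra term $i\lambda\tfrac{2}{\pi}\sigma x\log x$.

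There is essentially no obstacle here: the lemma is a purely algebraic manipulation of \eqref{eq:Psi_sigma_mu}. The only point requiring a little care is the logarithmic anomaly of the $1$-stable characteristic exponent — the fact that $\psi$ is not positively homogeneous of degree $1$ in $\lambda$, so that the rescaling $\lambda\mapsto\lambda x$ produces the additional imaginary term $i\lambda\tfrac{2}{\pi}\sigma x\log x$ rather than just multiplying by $x$ — but this is exactly what the splitting $\log|\lambda x| = \log x + \log|\lambda|$ tracks, and keeping the bookkeeping straight (in particular the sign of the drift adjustment $\mu\mapsto\mu - \sigma(2/\pi)\log x$) is the whole content of the statement. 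I would present the computation in one short display and conclude by exponentiating.
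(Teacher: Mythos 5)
Your proposal is correct and is exactly the computation the paper intends: its proof of Lemma~\ref{lem:Psi_algebra} is simply ``Direct calculation,'' and your expansion of $\psi_{\sigma,\mu}(\lambda x)$ via $\log\abs{\lambda x}=\log x+\log\abs{\lambda}$ and $\abs{\lambda}\,\mathrm{sign}(\lambda)=\lambda$ supplies that calculation faithfully, including the drift adjustment $\mu\mapsto\mu-\sigma(2/\pi)\log x$. Nothing is missing.
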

\begin{proof}
 Direct calculation. 
\end{proof}

\begin{proof}[Proof of Proposition~\ref{prop:one-dim}]
By Proposition \ref{prop:weak-convergence-in-probability}, it is enough to show 
that, for any $\lambda \in \R$,
%
\begin{align}
\label{eq:varphi_t}
\varphi_t(\lambda)
 \coloneqq \Ecsq{ \exp \left( i \lambda \sqrt{t} 
\left( Z_\infty - Z_t + \frac{\log t}{2} W_t \right)
\right)}{\sF_t} 
 \xrightarrow[t\to\infty]{\P} 
\Psi_{Z_\infty \sqrt{\pi/2}, Z_\infty \mu_Z \sqrt{2/\pi}}(\lambda).
\end{align}
We now fix some $\lambda \in \R$.
The crucial fact that we use is the following well-known decomposition:
\begin{equation}
\label{eq:Z_infty_decomposition}
Z_\infty = \sum_{u \in \cN(t)} \e^{-X_u(t)} Z_\infty^{(u)},
\end{equation}
where given $\sF_t$, the random variables $Z_\infty^{(u)}$ for $u \in \cN(t)$ are i.i.d.\@ copies of $Z_\infty$. Recalling that $\Psi_{Z_\infty}$ denotes the characteristic function of $Z_\infty$, the decomposition \eqref{eq:Z_infty_decomposition} yields,
\begin{equation}
\label{eq:Psi_Z_infty_decomposition}
 \Ecsq{ \exp \left( i \lambda
Z_\infty
\right)}{\sF_t} = \prod_{u \in \cN(t)} \Psi_{Z_\infty} \left(\lambda  \e^{-X_u(t)} \right).
\end{equation}
Furthermore, by the definitions of $Z_t$ and $W_t$, we have
\begin{equation}
 \label{eq:Z_tW_t}
\exp \left( i \lambda 
\left( - Z_t + \frac{\log t}{2} W_t \right)
\right)
= \prod_{u \in \cN(t)} \exp \left( i \lambda \e^{-X_u(t)}
	\left( \frac{\log t}{2} - X_u(t) \right)
	\right),
\end{equation}
and obviously this quantity is $\sF_t$-measurable. Applying \eqref{eq:Psi_Z_infty_decomposition} and \eqref{eq:Z_tW_t} with $\lambda\sqrt t$ instead of $\lambda$ and writing  $\xi_{u,t} = \sqrt t \e^{-X_u(t)}$, we get
\begin{align*}
\varphi_t(\lambda)
 = \prod_{u \in \cN(t)} \Psi_{Z_\infty} \left(\lambda \xi_{u,t} \right) 
\exp \left( i \lambda \xi_{u,t} \log \xi_{u,t}\right).
\end{align*}
Now recall that for sufficiently small $\lambda'$, say $\abs{\lambda'} \leq \varepsilon$ for some $\varepsilon > 0$, by \eqref{eq:characteristic-function-Z_infty}, $\Psi_{Z_\infty}(\lambda') = \Psi_{\pi/2,\mu_Z}(\lambda') \e^{\lambda' g(\lambda')}$, with $g$ a continuous function vanishing at 0. 
In order to apply this to the previous equation, define the event $\mathcal E_t = \{\max_{u \in \cN(t)} \abs{\lambda} \xi_{u,t} \le \varepsilon \}$. 
On $\mathcal E_t$, we get
\begin{align*}
\varphi_t(\lambda)
& = \prod_{u \in \cN(t)} \Psi_{\pi/2,\mu_Z}\left(\lambda \xi_{u,t} \right) \exp \left(\lambda \xi_{u,t} g(\lambda\xi_{u,t}) +  i \lambda \xi_{u,t}
	\log \xi_{u,t}
	\right).
\end{align*}
Applying Lemma~\ref{lem:Psi_algebra}, together with the equality $\sqrt{t} W_t = \sum_{u \in \cN(t)} \xi_{u,t}$, we get on $\mathcal E_t$,
\begin{align*}
\varphi_t(\lambda)
& = \prod_{u \in \cN(t)} \exp \left(-\xi_{u,t}\psi_{\pi/2,\mu_Z}(\lambda) + \lambda \xi_{u,t} g(\lambda\xi_{u,t})\right)\\
& = \exp\left(-\sqrt t W_t\psi_{\pi/2,\mu_Z}(\lambda) + Y_t^\lambda\right),\qquad \text{with }Y_t^\lambda \coloneqq \sum_{u \in \cN(t)} \lambda \xi_{u,t} g(\lambda\xi_{u,t}).
\end{align*}
Now note that by \eqref{eq:cv-du-min-du-BBM}, we have $\max_{u \in \cN(t)}\xi_{u,t} \to 0$ in probability and by \eqref{eq:convergence-W_t}, we have $\sqrt{t} W_t \to \sqrt{2/\pi} Z_\infty$ in probability, as $t\to\infty$. As a consequence, $\P(\mathcal E_t) \to 1$ as $t\to\infty$ and $Y_t^\lambda \to 0$ in probability, as $t\to\infty$. All of the above now shows that $\varphi_t(\lambda) \to \Psi_{Z_\infty \sqrt{\pi/2}, Z_\infty \mu_Z \sqrt{2/\pi}}(\lambda)$ in probability, as $t\to\infty$, which concludes the proof.
%
\end{proof}

\begin{rem}
What would go wrong if one were to try to extend this proof to the case $n=2$, say? What made the above proof possible was the marvelous decomposition \eqref{eq:Z_infty_decomposition} of $Z_\infty$. One can write a similar decomposition of $Z_s$ conditioned on $\sF_t$, for $s>t$, but it is much more complicated, with the presence of additional terms interplaying in a subtle way with the time-inhomogeneity of the equation (we encourage the reader to try it out!) This road therefore seems like a dead end.
\end{rem}

\section{Strategy of the proof of Theorem \ref{theorem}}
\label{section:proof-of-theorem}

In this section, we present the strategy for the proof of Theorem
\ref{theorem}. As in the one-dimensional case, we will first prove a
slightly different version of the result, namely
Theorem~\ref{theorem-bis} below, and will deduce Theorem~\ref{theorem}
from it and Proposition~\ref{prop:control-W_t}.
\begin{thm} \label{theorem-bis}
Let $(S_t)_{t\ge0}$ denote a spectrally positive 1-stable Lévy process with parameters $(\sqrt{\pi/2},\mu_Z \sqrt{2/\pi})$, independent of $Z_\infty$. 
Then the conditional law of $(\sqrt{t} (Z_\infty - Z_{a t} +\frac{\log t}{2}
W_{at} ))_{a\geq1}$ given $\sF_t$ converges weakly in probability (in the sense
of finite-dimensional distributions) to the conditional law of
$(S_{Z_\infty/\sqrt a})_{a\ge1}$ given $Z_\infty$.
\end{thm}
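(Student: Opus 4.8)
The plan is to prove Theorem~\ref{theorem-bis} by the barrier method outlined in Section~\ref{subsection:comments-and-heuristics}. Fix a large time $t$, let $\beta_t$ be a slowly increasing function and set $\gamma_t = \frac12\log t + \beta_t$. For each $a \geq 1$ I would decompose $Z_\infty$, conditionally on $\sF_t$, according to whether the descendants of a given particle at time $t$ ever go below the level $\gamma_t$ after time $t$. Writing $\sT$ for the collection of particles $u$ whose ancestral line first hits $\gamma_t$ at some (stopping) time $\tau_u \geq t$, the branching property gives a decomposition of the form $Z_\infty = Z_t - \gamma_t W_t + \sum_{u \in \sT} \e^{-\gamma_t}(\text{shifted copy of the martingale started afresh}) + (\text{contribution from particles never hitting }\gamma_t)$. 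The key structural identity is that the process $s\mapsto Z_s - \gamma_t W_s$, run on the event that all particles stay above $\gamma_t$, is a martingale; the first proposition (proved in Section~\ref{section:control-without-fluctuations}) should show that its fluctuations are $o(1/\sqrt t)$, so the part of $Z_\infty$ coming from particles that never descend to $\gamma_t$ equals $Z_t - \gamma_t W_t$ up to negligible error.

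Next I would handle the particles that do hit the barrier. A second proposition (Section~\ref{section:killed-particles}) should establish that, after conditioning on $\sF_t$, the point process of pairs $(a, \e^{-\gamma_t + \gamma_t})$-weighted fresh martingale limits attached to the barrier-hitting particles converges, once multiplied by $\sqrt t$, to a Poisson random measure whose intensity is computed from the expected number of particles hitting $\gamma_t$ together with the tail asymptotics \eqref{eq:tail-Z_infty} and \eqref{eq:tail-Z_infty-2} of $Z_\infty$. Concretely, each hitting particle contributes $\e^{-\gamma_t} Z_\infty^{(u)} = t^{-1/2}\e^{-\beta_t} Z_\infty^{(u)}$ times a further deterministic-ish factor, and since $Z_\infty^{(u)}$ has tail $\sim 1/x$, the rescaled sum $\sqrt t \sum_u (\cdots)$ is in the domain of attraction of the totally asymmetric $1$-stable law, with the logarithmic correction in \eqref{eq:characteristic-function-Z_infty} producing exactly the $\frac{\log t}{2}W_t$ counterterm and the parameter $(\sqrt{\pi/2},\mu_Z\sqrt{2/\pi})$. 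The number of hitting particles should be computed via a many-to-one / spine argument reducing to Brownian motion below a fixed level, which is where the $3$-dimensional Bessel process estimates of Appendix~\ref{section:technical-results} and \eqref{eq:convergence-W_t} enter: the total ``mass'' of barrier crossings seen from time $t$ is asymptotically proportional to $\sqrt t W_t \to \sqrt{2/\pi}\,Z_\infty$, which after the $1/\sqrt a$ time change on the Lévy process gives $S_{Z_\infty/\sqrt a}$ for the $a$-marginal.

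For the multidimensional (finite-dimensional distribution) statement I would run the barrier argument simultaneously at all the times $a_1 t, \dots, a_n t$ using the \emph{same} barrier $\gamma_t$ starting at time $t$. The crucial observation making the joint limit tractable is that a particle hitting $\gamma_t$ at time $\tau_u$ contributes to $Z_{a_k t}$ essentially in the same way for every $a_k$ with $a_k t \geq \tau_u$, and since $\tau_u$ is typically close to $t$, the same Poisson family of fresh martingale limits drives all coordinates; the only difference between coordinates $a_j < a_k$ is a deterministic time-lag in the underlying $1$-stable process, yielding the nested structure of $(S_{Z_\infty/\sqrt a})_{a\geq1}$. Assembling the two propositions with a third one that controls the error terms uniformly in $k$, and invoking Proposition~\ref{prop:weak-convergence-in-probability} to pass from convergence of conditional characteristic functions to weak convergence in probability, gives Theorem~\ref{theorem-bis}; Theorem~\ref{theorem} then follows upon replacing $\frac{\log t}{2}W_{a_k t}$ by $\frac{\log t}{\sqrt{2\pi a_k t}}Z_\infty$ via Proposition~\ref{prop:control-W_t} and Remark~\ref{rem:weak-convergence-in-proba}.

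I expect the main obstacle to be the second proposition: showing that the rescaled contribution of the barrier-hitting particles genuinely converges to the stated $1$-stable law \emph{conditionally on $\sF_t$ and in probability}, with the precise parameters. This requires (i) a sharp enough count of barrier crossings, including the second-order term responsible for the $\log t$ correction, which is exactly where the hypothesis $\Ec{L\log_+^3 L} < \infty$ and the asymptotic \eqref{eq:tail-Z_infty-2} are needed; (ii) controlling the dependence between the crossing locations/times and the fresh martingale limits, and ruling out that a bounded number of ``early'' crossings (at times well above $t$ or at atypically low positions) spoil the Poisson approximation; and (iii) making all estimates uniform enough in $t$ to upgrade from convergence in law to convergence in probability of the conditional laws. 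The truncation arguments needed to separate the small-contribution bulk (giving the stable law) from rare large contributions, and to show the latter are negligible, are the technically delicate heart of the proof.
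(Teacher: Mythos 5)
Your overall strategy is indeed the paper's: the barrier $\gamma_t=\frac12\log t+\beta_t$, the killed martingale $Z_s-\gamma_t W_s$ whose fluctuations are shown to be $o(1/\sqrt t)$ (Proposition~\ref{prop:control-Ztilde-infty}), a $1$-stable limit for the compensated contribution of the barrier-hitting particles via \eqref{eq:characteristic-function-Z_infty} and a law of large numbers for the number of crossings ($\e^{-\beta_t}N^{at,\gamma_t}_{\mathrm{good}}\approx\sqrt t\,W_{at}\to\sqrt{2/\pi}\,Z_\infty/\sqrt a$), and Proposition~\ref{prop:weak-convergence-in-probability} to upgrade conditional characteristic functions to weak convergence in probability. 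Two bookkeeping remarks: the centering forced by the logarithmic correction in \eqref{eq:characteristic-function-Z_infty} is $\beta_t W_{at}$, not $\frac{\log t}{2}W_{at}$ (the latter is what remains of the barrier term $\gamma_t W_{at}$ after subtracting that centering), and the paper never proves a Poisson point process limit: it computes the conditional characteristic function directly given $\sF_{\cL^{t,\gamma_t}}$, which is lighter than the Poisson-approximation route you sketch.

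The genuine gap is in your multidimensional step, which is the actual content of Theorem~\ref{theorem-bis} beyond Proposition~\ref{prop:one-dim}. The mechanism is not that ``$\tau_u$ is typically close to $t$'' with a deterministic time-lag between coordinates: crossing times spread over scales of order $t$ (the normalized number of crossings after $at$ tends to $\sqrt{2/\pi}\,Z_\infty/\sqrt a$), and it is precisely the random, decreasing-in-$a$ subfamily of crossers with $\Delta_u^{t,\gamma_t}>at$ that drives coordinate $a$ and produces the increments of the limiting L\'evy process; if all crossings happened near time $t$, every coordinate would see the same limit and the nested structure $(S_{Z_\infty/\sqrt a})_{a\ge1}$ would not appear. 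Moreover, for $a_k>1$ you give no argument for the particles whose paths dip below $\gamma_t$ during $[t,a_kt]$: their descendants enter both $Z_{a_kt}$ and $Z_\infty$, and controlling their net effect is not covered by your single barrier started at time $t$. In the paper this is exactly why the decomposition for coordinate $a_k$ uses a stopping line started at time $a_kt$ (so that Proposition~\ref{prop:control-Ztilde-infty} applies with $\beta'_{at}=\gamma_t-\frac12\log(at)$), together with the separate ``bad particle'' estimate (Proposition~\ref{prop:control-F^t_bad}, resting on $\E[Z_\infty\wedge x]\le C(1+\log_+x)$ and Lemma~\ref{lem:cv-for-L_init}); the identity $\cL^{at,\gamma_t}_{\mathrm{good}}=\{u\in\cL^{t,\gamma_t}_{\mathrm{good}}:\Delta_u^{t,\gamma_t}>at\}$ is then what lets the same conditionally i.i.d.\ family of fresh copies of $Z_\infty$ serve all coordinates simultaneously. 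Without these ingredients the joint limit does not follow from the one-dimensional analysis.
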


The proof of Theorem~\ref{theorem-bis} is split
into three propositions stated below. The method of proof also leads to
Proposition \ref{prop:speed-of-CV-Z_t}, proved in
Appendix~\ref{section:rate-of-cv-derivative-martingale}.
For $t > 0$, we set 
\[
\gamma_t \coloneqq \frac{1}{2} \log t + \beta_t,
\]
where $(\beta_t)_{t >0}$ is a family of positive numbers such that
\begin{align} \label{eq:assumption-beta_t}
\beta_t \xrightarrow[t\to\infty]{} \infty
\quad \text{and} \quad
\frac{\beta_t}{t^{1/4}} \xrightarrow[t\to\infty]{} 0.
\end{align}
In the heuristic description of the proof in Section~\ref{subsection:comments-and-heuristics}, $\beta_t$ was taken as a large constant $K$ that does not depend on $t$ and that tends to infinity after $t \to \infty$, but the choice of $\beta_t$ in \eqref{eq:assumption-beta_t} will turn out to be sufficient (which is an interesting fact and worth to point out).
In order to study $Z_{at}$ for some $a \geq 1$, we kill particles that come below $\gamma_t$ after time $at$. For this, we use the framework of \emph{stopping lines}, which have been defined in Chauvin \cite{chauvin91}. 
Let $\cL^{at,\gamma_t}$ denote the stopping line of the killed particles and,
for $u \in \cL^{at,\gamma_t}$, $\Delta_u^{at,\gamma_t}$ the time of the death of
$u$, formally\footnote{To be precise, in the definition of the stopping line $\cL^{at,\gamma_t}$, in order to be consistent with Chauvin \cite{chauvin91}, one also has to add the  information of the killing times. However, for notational convenience we omit throughout the article the killing times from the definition of stopping lines. It will always be the case that the killing times can be inferred from the context without ambiguity.},
\begin{align*}
\cL^{at,\gamma_t} 
& \coloneqq 
\{u \in \T : \exists s \geq at \text{ such that } 
u \in \cN(s), X_u(s) \leq \gamma_t \text{ and } \forall r \in [at,s), X_u(r) > \gamma_t \}, \\
\Delta_u^{at,\gamma_t} 
& \coloneqq 
\inf \{s \geq at : u \in \cN(s) \text{ and } X_u(s) \leq \gamma_t \}.
\end{align*}
We denote by $\sF_{\cL^{at,\gamma_t}}$ the $\sigma$-algebra associated with the stopping line $\cL^{at,\gamma_t}$ \cite{chauvin91}.

For the remaining particles, we consider the following random variables: for $s \geq at$, we set
\begin{align*}
\widetilde{Z}_s^{at,\gamma_t} 
\coloneqq 
\sum_{u \in \cN(s)} (X_u(s)-\gamma_t) \e^{-X_u(s)} 
\1_{\forall r \in [at,s], X_u(r) > \gamma_t}.
\end{align*}
Then $(\widetilde{Z}_s^{at,\gamma_t})_{s \geq at}$ is a non-negative martingale (see \cite{kyprianou2004}) and, therefore, it has an almost sure limit $\widetilde{Z}_\infty^{at,\gamma_t}$.
Furthermore, we have, using that $W_s \to 0$ $\P$-a.s.\@ as $s \to \infty$,
\begin{align}
Z_\infty
& = \lim_{s \to \infty} 
\sum_{v \in \cN(s)} (X_v(s)-\gamma_t) \e^{-X_v(s)}
= \widetilde{Z}_\infty^{at,\gamma_t}
+ \sum_{u \in \cL^{at,\gamma_t}} \e^{-X_u(\Delta_u^{at,\gamma_t})} Z_\infty^{(u,at,\gamma_t)},
\label{eq:decompo-Z_infty-1}
\end{align}
where we set, for $u \in \cL^{at,\gamma_t}$,
\begin{align*}
Z_\infty^{(u,at,\gamma_t)} 
& \coloneqq \lim_{s \to \infty} 
\sum_{v \in \cN(s) : u \leq v}
(X_v(s)-\gamma_t) \e^{-(X_v(s) - X_u(\Delta_u^{at,\gamma_t}))} \\
& = \lim_{s \to \infty} 
\sum_{v \in \cN(s) : u \leq v}
(X_v(s) - X_u(\Delta_u^{at,\gamma_t})) \e^{-(X_v(s) - X_u(\Delta_u^{at,\gamma_t}))},
\end{align*}
saying that $u \leq v$ if $u$ is an ancestor of particle $v$ and using again that $W_s \to 0$ $\P$-a.\@s.
Thus, by the branching property, conditionally on $\sF_{\cL^{at,\gamma_t}}$, the $Z_\infty^{(u,at,\gamma_t)}$ for $u\in\cL^{at,\gamma_t}$ are independent and have the same law as $Z_\infty$.

Looking at \eqref{eq:decompo-Z_infty-1}, our first step will be to study the limit $\widetilde{Z}_\infty^{at,\gamma_t}$, which is well concentrated around its conditional expectation given $\sF_{at}$ thanks to the killing barrier. 
Moreover, this conditional expectation is $\widetilde{Z}_{at}^{at,\gamma_t}$, which is very close to $Z_{at} - \gamma_t W_{at}$, since the particles below $\gamma_t$ at time $at$ have a negligible weight under the assumption \eqref{eq:assumption-beta_t}.
Therefore, it leads to the following result that will be shown in Section \ref{section:control-without-fluctuations}.
\begin{prop} \label{prop:control-Ztilde-infty}
For any $a \geq 1$, we have the following convergence in probability
\begin{align*}
\sqrt{t} \abs{\widetilde{Z}_\infty^{at,\gamma_t} - \left(Z_{at} - \gamma_t W_{at}\right)}
\xrightarrow[t\to\infty]{} 0.
\end{align*}
\end{prop}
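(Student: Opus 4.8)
\textbf{Proof plan for Proposition~\ref{prop:control-Ztilde-infty}.}

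The plan is to decompose the quantity in two pieces: first compare $\widetilde Z_\infty^{at,\gamma_t}$ with its conditional expectation $\widetilde Z_{at}^{at,\gamma_t}$ given $\sF_{at}$, and then compare $\widetilde Z_{at}^{at,\gamma_t}$ with $Z_{at}-\gamma_t W_{at}$. The second step is the easier one: by definition,
\[
Z_{at}-\gamma_t W_{at}-\widetilde Z_{at}^{at,\gamma_t}=\sum_{u\in\cN(at):X_u(at)\le\gamma_t}(X_u(at)-\gamma_t)\e^{-X_u(at)},
\]
so I would bound $\sqrt t$ times the absolute value of this sum. Since $\gamma_t=\frac12\log t+\beta_t$, on the event $\{X_u(at)\le\gamma_t\}$ one has $\e^{-X_u(at)}\ge \e^{-\gamma_t}=\beta_t^{-\text{ish}}t^{-1/2}$ — actually $\e^{-\gamma_t}=t^{-1/2}\e^{-\beta_t}$ — and $|X_u(at)-\gamma_t|\le \gamma_t+|X_u(at)|$. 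The point is that, by \eqref{eq:minimum} (or \eqref{eq:cv-du-min-du-BBM}), with probability tending to $1$ all particles at time $at$ lie well above $0$, indeed above $\frac32\log(at)+O(1)$, so in particular the set $\{u:X_u(at)\le\gamma_t\}$ is empty with probability tending to $1$ once $\beta_t\to\infty$ (note $\gamma_t\sim\frac12\log t\ll\frac32\log t$, but what actually matters is that the minimum diverges and $\gamma_t$ still diverges, so I should instead bound crudely: on $\{\min_u X_u(at)>0\}$ the sum is over particles with $0<X_u(at)\le\gamma_t$, and each contributes at most $\gamma_t\e^{-X_u(at)}\le\gamma_t\,t^{-1/2}\e^{-\beta_t}\cdot(\sqrt t\e^{-X_u(at)}\ge\ldots)$). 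Cleanly: the contribution is at most $\gamma_t\sum_{u:0<X_u(at)\le\gamma_t}\e^{-X_u(at)}$, and multiplying by $\sqrt t$ gives at most $\sqrt t\,\gamma_t\, W_{at}$ restricted to low particles; using $\sqrt t W_{at}\to\sqrt{2/\pi}Z_\infty$ (bounded in probability) and the fact that, conditionally, the low particles carry asymptotically vanishing $W$-mass by \eqref{eq:minimum}, one gets $o(1)$ in probability. I would make this rigorous via a first-moment / many-to-one estimate on $\E[\sum_{u\in\cN(at)}\e^{-X_u(at)}\1_{X_u(at)\le\gamma_t}]=\Pp{B_{at}\le\gamma_t}$ for a Brownian motion $B$, which is $O(\gamma_t/\sqrt t)\to0$, hence negligible after multiplying the relevant quantities; combined with \eqref{eq:assumption-beta_t} this handles the second step.

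For the first step I would use that $M_s\coloneqq\widetilde Z_s^{at,\gamma_t}$, $s\ge at$, is a non-negative martingale with $M_\infty=\widetilde Z_\infty^{at,\gamma_t}$ and $\E[M_\infty\mid\sF_{at}]=M_{at}$ (by the optional stopping / martingale convergence argument of Kyprianou, using uniform integrability guaranteed by the barrier). To control the $L^2$-fluctuation I would estimate the conditional variance $\Var(M_\infty\mid\sF_{at})$. By the branching property applied at the stopping line, or more directly by computing $\E[M_\infty^2\mid\sF_{at}]$ via the spine / many-to-one formula for the $h$-transformed (killed) process — with $h(x)=(x-\gamma_t)\e^{-x}$ being the relevant harmonic function for a Brownian motion killed at $\gamma_t$ — the second moment reduces to a sum over pairs of particles, giving a quantity of the form $\E[\sum_{u\in\cN(at)}\e^{-2X_u(at)}(X_u(at)-\gamma_t)^2\,\1_{X_u(at)>\gamma_t}\cdot(\text{a deterministic }O(1)\text{ factor})]$ plus the ``diagonal-free'' part which is exactly $M_{at}^2$. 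The key estimate is therefore that the extra (variance) term is $o(1/t)$ in expectation, so that $\sqrt t(M_\infty-M_{at})\to0$ in $L^2$, hence in probability. This variance term will involve $\E[\sum_u \e^{-2X_u(at)}(X_u(at)-\gamma_t)^2\1_{X_u(at)>\gamma_t}]$; by a change of measure this is the expectation, under the law of a Brownian motion $B$ with $B_{at}$ tilted by $\e^{-B_{at}}$, of $\e^{-B_{at}}(B_{at}-\gamma_t)^2\1_{B_s>\gamma_t\,\forall s}$ — and $\e^{-B_{at}}\le\e^{-\gamma_t}=t^{-1/2}\e^{-\beta_t}$ on the event, so one gains a factor $t^{-1/2}\e^{-\beta_t}$ beyond the $O(1/\sqrt t)$ one already has from the standard first-moment normalization, yielding $o(1/t)$ thanks to $\beta_t\to\infty$. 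Some care is needed because $(X_u(at)-\gamma_t)^2$ can be large, but a Bessel-process computation (the killed Brownian motion conditioned to stay above $\gamma_t$ behaves like $\gamma_t+{}$a $3$-dimensional Bessel process, cf.\ the results announced for Appendix~\ref{section:technical-results}) controls the tail and keeps the bound of the form $(\text{polynomial in }\gamma_t)\cdot t^{-1/2}\e^{-\beta_t}$, which is $o(1/t)$ after multiplying by $\sqrt t$ and using $\gamma_t^{k}\e^{-\beta_t}\to0$.

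Putting the two steps together: $\sqrt t\,|\widetilde Z_\infty^{at,\gamma_t}-(Z_{at}-\gamma_t W_{at})|\le \sqrt t\,|M_\infty-M_{at}|+\sqrt t\,|M_{at}-(Z_{at}-\gamma_t W_{at})|$, and both terms go to $0$ in probability by the above. The main obstacle I anticipate is the variance computation in the first step: making precise the second-moment estimate for the killed martingale $\widetilde Z_s^{at,\gamma_t}$ in a way that extracts the crucial extra decay factor $\e^{-\gamma_t}=t^{-1/2}\e^{-\beta_t}$, and controlling the $(X_u(at)-\gamma_t)^2$ weight via the Bessel-process description of particles conditioned to stay above the barrier. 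This is presumably where the preliminary results of Section~\ref{section:preliminary-results} and the Brownian/Bessel computations of Appendix~\ref{section:technical-results} enter. The dependence on $a$ is harmless: $a\ge1$ is fixed and only shifts the time horizon from $t$ to $at$, changing various $O(1/\sqrt t)$ bounds to $O(1/\sqrt{at})$, which is even better.
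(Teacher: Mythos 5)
Your decomposition is the same as the paper's (compare $\widetilde Z_\infty^{at,\gamma_t}$ with $\widetilde Z_{at}^{at,\gamma_t}$, then $\widetilde Z_{at}^{at,\gamma_t}$ with $Z_{at}-\gamma_t W_{at}$, cf.\ Lemmas~\ref{lem:concentration-Ztilde} and~\ref{lem:cv-for-L_init}), but both of your key estimates have genuine gaps. For the martingale step, a plain second-moment/$L^2$ argument is not available: the standing assumption is only $\E[L\log_+^3L]<\infty$, so $\E[L^2]$ may be infinite and then $\E[(\widetilde Z_\infty^{at,\gamma_t})^2\mid\sF_{at}]=\infty$. This is exactly why the paper introduces the truncation $B_{s,\kappa}$ and proves Lemmas~\ref{lem:not-in-B} and~\ref{lem:second-moment} before running the first/second-moment argument. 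Moreover, even granting finite offspring variance, your formula for the variance term is wrong: by the branching property the conditional variance is $\e^{-2\gamma_t}\sum_u\Var_{X_u(at)-\gamma_t}(\widetilde Z_\infty)$, and $\Var_x(\widetilde Z_\infty)$ is of order $\e^{-x}$, not $x^2\e^{-2x}$; hence $\Var(\widetilde Z_\infty^{at,\gamma_t}\mid\sF_{at})\asymp \e^{-\gamma_t}W_{at}$, whose expectation is $t^{-1/2}\e^{-\beta_t}$ and is \emph{not} $o(1/t)$ when $\beta_t$ grows slowly (e.g.\ $\beta_t=\log\log t$, allowed by \eqref{eq:assumption-beta_t}). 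So there is no convergence in $L^2$ in general; one must apply Chebyshev conditionally on $\sF_{at}$, obtain a bound of the form $C\sqrt t\,W_{at}(h(\kappa)+\kappa\e^{-\beta_t})$, and use the tightness of $\sqrt tW_{at}$ (convergence in probability from \eqref{eq:convergence-W_t}) together with a $t$-dependent choice $\kappa=\e^{\beta_t/2}$ — this is the content of Lemma~\ref{lem:concentration-Ztilde}.

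The second step has a quantitative gap as well. With the many-to-one formula the Brownian motion is driftless, so $\E\bigl[\sum_{u\in\cN(at)}\e^{-X_u(at)}\1_{X_u(at)\le\gamma_t}\bigr]=\P(B_{at}\le\gamma_t)\approx 1/2$, not $O(\gamma_t/\sqrt t)$; and even the corrected quantity $\P(0<B_{at}\le\gamma_t)=O(\gamma_t/\sqrt t)$ is useless here, because you must multiply by $\sqrt t\,\gamma_t$ (each low particle contributes up to $\gamma_t\e^{-X_u(at)}$), leaving a bound of order $\gamma_t^2\to\infty$. Also, your fallback ``the set of particles below $\gamma_t$ is empty with high probability'' only holds when $\beta_t=o(\log t)$, whereas \eqref{eq:assumption-beta_t} allows $\beta_t$ up to $o(t^{1/4})$. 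The missing factor $\sim 1/\sqrt t$ comes from a ballot-type argument: restrict to the event that the whole trajectory stays above $-L$ (probability $\ge 1-\varepsilon$ by \eqref{eq:minimum}), keep the indicator $\1_{\forall s,\,X_u(s)>-L}$ inside the expectation, and use the Bessel representation \eqref{eq:link-between-R-and-B} plus Lemma~\ref{lem:min-bessel-and-inverse} to get a bound of order $L(\gamma_t+L)^2/t^{3/2}$; then $\sqrt t\,\gamma_t\cdot L(\gamma_t+L)^2/t^{3/2}\to0$ precisely because $\gamma_t=o(t^{1/4})$. This is the paper's Lemma~\ref{lem:cv-for-L_init}, and without this ingredient your step 2 does not close.
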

Now, we want to deal with the second term on the right-hand side of \eqref{eq:decompo-Z_infty-1}.
It leads to the limiting process of Theorem \ref{theorem}, so we need to catch the dependence in $a$.
Heuristically, we want to see $\cL^{at,\gamma_t}$ as a decreasing set in $a$, with $\cL^{at,\gamma_t} \simeq \{ u \in \cL^{t,\gamma_t} : \Delta_u^{t,\gamma_t} > at\}$. 
This is not exactly the case and, therefore, we need to split $\cL^{at,\gamma_t}$ into two disjoint parts by setting
\begin{align*}
\cL^{at,\gamma_t}_\mathrm{good}
& \coloneqq 
\{u \in \cL^{at,\gamma_t} : u \in \cL^{t,\gamma_t} 
\text{ and } \Delta_u^{t,\gamma_t} = \Delta_u^{at,\gamma_t} > at \}, \\
\cL^{at,\gamma_t}_\text{bad}
& \coloneqq \cL^{at,\gamma_t} \setminus \cL^{at,\gamma_t}_\mathrm{good}.
\end{align*}
The ``good'' particles are those which contribute to the limiting process.
They satisfy $\cL^{at,\gamma_t}_\mathrm{good} = \{ u \in \cL^{t,\gamma_t}_\mathrm{good} : \Delta_u^{t,\gamma_t} > at\}$. 
Note that we have the equivalent definitions
\begin{align*}
\cL^{at,\gamma_t}_\mathrm{good}
&=
\{u \in \cL^{at,\gamma_t} : \min_{r\in[t,at]} X_u(r) > \gamma_t \}, \\
\cL^{at,\gamma_t}_\text{bad}
&= \{u \in \cL^{at,\gamma_t} : \min_{r\in[t,at]} X_u(r) \le \gamma_t \}.
\end{align*}
See Figure \ref{figure:stopping-lines} for some examples.
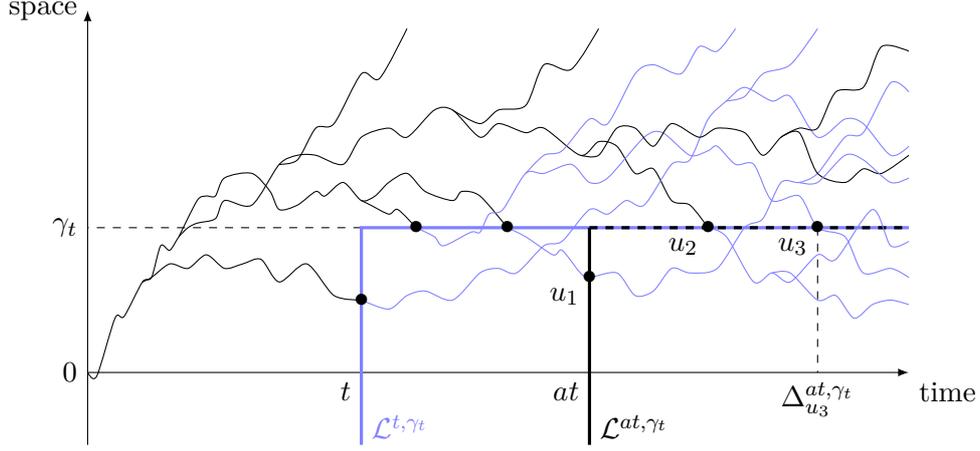
\begin{figure}[ht]
\centering
\begin{tikzpicture}[scale=1.2]
\draw[->,>=latex] (0,0) -- (9,0) node[below right]{time};
\draw[->,>=latex] (0,-0.8) -- (0,4) node[left]{space};
\draw (0,0) node[left]{$0$};
\draw[dashed] (3,1.6) -- (0,1.6) node[left]{$\gamma_t$};
\draw (3,0) node[below left]{$t$};
\draw (5.5,0) node[below left]{$at$};
\draw[dashed] (8,1.6) -- (8,0) node[below]{$\Delta_{u_3}^{at,\gamma_t}$};
\draw[blue!50,very thick] (3,-0.8) -- (3,1.6) -- (9,1.6);
\draw[very thick] (5.5,-0.8) -- (5.5,1.6);
\draw[very thick,dashed] (5.5,1.6) -- (9,1.6);
\draw[blue!50] (3,-0.6) node[right]{$\cL^{t,\gamma_t}$};
\draw (5.5,-0.6) node[right]{$\cL^{at,\gamma_t}$};
\draw[smooth] plot coordinates 
{(0,0) (0.1,-0.04) (0.3,0.6) (0.4,0.62) (0.6,1) 
(0.7,1.05) (0.8,1.4) (0.9,1.35) (1,1.5) 
(1.1,1.6) (1.2,1.65) (1.4,1.7) (1.5,1.9) (1.8,1.85) (2.1,2.3) 
(2.3,2.4) (2.5,2.4) (2.8,2.3) (3,2.6) (3.3,2.7) (3.5,2.6) (3.8,2.9) (4,2.9) 
(4.3,2.6) (4.5,2.75) (4.8,2.65) (5,2.7) (5.4,2.4) 
(5.6,2.55) (5.8,2.5) (6,2.2) (6.2,2.25) (6.4,1.9) (6.6,1.85) (6.8,1.6)}; 
\draw[smooth,blue!50] plot coordinates 
{(6.8,1.6) (7,1.5) (7.3,1) (7.5,1.1) 
(7.7,1) (8,1.3) (8.2,1.25) (8.6,1.8) (8.8,1.4) (9,1.45)
};
\draw[smooth] plot coordinates 
{(0.6,1) (0.7,1.05) (0.9,1.2) (1.1,1.15) (1.3,1.3)
(1.5,1.15) (1.7,1.2) (1.9,1.25) (2.1,1) (2.4,1.2) (2.7,0.85) (3,0.8)}; 
\draw[smooth,blue!50] plot coordinates 
{(3,0.8) (3.3,0.7) (3.5,0.85) (3.7,0.9) (4,0.8) (4.3,1.1) (4.5,1.15) (4.7,1.1) (5,1.4) (5.2,1.45) (5.5,1.8) (5.7,1.75) (5.9,1.9) (6.1,1.85) (6.4,2.3) (6.6,2.35) (6.8,2.8) (7,3) 
(7.2,3.15) (7.5,3) (7.7,3.05) (8,2.6) (8.3,2.8) (8.5,2.75) (8.8,2.4) (9,2.5)
};
\draw[smooth] plot coordinates 
{(1,1.5) (1.2, 1.9) (1.3,1.95) (1.4,1.95) (1.5,2.15) (1.8,2.2) (2,2) (2.1,1.8) (2.3,1.85)
(2.5,2) (2.6,1.95) (2.8,2.1) (3,1.9) 
(3.15,1.95) (3.3,2.1) (3.5,2.2) (3.7,2) (4,1.9) (4.1,2) (4.3,1.95) (4.6,1.6)}; 
\draw[smooth,blue!50] plot coordinates 
{(4.6,1.6) (4.8,1.5) (5.1,1.3) (5.2,1.35) (5.4,1.1) (5.5,1.05) 
(5.8,1.1) (6,0.85) (6.2,0.9) (6.4,1.15) (6.6,1.1) (6.9,1.15) (7.2,1.6) (7.4,1.7) (7.6,2.1) (7.8,2.1) 
(8,2.2) (8.3,2.6) (8.5,2.65) (8.8,3.2) (9,3.1)
};
\draw[smooth] plot coordinates 
{(2.1,2.3) (2.3,2.35) (2.5,2.7) (2.7,2.75) (3,3.3) (3.2,3.25) (3.5,3.8) 
};
\draw[smooth] plot coordinates 
{(3,1.9) (3.1,1.85) (3.3,1.75) (3.4,1.8) (3.6,1.6)}; 
\draw[smooth,blue!50] plot coordinates 
{(3.6,1.6) (3.8,1.5) (3.9,1.4) (4.2,1.55) (4.3,1.55) (4.4,1.75) (4.55,1.8) (4.8,2.2) 
(5,2.2) (5.3,2.3) (5.5,2.1) (5.7,2.15) (6.1,2.6) (6.3,2.65) (6.6,2.4) (6.8,2.45) (6.9,2.5) (7.1,2.3) (7.3,2.3) (7.5,1.9) (7.8,1.95) (8,1.6) 
(8.3,1.5) (8.5,1.1) (8.8,1.15) (9,1.05)};
\draw[smooth] plot coordinates 
{(4,2.9) (4.3,2.75) (4.5,2.9) (4.8,2.95) (5,3.4) (5.3,3.35) (5.6,3.8) 
};
\draw[smooth,blue!50] plot coordinates 
{(4.8,2.2) (5,2.4) (5.2,2.45) (5.5,2.8) (5.7,2.75) (6,3.3) (6.2,3.4) (6.5,3.3) (6.7,3.6) (6.9,3.55) (7.1,3.8) 
};
\draw[smooth] plot coordinates 
{(5.4,2.4) (5.6,2.4) (5.8,2.7) (6,2.7) (6.2,2.5) (6.4,2.75) (6.7,2.6) (6.9,2.7) (7.2,2.65) (7.4,2.4) (7.6,2.6) 
(7.8,2.7) (8,2.7) (8.2,3.1) (8.5,3.15) (8.7,3.6) (9,3.55)
};
\draw[smooth,blue!50] plot coordinates 
{(7,3) (7.2,3.05) (7.4,3.3) (7.7,3.4) (8,3.8) 
};
\draw[smooth,blue!50] plot coordinates 
{(7.5,1.1) (7.7,1.1) (8,0.8) (8.2,0.85) (8.4,0.6) (8.6,0.75) (8.8,0.8) (9,0.75)
};
\draw[smooth] plot coordinates 
{(7.6,2.6) (7.8,2.65) (8,2.2) (8.3,2.1) (8.5,2.25) (8.7,2.2) (9,2.4)
};
\draw[smooth,blue!50] plot coordinates 
{(7.8,2.1) (8,2.1) (8.3,2.4) (8.5,2.35) (8.8,2.1) (9,2.15)
};
\draw (3,0.8) node{$\bullet$};
\draw (3.6,1.6) node{$\bullet$};
\draw (4.6,1.6) node{$\bullet$};
\draw (5.5,1.05) node{$\bullet$};
\draw (5.5,1.05) node[below left]{$u_1$};
\draw (6.8,1.6) node{$\bullet$};
\draw (6.8,1.6) node[below left]{$u_2$};
\draw (8,1.6) node{$\bullet$};
\draw (8,1.6) node[below left]{$u_3$};
\end{tikzpicture}
\caption{Representation of a BBM with binary branching. 
The killing barrier that defines $\cL^{t,\gamma_t}$ is the thick blue line. 
The one for $\cL^{at,\gamma_t}$ is the thick black line.
At each time $s \geq t$, the blue particles do not contribute to $\widetilde{Z}^{t,\gamma_t}_s$.
We have 
$u_1,u_3 \in \cL_\text{bad}^{at,\gamma_t}$ and
$u_2 \in \cL_\mathrm{good}^{at,\gamma_t}$.}
\label{figure:stopping-lines}
\end{figure}

Now, we can rewrite \eqref{eq:decompo-Z_infty-1} as
\begin{align}
Z_\infty
& = \widetilde{Z}_\infty^{at,\gamma_t}
+ F^{at,\gamma_t}_\mathrm{good} + F^{at,\gamma_t}_\text{bad},
\label{eq:decompo-Z_infty-2}
\end{align}
where we set
\begin{align*}
F^{at,\gamma_t}_\mathrm{good/bad}
& \coloneqq 
\sum_{u \in \cL^{at,\gamma_t}_\mathrm{good/bad}} \e^{-\gamma_t} Z_\infty^{(u,at,\gamma_t)}.
\end{align*}
We can now state two propositions, which will be proved later in the paper.
They basically state that the ``good'' particles lead to the limiting process (after an appropriate compensation) and the ``bad'' particles have a negligible contribution.
\begin{prop} \label{prop:control-fluctuations}
Let $(S_t)_{t\ge0}$ be the L\'evy process defined in Theorem~\ref{theorem}.
For any $n \geq 1$, $a \in [1,\infty)^n$ and $\lambda \in \R^n$, we have the following convergence in probability
\begin{align*}
\Ecsq{\exp \left( i \sum_{k=1}^n \lambda_k \sqrt{t} \left( 
	F^{a_k t,\gamma_t}_\mathrm{good} - \beta_t W_{a_k t}
	\right) \right)}{\sF_t} 
 \xrightarrow[t\to\infty]{} 
\Ecsq{\exp \left( i \sum_{k=1}^n \lambda_k S_{Z\infty/\sqrt{a_k}}\right)}
	{Z_\infty}.
\end{align*}
\end{prop}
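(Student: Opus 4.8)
\textbf{Proof proposal for Proposition~\ref{prop:control-fluctuations}.}

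The plan is to compute the conditional characteristic function of the vector $(\sqrt t\,(F^{a_k t,\gamma_t}_\mathrm{good}-\beta_t W_{a_k t}))_{1\le k\le n}$ given $\sF_t$ by conditioning further on the structure of the stopping lines, and to show it converges to the stated Laplace/Fourier transform of $(S_{Z_\infty/\sqrt{a_k}})_k$. First I would recall from Section~\ref{section:proof-of-theorem} that $\cL^{a_k t,\gamma_t}_\mathrm{good}=\{u\in\cL^{t,\gamma_t}_\mathrm{good}:\Delta_u^{t,\gamma_t}>a_k t\}$, so all the good particles across the different values of $a_k$ come from the \emph{single} stopping line $\cL^{t,\gamma_t}_\mathrm{good}$, merely sorted by their hitting time $\Delta_u^{t,\gamma_t}$. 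Conditioning on $\sF_{\cL^{t,\gamma_t}}$ (which is richer than $\sF_t$, but one can reduce to it via the tower property), the branching property gives that the $Z_\infty^{(u,t,\gamma_t)}$, $u\in\cL^{t,\gamma_t}_\mathrm{good}$, are i.i.d.\ copies of $Z_\infty$. Writing $\Delta_u:=\Delta_u^{t,\gamma_t}$, the quantity $\sum_k\lambda_k\sqrt t\,F^{a_k t,\gamma_t}_\mathrm{good}=\sum_{u\in\cL^{t,\gamma_t}_\mathrm{good}}\big(\sum_k\lambda_k\1_{\Delta_u>a_k t}\big)\sqrt t\,\e^{-\gamma_t}Z_\infty^{(u,t,\gamma_t)}$, so the conditional characteristic function factorizes as
\[
\prod_{u\in\cL^{t,\gamma_t}_\mathrm{good}} \Psi_{Z_\infty}\!\Big(\sqrt t\,\e^{-\gamma_t}\,\textstyle\sum_{k}\lambda_k\1_{\Delta_u>a_k t}\Big)
= \prod_{u\in\cL^{t,\gamma_t}_\mathrm{good}} \Psi_{Z_\infty}\!\Big(t^{1/4}\e^{-\beta_t}\,\textstyle\sum_{k}\lambda_k\1_{\Delta_u>a_k t}\Big),
\]
using $\e^{-\gamma_t}=t^{-1/2}\e^{-\beta_t}$. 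The $\beta_t W_{a_k t}$ compensation accounts precisely for the logarithmic drift in $\psi_{\pi/2,\mu_Z}$ once one plugs in the asymptotic \eqref{eq:characteristic-function-Z_infty}: for each $u$ the argument $\xi_u:=t^{1/4}\e^{-\beta_t}(\cdots)$ is small (since $\beta_t/t^{1/4}\to0$ forces $t^{1/4}\e^{-\beta_t}$ — wait, one must be slightly careful here: $t^{1/4}\e^{-\beta_t}\to0$ requires $\beta_t\gg\frac14\log t$, which is guaranteed since $\beta_t\to\infty$ is only needed to dominate constants, not $\log t$; in fact the relevant smallness is that $\e^{-\gamma_t}\max_{u}1\cdot\sqrt t=t^{1/4}\e^{-\beta_t}$ and we really only need $\beta_t\to\infty$ to push below the threshold $\varepsilon$ eventually — I would state this carefully using the $\gamma_t=\frac12\log t+\beta_t$ structure), so that $\Psi_{Z_\infty}(\xi_u)=\Psi_{\pi/2,\mu_Z}(\xi_u)\e^{\xi_u g(\xi_u)}$ on a high-probability event.

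The key computational steps, in order, are: (1) write, via Lemma~\ref{lem:Psi_algebra}, $\Psi_{\pi/2,\mu_Z}(\xi_u)=\exp(-\xi_u\psi_{\pi/2,\mu_Z}(\mathrm{sign})-\cdots)$ — more precisely, expand $\log\Psi_{Z_\infty}(\xi_u)=-\frac\pi2|\xi_u|-i\xi_u\log|\xi_u|+i\mu_Z\xi_u+o(\xi_u)$ using \eqref{eq:characteristic-function-Z_infty} and the explicit form \eqref{eq:Psi_sigma_mu}; (2) sum over $u\in\cL^{t,\gamma_t}_\mathrm{good}$ and recognize that the linear-in-$\xi_u$ terms reassemble into expressions like $\e^{-\gamma_t}\#\{u:\Delta_u>a_k t\}$ and $\sqrt t\,W_{a_k t}$-type sums, where one uses that $\sum_{u\in\cL^{t,\gamma_t}_\mathrm{good}}\e^{-\gamma_t}\1_{\Delta_u>a_k t}$ is (up to good/bad corrections handled in Proposition~\ref{prop:control-fluctuations-bad} and the negligibility of particles below at time $a_k t$) essentially $Z_\infty-\widetilde Z_\infty^{a_k t,\gamma_t}-(\text{bad part})$, hence close to $\gamma_t W_{a_k t}+\sqrt{-1}\cdots$; actually the cleaner route is (2$'$): directly show $\sqrt t\,\e^{-\gamma_t}\#\{u\in\cL^{t,\gamma_t}_\mathrm{good}:\Delta_u>a_kt\}\to$ some explicit multiple of $Z_\infty/\sqrt{a_k}$ in probability, by an estimate on the number of particles hitting the barrier $\gamma_t$ after time $a_k t$ — this is where a Bessel-process/many-to-one computation enters, using the hypotheses in Appendix~\ref{section:technical-results}; (3) identify the $|\xi_u|$-sum with the $\sigma$-term and the $\xi_u\log|\xi_u|$-sum with the log-correction, matching the parameters $(\sqrt{\pi/2},\mu_Z\sqrt{2/\pi})$ and the time change $Z_\infty/\sqrt{a_k}$ of the limiting stable process, recalling \eqref{eq:Psi_S_t}; (4) control the error term $\sum_u \xi_u g(\xi_u)\to0$ in probability, exactly as in the proof of Proposition~\ref{prop:one-dim}, using that the arguments are uniformly small and $\sum_u|\xi_u|$ is tight.

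The main obstacle, I expect, is step (2$'$): establishing the precise first-order asymptotics of the (random) weighted count of good particles hitting $\gamma_t$, namely that $\sqrt t\,\e^{-\gamma_t}\sum_{u\in\cL^{t,\gamma_t}_\mathrm{good}}\1_{\Delta_u>a_kt}$ converges in probability (given $\sF_t$) to a deterministic functional of $(X_v(t))_{v\in\cN(t)}$ that in turn converges to $\mathrm{const}\cdot Z_\infty/\sqrt{a_k}$. This requires (i) a many-to-one / spine computation for the BBM started from each particle alive at time $t$, reducing to a single Brownian particle conditioned to stay above $\gamma_t$ and to first go below it in the window $(a_k t,\infty)$, which is a Bessel-3 type hitting estimate — precisely the content of Appendix~\ref{section:technical-results}; (ii) a second-moment bound to upgrade the expectation asymptotics to convergence in probability, where the $\E[L\log_+^3 L]<\infty$ hypothesis is needed (one of the ``three places'' alluded to in the Remark); and (iii) matching the constant with the $\psi$-parameters so that the compensation by $\beta_t W_{a_k t}$ exactly cancels the divergent logarithmic piece. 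Once this count asymptotics is in hand, the rest is the same bookkeeping as in Proposition~\ref{prop:one-dim}, combined with Proposition~\ref{prop:control-Ztilde-infty} to discard $\widetilde Z_\infty^{a_k t,\gamma_t}-(Z_{a_kt}-\gamma_tW_{a_kt})$ and the forthcoming bad-particle estimate to discard $F^{a_kt,\gamma_t}_\mathrm{bad}$.
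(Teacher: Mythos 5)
Your route is essentially the paper's: condition on the single stopping line $\cL^{t,\gamma_t}$, factorize the conditional characteristic function over the i.i.d.\ copies $Z_\infty^{(u,t,\gamma_t)}$ sorted by their hitting times $\Delta_u^{t,\gamma_t}$, expand $\Psi_{Z_\infty}$ via \eqref{eq:characteristic-function-Z_infty} and Lemma~\ref{lem:Psi_algebra}, and reduce everything to a law of large numbers for the number of good particles still above the barrier at time $a_k t$ --- which is exactly Proposition~\ref{prop:control-number-of-killed-particles} and Corollary~\ref{cor:cv-N}, proved by the truncated first/second moment argument you anticipate. Two concrete points in your sketch need repair, though.

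First, an arithmetic slip: $\sqrt t\,\e^{-\gamma_t}=\e^{-\beta_t}$, not $t^{1/4}\e^{-\beta_t}$. With the correct value, the smallness of the arguments of $\Psi_{Z_\infty}$ follows from $\beta_t\to\infty$ alone, and the hedge you insert is unnecessary; as written, with $t^{1/4}\e^{-\beta_t}$, the smallness would actually fail for admissible choices such as $\beta_t=\log\log t$, so your attempted justification (``$\beta_t$ only needs to dominate constants'') does not rescue the wrong expression --- the expression itself must be corrected. Second, your step (2') asks only for the plain convergence of $\sqrt t\,\e^{-\gamma_t}\,\#\{u\in\cL^{t,\gamma_t}_\mathrm{good}:\Delta_u>a_kt\}$ to a multiple of $Z_\infty/\sqrt{a_k}$, and this is not quite enough: the per-particle logarithmic term is of size $\beta_t\e^{-\beta_t}$, so after summation the quantity that exactly cancels it is $\beta_t\e^{-\beta_t}N^{a_kt,\gamma_t}_\mathrm{good}$, and replacing this by the prescribed compensator $\beta_t W_{a_kt}$ requires the strengthened estimate $\beta_t\,\lvert\e^{-\beta_t}N^{a_kt,\gamma_t}_\mathrm{good}-\sqrt t\,W_{a_kt}\rvert\to0$ in probability --- the error in the particle count gets multiplied by the divergent factor $\beta_t$. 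This is precisely what Proposition~\ref{prop:control-number-of-killed-particles} states, obtained from the concentration Lemma~\ref{lem:concentration-N} at scale $\e^{\beta_t}/\beta_t$ (with truncation $\kappa=\e^{\beta_t/2}$) together with Lemma~\ref{lem:cv-for-L_init}; the fact that $\beta_t$ may be taken to grow arbitrarily slowly is what makes this rate attainable, but your sketch should state it explicitly. With these two repairs, the bookkeeping you describe (exact cancellation of the $\log$ piece via Lemma~\ref{lem:Psi_algebra}, then passage to the limit given $\sF_t$ via Corollary~\ref{cor:cv-N} and the independence of increments of $(S_t)_{t\ge0}$) is the paper's proof.
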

\begin{prop} \label{prop:control-F^t_bad}
For any $a \geq 1$, we have the following convergence in probability
\begin{align*}
\sqrt{t} F^{at,\gamma_t}_\mathrm{bad} \xrightarrow[t\to\infty]{} 0.
\end{align*}
\end{prop}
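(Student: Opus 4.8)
The plan is to bound $\sqrt{t}\,F^{at,\gamma_t}_\mathrm{bad}$ in conditional $L^1$ given $\sF_{\cL^{at,\gamma_t}}$ and then integrate out, exploiting that the $Z_\infty^{(u,at,\gamma_t)}$ are i.i.d.\ copies of $Z_\infty$ but only have a logarithmic-truncated first moment. Concretely, $F^{at,\gamma_t}_\mathrm{bad} = \e^{-\gamma_t}\sum_{u\in\cL^{at,\gamma_t}_\mathrm{bad}} Z_\infty^{(u,at,\gamma_t)}$, and by the tail estimate \eqref{eq:tail-Z_infty}, $\E[Z_\infty\wedge M] \sim \log M$ as $M\to\infty$ while $\P(Z_\infty > M)\sim 1/M$. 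So I would first truncate: write $Z_\infty^{(u)} = Z_\infty^{(u)}\1_{Z_\infty^{(u)}\le M_t} + Z_\infty^{(u)}\1_{Z_\infty^{(u)} > M_t}$ for a well-chosen threshold $M_t$ (something like $M_t = \e^{\gamma_t}/\beta_t^2$ or a comparable polynomial in $t$, to be optimized), handle the large part by a union bound over $|\cL^{at,\gamma_t}_\mathrm{bad}|$ using $\P(\exists u\colon Z_\infty^{(u)} > M_t) \le |\cL^{at,\gamma_t}_\mathrm{bad}|\cdot C/M_t$, and handle the small part by its conditional expectation, which is at most $\e^{-\gamma_t}|\cL^{at,\gamma_t}_\mathrm{bad}|\cdot C\log M_t$.

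Thus everything reduces to an estimate on the number $|\cL^{at,\gamma_t}_\mathrm{bad}|$ of ``bad'' killed particles, i.e.\ those that at some point in $[t,at]$ dip below $\gamma_t$. I expect this count to be controlled by a first-moment (many-to-one / spine) computation: using the change of measure \eqref{hypothese 2}, the expected number of particles whose path first drops below the fixed level $\gamma_t$ after time $t$ is governed by a Brownian motion (started from $0$ at time $0$, or rather from its position at time $t$) hitting $\gamma_t$. Since $\gamma_t = \frac12\log t + \beta_t$ grows like $\tfrac12\log t$, and the typical displacement of a spine particle at time $at$ is of order $\sqrt{at}$, the relevant event is that the spine comes down to within $O(\log t)$ of the origin, which carries a small probability; I anticipate $\E[|\cL^{at,\gamma_t}_\mathrm{bad}| \mid \sF_t]$ or its unconditional version being of order $\gamma_t\e^{-\gamma_t}\cdot(\text{something})$ or, after multiplying by $\e^{-\gamma_t}$ and $\sqrt t$, yielding a bound like $C\sqrt t\, \e^{-2\gamma_t}\gamma_t^2 \log M_t = C\gamma_t^2\log M_t /\sqrt t \to 0$ given \eqref{eq:assumption-beta_t} (and the extra room from $\beta_t = o(t^{1/4})$). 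The precise form of these Bessel/Brownian hitting estimates is exactly what Appendix~\ref{section:technical-results} is advertised to supply, and the preliminary results of Section~\ref{section:preliminary-results} should provide the many-to-one lemma adapted to stopping lines; I would quote those rather than re-derive them.

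The main obstacle, I expect, is getting a sharp enough handle on $|\cL^{at,\gamma_t}_\mathrm{bad}|$ — more precisely, distinguishing ``bad'' particles (those dipping below $\gamma_t$ during the \emph{middle} window $[t,at]$) from the generic particles in $\cL^{t,\gamma_t}$. A particle in $\cL^{at,\gamma_t}_\mathrm{bad}$ must either already belong to $\cL^{t,\gamma_t}$ with killing time in $[t,at]$ (so it hit $\gamma_t$ early), or be a descendant of such a particle that went below $\gamma_t$ and came back up above it before time $at$, only to be killed later. Controlling the second type requires estimating, for a spine path that has touched $\gamma_t$, the number of its descendants that climb back above $\gamma_t$ and then get re-killed after $at$ — this is where a careful decomposition at the first hitting time of $\gamma_t$, combined with the branching property and a second many-to-one argument on the excursion, is needed. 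I would set up the bound $\sqrt t\, F^{at,\gamma_t}_\mathrm{bad} \le \sqrt t\,\e^{-\gamma_t}(\text{big part, vanishing by union bound}) + \sqrt t\,\e^{-\gamma_t}\,C\log M_t\,|\cL^{at,\gamma_t}_\mathrm{bad}|$, take conditional expectation given $\sF_t$, invoke the first-moment estimates for both types of bad particles, optimize $M_t$, and conclude convergence to $0$ in probability via Markov's inequality. The $\E[L\log_+^3 L]<\infty$ hypothesis should enter precisely in making the first-moment estimate on $|\cL^{at,\gamma_t}_\mathrm{bad}|$ finite and small (ensuring the branching events that produce extra dips below the barrier are adequately integrable), consistent with the remark that this assumption is used in three places in Section~\ref{section:preliminary-results}.
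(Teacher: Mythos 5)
Your overall architecture (truncate the $Z_\infty^{(u)}$'s, reduce to a first-moment bound on the bad particles hitting the barrier, conclude by Markov) is parallel to the paper's, which bounds $F^{at,\gamma_t}_\mathrm{bad}\wedge 1$ by $\sum_{u\in\cL^{at,\gamma_t}_\text{bad}}\e^{-X_u(\Delta_u)}\bigl(Z_\infty^{(u,at,\gamma_t)}\wedge\e^{\gamma_t}\bigr)$ and uses $\E[Z_\infty\wedge x]\le C(1+\log_+x)$, a consequence of \eqref{eq:tail-Z_infty}. But there is a genuine gap at the heart of your plan: the claimed source of smallness is wrong. Under the many-to-one change of measure the spine is an \emph{unconditioned} driftless Brownian motion, and the event that it dips below $\gamma_t\approx\tfrac12\log t$ somewhere in $[t,at]$ is \emph{not} rare: its probability is bounded below (e.g.\ by $\P(B_{at}\le 0)=\tfrac12$). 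Consequently the unconditional expectation of the weighted bad mass, $\E\bigl[\sum_{u\in\cL^{at,\gamma_t}_\text{bad}}\e^{-X_u(\Delta_u)}\bigr]=\E\bigl[W_{at}-\widetilde W^{t,\gamma_t}_{at}\bigr]$, is of order one, and $\E[|\cL^{at,\gamma_t}_\text{bad}|]$ is of order $\e^{\gamma_t}$; multiplying by $\sqrt t\,\e^{-\gamma_t}\log M_t$ your Markov step then produces $\sqrt t\log M_t\cdot O(1)\to\infty$, so the "small part" cannot be handled by a plain unconditional first moment, and your anticipated bound $C\gamma_t^2\log M_t/\sqrt t$ is not what this computation yields. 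The smallness is a typical-event phenomenon (the same reason $\E[W_{at}]=1$ while $W_{at}\asymp 1/\sqrt t$ in probability): the paper works \emph{conditionally} on $\sF_{at}$, where the branching property plus Lemma~\ref{lem:first-moment-N} give exactly $\Ecsq{\sum_{u\in\cL^{at,\gamma_t}_\text{bad}}\e^{-X_u(\Delta_u)}}{\sF_{at}}=W_{at}-\widetilde W^{t,\gamma_t}_{at}$, and then invokes the key Lemma~\ref{lem:cv-for-L_init}: $\gamma_t\sqrt t\,(W_{at}-\widetilde W^{t,\gamma_t}_{at})\to0$ in probability. That lemma is itself proved by first restricting to the high-probability event $\{\min_{s}\min_{u\in\cN(s)}X_u(s)>-L\}$; only after this restriction does the many-to-one spine become (via \eqref{eq:link-between-R-and-B}) a Bessel-type process, for which coming down to $\gamma_t+L$ after time $t$ costs $O(\gamma_t^2/t^{3/2}+\gamma_t/(\sqrt{(a-1)}\,t))$ by Lemma~\ref{lem:min-bessel-and-inverse}, and this is where $\gamma_t=o(t^{1/4})$ is used. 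This conditioning/restriction step is the missing idea; without it your argument breaks exactly at the Markov inequality.

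Two smaller points. First, your worry about "two types" of bad particles and an excursion decomposition at the first hitting time of $\gamma_t$ is unnecessary: every $u\in\cL^{at,\gamma_t}_\text{bad}$ is a descendant of some $v\in\cN(at)$ with $\min_{r\in[t,at]}X_v(r)\le\gamma_t$, killed at its first passage below $\gamma_t$ after $at$, so a single application of the branching property at time $at$ together with $\E_x[N_{[0,\infty)}]=\e^{-x}$ already yields the identity above — no second many-to-one on excursions is needed. Second, the hypothesis $\E[L\log_+^3L]<\infty$ is not what makes this proposition work: the proof uses only the tail estimate \eqref{eq:tail-Z_infty} (through \eqref{eq:tail-Z_infty-3}) and first-moment computations; the $\log^3$ moment enters elsewhere (the second-moment/peeling arguments and the refined tail \eqref{eq:tail-Z_infty-2}).
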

\begin{proof}[Proof of Theorem \ref{theorem} and Theorem \ref{theorem-bis}] 
By decomposition \eqref{eq:decompo-Z_infty-2} and the fact that $\gamma_t = \frac 1 2 \log t + \beta_t$ by definition, we have for every $a\ge1$,
\[
Z_\infty - Z_{a t} +\frac{\log t}{2} W_{at} 
= \widetilde{Z}_\infty^{at,\gamma_t} - (Z_{a t} - \gamma_t W_{a t}) + F^{at,\gamma_t}_\mathrm{good} - \beta_t W_{a t} + F^{at,\gamma_t}_\text{bad}.
\]
Propositions \ref{prop:control-Ztilde-infty} and \ref{prop:control-F^t_bad} now give for every $a\ge 1$,
\[
 \sqrt t\left(Z_\infty - Z_{a t} +\frac{\log t}{2} W_{at} \right) - \sqrt t \left(F^{at,\gamma_t}_\mathrm{good} - \beta_t W_{a t}\right) \xrightarrow[t\to\infty]{\P} 0. 
\]
Then, combining Propositions \ref{prop:control-fluctuations} and \ref{prop:weak-convergence-in-probability} with Remark \ref{rem:weak-convergence-in-proba}, this proves Theorem \ref{theorem-bis}.
We recall that Theorem \ref{theorem} follows then from Proposition \ref{prop:control-W_t}. 
\end{proof}

\section{Preliminary results on BBM with a barrier}
\label{section:preliminary-results}


\subsection{Many-to-one formula and change of probabilities}
\label{subsection:change-of-probabilities}

It will be handy in the sequel to allow the BBM to start at an arbitrary point $x\in\R$, in which case we write $\P_x$ and $\E_x$ instead of $\P$ and $\E$. The martingale property of the processes $(W_t)_{t\ge0}$ and $(Z_t)_{t\ge0}$ defined in the introduction then implies that for every $x\in\R$ and $t\ge0$,
\begin{align}
\label{eq:W_tZ_t}
\E_x[W_t] = \e^{-x},\quad \E_x[Z_t] = x\e^{-x}.
\end{align}
A generalization of this fact is provided by the following \textit{many-to-one formula}, which is an essential tool in the study of BBM. 
Let  $\cC([0,t])$ denote the space of continuous functions from $[0,t]$ to $\R$.
For any $x \in \R$, $t \geq 0$ and any measurable function $F \colon \cC([0,t]) \to \R_+$, one can compute the following expectation:
\begin{align*}
\Eci{x}{\sum_{u\in\cN(t)} \e^{-X_u(t)} F(X_u(s), s\in[0,t])}
& = \e^{-x} \Eci{x}{F(B_s, s\in[0,t])},
\end{align*}
where $(B_s)_{s \geq 0}$ denotes a standard Brownian motion, starting from $x$ under $\P_x$.
Note that this formula follows from forthcoming Proposition \ref{prop:change-of-measure-Q}.

Throughout the paper, we will use two changes of probabilities and the associated spinal decompositions. These methods have a rich history, starting with Kahane and Peyrière \cite{kahanepeyriere76}, and with its modern formulation due to Lyons, Pemantle and Peres' \cite{lpp95} work on Galton--Watson processes. We will follow the treatment by Hardy and Harris \cite{Hardy2009} for branching Markov processes. That paper supposes that the offspring distribution of the process is supported on $\{1,2,\ldots\}$, but this can be generalized, see Liu, Ren and Song \cite{Liu2011}.
See also Shi \cite{shi2015} for a survey of applications of these techniques to the branching random walk.

Let $\sF_t$ denote the $\sigma$-algebra containing all the information until time $t$, \[
\sF_t \coloneqq 
\sigma \left( \cN(s), 0 \leq s \leq t \right)
\vee
\sigma \left( X_u(s), 0 \leq s \leq t, u \in \cN(s) \right),
\] 
and $\sF_\infty \coloneqq \sigma ( \bigvee_{t\geq 0} \sF_t )$.
The first change of probability is done with respect to the critical additive martingale $(W_s)_{s\geq0}$ and has been introduced by Chauvin and Rouault \cite{chauvinrouault88}. 
Since it is a non-negative martingale, we can define, for $x >0$, a new probability measure $\Q_x$ on $\sF_\infty$ by
\begin{align*}
\Q_x \restreinta_{\sF_s}
\coloneqq
\e^x W_s \bullet \P_x \restreinta_{\sF_s}, 
\quad \forall s \geq 0.
\end{align*}
Following Hardy and Harris \cite{Hardy2009}, we rather view $\Q_x$ as the projection to $\sF_\infty$ of a probability measure (denoted by $\Q_x$ as well for simplicity) on an enlarged probability space, carrying a so-called \emph{spine}, i.e.~a marked ray in the genealogical tree. The particle on the spine by time $s$ will be denoted by $w_s$. Recall that $\lambda$ denotes the branching rate and $L$ a random variable distributed according to the offspring distribution in the original BBM. We then have the following description for the BBM with spine under $\Q_x$. 
\begin{itemize}
 \item The system starts with one particle $w_0$ at position $x$.
 \item This particle moves like a standard Brownian motion (without drift!) during a time distributed according to the exponential law of parameter $\mu_1\lambda$, where $\mu_1 := \E[L]$.
 \item Then, it gives birth to a random number $\widehat{L}$ of particles distributed according to the \textit{size-biased} reproduction law, i.e.~$\forall k \in \N, \P(\widehat{L}=k) = k \P(L=k)/\mu_1$.
 \item Amongst the children, one is uniformly chosen to be on the spine and continues in the same way. 
 \item Others spawn usual BBMs (according to the law $\P$, but started from the position of their parent).
\end{itemize}

The following proposition generalizes the many-to-one formula stated above and follows from the results in \cite{Hardy2009}. Recall that a ``particle'' is formally a word on the alphabet of natural numbers, i.e. an element of the countable set $\T = \bigcup_{n=0}^\infty \N^n$.
\begin{prop} \label{prop:change-of-measure-Q}
Let $x\in\R$.
Let $s \geq 0$ and let $(H_s(u))_{u\in \T}$ be a family of uniformly bounded $\sF_s$-measurable random variables. Then, we have 
\begin{align*}
\Eci{x}{\sum_{u\in\cN(s)} \e^{-X_u(s)} H_s(u)} = \e^{-x}\Eci{\Q_x}{H_s(w_s)}.
\end{align*}
\end{prop}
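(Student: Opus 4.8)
The statement to prove is Proposition~\ref{prop:change-of-measure-Q}, the spinal many-to-one formula. Let me sketch a proof plan.

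The plan is to derive the identity directly from the change of measure defining $\Q_x$ and the structure of the spine. The key observation is that $\Q_x$ restricted to $\sF_s$ has Radon--Nikodym derivative $\e^x W_s$ with respect to $\P_x$, and that on the enlarged space carrying the spine, the conditional law of the spine particle $w_s$ given $\sF_s$ is given by the size-biased weights: for $u \in \cN(s)$,
\[
\Q_x(w_s = u \mid \sF_s) = \frac{\e^{-X_u(s)}}{W_s}.
\]
This last fact is exactly what the Hardy--Harris construction guarantees; it reflects that at each branching event along the spine a uniformly chosen child is followed and the offspring law is size-biased, so that after integrating out the spine choices the weight attached to a given leaf $u$ is proportional to $\e^{-X_u(s)}$ (this is the standard ``spine decomposition'' identity, and it is where the martingale $W_s$ enters as the normalizing constant).

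First I would write, using the definition of $\Q_x$ on $\sF_s$ and then the tower property with respect to $\sF_s$ on the enlarged space,
\[
\Eci{\Q_x}{H_s(w_s)} = \Eci{\Q_x}{\Ecsqi{\Q_x}{H_s(w_s)}{\sF_s}} = \Eci{\Q_x}{\sum_{u\in\cN(s)} H_s(u)\, \Q_x(w_s=u\mid\sF_s)} = \Eci{\Q_x}{\sum_{u\in\cN(s)} H_s(u)\frac{\e^{-X_u(s)}}{W_s}},
\]
where the uniform boundedness of the $H_s(u)$ together with $W_s < \infty$ $\P$-a.s.\ justifies interchanging the conditional expectation with the (a.s.\ finite) sum. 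Next I would switch back to $\P_x$ via the Radon--Nikodym derivative: since the integrand $\sum_{u\in\cN(s)} H_s(u)\e^{-X_u(s)}/W_s$ is $\sF_s$-measurable and $\Q_x\restreinta_{\sF_s} = \e^x W_s \bullet \P_x\restreinta_{\sF_s}$,
\[
\Eci{\Q_x}{\sum_{u\in\cN(s)} H_s(u)\frac{\e^{-X_u(s)}}{W_s}} = \Eci{x}{\e^x W_s \sum_{u\in\cN(s)} H_s(u)\frac{\e^{-X_u(s)}}{W_s}} = \e^x \Eci{x}{\sum_{u\in\cN(s)} \e^{-X_u(s)} H_s(u)}.
\]
Multiplying through by $\e^{-x}$ gives the claimed formula. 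The many-to-one formula stated earlier in the text is then recovered by taking $H_s(u) = F(X_u(r), r\in[0,s])$ and using that under $\Q_x$ the spine moves as a drift-free Brownian motion started at $x$, so $\Eci{\Q_x}{H_s(w_s)} = \Eci{x}{F(B_r, r\in[0,s])}$.

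The main obstacle is really a bookkeeping/citation matter rather than a computational one: one must carefully invoke the Hardy--Harris framework \cite{Hardy2009} (with the extension of \cite{Liu2011} to offspring distributions charging $\{0,1,\dots\}$) to assert both that $\Q_x$ on the enlarged space projects to $\e^x W_s\bullet\P_x$ on $\sF_s$ and that the conditional spine law given $\sF_s$ has the size-biased form above; these are the content of the spine change-of-measure theorems there, and the only subtlety is checking that the hypotheses of those theorems (integrability of $L$, which holds since $\Ec{L\log_+^3 L}<\infty$ implies $\Ec{L}<\infty$) are met in our setting and that the possibility of extinction does not cause problems, since $W_s$ is nonetheless a genuine nonnegative $\P_x$-martingale with $\Eci{x}{W_s} = \e^{-x}$. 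Once these structural facts are quoted, the proof is the short two-line computation above.
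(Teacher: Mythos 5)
Your proof is correct. The paper itself gives no proof of this proposition---it simply states that it follows from the results of Hardy and Harris \cite{Hardy2009}---and your argument is exactly the standard derivation underlying that citation: the conditional spine law $\Q_x(w_s = u \mid \sF_s) = \e^{-X_u(s)}/W_s$ combined with the projection identity $\Q_x \restreinta_{\sF_s} = \e^x W_s \bullet \P_x \restreinta_{\sF_s}$, so you have merely fleshed out the details the paper delegates to the reference (including the harmless extinction case, where both sides vanish since $W_s=0$ there under $\P_x$ and $\Q_x(W_s=0)=0$).
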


The second change of probability we will use has been introduced by Kyprianou \cite{kyprianou2004} and is done with respect to a modification of the derivative martingale, obtained by killing particles coming below 0: 
it is called the \textit{truncated derivative martingale} and is defined by
\begin{align*}
\widetilde{Z}_s
\coloneqq 
\sum_{u \in \cN(s)} X_u(s) \e^{-X_u(s)} \1_{\forall r \in [0,s], X_u(r) > 0},
\quad s \geq 0.
\end{align*}
Then, for $x\ge 0$, $(\widetilde{Z}_s)_{s\geq0}$ is a non-negative martingale under $\P_x$ and, therefore, we define a new probability measure $\widetilde{\Q}_x$ on $\sF_\infty$ by
\begin{align*}
\widetilde{\Q}_x \restreinta_{\sF_s}
\coloneqq
\frac{\e^x}{x} \widetilde{Z}_s \bullet \P_x \restreinta_{\sF_s}, 
\quad \forall s \geq 0.
\end{align*}
We have an analogous description for the BBM with spine $(w_s)_{s\ge0}$ under $\widetilde{\Q}_x$:
\begin{itemize}
 \item The system starts with one particle $w_0$ at position $x$.
 \item This particle moves like a 3-dimensional Bessel process during a time distributed according to the exponential law of parameter $\mu_1\lambda$, where $\mu_1 := \E[L]$.
 \item Then, it gives birth to a random number of particles distributed as $\widehat{L}$ defined above.
 \item Amongst the children, one is uniformly chosen to be on the spine and continues in the same way. 
 \item Others spawn usual BBMs (according to the law $\P$, but started from the position of their parent).
\end{itemize}

Moreover, one has the following generalization of the many-to-one lemma:
\begin{prop} \label{prop:change-of-measure-Qtilde}
Let $x \ge 0$.
Let $s \geq 0$ and let $(H_s(u))_{u\in \T}$ be a family of uniformly bounded $\sF_s$-measurable random variables. Then, we have
\begin{align*}
\Eci{x}{\sum_{u\in\cN(s)} X_u(s)\e^{-X_u(s)} \1_{\forall r \in [0,s], X_u(r) > 0} H_s(u)} = x\e^{-x}\Eci{\widetilde{\Q}_x}{H_s(w_s)}.
\end{align*}
\end{prop}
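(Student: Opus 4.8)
The statement to prove is Proposition~\ref{prop:change-of-measure-Qtilde}, the many-to-one formula associated with the truncated derivative martingale and the change of measure $\widetilde{\Q}_x$.

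The plan is to derive the formula directly from the definition of the change of measure, exactly as one does for the additive-martingale spine (Proposition~\ref{prop:change-of-measure-Q}), but now with $\widetilde Z_s$ in place of $W_s$. First I would recall that $\widetilde{\Q}_x\restreinta_{\sF_s} = \frac{\e^x}{x}\widetilde Z_s \bullet \P_x\restreinta_{\sF_s}$, so for any non-negative $\sF_s$-measurable random variable $G$ one has $\Eci{\widetilde\Q_x}{G} = \frac{\e^x}{x}\Eci{x}{\widetilde Z_s\, G}$. Expanding $\widetilde Z_s = \sum_{u\in\cN(s)} X_u(s)\e^{-X_u(s)}\1_{\forall r\in[0,s],\,X_u(r)>0}$ gives
\[
\Eci{x}{\sum_{u\in\cN(s)} X_u(s)\e^{-X_u(s)}\1_{\forall r\in[0,s],\,X_u(r)>0}\, G} = x\e^{-x}\Eci{\widetilde\Q_x}{G}.
\]
This is the desired identity provided one can take $G = H_s(w_s)$, i.e.\ provided that under the enlarged measure $\widetilde\Q_x$ (with spine), the particle $w_s$ on the spine by time $s$ is distributed, jointly with $\sF_s$, in the ``size-biased along the $\widetilde Z$-weights'' way that the left-hand side demands. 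Concretely, the point to verify is that for bounded $\sF_s$-measurable $(H_s(u))_{u\in\T}$,
\[
\Eci{\widetilde\Q_x}{H_s(w_s)} = \Eci{\widetilde\Q_x}{\sum_{u\in\cN(s)} \frac{X_u(s)\e^{-X_u(s)}\1_{\forall r\in[0,s],X_u(r)>0}}{\widetilde Z_s} H_s(u)},
\]
which says the conditional law of $w_s$ given $\sF_s$ under $\widetilde\Q_x$ puts mass proportional to the $\widetilde Z$-contribution of each surviving particle. This is the standard property of spine constructions and is exactly the content of the Hardy--Harris formalism \cite{Hardy2009} (extended to general offspring laws via \cite{Liu2011}), applied with the martingale $\widetilde Z_s$ rather than $W_s$.

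To organize the argument I would proceed in three short steps. Step one: check that $(\widetilde Z_s)_{s\ge0}$ is indeed a non-negative $\P_x$-martingale for $x\ge0$ (citing \cite{kyprianou2004}), so that $\widetilde\Q_x$ is well defined and $\widetilde\Q_x\restreinta_{\sF_s}=\frac{\e^x}{x}\widetilde Z_s\bullet\P_x\restreinta_{\sF_s}$ is consistent in $s$. Step two: invoke the Hardy--Harris construction to build, on an enlarged space, the spine $(w_s)_{s\ge0}$ whose marginal on $\sF_\infty$ is $\widetilde\Q_x$ and whose conditional law given $\sF_s$ is the $\widetilde Z_s$-size-biased pick among $\cN(s)$; the spine dynamics stated just before the proposition (Bessel-3 motion, size-biased branching $\widehat L$, uniform choice among children, ordinary BBMs off the spine) are exactly what comes out of this recipe, because changing $\P_x$ by the martingale $\widetilde Z_s$ tilts the spine's spatial motion from Brownian motion to the Doob $h$-transform with $h(y)=y$, i.e.\ the 3-dimensional Bessel process, and tilts the branching as in the $W$-case. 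Step three: combine the change-of-measure identity of step one with the spine decomposition of step two to read off
\[
\Eci{x}{\sum_{u\in\cN(s)} X_u(s)\e^{-X_u(s)}\1_{\forall r\in[0,s],X_u(r)>0}H_s(u)} = x\e^{-x}\,\Eci{\widetilde\Q_x}{H_s(w_s)},
\]
which is the assertion.

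The main obstacle, such as it is, lies in Step two: one must be slightly careful that the offspring distribution here is allowed to charge $0$ (particles can die out), whereas the original Hardy--Harris treatment assumes support in $\{1,2,\dots\}$; this is precisely why the reference to Liu--Ren--Song \cite{Liu2011} is needed, and I would point to it rather than redo the construction. A second minor technical point is that the truncated martingale involves the indicator $\1_{\forall r\in[0,s],X_u(r)>0}$, so on the spine side the relevant spatial process is a Brownian motion conditioned to stay positive, i.e.\ the Bessel-3 process, and one should note that the weight $X_u(s)\e^{-X_u(s)}\1_{X_u(r)>0\ \forall r}$ is exactly the product of the additive-martingale weight $\e^{-X_u(s)}$ with the harmonic function $X_u(s)\1_{X_u(r)>0\ \forall r}$ of killed Brownian motion, so the two changes of measure factor cleanly; this identification of the Bessel-3 motion on the spine is the one substantive computation, and it is routine (it is the Doob transform of killed Brownian motion by $h(y)=y$). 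Everything else is bookkeeping identical to the proof of Proposition~\ref{prop:change-of-measure-Q}.
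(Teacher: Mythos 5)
Your argument is correct and matches the paper's treatment: the paper does not spell out a proof but relies on exactly the spine/size-biasing formalism of Hardy--Harris (generalized by Liu--Ren--Song) applied to the martingale $\widetilde Z_s$, with the Doob $h$-transform by $h(y)=y$ turning the spine's motion into a 3-dimensional Bessel process, which is precisely your Steps one through three. Your remarks on the offspring law possibly charging $0$ and on the factorization of the weight are the same caveats the paper addresses by citing \cite{Liu2011} and \cite{kyprianou2004}.
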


\subsection{Truncated derivative martingale: moments}
\label{subsection:truncated-derivative-martingale}

The second moment of the truncated derivative martingale $(\widetilde{Z}_s)_{s \geq 0}$ is infinite if $\E[L^2] = \infty$. We therefore introduce a variant of $(\widetilde{Z}_s)_{s \geq 0}$, which is close to it in $L^1$-distance and whose second moment can be controlled. This method, which uses the probability $\widetilde{\Q}_x$ defined in the previous section, is due to Aïdékon \cite{aidekon2013} and sometimes called the \emph{peeling lemma}, see Shi~\cite{shi2015} for more applications. We also incorporate simplifications of this method from Pain~\cite{Pain2017}.

Throughout, let $x\ge0$. For $u \in \T$, such that $u\in\cN(s)$ for some $s\ge0$, let $O_u$ denote the number of children of particle $u$ and $d_u$ the death time of $u$ (and leave undefined otherwise). Write $v < u$ if $v$ is a strict ancestor of $u$ and $v\le u$ if $v<u$ or $v = u$.
We set, for $s,\kappa \geq 0$, 
\begin{align*}
B_{s,\kappa} 
&\coloneqq
\left\{
u \in \cN(s) :
\forall v < u, O_v \leq \kappa \e^{X_v(d_v)/2}
\right\}, \\
\widetilde{Z}_{s,\kappa} 
& \coloneqq 
\sum_{u \in \cN(s)} X_u(s) \e^{-X_u(s)} 
\1_{\forall r \in [0,s], X_u(r) > 0} \1_{u \in B_{s,\kappa}}.
\end{align*}
Note that formally, $\widetilde{Z}_{s,\infty} = \widetilde{Z}_s$. The next lemma bounds the $L^1$-distance between $\widetilde{Z}_{s,\kappa}$ and $\widetilde{Z}_s$ for large $\kappa$.

\begin{lem} \label{lem:not-in-B}
There exists a decreasing function $h \colon \R_+ \to \R_+$ such that $\lim_{a \to \infty} h(a) = 0$ and for every $x,\kappa >0$ and $s \geq 0$,
\[
\Eci{x}{\left|\widetilde{Z}_s - \widetilde{Z}_{s,\kappa}\right|} \leq h(\kappa) \e^{-x}.
\]
\end{lem}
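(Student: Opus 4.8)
The plan is to estimate the $L^1$-distance between $\widetilde{Z}_s$ and $\widetilde{Z}_{s,\kappa}$ by using the change-of-measure identity from Proposition~\ref{prop:change-of-measure-Qtilde}. Writing $\widetilde{Z}_s - \widetilde{Z}_{s,\kappa} = \sum_{u\in\cN(s)} X_u(s)\e^{-X_u(s)}\1_{\forall r\in[0,s],\,X_u(r)>0}\1_{u\notin B_{s,\kappa}}$, and noting that the summand is non-negative, we get
\[
\Eci{x}{\abs{\widetilde{Z}_s - \widetilde{Z}_{s,\kappa}}} = x\e^{-x}\,\Ppsqi{\widetilde{\Q}_x}{w_s\notin B_{s,\kappa}}{\;}\le x\e^{-x}\,\Ppi{\widetilde{\Q}_x}{\exists v < w_s : O_v > \kappa\e^{X_v(d_v)/2}}.
\]
Wait — I should be careful: the many-to-one formula of Proposition~\ref{prop:change-of-measure-Qtilde} only applies directly if $\1_{u\notin B_{s,\kappa}}$ is $\sF_s$-measurable, which it is since it depends on the offspring numbers and positions of strict ancestors of $u$ up to time $s$. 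So this step is legitimate, and the problem reduces to bounding, under $\widetilde{\Q}_x$, the probability that \emph{some} node on the spine's ancestral line (i.e.\ one of the spine particles $w_r$ at its branching times, or one of their off-spine siblings that is an ancestor of $w_s$ — but in fact only the spine nodes themselves, since every strict ancestor $v<w_s$ lies on the spine) has an unusually large number of children relative to $\e^{X_v(d_v)/2}$.

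The key point is the spinal description under $\widetilde{\Q}_x$: the spine moves as a 3-dimensional Bessel process, and at each of its branching times it produces a \emph{size-biased} number $\widehat L$ of children, with $\Ppi{\widetilde{\Q}}{\widehat L = k} = k\,\Pp{L=k}/\mu_1$. The branching times along the spine form a Poisson process of rate $\mu_1\lambda$ up to time $s$. Thus, by a union bound over the (expected $\mu_1\lambda s$-many) branching events and the spinal independence,
\[
\Ppi{\widetilde{\Q}_x}{\exists v<w_s : O_v > \kappa\e^{X_v(d_v)/2}} \le \mu_1\lambda\,\Eci{\widetilde{\Q}_x}{\int_0^s \Ppsqi{\widetilde{\Q}}{\widehat L > \kappa\e^{R_r/2}}{R_r}\diff r},
\]
where $(R_r)$ is the Bessel-3 process started at $x$; here I use that along the spine the successive inter-branching positions are governed by the Bessel-3 motion and the children counts are i.i.d.\ $\widehat L$. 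The inner probability is $\Pp{\widehat L > \kappa\e^{R_r/2}} = \mu_1^{-1}\Ec{L\,\1_{L>\kappa\e^{R_r/2}}}$. To kill the factor $x\e^{-x}$ and the $s$-dependence simultaneously, I would integrate in $r$ first: using the transition density of Bessel-3 and the fact that $\int_0^\infty$ of the Bessel-3 occupation measure against a function of position is finite with the right decay, one reduces to a bound of the form $x\e^{-x}\cdot C\int_0^\infty \e^{-y}\Ec{L\,\1_{L>\kappa\e^{y/2}}}\,\mathrm{d}y$ (the $\e^{-y}$ coming from combining the Bessel-3 Green's function $\sim$ something like $\min(x,y)$ against the $\e^{-x}$-type weight; this is exactly the kind of computation placed in Appendix~\ref{section:technical-results}). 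Changing variables $w = \kappa\e^{y/2}$ turns this into $C\kappa^{-2}\int_\kappa^\infty w^{-3}\cdot w\cdot\Pp{L>w}\,\mathrm{d}w \cdot(\text{bounded})$, wait, more carefully $\e^{-y} = (\kappa/w)^2$ and $\mathrm{d}y = 2\,\mathrm{d}w/w$, giving $C\kappa^2\int_\kappa^\infty w^{-3}\Ec{L\1_{L>w}}\,\mathrm{d}w$; one then checks this is $o(1)$ as $\kappa\to\infty$ precisely because $\Ec{L\log_+^2 L}<\infty$ (indeed $\int_1^\infty w^{-3}\Ec{L\1_{L>w}}\,\mathrm{d}w$ is, up to constants and a Fubini interchange, comparable to $\Ec{L(\log_+ L)}$ or better, hence finite, so the tail integral from $\kappa$ vanishes). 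Define $h(\kappa)$ to be the resulting decreasing bound on $\sup_{x,s}\Eci{x}{\abs{\widetilde{Z}_s-\widetilde{Z}_{s,\kappa}}}\e^{x}$, so that $h(\kappa)\to0$.

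The main obstacle I anticipate is the bookkeeping needed to pass from ``some strict ancestor of $w_s$'' to an integral over the spine's branching times with the correct measure, and especially getting the $x$-dependence to come out as exactly $\e^{-x}$ (not worse) after integrating the Bessel-3 occupation density — this is where one must use the specific form of the Bessel-3 Green's function and its interplay with the $x\e^{-x}/x = \e^{-x}$ prefactor. I would isolate that computation as a lemma in Appendix~\ref{section:technical-results} of the form: for a suitable nonnegative $\phi$, $x\e^{-x}\Eci{x}{\int_0^\infty \phi(R_r)\,\mathrm{d}r}\le C\e^{-x}\int_0^\infty \phi(y)(1\wedge\cdots)\,\mathrm{d}y$, and then just apply it. The only other subtlety is making sure the union bound over branching events (an a.s.\ finite but unbounded random number) is handled via the expectation/Poisson-rate formulation above rather than a crude deterministic bound; the rate-$\mu_1\lambda$ Poissonian structure of the spinal branching times under $\widetilde\Q_x$ makes this clean, and no second-moment assumption on $L$ is needed for this lemma — only $\Ec{L\log_+^2 L}<\infty$, which is implied by~\eqref{hypothese 1}.
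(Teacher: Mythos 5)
Your overall strategy coincides with the paper's: you rewrite $\Eci{x}{\widetilde{Z}_s-\widetilde{Z}_{s,\kappa}}$ via Proposition~\ref{prop:change-of-measure-Qtilde} as $x\e^{-x}\,\widetilde{\Q}_x(w_s\notin B_{s,\kappa})$ (correctly noting that only spine nodes are strict ancestors of $w_s$ and that the indicator is $\sF_s$-measurable), and then use the spinal description (Bessel-3 motion, size-biased offspring $\widehat L$ at rate-$\mu_1\lambda$ branching times) to bound this probability by $\mu_1\lambda\,\Eci{x}{\int_0^s \Pp{\widehat L>\kappa\e^{R_r/2}}\diff r}$. Up to this point everything matches the paper.

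The gap is in how you evaluate this occupation integral, and it changes the moment condition. You claim that the Bessel-3 Green's function, combined with the $\e^{-x}$-type weight, produces an $\e^{-y}$ factor in the spatial variable, reducing matters to $\int_0^\infty \e^{-y}\,\Ec{L\1_{L>\kappa\e^{y/2}}}\diff y$ and hence to a statement provable under $\Ec{L\log_+^2 L}<\infty$. This is not correct: the Green's function of the 3-dimensional Bessel process is $G(x,y)=2y^2(x^{-1}\wedge y^{-1})$ (see Appendix~\ref{section:technical-results}), which grows polynomially in $y$; the $\e^{-x}$ prefactor lives in the starting point $x$ and never interacts with $y$, so no exponential spatial decay is available. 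The correct computation conditions on $\widehat L$, notes that the integrand is $\1_{R_r<2\log_+(\widehat L/\kappa)}$, and applies \eqref{eq:time-bessel-under-a}, $\Eci{x}{\int_0^\infty \1_{R_r<a}\diff r}\le a^3/x$ (equivalently, integrating $x\,G(x,y)\le 2y^2$ against $\Pp{\widehat L>\kappa\e^{y/2}}$ gives $\tfrac{8}{3}\Ec{(\log_+(\widehat L/\kappa))^3}$). The factor $1/x$ is what cancels the $x$ in $x\e^{-x}$ and yields the clean $\e^{-x}$ bound — not any $\e^{-y}$ — and one ends up with $h(\kappa)=C\mu_1\lambda\,\Ec{L(\log_+(L/\kappa))^3}$, which is finite and tends to $0$ precisely because of the assumption $\Ec{L\log_+^3 L}<\infty$ in \eqref{hypothese 1}. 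So your assertion that $\Ec{L\log_+^2 L}<\infty$ suffices is wrong: the cube of the logarithm is forced by the quadratic growth of the Bessel-3 occupation density, and the paper explicitly remarks that this lemma is one of the places where the $L\log_+^3 L$ hypothesis cannot be relaxed. Your subsequent change of variables ($w=\kappa\e^{y/2}$) is carried out on the wrong integrand and inherits this error.
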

\begin{proof}
First note that $\widetilde{Z}_s \ge  \widetilde{Z}_{s,\kappa}$, so we can ignore the absolute value in the expectation.
Using Proposition \ref{prop:change-of-measure-Qtilde}, we get
\[
\Eci{x}{\widetilde{Z}_s - \widetilde{Z}_{s,\kappa}}
= \E_x \Biggl[
	\sum_{u \in \cN(s)} X_u(s) \e^{-X_u(s)} 
	\1_{\forall r \in [0,s], X_u(r) > 0} \1_{u \notin B_{s,\kappa}}
	\Biggr]
= x \e^{-x} \widetilde{\Q}_x \left( w_s \notin B_{s,\kappa} \right).
\]
Under $\P_x$, let $(R_r)_{r \geq 0}$ denote a 3-dimensional Bessel process and let $\widehat{L}$ be a random variable independent of $(R_r)_{r \geq 0}$ following the size-biased reproduction law.
Then, using the spinal decomposition description and the formula for
expectations of additive functionals of Poisson point processes (applied to the
branching times $r$ of the spine where the number of children exceeds
$\e^{X_{w_r}(r)/2}$), we have
\begin{align*}
\widetilde{\Q}_x \left( w_s \notin B_{s,\kappa} \right)
& \leq \Eci{x}{\int_0^s 
	\Ppsqi{x}{\widehat{L} > \kappa \e^{R_r/2}}{(R_r)_{r \geq 0}}
	\mu_1 \lambda\diff r} \\
& = \mu_1 \lambda \Eci{x}{ \Ecsqi{x}{\int_0^s \1_{R_r < 2 \log_+ (\widehat{L}/\kappa)}\diff r} {\widehat{L}} } \\
& \leq \mu_1 \lambda 
\Ec{\frac{1}{x} \left(2 \log_+ \left(\widehat{L}/\kappa \right) \right)^3},\quad\text{by \eqref{eq:time-bessel-under-a}.}
\end{align*}
The previous inequalities and the definition of $\widehat L$ yield,
\[
 \Eci{x}{\widetilde{Z}_s - \widetilde{Z}_{s,\kappa}} \le h(\kappa) \e^{-x},\quad h(\kappa) := 8 \mu_1 \lambda \E[ L (\log_+(L/\kappa))^3].
\]
The function $h(\kappa)$ is decreasing in $\kappa$ and vanishes as $\kappa \to \infty$, by dominated convergence and Assumption \eqref{hypothese 1}. This proves the result.
\end{proof}

\begin{lem} \label{lem:second-moment}
There exists $C >0$ such that for every $x\geq0$, $\kappa \ge 1$ and $s \geq 0$,
\begin{align*}
\Eci{x}{\widetilde{Z}_{s,\kappa}^2}
\leq C \kappa \e^{-x}.
\end{align*}
\end{lem}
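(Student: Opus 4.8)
The strategy is to expand the second moment of $\widetilde Z_{s,\kappa}$ into a diagonal term and an off-diagonal term, and to control each using the change of measure $\widetilde\Q_x$ from Proposition~\ref{prop:change-of-measure-Qtilde}. Write
\[
\widetilde Z_{s,\kappa}^2 = \sum_{u\in\cN(s)} X_u(s)^2 \e^{-2X_u(s)} \1_{\cdots} \1_{u\in B_{s,\kappa}} + \sum_{\substack{u,v\in\cN(s)\\ u\ne v}} X_u(s) X_v(s) \e^{-X_u(s)-X_v(s)} \1_{\cdots}.
\]
For the diagonal term, we use the many-to-one formula (or Proposition~\ref{prop:change-of-measure-Qtilde}) with the extra factor $X_u(s)\e^{-X_u(s)}$: under $\widetilde\Q_x$ the spine behaves like a $3$-dimensional Bessel process, so the diagonal contribution becomes $x\e^{-x}\Eci{\widetilde\Q_x}{\1_{w_s\in B_{s,\kappa}} R_s \e^{-R_s}}$ (modulo the precise normalization), which is bounded by $C\e^{-x}$ since $x\e^{-x}\cdot x^{-1}\sup_{y>0} ye^{-y}$ and the Bessel integrability are harmless. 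The factor $\kappa$ will not yet appear — it enters only through the off-diagonal term.

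For the off-diagonal term, I would decompose each unordered pair $\{u,v\}$ according to their most recent common ancestor $w\le u\wedge v$, splitting at the branching time $d_w$ into two independent sub-BBMs. Summing over $u>w$ in one subtree and $v>w$ in the other, and using the many-to-one/spine formula along the spine up to time $d_w$, one gets a bound of the shape
\[
\Eci{x}{\widetilde Z_{s,\kappa}^2} \;\lesssim\; \e^{-x} + C\,\Eci{\widetilde\Q_x}{\sum_{w\le w_s}\, (O_w-1)\,\1_{O_w\le \kappa \e^{X_w(d_w)/2}}\, G\big(X_w(d_w)\big)},
\]
where $G(y)$ collects the expectation of the product $\Eci{y}{\widetilde Z_{\cdot}}^2$-type quantity over the two detached subtrees; by \eqref{eq:W_tZ_t}, $\Eci{y}{\widetilde Z_{s-r}}\le \Eci{y}{Z_{s-r}} = y\e^{-y}$, so $G(y)\asymp y^2\e^{-2y}$ up to the reweighting already carried by the spine. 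The truncation $O_w\le \kappa\e^{X_w(d_w)/2}$ is exactly what tames the size-biased offspring: on this event $O_w \le \kappa \e^{X_w(d_w)/2}$, so the factor $(O_w-1)\e^{-X_w(d_w)}$ picked up from detaching a subtree is at most $\kappa \e^{-X_w(d_w)/2}$. Combined with a Poissonian sum over the branching times of the spine (the number of children minus one having a controlled size-biased law), and using that along the spine $X_{w_r}(r)$ is a Bessel process, the resulting integral $\lambda\mu_1\,\Eci{x}{\int_0^s \kappa\, \e^{-R_r/2}\cdot(\text{polynomial in }R_r)\,\diff r}$ converges and is bounded by $C\kappa$ uniformly in $s$ and $x$ (after the overall $x\e^{-x}/x = \e^{-x}$ normalization), giving the claimed $C\kappa\e^{-x}$.

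The main obstacle is the bookkeeping in the off-diagonal sum: correctly keeping track of which spine carries the $\widetilde\Q_x$-weight, ensuring the two detached subtrees are handled by the \emph{unweighted} many-to-one bound $\Eci{y}{Z_{s-r}}\le y\e^{-y}$ rather than another spine change, and verifying that the truncation $\1_{O_w\le\kappa\e^{X_w(d_w)/2}}$ is inherited correctly by the pair sum (it is, since $w<u$ and $w<v$ both require $O_w\le\kappa\e^{X_w(d_w)/2}$ through the definition of $B_{s,\kappa}$). The Bessel-process integrability — that $\Eci{x}{\int_0^\infty \e^{-R_r/2}(1+R_r)^j\,\diff r}\le C_j/x$ or a similar uniform bound — should follow from the explicit computations in Appendix~\ref{section:technical-results} (the same type of estimate \eqref{eq:time-bessel-under-a} already invoked in Lemma~\ref{lem:not-in-B}); once that is in hand, the constant $C$ is genuinely independent of $x,\kappa,s$ as required.
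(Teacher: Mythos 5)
Your argument is correct in substance and uses exactly the same ingredients as the paper's proof --- the change of measure $\widetilde{\Q}_x$ of Proposition~\ref{prop:change-of-measure-Qtilde}, the truncation $O_v\le\kappa\e^{X_v(d_v)/2}$ to tame the offspring numbers at branching points, the first-moment identity for the detached subtrees, and the Bessel estimate \eqref{eq:integral-bessel} --- but it organizes the computation differently. You split $\widetilde{Z}_{s,\kappa}^2$ into diagonal and off-diagonal parts and decompose pairs at their most recent common ancestor; the paper instead writes $\widetilde{Z}_{s,\kappa}^2=\sum_{u\in\cN(s)}X_u(s)\e^{-X_u(s)}\1_{\forall r\in[0,s],X_u(r)>0}\1_{u\in B_{s,\kappa}}\,\widetilde{Z}_{s,\kappa}$, applies Proposition~\ref{prop:change-of-measure-Qtilde} once (so the ``first'' particle of each pair becomes the spine), bounds $\widetilde{Z}_{s,\kappa}\le\widetilde{Z}_s$, and then decomposes $\widetilde{Z}_s$ over the subtrees attached to the spine; your diagonal term is precisely the spine's own contribution $X_{w_s}(s)\e^{-X_{w_s}(s)}$ in that decomposition. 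The one-shot version buys automatic bookkeeping (no separate pair count $O_w(O_w-1)$ versus $O_w-1$, no double-counting issues), while your version is the classical route and works just as well: both reduce to $\Eci{x}{\int_0^s R_r\e^{-R_r/2}\diff r}\le 32/x$, which after multiplying by $\kappa\mu_1\lambda$ and the normalization $x\e^{-x}$ gives the claimed $C\kappa\e^{-x}$.

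Two details to tighten. First, justify $\Eci{y}{\widetilde{Z}_{s-r}}\le y\e^{-y}$ by the martingale property of the truncated derivative martingale under $\P_y$ (it is in fact an equality), not by the pathwise comparison $\widetilde{Z}\le Z$: that inequality is false in general, since $Z$ contains signed terms coming from particles that went below $0$. Second, for the diagonal (spine) term the crude bound $\sup_{y>0}y\e^{-y}$ is not enough, as it leaves $x\e^{-x}\cdot O(1)$, which is not $O(\e^{-x})$ uniformly in $x$; you need $\Eci{x}{R_s\e^{-R_s}}\le C/x$, which follows from the density bound \eqref{eq:density_R} (the density of $R_s$ under $\P_x$ is at most $\tfrac{y}{x}$ times the Gaussian kernel), exactly as in the paper.
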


\begin{proof}
Using Proposition \ref{prop:change-of-measure-Qtilde}, we have 
\begin{align*}
\Eci{x}{\widetilde{Z}_{s,\kappa}^2}
&= \Eci{x}{\sum_{u \in \cN(s)} X_u(s) \e^{-X_u(s)} 
	\1_{\forall r \in [0,s], X_u(r) > 0} \1_{u \notin B_{s,\kappa}} \widetilde{Z}_{s,\kappa} } \\
&= x \e^{-x} \Eci{\widetilde{\Q}_x}{\widetilde{Z}_{s,\kappa} \1_{w_s \in B_{s,\kappa}}}
\leq x \e^{-x} \Eci{\widetilde{\Q}_x}{\widetilde{Z}_s \1_{w_s \in B_{s,\kappa}}}.
\end{align*}
The $\widetilde{Z}_s$ appearing in this last expectation can be decomposed as a sum indexed by the (non-spine) children of the spine particles, each term in the sum corresponding to the contribution of the descendants of this child, plus the contribution from the spine particle at time $s$.
With $\Pi_s$ denoting the set of branching times of the spine before $s$, we then get
\begin{align*}
\Eci{\widetilde{\Q}_x}{\widetilde{Z}_s \1_{w_s \in B_{s,\kappa}}}
& = \Eci{\widetilde{\Q}_x}{\1_{w_s \in B_{s,\kappa}} 
	\sum_{r \in \Pi_s}
	\left( O_{w_r} - 1 \right)
	\Eci{X_{w_r}(r)}{\widetilde{Z}_{s-r}} + X_{w_s}(s) \e^{-X_{w_s}(s)} } \\
& \leq \Eci{x}{\int_0^s \kappa \e^{R_r/2}
	\Eci{R_r}{\widetilde{Z}_{s-r}} \mu_1 \lambda \diff r + R_s \e^{-R_s}}\\
& = \mu_1 \lambda\kappa \Eci{x}{\int_0^s R_r \e^{-R_r/2} \diff r}  + \Eci{x}{R_s \e^{-R_s}},
\end{align*}
using that $\E_{R_r}[\widetilde{Z}_{s-r}] = R_r \e^{-R_r}$. The first
expectation is bounded by $\frac{32}{x}$ by \eqref{eq:integral-bessel}. The
second expectation is bounded by $\frac {C'} x$, with $C' = \max_{y\ge0}
y^2\e^{-y}$, since the density of $R_s$ under $\P_x$ is bounded by $\frac y x
g_s(x,y)$, with $g_s(x,y)$ the Gaussian kernel of variance $s$, see
\eqref{eq:density_R}.
This concludes the proof.
\end{proof}

\subsection{Number of particles killed by the barrier}
\label{subsection:number-of-particles-killed-by-the-barrier}

\def\stopped{0}

We still consider here a BBM starting with a single particle at $x\ge0$ and where particles are killed when hitting 0. 
Let $\cL$ denote the stopping line of the killed particles and, for $u \in \cL$, let $\Delta_u$ be the killing time of $u$.
Our interest in this section is the number 
$$N_{[a,b]} \coloneqq \# \{ u \in \cL : \Delta_u \in [a,b] \}$$
of particles that are killed at 0 between times $a$ and $b$. 
More generally, we introduce as before a new random variable, where we remove the particles with too many children: for $\kappa \ge 0$ and $0\le a\le b$, we set
\begin{align*}
C_\kappa
&\coloneqq
\left\{
u \in \cL :
\forall v < u,  O_v \leq \kappa \e^{X_v(d_v)/2}
\right\}, \\
N_{[a,b],\kappa}
& \coloneqq 
\# \{ u \in C_\kappa : \Delta_u \in [a,b] \}.
\end{align*}

It will be most convenient to consider a different branching Markov process (in the sense of Hardy and Harris \cite{Hardy2009}), namely, BBM where particles are \emph{stopped} at the origin. Formally, this is a branching Markov process where the motion of the particles is standard Brownian motion on $[0,\infty)$, stopped when it hits the origin, and the branching rate is equal to $\lambda$ on $(0,\infty)$ and zero at the origin (the offspring distribution is the same as before). We will denote the law and expectation w.r.t.~this process by $\P^{\stopped}_x$ and $\E^{\stopped}_x$, respectively. It is clear that $(W_t)_{t\ge0}$ is still a martingale under $\P^{\stopped}_x$. As in Section~\ref{subsection:change-of-probabilities}, we can therefore define a new measure $\Q^{\stopped}_x$ by changing the measure $\P^{\stopped}_x$ w.r.t~ the normalized martingale $(\e^{-x} W_t)_{t\ge0}$. This measure has an analogous description as the measure $\Q_x$, except that all particles (including the spine), are stopped at the origin and do not branch once stopped. Furthermore, we have the following many-to-one lemma:

\begin{prop} \label{prop:change-of-measure-Q0}
Let $x\ge 0$.
Let $s \geq 0$ and let $(H_s(u))_{u\in \T}$ be a family of uniformly bounded $\sF_s$-measurable random variables. Then, we have
\begin{align*}
\Ecii{x}{\stopped}{\sum_{u\in\cN(s)} \e^{-X_u(s)} H_s(u)} = \e^{-x}\Eci{\Q^{\stopped}_x}{H_s(w_s)}.
\end{align*}
\end{prop}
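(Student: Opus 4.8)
The statement to prove is Proposition~\ref{prop:change-of-measure-Q0}, the many-to-one lemma for BBM with particles stopped at the origin. The plan is to follow exactly the same route used to establish Proposition~\ref{prop:change-of-measure-Q} for the standard BBM, adapting each ingredient to the stopped process. First I would recall the two descriptions of the measure $\Q_x^{\stopped}$: on the one hand it is defined as the change of measure of $\P_x^{\stopped}$ by the normalized martingale $(\e^{-x}W_s)_{s\ge0}$ on $\sF_\infty$; on the other hand, as indicated in the paragraph preceding the statement, it admits a spinal description identical to that of $\Q_x$ except that every particle (the spine included) is a Brownian motion on $[0,\infty)$ stopped upon hitting $0$, with branching switched off once a particle is stopped. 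The consistency of these two descriptions is the analogue, for the stopped process, of the corresponding fact in Hardy--Harris~\cite{Hardy2009}, and can be obtained by the same martingale-change-of-measure computation: the additive martingale $(W_s)_{s\ge0}$ is still a non-negative $\P_x^{\stopped}$-martingale (as already noted in the text), so the Radon--Nikodym derivative $\e^x W_s$ is well-defined, and unwinding the branching structure of $W_s$ yields the size-biased offspring law together with the uniform choice of spine, while the Girsanov transform of a stopped Brownian motion under the $h$-transform by $\e^{-x}$ (which is the trivial harmonic function here, since we only tilt the branching, not the spatial motion — the spine remains a stopped Brownian motion without drift) gives the stated motion of the spine.

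The core computation is then the standard one. Fix $s\ge0$ and a uniformly bounded family $(H_s(u))_{u\in\T}$ of $\sF_s$-measurable random variables. The key observation is that
\[
\sum_{u\in\cN(s)} \e^{-X_u(s)} H_s(u) = W_s \sum_{u\in\cN(s)} \frac{\e^{-X_u(s)}}{W_s} H_s(u),
\]
where, on the event $\{W_s>0\}$, the weights $\e^{-X_u(s)}/W_s$ define a probability measure on $\cN(s)$; picking $u$ according to these weights is precisely the law of the spine vertex $w_s$ under $\Q_x^{\stopped}$ given $\sF_s$. Hence
\[
\Eci{\Q_x^{\stopped}}{H_s(w_s)} = \Eci{\Q_x^{\stopped}}{\sum_{u\in\cN(s)}\frac{\e^{-X_u(s)}}{W_s}H_s(u)} = \Eci{x}^{\stopped}{\e^x W_s \sum_{u\in\cN(s)}\frac{\e^{-X_u(s)}}{W_s}H_s(u)} = \e^x \Eci{x}^{\stopped}{\sum_{u\in\cN(s)}\e^{-X_u(s)}H_s(u)},
\]
using in the second equality the definition $\Q_x^{\stopped}\!\restreinta_{\sF_s} = \e^x W_s\bullet\P_x^{\stopped}\!\restreinta_{\sF_s}$ and in the last one that the $W_s$ cancel (with the convention that on $\{W_s=0\}$ the original sum vanishes too, since $W_s=0$ forces $\cN(s)=\emptyset$ after the event $\{W_s=0\}$ has full weight zero under the tilted measure anyway). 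Rearranging gives the claimed identity. Since this is exactly the argument behind Proposition~\ref{prop:change-of-measure-Q}, one could alternatively just invoke~\cite{Hardy2009} verbatim after checking that their hypotheses apply to the stopped branching Markov process.

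The only genuine point requiring care — and hence the main obstacle — is the verification that the abstract framework of Hardy and Harris applies to the \emph{stopped} process, i.e. that killing the spatial motion at $0$ and setting the branching rate to zero on the boundary fits their setup and that the martingale $(W_s)_{s\ge0}$ and its associated spine decomposition behave as in the unkilled case. This is essentially bookkeeping: the stopped Brownian motion on $[0,\infty)$ is still a nice Markov process, $\e^{-x}$ (the constant tilting function in the critical case, after the usual exponential transform absorbed into the definition of $W_s$) restricted to $[0,\infty)$ still gives rise to the martingale, and the branching mechanism is unchanged away from the boundary, where no branching occurs. Once this is granted, the spinal description stated in the text before Proposition~\ref{prop:change-of-measure-Q0} is justified and the many-to-one identity follows by the display above with no further work. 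I would therefore write the proof as: recall the definition of $\Q_x^{\stopped}$ and the spine decomposition, note it follows from~\cite{Hardy2009,Liu2011} as in Proposition~\ref{prop:change-of-measure-Q}, and then give the two-line weight-cancellation computation above.
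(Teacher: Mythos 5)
Your proposal is correct and follows essentially the same route as the paper, which states this proposition without further proof as a consequence of the Hardy--Harris spine framework applied to the stopped branching Markov process (exactly as for Proposition~\ref{prop:change-of-measure-Q}). Your explicit two-line computation — using that under $\Q_x^{\stopped}$ the conditional law of $w_s$ given $\sF_s$ charges $u\in\cN(s)$ with weight $\e^{-X_u(s)}/W_s$, then undoing the change of measure — is precisely the standard argument behind that citation, and your remark that the only real work is checking the stopped process fits the Hardy--Harris (or Liu--Ren--Song) setup matches the paper's implicit reasoning.
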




As a first application of Proposition~\ref{prop:change-of-measure-Q0}, we calculate the first moment of $N_{[a,b]}$ (this calculation is standard but we include it for convenience).

\begin{lem} \label{lem:first-moment-N}
For $x \geq 0$ and $0\le a\le b$, we have
\[
\Eci{x}{N_{[a,b]}} 
= \e^{-x} \int_a^b \frac{x}{\sqrt{2\pi}s^{3/2}} \e^{-x^2/2s} \diff s.
\]
Moreover, it follows that $\E_x[N_{[0,\infty)}] = \e^{-x}$.
\end{lem}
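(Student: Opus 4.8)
The plan is to compute $\E_x[N_{[a,b]}]$ via a first-moment (many-to-one) argument for the \emph{stopped} BBM, using Proposition~\ref{prop:change-of-measure-Q0}, and then recognize the resulting integral as the law of the hitting time of $0$ by a Brownian motion started at $x$. First I would write $N_{[a,b]} = \sum_{u\in\cN(s)}\mathbbm{1}_{\{\text{$u$ hits $0$ in $[a,b]$}\}}$ in a way amenable to the many-to-one formula. Concretely, in the stopped BBM $\P^0_x$, a particle that has hit $0$ stays at $0$ forever and does not branch, so for $s\ge b$ the event ``particle $u\in\cN(s)$ is on the stopping line $\cL$ with $\Delta_u\in[a,b]$'' is $\sF_s$-measurable and is detected by looking at the trajectory $(X_u(r))_{r\le s}$. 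The technical nuisance is the $\e^{-X_u(s)}$ weight that the many-to-one formula insists on; since a counted particle sits at position $0$ for all times $\ge b$, one has $\e^{-X_u(s)}=1$ for such $u$ when $s\ge b$, so $N_{[a,b]} = \sum_{u\in\cN(s)} \e^{-X_u(s)} H_s(u)$ with $H_s(u)=\mathbbm{1}_{\{u\text{ hits }0\text{ during }[a,b]\}}$, a bounded $\sF_s$-measurable functional, for any fixed $s\ge b$.

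Then I would apply Proposition~\ref{prop:change-of-measure-Q0}:
\[
\E^0_x[N_{[a,b]}] = \e^{-x}\,\E_{\Q^0_x}[H_s(w_s)] = \e^{-x}\,\Q^0_x\bigl(w\text{ hits }0\text{ during }[a,b]\bigr).
\]
Under $\Q^0_x$ the spine moves as a standard Brownian motion (no drift!) stopped at $0$, so writing $\tau_0 = \inf\{r\ge0: B_r = 0\}$ for a Brownian motion started at $x$, the right-hand side equals $\e^{-x}\,\P_x(\tau_0\in[a,b])$. The density of $\tau_0$ under $\P_x$ is the well-known Lévy formula $f_{\tau_0}(s) = \frac{x}{\sqrt{2\pi}\,s^{3/2}}\e^{-x^2/(2s)}$ for $s>0$, which gives exactly
\[
\E_x[N_{[a,b]}] = \e^{-x}\int_a^b \frac{x}{\sqrt{2\pi}\,s^{3/2}}\e^{-x^2/2s}\diff s,
\]
after noting that $N_{[a,b]}$ does not depend on whether we run the stopped or the original BBM (the stopping line and killing times coincide). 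Letting $b\to\infty$ and using $a=0$ gives $\E_x[N_{[0,\infty)}] = \e^{-x}\,\P_x(\tau_0<\infty) = \e^{-x}$, since Brownian motion is recurrent; equivalently one integrates the Lévy density over $(0,\infty)$.

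I expect the only real subtlety — hardly an obstacle — to be the bookkeeping that justifies rewriting $N_{[a,b]}$ as an $\e^{-X_u(s)}$-weighted sum over $\cN(s)$ for the stopped process, i.e.\ making precise that a counted particle contributes weight $1$ and that particles killed after time $b$ or never killed contribute $0$ to $H_s$; and the standard-but-worth-stating fact that the stopping line $\cL$ and the killing times $\Delta_u$ are the same objects under $\P_x$ and $\P^0_x$, so that the many-to-one identity for the stopped process computes the quantity we want. Everything else is the Lévy hitting-time density and recurrence of Brownian motion, both classical.
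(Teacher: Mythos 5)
Your proof is correct and follows essentially the same route as the paper: rewrite $N_{[a,b]}$ as an $\e^{-X_u(\cdot)}$-weighted sum over the stopped BBM (the weight being $1$ for particles frozen at $0$), apply the many-to-one formula of Proposition~\ref{prop:change-of-measure-Q0} so that the spine is a Brownian motion stopped at the origin, and conclude with the L\'evy hitting-time density and almost sure finiteness of the hitting time. The only cosmetic difference is that the paper evaluates at time $s=b$ with the indicator $\{X_u(b)=0,\ X_u(r)>0\ \forall r<a\}$ rather than at a general $s\ge b$, which changes nothing.
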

\begin{proof}
We first rewrite the expectation using BBM stopped at the origin:
\[
 \Eci{x}{N_{[a,b]}} = \Ecii{x}{\stopped}{\sum_{u\in\cN(b)} \1_{X_u(b) = 0,\,X_u(r)> 0\,\forall r<a}}.
\]
Note that if $X_u(b) = 0$, then $1 = \e^{-X_u(b)}$. We can therefore apply the many-to-one formula (Proposition~\ref{prop:change-of-measure-Q0}) and get,
\begin{align*}
\Eci{x}{N_{[a,b]}} 
& = \e^{-x} \Q_x^{\stopped}\left(X_{w_b}(b) = 0,\,X_{w_b}(r)> 0\,\forall r<a \right)\\
& = \e^{-x} \Ppi{x}{\inf \{ r \geq 0 : B_r = 0 \} \in [a,b]},
\end{align*}
where $(B_r)_{r \geq 0}$ is a standard Brownian motion started at $x$ under $\P_x$.
The law of the hitting time of the origin of Brownian motion started at $x$ is known (see e.g.~Equation 1.2.0.2 of Borodin and Salminen \cite{borodinsalminen2002}) and implies the first statement of the lemma. The second statement follows by setting $a=0$, letting $b\to\infty$ and observing that the hitting time is finite almost surely (or by direct calculation, using for example the change of variables $v = x/\sqrt{s}$).
\end{proof}

\begin{lem} \label{lem:not-in-C}
There exists a decreasing function $h \colon \R_+ \to \R_+$ such that $\lim_{a \to \infty} h(a) = 0$ and, for any $x \ge 0$, $\kappa>1$,
\[
\Eci{x}{N_{[0,\infty)} - N_{[0,\infty),\kappa}} \leq \frac{h(\kappa)}{\log \kappa} \e^{-x}.
\]
\end{lem}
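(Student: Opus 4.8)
The statement parallels Lemma~\ref{lem:not-in-B} (and its proof), now for the barrier-killed process instead of the whole BBM, and the extra factor $1/\log\kappa$ should come from the fact that we are weighting by indicators of particles \emph{hitting} the barrier, which is a rare event. The plan is to first rewrite the difference $N_{[0,\infty)} - N_{[0,\infty),\kappa}$ as a sum over particles killed at the origin that have an ancestor with ``too many'' children, and then apply the many-to-one formula of Proposition~\ref{prop:change-of-measure-Q0} (for the process stopped at the origin) to express its expectation as a spine computation under $\Q_x^{\stopped}$. Concretely, as in Lemma~\ref{lem:first-moment-N}, we have
\[
\Eci{x}{N_{[0,\infty)} - N_{[0,\infty),\kappa}}
= \e^{-x}\,\Q_x^{\stopped}\!\left( w \text{ hits } 0,\ \exists v < w \text{ with } O_v > \kappa \e^{X_v(d_v)/2} \right),
\]
where ``$w$ hits $0$'' means the spine particle is eventually stopped at the origin (which under $\Q_x^{\stopped}$ happens almost surely since the spine is a Brownian motion started at $x\ge 0$), and $v<w$ ranges over the strict ancestors of the spine.

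\textbf{Key steps.} First, I would bound the probability on the right using the formula for additive functionals of the Poisson point process of branching times along the spine: the spine moves as a Brownian motion $(B_r)_{r\ge0}$ on $[0,\infty)$ stopped at $0$, and at rate $\mu_1\lambda$ it branches with a size-biased number of children $\widehat L$ (independent of the motion). So
\[
\Q_x^{\stopped}\!\left( \exists v < w :\ O_v > \kappa \e^{X_v(d_v)/2} \right)
\le \Eci{x}{\int_0^{\tau_0} \Ppsqi{x}{\widehat L > \kappa \e^{B_r/2}}{(B_r)} \mu_1\lambda \diff r},
\]
where $\tau_0 = \inf\{r\ge0 : B_r = 0\}$ is the hitting time of the origin (finite a.s.). Swapping expectations and conditioning on $\widehat L$, the event $\widehat L > \kappa\e^{B_r/2}$ is $\{B_r < 2\log_+(\widehat L/\kappa)\}$, so the inner quantity is $\mu_1\lambda\, \Ecsqi{x}{\int_0^{\tau_0}\1_{B_r < 2\log_+(\widehat L/\kappa)}\diff r}{\widehat L}$. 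The occupation time of Brownian motion started at $x\ge0$ in the interval $[0,M]$ before hitting $0$ is a standard computation: using the known Green's function for Brownian motion on $[0,\infty)$ absorbed at $0$ — namely $G(x,y) = 2(x\wedge y)$ — one gets $\Eci{x}{\int_0^{\tau_0}\1_{B_r < M}\diff r} = \int_0^M 2(x\wedge y)\diff y \le M^2$ (and $\le 2xM$, but $M^2$ is the right bound uniformly in $x$). This replaces the Bessel occupation-time estimate \eqref{eq:time-bessel-under-a} used in Lemma~\ref{lem:not-in-B}: here the power is $2$ rather than $3$, which is the source of the improved $1/\log\kappa$. Plugging in $M = 2\log_+(\widehat L/\kappa)$ and taking expectation over $\widehat L$ gives a bound of the form $C\,\E[\widehat L\, (\log_+(\widehat L/\kappa))^2]/\ ?$ — wait, one must be careful: there is no leftover $1/x$ here (unlike the Bessel case, where a $1/x$ appeared and cancelled the $x$ in $x\e^{-x}$), so the bound on the spine probability is just $C\,\E[\widehat L\,(2\log_+(\widehat L/\kappa))^2] = C'\,\E[L\,(\log_+(L/\kappa))^2]$ after unbiasing, and multiplying by $\e^{-x}$ gives exactly the shape $h_0(\kappa)\e^{-x}$ with $h_0(\kappa) := C''\,\E[L(\log_+(L/\kappa))^2]$.

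\textbf{Extracting the $1/\log\kappa$ factor.} The claimed bound carries an extra $1/\log\kappa$, so the point is that $\E[L(\log_+(L/\kappa))^2]$ is not merely $o(1)$ but $o(1/\log\kappa)$ under the hypothesis $\E[L\log_+^3 L]<\infty$. Indeed, write $\E[L(\log_+(L/\kappa))^2] = \E[L(\log_+(L/\kappa))^2 \1_{L\le \kappa^2}] + \E[L(\log_+(L/\kappa))^2\1_{L>\kappa^2}]$. On $\{L>\kappa^2\}$ we have $\log_+(L/\kappa) \le \log L$, so $(\log_+(L/\kappa))^2 \le (\log L)^3/\log\kappa$ (since $\log L > 2\log\kappa > \log\kappa$ forces $(\log L)^2 \le (\log L)^3/\log\kappa$), hence this piece is $\le \E[L\log_+^3 L]/\log\kappa$. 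On $\{\kappa < L\le\kappa^2\}$ we have $\log_+(L/\kappa)\le\log\kappa$, so $(\log_+(L/\kappa))^2 \le \log\kappa\cdot\log_+(L/\kappa) \le \log\kappa\cdot(\log L)^2/ \log\kappa \cdots$ — more cleanly, $(\log_+(L/\kappa))^2 \le (\log L)^3/\log\kappa$ still holds here because $\log_+(L/\kappa) \le \log L$ and $\log L > \log\kappa$. And on $\{L\le\kappa\}$ the summand vanishes. So in all cases $L(\log_+(L/\kappa))^2 \le L(\log L)^3/\log\kappa$, giving directly
\[
\E\!\left[L\,(\log_+(L/\kappa))^2\right] \le \frac{\E[L\log_+^3 L]}{\log\kappa},
\]
finite by Assumption \eqref{hypothese 1}. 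This yields $\Eci{x}{N_{[0,\infty)} - N_{[0,\infty),\kappa}} \le \frac{h(\kappa)}{\log\kappa}\e^{-x}$ with $h(\kappa) := C\,\E[L\log_+^3 L\,\1_{L>\kappa}]$ — one should localize with $\1_{L>\kappa}$ (legitimate since the summand already vanishes for $L\le\kappa$) to ensure $h$ is decreasing and $h(\kappa)\to0$ by dominated convergence. This $h$ is decreasing and vanishes at infinity, as required.

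\textbf{Main obstacle.} The conceptual content is routine once the pieces are in place; the only genuine care is (i) getting the Brownian occupation-time / Green's function bound right (on $[0,\infty)$ absorbed at $0$, started from $x\ge0$, including the edge case $x=0$ where $\tau_0=0$ and both sides vanish — consistent), and (ii) arranging the algebra so that the $1/\log\kappa$ truly comes out, which as shown above is a one-line consequence of $\log_+(L/\kappa)\le\log L$ together with $\log L>\log\kappa$ on the support of the relevant indicator. The bookkeeping to match the precise statement format — isolating a decreasing $h$ with $h(\kappa)\to0$ rather than just the explicit expectation — is the sort of detail to be handled but presents no real difficulty, exactly as in the proof of Lemma~\ref{lem:not-in-B}.
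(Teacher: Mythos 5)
Your proposal is correct and takes essentially the same route as the paper: many-to-one for the BBM stopped at the origin, the spine/Poisson bound on branching times, the occupation-time estimate $\Eci{x}{\int_0^\tau \1_{B_r<a}\diff r}\le a^2$, and the extraction of the $1/\log\kappa$ factor from the fact that $\log L>\log\kappa$ wherever the integrand is nonzero. The only cosmetic differences are that the paper first works with $N_{[0,s]}$ for finite $s$ and lets $s\to\infty$ (since the many-to-one formula is stated at a fixed time), and it packages the vanishing function as $h(\kappa)=C\,\Ec{L(\log_+(L/\kappa))^2\log_+L}$ instead of your $C\,\Ec{L\log_+^3L\,\1_{L>\kappa}}$ — both are decreasing and tend to $0$ by dominated convergence under $\Ec{L\log_+^3L}<\infty$.
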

\begin{proof}
As in the beginning of the proof of Lemma~\ref{lem:first-moment-N}, for any $s \geq 0$, we have
\begin{align*}
\Eci{x}{N_{[0,s]} - N_{[0,s],\kappa}}
& = \e^{-x} \Q_x^{\stopped}\left(X_{w_s}(s) = 0,\,w_s \notin C_\kappa\right).
\end{align*}
Under $\P_x$, let $(B_r)_{r \geq 0}$ denote a Brownian motion and let $\widehat{L}$ be a random variable independent of $(B_r)_{r \geq 0}$ following the size-biased reproduction law. Furthermore, set  $\tau \coloneqq \inf \{ r \geq 0 : B_r = 0 \}$.
Then, using the spinal decomposition description under $\Q^0_x$, we obtain as in Lemma~\ref{lem:not-in-B},
\begin{align*}
\Q_x^{\stopped}\left(X_{w_s}(s) = 0,\,w_s \notin C_\kappa\right)
& \leq \Eci{x}{\1_{\tau \leq s}
	\int_0^\tau \Ppsqi{x}{\widehat{L} > \kappa \e^{B_t/2}}{(B_t)_{t \geq 0}}
	\mu_1 \lambda\diff t} \\
& \leq \mu_1 \lambda \Eci{x}{
	\Ecsqi{x}{\int_0^\tau \1_{B_t < 2 \log_+ (\widehat{L}/\kappa)}\diff t}{\widehat{L}}
	}\\
& \leq \mu_1 \lambda \Ec{\left( 2 \log_+ (\widehat{L}/\kappa) \right)^2} && \text{by \eqref{eq:time-killed-BM-under-a}}\\
& \leq 4 \mu_1 \lambda \widehat{\E} \left[
	\left( \log_+ (\widehat{L}/\kappa) \right)^2
	\1_{\widehat{L} \geq \kappa}
	\frac{\log_+ \widehat{L}}{\log_+ \kappa}
	\right].
\end{align*}
Setting $h(\kappa) \coloneqq 4 \mu_1 \lambda \E[ L (\log_+(L/\kappa))^2 \log_+ L]$, we have $\lim_{\kappa \to \infty} h(\kappa) = 0$ by dominated convergence (and using Assumption \eqref{hypothese 1}).
Letting $s \to \infty$, this proves the result.
\end{proof}

\begin{lem} \label{lem:second-moment-N}
There exists $C >0$ such that we have, for any $x \geq 0$ and $\kappa \ge 1$,
\begin{align*}
\Eci{x}{N_{[0,\infty),\kappa}^2}
\leq C \kappa \e^{-x}.
\end{align*}
\end{lem}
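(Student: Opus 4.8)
The plan is to follow the same strategy as in the proof of Lemma~\ref{lem:second-moment}, using the change of measure $\Q^{\stopped}_x$ associated with the critical additive martingale for the BBM stopped at the origin. First I would write, using Proposition~\ref{prop:change-of-measure-Q0} applied in the form where the test functional picks out the other killed particles,
\[
\Eci{x}{N_{[0,\infty),\kappa}^2}
= \Ecii{x}{\stopped}{\sum_{u \in \cN(\cdot)} \e^{-X_u} \1_{X_u = 0}\,\1_{u \in C_\kappa}\, N_{[0,\infty),\kappa}}
= \e^{-x} \Eci{\Q^{\stopped}_x}{\1_{w \text{ hits }0,\, w \in C_\kappa}\, N_{[0,\infty),\kappa}},
\]
(with the obvious meaning that the spine particle $w$ is the one stopped at $0$). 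More precisely one takes $s\to\infty$ after a finite-horizon version, as in Lemma~\ref{lem:not-in-C}. The key point is that on the event that the spine hits $0$, the quantity $N_{[0,\infty),\kappa}$ decomposes along the spine: it equals the contribution of the spine particle itself (which is $1$ once the spine is killed, contributing at most $1$) plus, over each branching time $r\in\Pi$ of the spine before it hits $0$, the sum over the $O_{w_r}-1$ non-spine children of an independent copy of $N_{[0,\infty),\kappa}$ started from the position $X_{w_r}(r)>0$ of that child.

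Next I would take conditional expectation given the spine trajectory and its branching/offspring data. By Lemma~\ref{lem:first-moment-N}, each independent copy started from $y>0$ has $\E_y[N_{[0,\infty),\kappa}] \le \E_y[N_{[0,\infty)}] = \e^{-y}$. Hence, bounding $1 + (\text{sum over branching times})$ and using that on $C_\kappa$ we have $O_{w_r}-1 \le \kappa \e^{X_{w_r}(r)/2}$, the Poisson-point-process formula for the branching times of the spine (rate $\mu_1\lambda$) gives
\[
\Eci{x}{N_{[0,\infty),\kappa}^2}
\le \e^{-x}\Eci{x}{\1_{\tau<\infty}\Bigl(1 + \int_0^\tau \kappa \e^{B_r/2}\,\e^{-B_r}\,\mu_1\lambda\diff r\Bigr)}
= \e^{-x}\Bigl(\Ppi{x}{\tau<\infty} + \mu_1\lambda\kappa\,\Eci{x}{\int_0^\tau \e^{-B_r/2}\diff r}\Bigr),
\]
where under $\Q^{\stopped}_x$ the spine moves as a standard Brownian motion stopped at $0$, $\tau = \inf\{r: B_r=0\}$, which is finite a.s., so the first term is $1$. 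For the integral term, I would invoke the Brownian-motion estimate from Appendix~\ref{section:technical-results} — the analogue of \eqref{eq:integral-bessel}/\eqref{eq:time-killed-BM-under-a} — which bounds $\Eci{x}{\int_0^\tau \e^{-B_r/2}\diff r}$ by a constant independent of $x$ (indeed $\E_x\int_0^\tau \e^{-B_r/2}\diff r = \int_0^\infty G(x,y)\e^{-y/2}\diff y$ with $G(x,y) = 2(x\wedge y)$ the Green's function of BM killed at $0$, which is $\le 2\int_0^\infty \e^{-y/2}\diff y = 4$). Collecting the pieces yields $\Eci{x}{N_{[0,\infty),\kappa}^2} \le C\kappa\,\e^{-x}$ with $C = 1 + 4\mu_1\lambda$ (up to absorbing constants), which is the claim.

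The one subtlety — and the main thing to get right rather than a genuine obstacle — is the bookkeeping of the spine decomposition of $N_{[0,\infty),\kappa}$: one must check that the restriction $u\in C_\kappa$ is compatible with the recursive structure (a descendant of a non-spine child lies in $C_\kappa$ iff that subtree satisfies the $O_v \le \kappa\e^{X_v(d_v)/2}$ constraint along its own ancestors, which is exactly the law of $N_{[0,\infty),\kappa}$ restarted), and that dropping the $C_\kappa$ restriction on the sub-copies only increases the quantity, so replacing $N_{[0,\infty),\kappa}$ by $N_{[0,\infty)}$ inside the first-moment bound is legitimate. The finite-horizon truncation (work with $N_{[0,s],\kappa}$, bound uniformly in $s$, then let $s\to\infty$ by monotone convergence) handles any integrability concerns in applying Proposition~\ref{prop:change-of-measure-Q0}. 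Everything else is the same routine computation as in Lemmas~\ref{lem:second-moment} and~\ref{lem:not-in-C}.
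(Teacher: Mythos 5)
Your proposal is correct and follows essentially the same route as the paper's proof: the change of measure of Proposition~\ref{prop:change-of-measure-Q0}, the spinal decomposition with the first-moment bound of Lemma~\ref{lem:first-moment-N} for the non-spine subtrees, the bound $O_{w_r}-1\le\kappa\e^{X_{w_r}(r)/2}$ on the event that the spine lies in $C_\kappa$, the finite-horizon truncation with $s\to\infty$, and the Brownian estimate \eqref{eq:integral-killed-BM}; the paper merely splits $N_{[0,\infty),\kappa}^2$ into the factorial moment plus the first moment, which is the same bookkeeping as your accounting of the spine's own contribution of $1$. (Only a harmless slip in your parenthetical Green-function computation: $G(x,y)\le 2y$ gives $\int_0^\infty 2y\,\e^{-y/2}\diff y = 8$ rather than $4$, but any constant bound suffices, as provided by \eqref{eq:integral-killed-BM}.)
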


\begin{proof}
First note that 
\[
 \Eci{x}{N_{[0,\infty),\kappa}^2} = \Eci{x}{N_{[0,\infty),\kappa}(N_{[0,\infty),\kappa}-1)} + \Eci{x}{N_{[0,\infty),\kappa}},
\]
and the second summand is bounded by $\e^{-x}$ by the inequality $N_{[0,\infty),\kappa} \le N_{[0,\infty)}$ and Lemma~\ref{lem:first-moment-N}. It thus remains to show that the first summand is bounded by $C \kappa \e^{-x}$ for some $C>0$. 

The proof is similar to the proof of Lemma~\ref{lem:second-moment}. We first have for $s\ge0$, by Proposition \ref{prop:change-of-measure-Q0},
\begin{align*}
 \Eci{x}{N_{[0,s],\kappa}(N_{[0,s],\kappa}-1)}
&= \Ecii{x}{\stopped}{\sum_{u\in\cN(s)} \1_{X_u(s) = 0,\, u\in C_\kappa} (N_{[0,s],\kappa}-1)}\\
& = \e^{-x} 
\E_{\Q^{\stopped}_x} \left[ 
(N_{[0,s],\kappa}-1)
\1_{X_{w_s}(s) = 0,\,w_s \in C_\kappa}
\right].
\end{align*}
Denoting again $\Pi_s$ the set of branching times of the spine before $s$ and using the spinal decomposition description, we get 
\begin{align*}
& \E_{\Q^{\stopped}_x} \left[ 
(N_{[0,s],\kappa}-1)
\1_{X_{w_s}(s) = 0,\,w_s \in C_\kappa}
\right] \\
& = \E_{\Q^{\stopped}_x} \left[ 
\1_{X_{w_s}(s) = 0,\,w_s \in C_\kappa}
\sum_{t \in \Pi_s}
	\left( O_{w_t} - 1 \right)
	\Eci{X_{w_t}(t)}{N_{[0,s-t],\kappa}}
\right]\\
&\leq \kappa \E_{\Q^{\stopped}_x} \left[ 
\1_{X_{w_s}(s) = 0}
\sum_{t \in \Pi_s}
	\e^{X_{w_t}(t)/2}
	\Eci{X_{w_t}(t)}{N_{[0,\infty)}}
\right].
\end{align*}
We denote again by $(B_r)_{r \geq 0}$ a Brownian motion started at $x$ under
$\P_x$ and set $\tau \coloneqq \inf \{ r \geq 0 : B_r = 0 \}$. Using
Lemma~\ref{lem:first-moment-N} and the formula for
expectations of additive functionals of Poisson point processes, the last
estimates gives
\begin{align*}
 \Eci{x}{N_{[0,s],\kappa}(N_{[0,s],\kappa}-1)} \le \kappa\mu_1\lambda \e^{-x} \Eci{x}{\1_{\tau\le s} \int_0^\tau \e^{-B_t/2}\diff t},
\end{align*}
and, letting $s\to\infty$, we get
\[
  \Eci{x}{N_{[0,\infty),\kappa}(N_{[0,\infty),\kappa}-1)} \le \kappa\mu_1\lambda \e^{-x}  \Eci{x}{\int_0^\tau \e^{-B_t/2}\diff t}.
\]
This last integral is bounded by a constant by \eqref{eq:integral-killed-BM}, which finishes
the proof of the theorem.
%
\end{proof}

\subsection{Truncation of the critical additive martingale}
\label{subsection:truncation-critical-additive-martingale}

In this section, using again the new probability measure $\widetilde{\Q}_x$, we study the truncation of the critical additive martingale, when particles are killed below 0, defined by
\begin{align*}
\widetilde{W}_s
\coloneqq 
\sum_{u \in \cN(s)} \e^{-X_u(s)} \1_{\forall r \in [0,s], X_u(r) > 0},
\quad s \geq 0.
\end{align*}
Note that $(\widetilde{W}_s)_{s \geq 0}$ is not a martingale.
Moreover, we introduce, for $\kappa\geq 0$ and $s \geq 0$, 
\begin{align*}
\widetilde{W}_{s,\kappa} 
& \coloneqq 
\sum_{u \in \cN(s)} \e^{-X_u(s)} 
\1_{\forall r \in [0,s], X_u(r) > 0} \1_{u \in B_{s,\kappa}},
\end{align*}
where $B_{s,\kappa}$ has been introduced in Subsection
\ref{subsection:truncated-derivative-martingale}.
\begin{lem} \label{lem:control-first-moment-W}
For $x \ge 0$, set $F(x) \coloneqq \sqrt{2/\pi} \int_0^x \e^{-z^2/2} \diff
z$.
For any $x,s\ge0$ we have
\[
\Eci{x}{\widetilde{W}_s} 
= \e^{-x} F \left( \frac{x}{\sqrt{s}} \right)
\leq \e^{-x} \left( 1 \wedge \sqrt{\frac{2}{\pi}} \frac{x}{\sqrt{s}} \right).
\]
\end{lem}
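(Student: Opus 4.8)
The plan is to identify $\Eci{x}{\widetilde{W}_s}$ exactly via the many-to-one formula, and then bound the resulting Brownian motion probability. First I would apply the many-to-one lemma (Proposition \ref{prop:change-of-measure-Q} restricted to the killed process, or equivalently the unlabelled many-to-one formula stated right after \eqref{eq:W_tZ_t}) with the $\sF_s$-measurable family $H_s(u) = \1_{\forall r\in[0,s],\,X_u(r)>0}$. Since $\widetilde{W}_s = \sum_{u\in\cN(s)} \e^{-X_u(s)} H_s(u)$, this gives
\[
\Eci{x}{\widetilde{W}_s} = \e^{-x}\, \Ppi{x}{\forall r\in[0,s],\ B_r > 0} = \e^{-x}\,\Ppi{x}{\tau_0 > s},
\]
where $(B_r)_{r\ge0}$ is a standard Brownian motion started at $x$ and $\tau_0 = \inf\{r\ge0 : B_r = 0\}$.

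Next I would compute $\Ppi{x}{\tau_0 > s}$ explicitly. By the reflection principle (or the known hitting-time density used already in the proof of Lemma \ref{lem:first-moment-N}), for $x>0$ one has $\Ppi{x}{\tau_0 \le s} = \Ppi{x}{\min_{r\le s} B_r \le 0} = 2\,\Ppi{0}{B_s \ge x} = \P(|\cN(0,1)| \ge x/\sqrt s)$, hence
\[
\Ppi{x}{\tau_0 > s} = \P\!\left(|Y| \le \tfrac{x}{\sqrt s}\right) = \sqrt{\tfrac{2}{\pi}} \int_0^{x/\sqrt s} \e^{-z^2/2}\diff z = F\!\left(\tfrac{x}{\sqrt s}\right),
\]
where $Y$ is standard Gaussian. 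For $x=0$ (or $s=0$) both sides are trivially $0$ (resp.\ $1$), so the identity $\Eci{x}{\widetilde W_s} = \e^{-x}F(x/\sqrt s)$ holds for all $x,s\ge0$ with the convention $F(+\infty)=1$ when $s=0$.

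Finally, the upper bound. We have $F(y) \le 1$ for all $y\ge0$ since $F$ is a (sub)probability, i.e.\ $F(y) = \P(|Y|\le y) \le 1$; and $F(y) \le \sqrt{2/\pi}\,y$ since the integrand $\e^{-z^2/2} \le 1$, giving $F(y) = \sqrt{2/\pi}\int_0^y \e^{-z^2/2}\diff z \le \sqrt{2/\pi}\,y$. Combining, $F(x/\sqrt s) \le 1 \wedge \sqrt{2/\pi}\,(x/\sqrt s)$, which multiplied by $\e^{-x}$ yields the claimed inequality. There is no real obstacle here; the only point requiring a little care is the clean application of the many-to-one formula with the killing indicator (which is genuinely $\sF_s$-measurable and bounded) and the standard reflection-principle identity for the Brownian hitting time of $0$.
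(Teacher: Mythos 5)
Your proposal is correct and follows essentially the same route as the paper: the many-to-one formula reduces $\Eci{x}{\widetilde W_s}$ to $\e^{-x}\Ppi{x}{\forall r\le s,\ B_r>0}$, which the reflection principle identifies with $\e^{-x}F(x/\sqrt s)$, and the bound follows from $F(y)\le 1\wedge\sqrt{2/\pi}\,y$. The only (harmless) addition is your explicit treatment of the degenerate cases $x=0$ and $s=0$, which the paper leaves implicit.
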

\begin{proof}
Using the many-to-one formula (Proposition~\ref{prop:change-of-measure-Q}) and
the reflection principle, we have
\begin{align*}
\Eci{x}{\widetilde{W}_s} 
= \e^{-x} \Ppi{x}{\forall r \in [0,s], B_r \geq 0} 
= \e^{-x} \Pp{\abs{B_s} \leq x}
= \e^{-x} F \left( \frac{x}{\sqrt{s}} \right),
\end{align*}
because $F(y) = \Pp{\abs{B_1} \leq y}$.
Then, note that, for any $y > 0$, we have $F(y) \leq \sqrt{2/\pi} y$ and also
$F(y) \leq 1$.
\end{proof}
\begin{lem} \label{lem:not-in-B-for-W}
There exists a decreasing function $h \colon \R_+ \to \R_+$ such that 
$\lim_{a \to \infty} h(a) = 0$ and, for any $x,\kappa,s \ge 0$,
\[
\Eci{x}{\widetilde{W}_s - \widetilde{W}_{s,\kappa}} 
\leq h(\kappa) \e^{-x} 
	\left( \frac{1}{\sqrt{s}} + \frac{x}{s} \right).
\]
\end{lem}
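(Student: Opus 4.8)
The plan is to follow the exact same strategy as in Lemma~\ref{lem:not-in-B}, only that now the relevant change of measure is $\widetilde{\Q}_x$ (built from the \emph{truncated derivative} martingale), and this introduces an extra factor $x$ in the Radon--Nikodym derivative that must be compensated. First I would note that $\widetilde{W}_s \ge \widetilde{W}_{s,\kappa}$ pointwise, so the expectation is nonnegative and the absolute value is irrelevant. Next, I would like to apply Proposition~\ref{prop:change-of-measure-Qtilde}, but that proposition has an extra weight $X_u(s)$ inside the sum compared to the additive-martingale sum defining $\widetilde{W}_s - \widetilde{W}_{s,\kappa}$. The standard trick (as in A\"id\'ekon's peeling lemma) is to write $1 = X_u(s)\e^{-X_u(s)} \cdot \frac{\e^{X_u(s)}}{X_u(s)}$ — but this is singular when $X_u(s)$ is small, so instead one restores the weight by conditioning on the extra structure, or more simply: apply Proposition~\ref{prop:change-of-measure-Qtilde} with the test functional $H_s(u) = \frac{1}{X_u(s)}\e^{X_u(s)}\cdot(\text{indicators})$ only if we can control $1/X_u(s)$. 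The cleaner route, and the one I would take, is to observe that under $\widetilde{\Q}_x$ the spine moves as a $3$-dimensional Bessel process, which stays strictly positive, and instead apply Proposition~\ref{prop:change-of-measure-Qtilde} directly to $\widetilde{W}_s - \widetilde{W}_{s,\kappa} = \sum_{u\in\cN(s)} \e^{-X_u(s)}\1_{\forall r,X_u(r)>0}\1_{u\notin B_{s,\kappa}}$ after rewriting $\e^{-X_u(s)} = X_u(s)\e^{-X_u(s)}\cdot X_u(s)^{-1}$, giving
\[
\Eci{x}{\widetilde{W}_s - \widetilde{W}_{s,\kappa}} = x\e^{-x}\,\Eci{\widetilde{\Q}_x}{\frac{1}{R_s}\1_{w_s\notin B_{s,\kappa}}},
\]
where $(R_r)_{r\ge0}$ is the spine, a $3$-dimensional Bessel process started at $x$.

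Then, as in Lemma~\ref{lem:not-in-B}, I would bound the event $\{w_s \notin B_{s,\kappa}\}$ via the Poisson-point-process formula applied to the branching times of the spine where the number of children exceeds $\kappa \e^{R_r/2}$, and use the size-biased law $\widehat L$: conditionally on the Bessel path,
\[
\widetilde{\Q}_x\!\left(w_s\notin B_{s,\kappa} \,\middle|\, (R_r)_{r\ge0}\right) \le \mu_1\lambda\int_0^s \Ppsqi{x}{\widehat L > \kappa\e^{R_r/2}}{(R_r)} \diff r = \mu_1\lambda\,\Ecsqi{x}{\int_0^s \1_{R_r < 2\log_+(\widehat L/\kappa)}\diff r}{\widehat L}.
\]
Because of the extra $1/R_s$ weight, I cannot just bound the occupation time crudely; instead I would incorporate the $1/R_s$ factor into the Bessel computation. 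This is where I expect the main work: one needs an estimate of the form $\Eci{x}{\frac{1}{R_s}\int_0^s \1_{R_r < A}\diff r}$ that produces both the $1/\sqrt{s}$ and $x/s$ terms in the statement — the two regimes being whether the Bessel process has "forgotten" its start ($x$ small relative to $\sqrt s$, giving $1/\sqrt s$) or not ($x$ large, giving $x/s$ after the $x\e^{-x}$ prefactor, so effectively $\e^{-x}\cdot x/s$). This should follow from the explicit Bessel density $\frac{y}{x}g_s(x,y)$ and the Markov property at time $r$: write $\Eci{x}{\frac{1}{R_s}\1_{R_r<A}} = \Eci{x}{\1_{R_r<A}\,\Eci{R_r}{1/R_s-r}}$ and use $\Eci{z}{1/R_{s-r}} \le C/\sqrt{s-r}$ together with the known occupation-time-type bounds — precisely the kind of Bessel estimates collected in Appendix~\ref{section:technical-results} (the analogues of \eqref{eq:time-bessel-under-a}, \eqref{eq:integral-bessel}).

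Carrying out the integration over $r$ and over the law of $\widehat L$ and using $\int_0^s (s-r)^{-1/2}\diff r = 2\sqrt s$ type bounds would yield a factor $h(\kappa):=C\mu_1\lambda\,\E[L(\log_+(L/\kappa))^3]$ (or with an exponent $2$ — the precise power is immaterial as long as it vanishes as $\kappa\to\infty$ by dominated convergence under Assumption~\eqref{hypothese 1}), times $\e^{-x}(\frac{1}{\sqrt s}+\frac{x}{s})$ after reinstating the $x\e^{-x}$ prefactor and distinguishing whether the occupation-time contribution comes mainly from early times (before the Bessel process escapes, contributing the $x/s$ term) or is spread out (contributing $1/\sqrt s$). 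The monotonicity and vanishing of $h$ are then immediate. The hard part is really the Bessel occupation-time estimate with the $1/R_s$ weight giving exactly the two-term bound; everything else is a transcription of the proof of Lemma~\ref{lem:not-in-B} with $\widetilde{\Q}_x$ in place of the spine measure there.
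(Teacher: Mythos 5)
Your proposal follows essentially the same route as the paper: insert the weight $1/X_u(s)$ and apply Proposition~\ref{prop:change-of-measure-Qtilde}, bound $\widetilde{\Q}_x(w_s \notin B_{s,\kappa})$ via the Poisson formula over the spine's branching times conditionally on the Bessel path, and reduce everything to the weighted occupation-time estimate $\Eci{x}{\frac{1}{R_s}\int_0^s \1_{R_r<a}\diff r}\le Ca^3(\frac{1}{x\sqrt s}+\frac{1}{s})$, which is exactly Lemma~\ref{lem:time-bessel-under-a-and-inverse} and is proved there by the Markov-property/density argument you sketch. The only nuance is that the exponent in $h(\kappa)=C\mu_1\lambda\,\E[L(\log_+(L/\kappa))^3]$ comes out as $3$ (not $2$), which is precisely what the assumption $\E[L\log_+^3 L]<\infty$ accommodates, so your hedge on the power is harmless here.
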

\begin{proof}
Using Proposition \ref{prop:change-of-measure-Qtilde}, we get
\[
\Eci{x}{\widetilde{W}_s - \widetilde{W}_{s,\kappa}} 
= \Eci{x}{\sum_{u \in \cN(s)} X_u(s)\e^{-X_u(s)} 
	\1_{\forall r \in [0,s], X_u(r) > 0} \frac{\1_{u \notin
B_{s,\kappa}}}{X_u(s)}}
= x \e^{-x} \Eci{\widetilde{\Q}_x}{\frac{\1_{w_s \notin
B_{s,\kappa}}}{X_{w_s}(s)}}.
\]
Then, using the spinal decomposition description, we obtain
\begin{align*}
\Eci{\widetilde{\Q}_x}
	{\frac{\1_{w_s \notin B_{s,\kappa}}}{X_{w_s}(s)}}
& \leq \Eci{x}{
	\frac{1}{R_s}
	\int_0^s 
	\Ppsq{\widehat{L} > \kappa \e^{R_r/2}}{(R_r)_{r\geq 0}}
	\mu_1 \lambda\diff r} \\
& = \mu_1 \lambda \Eci{x}{ \Ecsqi{x}{
	\frac{1}{R_s}
	\int_0^s \1_{R_r < 2 \log_+ (\widehat{L}/\kappa)}\diff r}{\widehat{L}} }
\\
& \leq \mu_1 \lambda \Ec{C \left(2 \log_+ \left(\widehat{L}/\kappa \right)
\right)^3 
	\left( \frac{1}{x \sqrt{s}} + \frac{1}{s} \right)},
\end{align*}
by applying Lemma \ref{lem:time-bessel-under-a-and-inverse}. 
Setting $h(\kappa) \coloneqq C \mu_1\lambda \E[ L (\log_+(L/\kappa))^3]$, it proves the
result. 
\end{proof}
\begin{lem} \label{lem:second-moment-for-W}
There exists $C >0$ such that, for any $x \geq 0$, $\kappa \ge 1$ and $s \geq 2$, we have
\begin{align*}
\Eci{x}{\widetilde{W}_{s,\kappa}^2}
\leq C \kappa \e^{-x} \left( \frac{1}{s} + \frac{x \ln s}{s^{3/2}} \right).
\end{align*}
\end{lem}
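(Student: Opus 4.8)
The plan is to mimic closely the proof of Lemma~\ref{lem:second-moment} (the second moment bound for the truncated derivative martingale), using the change of measure $\widetilde{\Q}_x$ of Proposition~\ref{prop:change-of-measure-Qtilde} and the spinal decomposition. First I would write
\[
\Eci{x}{\widetilde{W}_{s,\kappa}^2}
= \Eci{x}{\sum_{u \in \cN(s)} \e^{-X_u(s)} \1_{\forall r \in [0,s], X_u(r) > 0} \1_{u \in B_{s,\kappa}} \widetilde{W}_{s,\kappa}}.
\]
To apply Proposition~\ref{prop:change-of-measure-Qtilde} (which has a weight $X_u(s)\e^{-X_u(s)}$, not $\e^{-X_u(s)}$), I multiply and divide by $X_u(s)$ inside the sum, obtaining
\[
\Eci{x}{\widetilde{W}_{s,\kappa}^2}
= x \e^{-x} \Eci{\widetilde{\Q}_x}{\frac{\1_{w_s \in B_{s,\kappa}}}{X_{w_s}(s)}\, \widetilde{W}_{s,\kappa}}
\le x \e^{-x} \Eci{\widetilde{\Q}_x}{\frac{\1_{w_s \in B_{s,\kappa}}}{R_s}\, \widetilde{W}_s},
\]
using $\widetilde{W}_{s,\kappa}\le\widetilde{W}_s$ and that under $\widetilde\Q_x$ the spine moves like a 3-dimensional Bessel process, so $X_{w_s}(s)=R_s$.

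Next, I would decompose $\widetilde{W}_s$ along the spine exactly as in Lemma~\ref{lem:second-moment}: as a sum over the branching times $r\in\Pi_s$ of the spine of the contributions $\sum_{\text{non-spine children}} \Eci{X_{w_r}(r)}{\widetilde{W}_{s-r}}$, plus the spine term $\e^{-X_{w_s}(s)}=\e^{-R_s}$. Using the $B_{s,\kappa}$-constraint to bound $O_{w_r}-1\le \kappa\e^{X_{w_r}(r)/2}$, together with Lemma~\ref{lem:control-first-moment-W} to control $\Eci{y}{\widetilde{W}_{s-r}}\le \e^{-y}\bigl(1\wedge\sqrt{2/\pi}\,y/\sqrt{s-r}\bigr)$, and the Poisson point process formula for additive functionals of the spine, I would arrive at a bound of the form
\[
\Eci{x}{\widetilde{W}_{s,\kappa}^2}
\le C\kappa\, \e^{-x}\, \Eci{x}{\frac{1}{R_s}\int_0^s \e^{-R_r/2}\Bigl(1\wedge \frac{R_r}{\sqrt{s-r}}\Bigr)\diff r}
\;+\; x\e^{-x}\Eci{\widetilde\Q_x}{\frac{\e^{-R_s}}{R_s}}.
\]
The spine term is handled using the explicit density $\tfrac{y}{x}g_s(x,y)$ of $R_s$ under $\P_x$ (cf.\ \eqref{eq:density_R}), giving $x\e^{-x}\Eci{x}{\e^{-R_s}/R_s} = \e^{-x}\Eci{x}{(x/R_s)\,R_s\,\e^{-R_s}\cdot(1/x)}$; more carefully, $x\e^{-x}\,\E_x[\e^{-R_s}/R_s]=\e^{-x}\int_0^\infty \e^{-y}g_s(x,y)\diff y$, which is $O(\e^{-x}/\sqrt s)$ and hence absorbed into the stated bound.

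The main obstacle — and the part requiring genuine work — is the Bessel integral
\[
\Eci{x}{\frac{1}{R_s}\int_0^s \e^{-R_r/2}\Bigl(1\wedge \frac{R_r}{\sqrt{s-r}}\Bigr)\diff r}.
\]
I expect this to be controlled by a Bessel estimate of the type collected in Appendix~\ref{section:technical-results} (analogous to Lemma~\ref{lem:time-bessel-under-a-and-inverse} and the inequalities \eqref{eq:integral-bessel}, \eqref{eq:time-bessel-under-a}), yielding a bound of order $\tfrac{1}{xs}+\tfrac{\ln s}{xs^{3/2}}$ once one pulls out the factor $1/x$ coming from $1/R_s$; the two regimes $r\le s/2$ and $r>s/2$ must be treated separately, the $\e^{-R_r/2}$ factor providing integrability for small $r$ and the $1/\sqrt{s-r}$ factor (together with $1/R_s$) producing the $\ln s$ term for $r$ close to $s$. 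Multiplying back by $x\e^{-x}\cdot C\kappa$ then gives $C\kappa\e^{-x}\bigl(\tfrac1s+\tfrac{x\ln s}{s^{3/2}}\bigr)$, as claimed; one also checks the constraint $s\ge2$ is what makes $\ln s\ge0$ and keeps the estimate clean.
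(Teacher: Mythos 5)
Your overall route is the same as the paper's: apply the change of measure of Proposition~\ref{prop:change-of-measure-Qtilde}, bound $\widetilde W_{s,\kappa}\le\widetilde W_s$, decompose $\widetilde W_s$ along the spine as in Lemma~\ref{lem:second-moment}, use the $B_{s,\kappa}$-constraint and Lemma~\ref{lem:control-first-moment-W}, and control the resulting Bessel integral; the latter is exactly Lemma~\ref{lem:integral-bessel-and-inverse}, whose bound is $C\left(\frac{1}{xs}+\frac{\ln s}{s^{3/2}}\right)$ (you wrote $\frac{\ln s}{x s^{3/2}}$ for the second term, which is not what the appendix gives, but your final arithmetic $x\e^{-x}\kappa\cdot C\left(\frac1{xs}+\frac{\ln s}{s^{3/2}}\right)\le C\kappa\e^{-x}\left(\frac1s+\frac{x\ln s}{s^{3/2}}\right)$ uses the correct form, so this is only a slip; likewise the missing factor $x$ in your displayed bound). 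You should also note that the case $x=0$ is trivial, since then no particle survives the barrier and $\widetilde W_{s,\kappa}=0$.

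The genuine gap is your treatment of the spine term $x\e^{-x}\Eci{x}{\e^{-R_s}/R_s}$. You bound it by $\e^{-x}\int_0^\infty\e^{-y}g_s(x,y)\diff y = O(\e^{-x}/\sqrt s)$, having implicitly dropped the factor $(1-\e^{-2xy/s})$ from the density \eqref{eq:density_R}, and then claim this is ``absorbed into the stated bound''. It is not: for $x$ of order $1$ and $s$ large the stated bound is of order $\e^{-x}/s$, which is much smaller than $\e^{-x}/\sqrt s$, so this step fails. The fix is the one the paper uses: keep the factor and bound $1-\e^{-2xy/s}\le 2xy/s$, so that the density of $R_s$ under $\P_x$ is at most $Cy^2/s^{3/2}$ uniformly in $x\ge0$; this gives $\Eci{x}{\e^{-R_s}/R_s}\le C/s^{3/2}$, hence a spine contribution $Cx\e^{-x}/s^{3/2}$, which is indeed absorbed into the $\frac{x\ln s}{s^{3/2}}$ term (using $s\ge2$ and $\kappa\ge1$). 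With that correction your plan coincides with the paper's proof.
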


\begin{proof}
Using Proposition \ref{prop:change-of-measure-Qtilde}, we have 
\begin{align*}
\Eci{x}{\widetilde{W}_{s,\kappa}^2}
= x \e^{-x} \Eci{\widetilde{\Q}_x}{\widetilde{W}_{s,\kappa} 
	\frac{\1_{w_s \in B_{s,\kappa}}}{X_{w_s}(s)}}
\leq x \e^{-x} \Eci{\widetilde{\Q}_x}{\widetilde{W}_s 
	\frac{\1_{w_s \in B_{s,\kappa}}}{X_{w_s}(s)}}.
\end{align*}
As in the proof of Lemma~\ref{lem:second-moment}, we get 
\begin{align*}
\Eci{\widetilde{\Q}_x}{\widetilde{W}_s 
	\frac{\1_{w_s \in B_{s,\kappa}}}{X_{w_s}(s)}}
& = \Eci{\widetilde{\Q}_x}{
	\frac{\1_{w_s \in B_{s,\kappa}}}{X_{w_s}(s)}
\left(
	\sum_{r \in \Pi_s}
	\left( O_{w_r} - 1 \right)
	\Eci{X_{w_r}(r)}{\widetilde{W}_{s-r}} + \e^{-X_{w_s}(s)} 
\right)
} \\
&\le \Eci{x}{\frac{1}{R_s} \left(\int_0^s \kappa \e^{R_r/2}
	\Eci{R_r}{\widetilde{W}_{s-r}} \mu_1 \lambda \diff r + \e^{-R_s}\right)}
\\
& \le \mu_1 \lambda\kappa 
	\Eci{x}{\frac{1}{R_s} \int_0^s \e^{-R_r/2} 
	\left( 1 \wedge \sqrt{\frac{2}{\pi}} \frac{R_r}{\sqrt{s-r}} \right)
	\diff r} + \Eci{x}{\frac{1}{R_s}\e^{-R_s}},
\end{align*}
by applying Lemma \ref{lem:control-first-moment-W}. The first expectation is
bounded by $C \left( \frac{1}{xs} + \frac{\ln s}{s^{3/2}}\right)$ for some
constant $C$, by Lemma \ref{lem:integral-bessel-and-inverse}, for $x>0$ (and
the statement of the proposition is trivial if $x=0$). The second expectation is
bounded by $C/s^{3/2}$, because the density of $R_s$ under $\P_x$ is bounded by
$Cy^2/s^{3/2}$ for all $x\ge0$, by \eqref{eq:density_R} and the inequality
$1-\e^{-x} \le x$ for all $x\ge0$. Together with the previous inequalities, this
yields the result.
\end{proof}

\section{The particles staying above \texorpdfstring{$\gamma_t$}{gamma\_t}: proof of Proposition~\ref{prop:control-Ztilde-infty}}
\label{section:control-without-fluctuations}

In this section, we prove Proposition \ref{prop:control-Ztilde-infty} using the results from Section~\ref{subsection:truncated-derivative-martingale}.
As explained below, it will be enough to consider $a=1$.
We recall the notation from Section~\ref{section:proof-of-theorem}: for $t \geq 0$,
\begin{align*}
\gamma_t &= \frac 1 2 \log t + \beta_t,\quad \text{with }\beta_t \to \infty
 \text{ and } 
\frac{\beta_t}{t^{1/4}} \to 0\text{ as $t\to\infty$,}\\
\widetilde{Z}_s^{t,\gamma_t} 
& = \sum_{u \in \cN(s)} (X_u(s)-\gamma_t) \e^{-X_u(s)} 
\1_{\forall r \in [t,s], X_u(r) > \gamma_t}, 
\quad s \geq t.
\end{align*}
We start with a preliminary lemma.
\begin{lem} \label{lem:concentration-Ztilde}
We have the following convergence in probability
\begin{align*}
\sqrt t \abs{\widetilde{Z}_\infty^{t,\gamma_t} - \widetilde{Z}_t^{t,\gamma_t}} \xrightarrow[t\to\infty]{} 0.
\end{align*}
\end{lem}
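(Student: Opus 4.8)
The plan is to exploit that $(\widetilde{Z}_s^{t,\gamma_t})_{s\ge t}$ is a non-negative martingale (as stated in Section~\ref{section:proof-of-theorem}), so that $\widetilde{Z}_t^{t,\gamma_t} = \Ecsq{\widetilde{Z}_\infty^{t,\gamma_t}}{\sF_t}$, and then control the conditional $L^2$-distance $\Ecsq{(\widetilde{Z}_\infty^{t,\gamma_t}-\widetilde{Z}_t^{t,\gamma_t})^2}{\sF_t}$, showing it is $o(1/t)$ in probability. By Chebyshev's inequality conditionally on $\sF_t$, this gives the claimed convergence in probability. The advantage of the killing barrier at height $\gamma_t$ is precisely that it forces good $L^2$-control, unlike the un-killed derivative martingale whose $L^2$-norm is infinite (if $\E[L^2]=\infty$) or at least not small enough.

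The first step is to reduce to a statement about BBM started from a single particle, using the branching property at time $t$: conditionally on $\sF_t$, writing each particle $v\in\cN(t)$ with displaced position $X_v(t)-\gamma_t$ (note all these are typically negative, but the killing is at $0$ in the shifted frame only after time $t$), the limit $\widetilde{Z}_\infty^{t,\gamma_t} - \widetilde{Z}_t^{t,\gamma_t}$ decomposes as a sum over $v\in\cN(t)$ of independent centered contributions, where the $v$-th contribution corresponds to a BBM started at $\gamma_t - X_v(t)$ (distance to the barrier), killed at the origin, minus its conditional mean. By independence, $\Ecsq{(\widetilde{Z}_\infty^{t,\gamma_t}-\widetilde{Z}_t^{t,\gamma_t})^2}{\sF_t}$ equals the sum over $v$ of the conditional variances. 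Each such variance is bounded by the second moment of $\widetilde{Z}_s - \Ec{\widetilde{Z}_s}$-type quantities for BBM started at height $x_v := \gamma_t - X_v(t)$ killed at $0$; here I would invoke the $L^2$-bound of Lemma~\ref{lem:second-moment} (applied to the truncated-at-$\kappa$ variant $\widetilde{Z}_{s,\kappa}$) together with Lemma~\ref{lem:not-in-B} to handle the peeling error, yielding a bound of the form $C\kappa\, x_v^2\e^{-x_v}$ for the "good" part plus $h(\kappa)\,x_v\e^{-x_v}$ (in the appropriate shifted normalization — one must be careful that the martingale $\widetilde{Z}^{t,\gamma_t}$ uses weights $\e^{-X_u(s)}$, i.e.\ an extra factor $\e^{-\gamma_t}$ compared to the origin-shifted version, which contributes a factor $\e^{-2\gamma_t}\cdot\e^{2x_v}$ hmm — more precisely the shifted version has weights $\e^{-(X_u(s)-\gamma_t)}$, so $\widetilde{Z}_s^{t,\gamma_t}$ restricted to descendants of $v$ equals $\e^{-\gamma_t}$ times a shifted killed derivative martingale; so the variance of the $v$-contribution is $\e^{-2\gamma_t}$ times a killed-second-moment evaluated at $x_v$, giving $\lesssim \e^{-2\gamma_t}\kappa\, x_v \e^{-x_v}$ after optimizing, wait — Lemma~\ref{lem:second-moment} gives $\Ec{\widetilde{Z}_{s,\kappa}^2}\le C\kappa\e^{-x}$, so the $v$-contribution variance is $\le C\kappa\,\e^{-2\gamma_t}\e^{-x_v} = C\kappa\,\e^{-\gamma_t}\e^{-X_v(t)}$).

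Summing over $v\in\cN(t)$, the total conditional variance is then bounded by $C\kappa\,\e^{-\gamma_t}\sum_{v\in\cN(t)}\e^{-X_v(t)} = C\kappa\,\e^{-\gamma_t} W_t$ for the good part, plus a peeling error of order $h(\kappa)\,\e^{-\gamma_t}\sum_v \e^{-X_v(t)} = h(\kappa)\e^{-\gamma_t}W_t$ — actually the peeling error must be handled in $L^1$ rather than $L^2$, so I would instead split $\widetilde{Z}_\infty^{t,\gamma_t}-\widetilde{Z}_t^{t,\gamma_t}$ into its $\kappa$-truncated version (controlled in conditional $L^2$ by $C\kappa\e^{-\gamma_t}W_t$) and the remainder (controlled in conditional $L^1$ by $h(\kappa)\e^{-\gamma_t}W_t$ via Lemma~\ref{lem:not-in-B}), then take $\sqrt t$ times each. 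Since $\gamma_t = \tfrac12\log t + \beta_t$, we have $\e^{-\gamma_t} = t^{-1/2}\e^{-\beta_t}$, so $t\cdot\e^{-\gamma_t}W_t = \sqrt t\,\e^{-\beta_t}W_t \to 0$ in probability because $\sqrt t\, W_t \to \sqrt{2/\pi}\,Z_\infty$ in probability by \eqref{eq:convergence-W_t} and $\e^{-\beta_t}\to 0$ by \eqref{eq:assumption-beta_t}; similarly $\sqrt t\cdot\e^{-\gamma_t}W_t = \e^{-\beta_t}\sqrt t W_t \to 0$. Choosing $\kappa = \kappa_t\to\infty$ slowly enough that $\kappa_t\e^{-\beta_t}\to 0$ (possible since $\beta_t\to\infty$) makes both error terms vanish. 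The main obstacle I anticipate is bookkeeping the shift: correctly tracking how the barrier-shifted martingale $\widetilde{Z}^{t,\gamma_t}$ relates to the origin-killed martingale $\widetilde{Z}$ of Section~\ref{subsection:truncated-derivative-martingale} under the change of starting point and the extra $\e^{-\gamma_t}$ weight factor, and making sure the truncation-by-$\kappa$ device (defined there for the origin-killed process) transfers cleanly to each sub-BBM; everything else is a routine conditional second-moment computation.
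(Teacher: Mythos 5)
Your plan is essentially the paper's own proof: condition on $\sF_t$, use the branching property to reduce to the origin-killed martingale started from $X_v(t)-\gamma_t$, apply the peeling truncation of Section~\ref{subsection:truncated-derivative-martingale} (Lemma~\ref{lem:not-in-B} in conditional $L^1$, Lemma~\ref{lem:second-moment} in conditional $L^2$), and conclude with $\kappa_t\to\infty$, $\kappa_t\e^{-\beta_t}\to0$ and the convergence of $\sqrt t\,W_t$ from \eqref{eq:convergence-W_t}. The small slips (the sign of $x_v$, the spurious extra $\e^{-\gamma_t}$ in the $L^1$ peeling bound, whose correct form is $h(\kappa)W_t$, and the fact that one should bound $\P(|\widetilde Z_s^{t,\gamma_t}-\widetilde Z_t^{t,\gamma_t}|\ge\varepsilon/\sqrt t\mid\sF_t)$ for finite $s$ and let $s\to\infty$ rather than invoke $\widetilde Z_t^{t,\gamma_t}=\Ecsq{\widetilde Z_\infty^{t,\gamma_t}}{\sF_t}$ directly) do not affect the argument, since your choice of $\kappa_t$ already handles the corrected bounds.
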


\begin{proof}[Proof of Lemma~\ref{lem:concentration-Ztilde}]
The proof basically follows a first- and second moment argument, with a truncation as in Section~\ref{subsection:truncated-derivative-martingale} in order to handle offspring distributions of infinite variance. By analogy with Section~\ref{subsection:truncated-derivative-martingale}, we set, for $\kappa \geq 1$ and $s \geq t$,
\begin{align*}
B_{s,\kappa}^{t,\gamma_t}
&\coloneqq
\left\{
u \in \cN(s) :
\forall v < u, d_v \geq t \Rightarrow O_v \leq \kappa \e^{(X_v(d_v)-\gamma_t)/2}
\right\} \\
\widetilde{Z}_{s,\kappa}^{t,\gamma_t}
& \coloneqq 
\sum_{u \in \cN(s)} (X_u(s)-\gamma_t) \e^{-X_u(s)} 
\1_{\forall r \in [t,s], X_u(r) > \gamma_t} \1_{u \in B_{s,\kappa}^{t,\gamma_t}}.
\end{align*}
It is enough to show the following: for every $\varepsilon>0$,
\begin{equation}
 \label{eq:enoughtoshow}
\Ppsq{\abs{\widetilde{Z}_s^{t,\gamma_t} - \widetilde{Z}_t^{t,\gamma_t}} 
	\geq \frac{\varepsilon}{\sqrt{t}}}{\sF_t} \xrightarrow[t\to\infty]{} 0,\quad\text{in probability.}
\end{equation}
Noting that $\widetilde{Z}_s^{t,\gamma_t} - \widetilde{Z}_{s,\kappa}^{t,\gamma_t}\geq 0$ and $\E[\widetilde{Z}_s^{t,\gamma_t}| \sF_t] = \widetilde{Z}_t^{t,\gamma_t}$, we decompose
\begin{align}
& \Ppsq{\abs{\widetilde{Z}_s^{t,\gamma_t} - \widetilde{Z}_t^{t,\gamma_t}} 
	\geq \frac{3\varepsilon}{\sqrt{t}}}{\sF_t} \nonumber \\
& \leq \Ppsq{\widetilde{Z}_s^{t,\gamma_t} - \widetilde{Z}_{s,\kappa}^{t,\gamma_t} 
	\geq \frac{\varepsilon}{\sqrt{t}}}{\sF_t}
+ \Ppsq{\abs{\widetilde{Z}_{s,\kappa}^{t,\gamma_t} - \Ecsq{\widetilde{Z}_{s,\kappa}^{t,\gamma_t}}{\sF_t}} 
	\geq \frac{\varepsilon}{\sqrt{t}}}{\sF_t} 
+ \1_{\Ecsq{\widetilde{Z}_s^{t,\gamma_t} - \widetilde{Z}_{s,\kappa}^{t,\gamma_t}}{\sF_t} 
	\geq \varepsilon/\sqrt{t}} \nonumber \\
& \leq 2 \cdot \frac{\sqrt{t}}{\varepsilon} 
	\Ecsq{\widetilde{Z}_s^{t,\gamma_t} - \widetilde{Z}_{s,\kappa}^{t,\gamma_t}}{\sF_t} 
+ \left( \frac{\sqrt{t}}{\varepsilon} \right)^2 \Varsq{\widetilde{Z}_{s,\kappa}^{t,\gamma_t}}{\sF_t},
\label{aa}
\end{align}
using Markov's inequality for the first and third terms and Chebychev's inequality for the second one.
For the first term in \eqref{aa}, using the branching property at time $t$ and Lemma \ref{lem:not-in-B}, we have with the notation used there,
\begin{align}
\Ecsq{\widetilde{Z}_s^{t,\gamma_t} - \widetilde{Z}_{s,\kappa}^{t,\gamma_t}}{\sF_t}
& = \sum_{u \in \cN(t)} \e^{-\gamma_t} 
	\Eci{X_u(t)-\gamma_t}{\widetilde{Z}_{s-t} - \widetilde{Z}_{s-t,\kappa}} 
	\1_{X_u(t) > \gamma_t} \nonumber \\
& \leq \sum_{u \in \cN(t)} \e^{-\gamma_t} 
C\e^{-(X_u(t)-\gamma_t)} h(\kappa)
= C W_t h(\kappa). \label{ab}
\end{align}
For the second term in \eqref{aa}, using again the branching property at time $t$ and Lemma \ref{lem:second-moment}, we get
\begin{align}
\Varsq{\widetilde{Z}_{s,\kappa}^{t,\gamma_t}}{\sF_t}
& \leq \sum_{u \in \cN(t)}  
	\Eci{X_u(t)-\gamma_t}{ \left(\e^{-\gamma_t} \widetilde{Z}_{s-t,\kappa} \right)^2}
	\1_{X_u(t) > \gamma_t} \nonumber \\
& \leq \sum_{u \in \cN(t)} \e^{-2\gamma_t} C \kappa \e^{-(X_u(t)-\gamma_t)}
= C \kappa \e^{-\gamma_t} W_t. \label{ac}
\end{align}
Combining \eqref{aa}, \eqref{ab}, \eqref{ac} and taking $s \to \infty$, we get for every $\kappa \geq 1$ and $t \geq 0$, 
\begin{align}
\Ppsq{\abs{\widetilde{Z}_\infty^{t,\gamma_t} - \widetilde{Z}_t^{t,\gamma_t}} 
	\geq \frac{\varepsilon}{\sqrt{t}}}{\sF_t}
& \leq C \sqrt{t} W_t \left(
\frac{h(\kappa)}{\varepsilon}
+ \frac{\kappa \e^{-\beta_t}}{\varepsilon^2}
\right). \label{ad}
\end{align}
Now recall from \eqref{eq:convergence-W_t} that $\sqrt t W_t$ converges in probability, as $t\to\infty$.
Taking $\kappa = \e^{\beta_t/2}$ and using that $h(\kappa) \to 0$ as $\kappa \to \infty$, the right-hand side of \eqref{ad} then goes to 0 in probability as $t \to \infty$, which shows \eqref{eq:enoughtoshow}.
The result follows.
\end{proof}
The next lemma shows that the contribution of the particles below $\gamma_t$ at time $t$ is negligible. It is stated in greater generality since it will be used in Section~\ref{section:killed-particles} as well.
It is here that we use the assumption $\gamma_t = o(t^{1/4})$. 
Define for $s\geq t\geq0$,
\begin{align}
\widetilde W_s^{t,\gamma_t}
\coloneqq \sum_{u \in \cN(t)} \e^{-X_u(t)} \1_{\forall r\in[t,s], X_u(r) > \gamma_t}.
\label{initial}
\end{align}
\begin{lem} \label{lem:cv-for-L_init}
For all $a\ge 1$, we have $\gamma_t \sqrt{t} (W_{at} - \widetilde W_{at}^{t,\gamma_t}) \to 0$ in probability as $t\to\infty$.
\end{lem}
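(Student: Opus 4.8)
The plan is to bound the difference $W_{at} - \widetilde W_{at}^{t,\gamma_t}$ from above by a non-negative quantity and then estimate its conditional expectation given $\sF_t$. Since $W_{at} = \sum_{u\in\cN(t)} \e^{-X_u(t)} W_{at}^{(u)}$ where the $W_{at}^{(u)}$ are the additive martingales of the subtrees rooted at the particles of $\cN(t)$ (each started at $X_u(t)$), and $\widetilde W_{at}^{t,\gamma_t}$ keeps only the contribution of descendants which stay above $\gamma_t$ on $[t,at]$, we have $0 \le W_{at} - \widetilde W_{at}^{t,\gamma_t} \le \sum_{u\in\cN(t)} \e^{-X_u(t)} R_{at-t}^{(u)}$, where $R_{s}^{(u)}$ is the contribution to $W_{s}$ of descendants of $u$ that dip below $\gamma_t$ before time $s$ (and also the particles with $X_u(t)\le\gamma_t$ contribute entirely). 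By the branching property and translation invariance, $\Ecsq{R_{at-t}^{(u)}}{\sF_t} = \e^{-(X_u(t)-\gamma_t)}\bigl(\E_{X_u(t)-\gamma_t}[W_{at-t}] - \E_{X_u(t)-\gamma_t}[\widetilde W_{at-t}]\bigr)\e^{\gamma_t-\gamma_t}$ when $X_u(t)>\gamma_t$; and $\E_x[W_{s}] - \E_x[\widetilde W_{s}] = \e^{-x}(1 - F(x/\sqrt s))$ by \eqref{eq:W_tZ_t} and Lemma~\ref{lem:control-first-moment-W}. Using $1 - F(y) \le 1\wedge(1 - c + \dots)$ — more precisely the bound $F(y)\ge 1 - C\e^{-y^2/2}/y$ for $y$ bounded away from $0$ and $1-F(y)\le 1$ always — one obtains a bound that is small uniformly over the relevant particles.

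Concretely, I would first write
\[
\Ecsq{W_{at} - \widetilde W_{at}^{t,\gamma_t}}{\sF_t}
= \sum_{u\in\cN(t)} \e^{-X_u(t)} \1_{X_u(t)\le\gamma_t}
+ \sum_{u\in\cN(t)} \e^{-X_u(t)}\1_{X_u(t)>\gamma_t}\Bigl(1 - F\Bigl(\tfrac{X_u(t)-\gamma_t}{\sqrt{(a-1)t}}\Bigr)\Bigr),
\]
with the convention that the second sum is $0$ when $a=1$ (then $\widetilde W_{at}^{t,\gamma_t}=W_t$ minus the low particles and the argument only needs the first sum). For $a=1$ the statement reduces to showing $\gamma_t\sqrt t\sum_{u\in\cN(t)}\e^{-X_u(t)}\1_{X_u(t)\le\gamma_t}\to 0$ in probability; by the many-to-one formula this sum has expectation $\Pp{B_t\le\gamma_t}\le \Pp{B_t - t \cdot 0 \le \gamma_t}$ — wait, more carefully, $\E[\sum_u\e^{-X_u(t)}\1_{X_u(t)\le\gamma_t}] = \Pp{B_t\le\gamma_t}$ where $B$ is a standard Brownian motion started at $0$ with no drift, so this is $\Pp{B_t\le\gamma_t} = \Pp{B_1\le\gamma_t/\sqrt t}$, and since $\gamma_t = \tfrac12\log t + \beta_t = o(t^{1/4}) = o(\sqrt t)$, we get $\gamma_t/\sqrt t\to 0$, hence the probability tends to $1/2$, not $0$ — so a crude first-moment bound is not enough and one must use the decay of the Gaussian tail together with the factor $\gamma_t\sqrt t$ more cleverly. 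The right move is: $\E[\gamma_t\sqrt t\sum_u\e^{-X_u(t)}\1_{X_u(t)\le\gamma_t}] = \gamma_t\sqrt t\,\Pp{B_1\le\gamma_t/\sqrt t}$, which does \emph{not} go to $0$. Hence one cannot use the plain additive martingale weight; instead I would multiply and divide appropriately, or rather use that on the event of interest $X_u(t)$ is of order $\gamma_t$ so that $\e^{-X_u(t)}$ is much smaller — formally, use $\1_{X_u(t)\le\gamma_t}\le \1_{X_u(t)\le\gamma_t}$ and the \emph{many-to-one with the derivative martingale weight} or a ballot-type estimate, noting $\min_u X_u(t)\to+\infty$ by \eqref{eq:minimum}, so that for $t$ large all particles have $X_u(t) > 0$; better, bound $\gamma_t\sqrt t\sum_u\e^{-X_u(t)}\1_{X_u(t)\le\gamma_t}\le \gamma_t\sqrt t\,\e^{-0}\cdots$ — this still fails. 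The clean approach: write $\sum_u \e^{-X_u(t)}\1_{X_u(t)\le\gamma_t} \le \e^{\gamma_t}\sum_u \e^{-2X_u(t)}\cdot\e^{X_u(t)-\gamma_t}\1_{X_u(t)\le\gamma_t}\le\e^{\gamma_t}W_t(2)$-type bound is also wasteful. I believe the intended route is: use Proposition~\ref{prop:change-of-measure-Q} (the $\Q$-change of measure) so that $\E[\sqrt t\,\gamma_t\sum_u\e^{-X_u(t)}\1_{X_u(t)\le\gamma_t}]$ involves the spine $w_t$ which under $\Q$ is a \emph{driftless} Brownian motion started at $0$; this is the same computation and gives $\gamma_t\sqrt t\,\Pp{B_t\le\gamma_t}$ which is large — so the statement as a first-moment statement is false, meaning the lemma must genuinely use cancellation between $W_{at}$ and $\widetilde W_{at}^{t,\gamma_t}$, and in particular that \emph{most} low particles are still counted in $\widetilde W$ as long as they come back up above $\gamma_t$.

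Reconsidering: the correct reading is that $\widetilde W_{at}^{t,\gamma_t}$ in \eqref{initial} weights particles of $\cN(t)$ (not $\cN(at)$) by $\e^{-X_u(t)}$, with the indicator that \emph{all descendants alive on $[t,at]$ and the particle's own trajectory} stay above $\gamma_t$ — so $W_{at}$ and $\widetilde W^{t,\gamma_t}_{at}$ differ by the $\e^{-X_u(t)}$-mass of those $u\in\cN(t)$ which, or whose descendants, go below $\gamma_t$. Thus the difference equals $\sum_{u\in\cN(t)}\e^{-X_u(t)}\1_{E_u}$ where $E_u = \{\exists v\succeq u,\ \exists r\in[t,at]: X_v(r)\le\gamma_t\}$. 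Conditionally on $\sF_t$, $\Pp{E_u}{\sF_t}$ for $X_u(t)>\gamma_t$ equals $\Pp{\min_{r\le (a-1)t}\min_{v\in\cN(r)}X_v(r)\le 0}{start at } X_u(t)-\gamma_t}$. Here is where the real estimate enters: this hitting probability, for BBM started at height $y=X_u(t)-\gamma_t$, is at most (by a union bound / many-to-one with the barrier, i.e. Lemma~\ref{lem:first-moment-N}) $\E_y[N_{[0,\infty)}] = \e^{-y}$, but we can do better for the $\sqrt t\,\gamma_t$ scaling using Lemma~\ref{lem:first-moment-N}'s Brownian hitting time: $\E_y[N_{[0,\infty)}]$ restricted to hitting within time $(a-1)t$ is $\e^{-y}\Pp{\tau_0^{BM}\le(a-1)t \mid \text{start }y}$; and then $\sqrt t\gamma_t\E[\sum_u\e^{-X_u(t)}\1_{E_u}]$ involves, via many-to-one, $\sqrt t\gamma_t\,\E_{\text{spine}}[\Pp{\text{hit }\gamma_t}]$. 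The main obstacle, and the heart of the proof, is obtaining a bound on $\Pp{E_u}{\sF_t}$ that is $o(1)$ after multiplication by $\sqrt t\,\gamma_t$ and summation against $\sqrt t\,W_t\to\sqrt{2/\pi}Z_\infty$; I would achieve it by combining the first-moment estimate for $N_{[0,\infty),\kappa}$ (Lemma~\ref{lem:first-moment-N}) with the second-moment bound (Lemma~\ref{lem:second-moment-N}) and a truncation at $\kappa=\e^{\beta_t/2}$ exactly as in the proof of Lemma~\ref{lem:concentration-Ztilde}, using the barrier at height $\gamma_t$. The assumption $\gamma_t = o(t^{1/4})$ is used precisely to make the product $\gamma_t\cdot\gamma_t\cdot t^{-1/2}\cdot(\text{number of crossings}) = o(1)$; the factor $\sqrt t\,\gamma_t$ in front is absorbed by the $\e^{-\gamma_t} = t^{-1/2}\e^{-\beta_t}$ and $\e^{-X_u(t)}$-weights coming from the change of measure, with $\gamma_t W_t\sqrt t$ bounded in probability by \eqref{eq:convergence-W_t} and $\gamma_t=o(t^{1/4})$. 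So the plan is: (1) decompose $W_{at}-\widetilde W_{at}^{t,\gamma_t}=\sum_{u\in\cN(t)}\e^{-X_u(t)}\1_{E_u}$ with $E_u$ the crossing event; (2) truncate offspring numbers at $\kappa=\e^{\beta_t/2}$ using Lemma~\ref{lem:not-in-C}; (3) bound the conditional first moment of the truncated crossing sum via Lemma~\ref{lem:first-moment-N} applied at the barrier $\gamma_t$ and the conditional variance via Lemma~\ref{lem:second-moment-N}, both after shifting so the barrier is at the origin; (4) apply Markov/Chebyshev conditionally on $\sF_t$ as in \eqref{aa}, and conclude using $\sqrt t\,W_t\to\sqrt{2/\pi}\,Z_\infty$ in probability and $\gamma_t=o(t^{1/4})$ to send the resulting bound to $0$. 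The main obstacle is step~(3): matching the polynomial scales so that the $\sqrt t\,\gamma_t$ prefactor is killed, which forces the use of the refined $N_{[0,\infty),\kappa}$ moment bounds rather than the naive $\E_y[N_{[0,\infty)}]=\e^{-y}$ estimate.
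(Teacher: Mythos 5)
Your final plan does not prove the lemma, for two reasons. First, a reading problem (partly excusable, since \eqref{initial} contains a typo): as the uses in the proofs of Propositions~\ref{prop:control-F^t_bad} and \ref{prop:control-number-of-killed-particles} show, $\widetilde W^{t,\gamma_t}_{at}$ is the time-$at$ mass $\sum_{u\in\cN(at)}\e^{-X_u(at)}\1_{\forall r\in[t,at],\,X_u(r)>\gamma_t}$, so that $W_{at}-\widetilde W^{t,\gamma_t}_{at}=\sum_{u\in\cN(at)}\e^{-X_u(at)}\1_{\min_{r\in[t,at]}X_u(r)\le\gamma_t}\ge0$. Your step (1), ``$W_{at}-\widetilde W^{t,\gamma_t}_{at}=\sum_{u\in\cN(t)}\e^{-X_u(t)}\1_{E_u}$'', is neither an identity nor a pointwise upper bound: the time-$at$ additive mass of the subtree rooted at $u\in\cN(t)$ is not bounded by $\e^{-X_u(t)}$. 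This could be repaired by working with $\E[\,\cdot\,|\sF_t]$ (which, by many-to-one in each subtree, equals $\sum_{u\in\cN(t)}\e^{-X_u(t)}\P_{X_u(t)}(\min_{[0,(a-1)t]}B\le\gamma_t)$) and conditional Markov, but that is not where the matter is decided.

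The substantive gap is that your plan never supplies the ballot-type (entropic repulsion) gain which is the whole point of the lemma, and the tools you invoke are the wrong ones. The quantity to control is non-negative, so only a first moment is needed; truncation at $\kappa=\e^{\beta_t/2}$, Lemma~\ref{lem:not-in-C}, the second moment of Lemma~\ref{lem:second-moment-N} and Chebyshev as in Lemma~\ref{lem:concentration-Ztilde} address concentration and heavy-tailed offspring, which are irrelevant here. You correctly observed that the unrestricted first moment is of order $\gamma_t\sqrt t$, but your four steps never fix this: for $a=1$ the window $[t,at]$ degenerates, the difference is exactly the mass below $\gamma_t$ at time $t$, and none of the crossing-count lemmas applies, so that case is left exactly where your own computation stalled. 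For $a>1$, the route you sketch (bound $\P(E_u|\sF_t)$ by $\E_{X_u(t)-\gamma_t}[N_{[0,\infty)}]=\e^{-(X_u(t)-\gamma_t)}$, then conditional Markov) yields at best $\gamma_t\sqrt t\,\e^{\gamma_t}\sum_{u\in\cN(t)}\e^{-2X_u(t)}$; since $\e^{\gamma_t}=\sqrt t\,\e^{\beta_t}$ while $\sum_u\e^{-2X_u(t)}$ is only polynomially small in $t$, the factor $\e^{\beta_t}$ survives and the bound diverges whenever $\beta_t$ grows faster than logarithmically, which \eqref{eq:assumption-beta_t} allows (e.g.\ $\beta_t=t^{1/5}$); also, your assertion that $\gamma_t\sqrt t\,W_t$ is bounded in probability is false, since $\sqrt t\,W_t\to\sqrt{2/\pi}\,Z_\infty$ makes it grow like $\gamma_t$. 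The paper's proof is instead a one-shot restricted first-moment estimate: fix $\varepsilon>0$, choose $L$ with $\P(\min_{s\ge0}\min_{u\in\cN(s)}X_u(s)\le-L)\le\varepsilon$ using \eqref{eq:minimum}, bound $\gamma_t\sqrt t\,\E[(W_{at}-\widetilde W^{t,\gamma_t}_{at})\1_{\min>-L}]$ by many-to-one; the constraint that the Brownian path stays above $-L$ on all of $[0,at]$ converts the expectation, via \eqref{eq:link-between-R-and-B}, into $L\,\E_L[\frac{1}{R_{at}}\1_{\min_{[t,at]}R\le\gamma_t+L}]$, and Lemma~\ref{lem:min-bessel-and-inverse} gives $C(\frac{(\gamma_t+L)^2}{t^{3/2}}+\frac{\gamma_t+L}{\sqrt{a-1}\,t}\1_{a>1})$, so after multiplying by $\gamma_t\sqrt t$ everything vanishes because $\gamma_t=o(t^{1/4})$. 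Without this restriction to the event that the whole path stays above $-L$ (or some equivalent device producing the extra factor of order $L\gamma_t/t$), no rearrangement of Lemmas~\ref{lem:first-moment-N}--\ref{lem:second-moment-N} will produce the required $o(1/(\gamma_t\sqrt t))$ bound.
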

\begin{proof} Fix $\varepsilon >0$.
By \eqref{eq:minimum}, there exists $L >0$ such that $\P (\min_{s \geq 0} \min_{u\in\cN(s)} X_u(s) \leq -L) \leq \varepsilon$. By Markov's inequality,
\begin{align} 
\P \left(W_{at} - \widetilde W_{at}^{t,\gamma_t} \geq \frac{\varepsilon}{\gamma_t \sqrt{t}} \right)
& \leq \varepsilon 
+ \varepsilon^{-1}\gamma_t\sqrt{t}\,
	\Ec{(W_{at} - \widetilde W_{at}^{t,\gamma_t})
	\1_{\min_{s \geq 0} \min_{u\in\cN(s)} X_u(s) > -L}} \nonumber \\
& \leq \varepsilon 
+  \varepsilon^{-1}\gamma_t\sqrt{t}\,
	\E \Biggl[ 
		\sum_{u \in \cN(t)} \e^{-X_u(at)} \1_{\min_{r\in[t,at]} X_u(r) \leq \gamma_t}
		\1_{\forall s \in [0,at], X_u(s) > -L}
		\Biggr]. \label{dx}
\end{align}
It follows from the many-to-one formula that the expectation on the right-hand side of \eqref{dx} is equal to
\begin{align*}
\Eci{L}{\1_{\min_{r\in[t,at]} B_r \leq \gamma_t+L} \1_{\min_{s \in [0,at]} B_s > 0}}
& = L \Eci{L}{ \frac{1}{R_t} \1_{\min_{r\in[t,at]} R_r \leq \gamma_t+L}},
\end{align*}
using \eqref{eq:link-between-R-and-B}.
By Lemma~\ref{lem:min-bessel-and-inverse},
\[
\Eci{L}{ \frac{1}{R_t} \1_{\min_{r\in[t,at]} R_r \leq \gamma_t+L}} \le C \left(\frac{(\gamma_t+L)^2}{t^{3/2}} +  \frac{\gamma_t+L}{\sqrt{a-1}\,t} \1_{a>1}\right).
\]
Coming back to \eqref{dx}, we proved that 
\begin{align*}
\P \left(W_{at} - \widetilde W_{at}^{t,\gamma_t} \geq \frac{\varepsilon}{\gamma_t \sqrt{t}} \right)
& \leq \varepsilon 
+ C \varepsilon^{-1}L\left(
	 \frac{\gamma_t (\gamma_t + L)^2}{t} +
	\frac{\gamma_t (\gamma_t + L)}{\sqrt{(a-1)t}}\1_{a>1}\right) \\
& \xrightarrow[t\to\infty]{} \varepsilon,
\end{align*}
because $\gamma_t = o(t^{1/4})$.
This proves the lemma.
\end{proof}
\begin{proof}[Proof of Proposition \ref{prop:control-Ztilde-infty}]
First note that it is sufficient to prove the case $a=1$: for $a > 1$, $\beta'_{at} \coloneqq \gamma_t - \frac{1}{2} \log (at)$ satisfies \eqref{eq:assumption-beta_t} with $at$ instead of $t$, so one can directly apply the case $a=1$.
We therefore have to show:
\begin{equation}
 \label{eq:havetoshow}
\sqrt{t} 
\abs{\widetilde{Z}_\infty^{t,\gamma_t} - \left(Z_t - \gamma_t W_t \right)} \xrightarrow[t\to\infty]{\P} 0.
\end{equation}
We decompose:
\[
 \sqrt{t} 
\abs{\widetilde{Z}_\infty^{t,\gamma_t} - \left(Z_t - \gamma_t W_t \right)} \le \sqrt{t} 
\abs{\widetilde{Z}_\infty^{t,\gamma_t} - \widetilde{Z}_t^{t,\gamma_t}} +  \sqrt{t} 
\abs{\widetilde{Z}_t^{t,\gamma_t} - \left(Z_t - \gamma_t W_t \right)}.
\]
The first term goes to 0 in probability by Lemma~\ref{lem:concentration-Ztilde}. The second term can be written as
\begin{align*}
\sqrt{t} 
\abs{\widetilde{Z}_t^{t,\gamma_t} - \left(Z_t - \gamma_t W_t \right)}
& = \sqrt{t} \sum_{u \in \cN(t)} (\gamma_t-X_u(t)) \e^{-X_u(t)}
\1_{X_u(t) \leq \gamma_t}.
\end{align*}
Since $\min_{u\in\cN(t)} X_u(t) \to \infty$ a.s. as $t\to\infty$ by \eqref{eq:minimum}, the right-hand side is bounded by $\gamma_t \sqrt{t} (W_{t} - \widetilde W_{t}^{t,\gamma_t})$ with high probability for large $t$, and this goes to 0 by Lemma~\ref{lem:cv-for-L_init} as $t\to\infty$. Equation~\eqref{eq:havetoshow} follows.
\end{proof}

\section{The particles going below \texorpdfstring{$\gamma_t$}{gamma\_t}: proof of Propositions~\ref{prop:control-fluctuations} \& \ref{prop:control-F^t_bad}}
\label{section:killed-particles}

In this section we prove Propositions~\ref{prop:control-fluctuations} and \ref{prop:control-F^t_bad}. We use throughout the notation from Section~\ref{section:proof-of-theorem}, in particular the stopping line $\cL^{at,\gamma_t}$ and its decomposition into
\[
 \cL^{at,\gamma_t} = \cL^{at,\gamma_t}_\mathrm{good} \cup \cL^{at,\gamma_t}_\text{bad}.
\]
Section~\ref{subsection:bad_particles} handles the ``bad'' particles: Proposition~\ref{prop:control-F^t_bad} is proven there. Section~\ref{subsection:number_of_good_particles} contains bounds on the number of good particles hitting $\gamma_t$ at a certain time. Section~\ref{subsection:contribution_of_good_particles} wraps up the proof of Proposition~\ref{prop:control-fluctuations}.

\subsection{The ``bad'' particles}
\label{subsection:bad_particles}

In this section, we prove that the ``bad'' particles have a negligible contribution to $Z_\infty$, i.e., we prove Proposition~\ref{prop:control-F^t_bad}. We will need the following estimate: there exists a constant $C>0$, such that for all $x\ge0$,
\begin{equation}
\label{eq:tail-Z_infty-3}
 \E[Z_\infty \wedge x] \le C(1+\log_+ x).
\end{equation}
This estimate is a direct consequence of \eqref{eq:tail-Z_infty} and the equality $\E[X\wedge x] = \int_0^x\P(X>x) \diff x$ for a non-negative random variable $X$.
\begin{proof}[Proof of Proposition \ref{prop:control-F^t_bad}]
We perform a truncated first moment computation. For simplicity, write $\Delta_u$ for $\Delta_u^{at,\gamma_t}$. Then note that
\[
F^{at,\gamma_t}_\text{bad}\wedge 1 
\le 
\sum_{u \in \cL^{at,\gamma_t}_\text{bad}} \e^{-X_u(\Delta_u)} \left(Z_\infty^{(u,at,\gamma_t)} \wedge \e^{X_u(\Delta_u)}\right) 
\le  
\sum_{u \in \cL^{at,\gamma_t}_\text{bad}} \e^{-X_u(\Delta_u)}\left(Z_\infty^{(u,at,\gamma_t)} \wedge \e^{\gamma_t}\right),
\]
because the particles $u\in \cL^{at,\gamma_t}_\text{bad}$ satisfy $X_u(\Delta_u) \le \gamma_t$. This gives
\begin{align}
\Ecsq{F^{at,\gamma_t}_\text{bad}\wedge 1}{\sF_{\cL^{at,\gamma_t}}}
& \leq \sum_{u \in \cL^{at,\gamma_t}_\text{bad}} \e^{-X_u(\Delta_u)}
	\Ecsq{Z_\infty^{(u,at,\gamma_t)}\wedge \e^{\gamma_t}}{\sF_{\cL^{at,\gamma_t}}}
	\nonumber \\
& \leq C (1+ \gamma_t) \sum_{u \in \cL^{at,\gamma_t}_\text{bad}} \e^{-X_u(\Delta_u)}, \nonumber
\end{align}
by \eqref{eq:tail-Z_infty-3}. Now define $\cN_\text{bad}(at)$ as the set of particles $u \in \cN(at)$ such that $\min_{r\in[t,at]} X_u(r) \leq \gamma_t$. Then, $\cL^{at,\gamma_t}_\text{bad}$ forms exactly the descendants of the particles in $\cN_\text{bad}(at)$ at the time when they go below $\gamma_t$. 
By the branching property at time $at$ and Lemma \ref{lem:first-moment-N} (for particles above $\gamma_t$ at time $at$),
\[
\Ecsq{\sum_{u \in \cL^{at,\gamma_t}_\text{bad}} \e^{-X_u(\Delta_u)}}{\sF_{at}} = \sum_{v\in\cN_\text{bad}(at)} \e^{-X_v(at)} = W_{at} - \widetilde W_{at}^{t,\gamma_t},
\]
where $\widetilde W_{at}^{t,\gamma_t}$ is defined in Section~\ref{section:control-without-fluctuations}.
Altogether, we get
\[
\Ecsq{\sqrt{t}(F^{at,\gamma_t}_\text{bad}\wedge 1)}{\sF_{at}} \le C(1+\gamma_t)\sqrt t (W_{at} - \widetilde W_{at}^{t,\gamma_t}).
\]
The lemma now readily follows from Lemma~\ref{lem:cv-for-L_init} and Markov's inequality.
\end{proof}

\subsection{The number of ``good'' particles}
\label{subsection:number_of_good_particles}

We recall that $\cL^{at,\gamma_t}_\mathrm{good}$ is a decreasing set in $a$, with $\cL^{at,\gamma_t}_\mathrm{good} = \{ u \in \cL^{t,\gamma_t}_\mathrm{good} : \Delta_u^{t,\gamma_t} > at\}$. 
For each $a \geq 1$, we set 
\begin{align*}
N^{at,\gamma_t}_\mathrm{good} \coloneqq \# \cL^{at,\gamma_t}_\mathrm{good}.
\end{align*} 
In this section, we study the convergence of $N^{at,\gamma_t}_\mathrm{good}$ after a proper renormalization.
\begin{lem} \label{lem:concentration-N}
For any $a \ge 1$, we have the following convergence in probability
\begin{align*}
\beta_t \e^{-\beta_t}\abs{N^{at,\gamma_t}_\mathrm{good} - \Ecsq{N^{at,\gamma_t}_\mathrm{good}}{\sF_{at}}}
& \xrightarrow[t\to\infty]{} 0.
\end{align*}
\end{lem}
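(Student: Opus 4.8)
The plan is to follow a conditional first- and second-moment argument analogous to the proof of Lemma~\ref{lem:concentration-Ztilde}, using the branching property at time $at$ together with the moment estimates on the number of killed particles from Section~\ref{subsection:number-of-particles-killed-by-the-barrier} (Lemmas~\ref{lem:first-moment-N}, \ref{lem:not-in-C} and \ref{lem:second-moment-N}). The key obstacle, as in Lemma~\ref{lem:concentration-Ztilde}, is that the offspring distribution may have infinite variance, so a naive second-moment bound on $N^{at,\gamma_t}_\mathrm{good}$ is unavailable; we circumvent this by truncating the tree at offspring sizes exceeding $\kappa \e^{(X_v(d_v)-\gamma_t)/2}$, exactly as in the definition of $C_\kappa$ and $N_{[a,b],\kappa}$, and then letting $\kappa = \kappa_t \to \infty$ slowly (the natural choice being $\kappa_t = \e^{\beta_t/2}$, mirroring the end of the proof of Lemma~\ref{lem:concentration-Ztilde}).

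First I would define the truncated count $N^{at,\gamma_t}_{\mathrm{good},\kappa}$ by restricting $\cL^{at,\gamma_t}_\mathrm{good}$ to those particles whose ancestors before hitting $\gamma_t$ all have at most $\kappa \e^{(X_v(d_v)-\gamma_t)/2}$ children (i.e.\ lying in the appropriate analogue of $B_{s,\kappa}$ shifted by $\gamma_t$). Then I would write, for $\varepsilon>0$, the decomposition
\begin{align*}
& \Ppsq{\beta_t\e^{-\beta_t}\abs{N^{at,\gamma_t}_\mathrm{good} - \Ecsq{N^{at,\gamma_t}_\mathrm{good}}{\sF_{at}}} \geq 3\varepsilon}{\sF_{at}} \\
& \leq 2\,\frac{\beta_t\e^{-\beta_t}}{\varepsilon} \Ecsq{N^{at,\gamma_t}_\mathrm{good} - N^{at,\gamma_t}_{\mathrm{good},\kappa}}{\sF_{at}} + \Big(\frac{\beta_t\e^{-\beta_t}}{\varepsilon}\Big)^2 \Varsq{N^{at,\gamma_t}_{\mathrm{good},\kappa}}{\sF_{at}},
\end{align*}
using Markov's inequality on the truncation-error term (which is nonnegative) and Chebyshev on the truncated term, just as in \eqref{aa}. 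By the branching property at time $at$, applied to each particle $u\in\cN(at)$ with $\min_{r\in[t,at]}X_u(r)>\gamma_t$ (these are the ancestors of $\cL^{at,\gamma_t}_\mathrm{good}$), the conditional expectation of the truncation error is bounded by a sum over such $u$ of $\e^{-(X_u(at)-\gamma_t)}\cdot\frac{h(\kappa)}{\log\kappa}$ via Lemma~\ref{lem:not-in-C} (with starting point $X_u(at)-\gamma_t$), hence by $\frac{h(\kappa)}{\log\kappa}\,\e^{\gamma_t}\widetilde W_{at}^{t,\gamma_t}\le \frac{h(\kappa)}{\log\kappa}\,\e^{\gamma_t} W_{at}$; and the conditional variance is bounded, again by the branching property and Lemma~\ref{lem:second-moment-N}, by $\sum_u \e^{-2(X_u(at)-\gamma_t)}\cdot C\kappa\,\e^{X_u(at)-\gamma_t}\le C\kappa\,\e^{\gamma_t}W_{at}$ (dropping the off-diagonal negative contribution of centering across distinct $u$, and noting $\widetilde W_{at}^{t,\gamma_t}\le W_{at}$).

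Plugging these two bounds back in and recalling $\gamma_t=\frac12\log t+\beta_t$, so that $\e^{\gamma_t}=\sqrt t\,\e^{\beta_t}$, I get that the right-hand side is bounded by
\[
C\,\sqrt t\,W_{at}\left(\frac{\beta_t h(\kappa)}{\varepsilon\log\kappa} + \frac{\beta_t^2\,\e^{-\beta_t}\kappa}{\varepsilon^2}\right).
\]
Now $\sqrt t\,W_{at}=\sqrt a\cdot\sqrt{at}\,W_{at}$ converges in probability by \eqref{eq:convergence-W_t}, and choosing $\kappa=\kappa_t=\e^{\beta_t/2}$ makes $\frac{\beta_t h(\kappa_t)}{\log\kappa_t}=\frac{2h(\kappa_t)}{1}\to 0$ (since $h\to0$ at infinity and $\beta_t/\log\kappa_t = 2$) and $\beta_t^2\e^{-\beta_t}\kappa_t=\beta_t^2\e^{-\beta_t/2}\to0$; hence the whole bound tends to $0$ in probability, which proves the lemma. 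The only genuinely delicate point is verifying that Lemma~\ref{lem:not-in-C} applies with the shifted starting point $X_u(at)-\gamma_t\ge0$ and that the truncation set $B_{s,\kappa}^{t,\gamma_t}$ is compatible with the stopping-line framework, but this is entirely parallel to the treatment in Lemma~\ref{lem:concentration-Ztilde} and Section~\ref{subsection:number-of-particles-killed-by-the-barrier}.
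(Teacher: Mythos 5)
Your proposal is correct and follows essentially the same route as the paper: the same truncated count $N^{at,\gamma_t}_{\mathrm{good},\kappa}$ with the choice $\kappa=\e^{\beta_t/2}$, the same Markov/Chebyshev decomposition conditionally on $\sF_{at}$, and the same application of Lemmas~\ref{lem:not-in-C} and~\ref{lem:second-moment-N} through the branching property at time $at$, with the $1/\log\kappa$ factor from Lemma~\ref{lem:not-in-C} absorbing the extra $\beta_t$ exactly as in the paper. The only blemishes are cosmetic: the variance term is bounded directly by $\sum_u C\kappa\,\e^{-(X_u(at)-\gamma_t)}$ from Lemma~\ref{lem:second-moment-N} (your written product happens to equal this), and $\sqrt{t}\,W_{at}=\tfrac{1}{\sqrt a}\,\sqrt{at}\,W_{at}$ rather than $\sqrt a\cdot\sqrt{at}\,W_{at}$ --- neither affects the conclusion.
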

\begin{proof}
The proof goes by a truncated second moment computation as in Section~\ref{subsection:number-of-particles-killed-by-the-barrier}.
Fix $a\ge 1$ and $\varepsilon > 0$. We set, for $\kappa > 1$, 
\begin{align*}
C_\kappa^{at,\gamma_t}
& \coloneqq
\left\{
u \in \cL^{at,\gamma_t}_\mathrm{good} :
\forall v < u, d_v \geq at \Rightarrow O_v \leq \kappa \e^{(X_v(d_v)-\gamma_t)/2}
\right\} \\
N^{at,\gamma_t}_{\mathrm{good},\kappa} 
& \coloneqq \# C_\kappa^{at,\gamma_t}.
\end{align*}
Noting that $N^{at,\gamma_t}_\mathrm{good} - N^{at,\gamma_t}_{\mathrm{good},\kappa}\geq 0$ and working conditionally on $\sF_{at}$, we have
\begin{align}
& \Ppsq{\abs{N^{at,\gamma_t}_\mathrm{good} - \Ecsq{N^{at,\gamma_t}_\mathrm{good}}{\sF_{at}}} 
	\geq \frac{3\varepsilon\e^{\beta_t}}{\beta_t}}{\sF_{at}} \nonumber \\
\begin{split}
& \leq \Ppsq{N^{at,\gamma_t}_\mathrm{good} - N^{at,\gamma_t}_{\mathrm{good},\kappa}
	\geq \frac{\varepsilon \e^{\beta_t}}{\beta_t}}{\sF_{at}} 
+ \Ppsq{\abs{N^{at,\gamma_t}_{\mathrm{good},\kappa} - \Ecsq{N^{at,\gamma_t}_{\mathrm{good},\kappa}}{\sF_{at}}} 
	\geq \frac{\varepsilon \e^{\beta_t}}{\beta_t}}{\sF_{at}} \\
& \relphantom{\leq} {} 
+ \1_{\E \bigl[ N^{at,\gamma_t}_\mathrm{good} - N^{at,\gamma_t}_{\mathrm{good},\kappa} \big| \sF_{at} \bigr] \geq \varepsilon \e^{\beta_t}/\beta_t}
\end{split} \nonumber \\
& \leq  \frac{2\beta_t}{\varepsilon \e^{\beta_t}} 
	\Ecsq{N^{at,\gamma_t}_\mathrm{good} - N^{at,\gamma_t}_{\mathrm{good},\kappa}}{\sF_{at}} 
+ \left( \frac{\beta_t}{\varepsilon \e^{\beta_t}} \right)^2 
	\Varsq{N^{at,\gamma_t}_{\mathrm{good},\kappa}}{\sF_{at}}, \label{da}
\end{align}
using Markov's inequality for the first and third terms and Chebychev's inequality for the second one.

For the first term in \eqref{da}, using the branching property at time $at$ and Lemma \ref{lem:not-in-C}, we have with the notation used there,
\begin{align}
\Ecsq{N^{at,\gamma_t}_\mathrm{good} - N^{at,\gamma_t}_{\mathrm{good},\kappa}}{\sF_{at}} 
& = \sum_{u \in \cN(at)} 
	\Eci{X_u(at)-\gamma_t}{N_{(0,\infty)} - N_{(0,\infty),\kappa}} 
	\1_{X_u(at) > \gamma_t} \nonumber \\
& \leq \sum_{u \in \cN(at)} \e^{-(X_u(at)-\gamma_t)} \frac{h(\kappa)}{\log \kappa} 
= \sqrt{t} \e^{\beta_t} W_t \frac{h(\kappa)}{\log \kappa}, \label{db}
\end{align}
For the second term in \eqref{aa}, using again the branching property at time $at$ and Lemma \ref{lem:second-moment-N}, we get
\begin{align}
\Varsq{N^{at,\gamma_t}_{\mathrm{good},\kappa}}{\sF_{at}}
& \leq \sum_{u \in \cN(at)}  
	\Eci{X_u(at)-\gamma_t}{ \left(N_{(0,\infty),\kappa} \right)^2}
	\1_{X_u(at) > \gamma_t} \nonumber \\
& \leq \sum_{u \in \cN(at)} C \kappa \e^{-(X_u(at)-\gamma_t)}
= C \kappa \sqrt{t} \e^{\beta_t} W_{at}, \label{dc}
\end{align}
Combining \eqref{da}, \eqref{db} and \eqref{dc} with $\kappa = \e^{\beta_t/2}$, it follows that
\begin{align*}
\Ppsq{\e^{-\beta_t} \abs{N^{at,\gamma_t}_\mathrm{good} - \Ecsq{N^{at,\gamma_t}_\mathrm{good}}{\sF_{at}}} 
	\geq \frac{\varepsilon}{\beta_t}}{\sF_{at}} 
& \leq C \sqrt{t} W_{at} \left( 
\frac{h(\e^{\beta_t/2})}{\varepsilon} 
+ \frac{\beta_t^2 \e^{-\beta_t/2}}{\varepsilon^2}
\right)
\xrightarrow[t\to\infty]{} 0,
\end{align*}
in probability, using the convergence in probability of $\sqrt tW_t$ from \eqref{eq:convergence-W_t} and that $h(\e^{\beta_t/2}) \to 0$. The result follows.
\end{proof}
\begin{prop} \label{prop:control-number-of-killed-particles}
For every $a \geq1$, we have the following convergence in probability
\begin{align*}
\beta_t \abs{\e^{-\beta_t} N^{at,\gamma_t}_\mathrm{good} - \sqrt{t} W_{a t}}
\xrightarrow[t\to\infty]{} 0.
\end{align*}
\end{prop}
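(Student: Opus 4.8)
The plan is to combine the concentration estimate of Lemma~\ref{lem:concentration-N} with an exact formula for the conditional mean $\Ecsq{N^{at,\gamma_t}_\mathrm{good}}{\sF_{at}}$. Since Lemma~\ref{lem:concentration-N} gives $\beta_t\e^{-\beta_t}\abs{N^{at,\gamma_t}_\mathrm{good}-\Ecsq{N^{at,\gamma_t}_\mathrm{good}}{\sF_{at}}}\to0$ in probability, the triangle inequality reduces the claim to showing
\[
\beta_t\,\abs{\e^{-\beta_t}\,\Ecsq{N^{at,\gamma_t}_\mathrm{good}}{\sF_{at}}-\sqrt t\,W_{at}}\xrightarrow[t\to\infty]{}0\qquad\text{in probability.}
\]

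Next I would compute the conditional mean. Every particle of $\cL^{at,\gamma_t}_\mathrm{good}$ has, at time $at$, an ancestor $v\in\cN(at)$ (possibly itself) with $X_v(at)>\gamma_t$ and whose ancestral line stays strictly above $\gamma_t$ on all of $[t,at]$; conversely, given such a $v$, the branching property at time $at$ shows that the descendants of $v$ that later drop below $\gamma_t$ form, after translating the level $\gamma_t$ to the origin, exactly the stopping line of killed particles of a BBM started from $X_v(at)-\gamma_t>0$ as in Section~\ref{subsection:number-of-particles-killed-by-the-barrier}; since $X_v(at)>\gamma_t$, all of these particles are killed at times larger than $at$, so each of them belongs to $\cL^{at,\gamma_t}_\mathrm{good}$, and by Lemma~\ref{lem:first-moment-N} their number has conditional mean $\E_{X_v(at)-\gamma_t}[N_{[0,\infty)}]=\e^{-(X_v(at)-\gamma_t)}$. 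Summing over $v$,
\[
\Ecsq{N^{at,\gamma_t}_\mathrm{good}}{\sF_{at}}=\sum_{v\in\cN(at)}\e^{-(X_v(at)-\gamma_t)}\,\1_{\forall r\in[t,at],\,X_v(r)>\gamma_t}=\e^{\gamma_t}\,\widetilde W_{at}^{t,\gamma_t},
\]
with $\widetilde W_{at}^{t,\gamma_t}$ as in \eqref{initial}. Since $\e^{\gamma_t}=\sqrt t\,\e^{\beta_t}$, this yields $\e^{-\beta_t}\Ecsq{N^{at,\gamma_t}_\mathrm{good}}{\sF_{at}}=\sqrt t\,\widetilde W_{at}^{t,\gamma_t}$.

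Finally, plugging this into the display above and using $0\le W_{at}-\widetilde W_{at}^{t,\gamma_t}$ together with $\beta_t\le\gamma_t$ (valid for $t\ge1$), the quantity to control equals
\[
\beta_t\sqrt t\,\bigl(W_{at}-\widetilde W_{at}^{t,\gamma_t}\bigr)\le\gamma_t\sqrt t\,\bigl(W_{at}-\widetilde W_{at}^{t,\gamma_t}\bigr),
\]
and the right-hand side converges to $0$ in probability by Lemma~\ref{lem:cv-for-L_init}, which concludes the proof. I expect the only delicate step to be the conditional-mean identity, where one must argue carefully that ``good'' encodes exactly the two conditions ``the time-$at$ ancestor stays above $\gamma_t$ throughout $[t,at]$'' and ``the first descent below $\gamma_t$ happens after $at$'' --- the latter being automatic as soon as $X_v(at)>\gamma_t$ --- so that the branching property at time $at$ and Lemma~\ref{lem:first-moment-N} apply without any correction term; the remaining steps are immediate consequences of the lemmas from Sections~\ref{section:preliminary-results} and~\ref{section:control-without-fluctuations}.
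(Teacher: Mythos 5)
Your proposal is correct and follows essentially the same route as the paper: the identity $\Ecsq{N^{at,\gamma_t}_\mathrm{good}}{\sF_{at}} = \e^{\gamma_t}\,\widetilde W_{at}^{t,\gamma_t}$ obtained from the branching property at time $at$ and Lemma~\ref{lem:first-moment-N}, combined via the triangle inequality with the concentration estimate of Lemma~\ref{lem:concentration-N} and the comparison $\beta_t\sqrt t\,(W_{at}-\widetilde W_{at}^{t,\gamma_t})\le \gamma_t\sqrt t\,(W_{at}-\widetilde W_{at}^{t,\gamma_t})\to0$ from Lemma~\ref{lem:cv-for-L_init}. The only difference is that you spell out the conditional-mean identity (the exact correspondence between good particles and the killed stopping lines of the sub-BBMs rooted at time $at$) in more detail than the paper, which states it in one line.
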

\begin{proof}
Using the branching property at time $at$, we have
\begin{align} \label{eq:first-moment-N_(1,infty)}
\e^{-\beta_t} \Ecsq{N^{at,\gamma_t}_\mathrm{good}}{\sF_{at}}
& = \e^{-\beta_t} \sum_{u \in \cN(at)} \1_{\forall r\in[t,at]:X_u(r) > \gamma_t}
	\Eci{X_u(at) - \gamma_t}{N_{(0,\infty)}}
= \sqrt{t} \widetilde{W}_{at}^{t,\gamma_t},
\end{align}
applying Lemma \ref{lem:first-moment-N}. Using the triangle inequality, it follows that
\[
 \beta_t \abs{\e^{-\beta_t} N^{at,\gamma_t}_\mathrm{good} - \sqrt{t} W_{a t}} \le \beta_t\e^{-\beta_t} \abs{N^{at,\gamma_t}_\mathrm{good} - \Ecsq{N^{at,\gamma_t}_\mathrm{good}}{\sF_{at}}} + \beta_t\sqrt{t}\abs{\widetilde{W}_{at}^{at,\gamma_t} - W_{at}}.
\]
Both terms vanish in probability as $t\to\infty$ by Lemmas \ref{lem:concentration-N} and \ref{lem:cv-for-L_init}. This concludes the proof.
%
\end{proof}
\begin{cor} \label{cor:cv-N}
Let $n \geq 1$, $1 \leq a_1 < \dots < a_n < a_{n+1} = \infty$ and $(z^t)_{t\geq 0} \in (\C^n)^{\R_+}$. 
Assume that, for any $1 \leq k \leq n$ and $t \geq 0$, $\mathrm{Re} (z_k^t) \leq 0$ and that $z^t$ converges to some $z \in \C^n$.
Then, we have the following convergence in probability
\begin{align*}
\Ecsq{\exp \left( \sum_{k=1}^n z_k^t 
	\e^{-\beta_t}  \left(N^{a_kt,\gamma_t}_\mathrm{good} - N^{a_{k+1}t,\gamma_t}_\mathrm{good}\right) 
	\right)}{\sF_t}
& \xrightarrow[t\to\infty]{} 
\exp \left( \sqrt{\frac{2}{\pi}} Z_\infty 
\sum_{k=1}^n z_k \left( \frac{1}{\sqrt{a_k}} - \frac{1}{\sqrt{a_{k+1}}} \right) 
\right).
\end{align*}
\end{cor}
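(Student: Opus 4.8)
The plan is to derive this corollary from Proposition~\ref{prop:control-number-of-killed-particles} by soft probabilistic arguments. First I would prove that the random variable inside the conditional expectation converges in probability, and then pass this convergence through the conditioning using a bounded‑convergence estimate together with L\'evy's upward martingale convergence theorem.

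\emph{Step 1 (pointwise convergence of the exponent).} Since $a_{n+1}=\infty$, the definition $\cL^{at,\gamma_t}_\mathrm{good}=\{u\in\cL^{t,\gamma_t}_\mathrm{good}:\Delta_u^{t,\gamma_t}>at\}$ gives $\cL^{a_{n+1}t,\gamma_t}_\mathrm{good}=\emptyset$, hence $N^{a_{n+1}t,\gamma_t}_\mathrm{good}=0$, consistently with the convention $1/\sqrt{a_{n+1}}=0$. For $1\le k\le n$, Proposition~\ref{prop:control-number-of-killed-particles} gives $\beta_t\,\big|\e^{-\beta_t}N^{a_k t,\gamma_t}_\mathrm{good}-\sqrt{t}\,W_{a_k t}\big|\to0$ in probability, so since $\beta_t\to\infty$ we get $\e^{-\beta_t}N^{a_k t,\gamma_t}_\mathrm{good}-\sqrt t\,W_{a_k t}\to0$ in probability; combined with $\sqrt t\,W_{a_k t}=a_k^{-1/2}\sqrt{a_k t}\,W_{a_k t}\to a_k^{-1/2}\sqrt{2/\pi}\,Z_\infty$ in probability (apply \eqref{eq:convergence-W_t} at time $a_k t$), we obtain $\e^{-\beta_t}N^{a_k t,\gamma_t}_\mathrm{good}\to a_k^{-1/2}\sqrt{2/\pi}\,Z_\infty$ in probability for every $1\le k\le n+1$. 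As there are finitely many terms and $z^t\to z$, it follows that $\sum_{k=1}^n z_k^t\,\e^{-\beta_t}\big(N^{a_k t,\gamma_t}_\mathrm{good}-N^{a_{k+1}t,\gamma_t}_\mathrm{good}\big)\to\sqrt{2/\pi}\,Z_\infty\sum_{k=1}^n z_k\big(a_k^{-1/2}-a_{k+1}^{-1/2}\big)$ in probability, so by continuity of $\exp$ the random variable $G_t:=\exp\big(\sum_{k=1}^n z_k^t\,\e^{-\beta_t}(N^{a_k t,\gamma_t}_\mathrm{good}-N^{a_{k+1}t,\gamma_t}_\mathrm{good})\big)$ converges in probability to $G_\infty:=\exp\big(\sqrt{2/\pi}\,Z_\infty\sum_{k=1}^n z_k(a_k^{-1/2}-a_{k+1}^{-1/2})\big)$, which is the claimed right‑hand side.

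\emph{Step 2 (uniform bound and passage to the conditional expectation).} Because $\cL^{at,\gamma_t}_\mathrm{good}$ is decreasing in $a$, each difference $N^{a_k t,\gamma_t}_\mathrm{good}-N^{a_{k+1}t,\gamma_t}_\mathrm{good}$ is nonnegative; together with $\mathrm{Re}(z_k^t)\le0$ this gives $|G_t|\le1$ for all $t$, hence also $|G_\infty|\le1$. Now decompose
\[
\Ecsq{G_t}{\sF_t}-G_\infty=\Ecsq{G_t-G_\infty}{\sF_t}+\big(\Ecsq{G_\infty}{\sF_t}-G_\infty\big).
\]
For the first term, conditional Jensen gives $\big|\Ecsq{G_t-G_\infty}{\sF_t}\big|\le\Ecsq{|G_t-G_\infty|}{\sF_t}$; since $|G_t-G_\infty|\le2$ and $G_t\to G_\infty$ in probability, dominated convergence yields $\E[|G_t-G_\infty|]\to0$, and taking expectations $\E\big[\Ecsq{|G_t-G_\infty|}{\sF_t}\big]=\E[|G_t-G_\infty|]\to0$, so this term tends to $0$ in $L^1$ and hence in probability. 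For the second term, $G_\infty$ is bounded and $\sF_\infty$‑measurable (as $Z_\infty=\lim_{s\to\infty}Z_s$ is $\sF_\infty$‑measurable), so L\'evy's upward theorem gives $\Ecsq{G_\infty}{\sF_t}\to\Ecsq{G_\infty}{\sF_\infty}=G_\infty$ almost surely. Adding the two contributions proves the corollary.

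I do not expect a genuine obstacle here: the substantive content lies in Proposition~\ref{prop:control-number-of-killed-particles}. The only delicate point is that the conditioning in the statement is with respect to $\sF_t$ while the limit $G_\infty$ depends on $Z_\infty$ and is not $\sF_t$‑measurable, which is exactly what the splitting in Step~2 handles; the hypotheses $\mathrm{Re}(z_k^t)\le0$ and $z^t\to z$ are used, respectively, for the domination $|G_t|\le1$ and for the pointwise convergence in Step~1.
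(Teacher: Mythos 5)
Your proof is correct, and it rests on the same ingredients as the paper's argument (Proposition~\ref{prop:control-number-of-killed-particles}, the convergence \eqref{eq:convergence-W_t}, and the uniform bound coming from $\mathrm{Re}(z_k^t)\le 0$ together with the monotonicity of $\cL^{at,\gamma_t}_\mathrm{good}$ in $a$), but it handles the conditioning by a different mechanism. The paper centers on an $\sF_t$-measurable approximant: it shows that $\e^{-\beta_t}(N^{a_kt,\gamma_t}_\mathrm{good}-N^{a_{k+1}t,\gamma_t}_\mathrm{good}) - \sqrt{2/\pi}\,Z_t\,(a_k^{-1/2}-a_{k+1}^{-1/2}) \to 0$ in probability (with $Z_t$ rather than $Z_\infty$), uses Remark~\ref{rem:weak-convergence-in-proba} and the bound by $1$ to discard this difference inside the conditional expectation, and then the conditional expectation trivializes because the remaining exponent is $\sF_t$-measurable, so the conclusion follows from $Z_t\to Z_\infty$ a.s. You instead keep $Z_\infty$ in the limit and split $\Ecsq{G_t}{\sF_t}-G_\infty$ into $\Ecsq{G_t-G_\infty}{\sF_t}$, which you kill by a dominated-convergence/$L^1$ argument (essentially what Remark~\ref{rem:weak-convergence-in-proba} encapsulates), plus $\Ecsq{G_\infty}{\sF_t}-G_\infty$, which you kill by L\'evy's upward theorem. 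Both routes are equally soft; the paper's $Z_t$-centering avoids any appeal to martingale convergence of the conditional expectations, while yours avoids introducing $Z_t$ at all. One cosmetic caveat in your Step~2: the almost-sure form of L\'evy's upward theorem over a continuous time parameter requires a word (argue along sequences $t_n\to\infty$, or take a suitable modification), but since the corollary only claims convergence in probability, the $L^1$-convergence of $\Ecsq{G_\infty}{\sF_t}$ to $G_\infty$ suffices and is standard, so there is no genuine gap.
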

\begin{proof}
Recalling that $\sqrt{t} W_{at} \to \sqrt{2/\pi a} Z_\infty$ in probability by \eqref{eq:convergence-W_t} and $Z_t \to Z_\infty$ a.s., it follows from Proposition \ref{prop:control-number-of-killed-particles} that, for every $1 \leq a < b \leq \infty$, we have
\begin{align*}
\abs{\e^{-\beta_t}  \left(N^{at,\gamma_t}_\mathrm{good} - N^{bt,\gamma_t}_\mathrm{good}\right)
- \sqrt{\frac{2}{\pi}} Z_t \left( \frac{1}{\sqrt{a}} - \frac{1}{\sqrt{b}} \right)}
\xrightarrow[t\to\infty]{\P} 0.
\end{align*}
Noting that everything is bounded by 1 because $\mathrm{Re} (z_k^t) \leq 0$ and using Remark \ref{rem:weak-convergence-in-proba}, it is sufficient to prove that 
\begin{align*}
\Ecsq{\exp \left( \sum_{k=1}^n z_k^t 
	\sqrt{\frac{2}{\pi}} Z_t
	\left( \frac{1}{\sqrt{a_k}} - \frac{1}{\sqrt{a_{k+1}}} \right) 
	\right)}{\sF_t}
& \xrightarrow[t\to\infty]{\P} 
\exp \left( \sqrt{\frac{2}{\pi}} Z_\infty
\sum_{k=1}^n z_k \left( \frac{1}{\sqrt{a_k}} - \frac{1}{\sqrt{a_{k+1}}} \right)  \right).
\end{align*}
But this holds because $Z_t$ is $\sF_t$-measurable and $Z_t \to Z_\infty$ a.s.
\end{proof}

\subsection{Contribution of the ``good'' particles}
\label{subsection:contribution_of_good_particles}

In this section, we use the results of the last section for proving Proposition \ref{prop:control-fluctuations}.
\begin{proof}[Proof of Proposition \ref{prop:control-fluctuations}] 
Note that we can assume w.l.o.g.\@ that $a_1 < a_2 < \dots < a_n$.
Using Proposition \ref{prop:control-number-of-killed-particles} and Remark \ref{rem:weak-convergence-in-proba}, it is sufficient to prove that 
\begin{align}
 &\Ecsq{\exp \left( i \sum_{k=1}^n \lambda_k \left( 
	\sqrt{t} F^{a_k t,\gamma_t}_\mathrm{good} 
	- \beta_t \e^{-\beta_t} N^{a_kt,\gamma_t}_\mathrm{good}
	\right) \right)}{\sF_t} \nonumber \\
& \xrightarrow[t\to\infty]{\P} 
\Ecsq{\exp \left( i \sum_{k=1}^n \lambda_k S_{Z_\infty/\sqrt{a_k}}\right)}
	{Z_\infty},\label{de}
\end{align}
where $(S_t)_{t\ge0}$ is the L\'evy process from the statement of Theorem~\ref{theorem}.
Recall that $\cL^{at,\gamma_t}_\mathrm{good} = \{ u \in \cL^{t,\gamma_t}_\mathrm{good} : \Delta_u^{t,\gamma_t} > at\}$ and that, for $u \in \cL^{at,\gamma_t}_\mathrm{good}$, we have $\Delta_u^{at,\gamma_t} = \Delta_u^{t,\gamma_t}$ and, therefore $Z_\infty^{(u,at,\gamma_t)} = Z_\infty^{(u,t,\gamma_t)}$.
Thus, we have 
\begin{align*}
F^{at,\gamma_t}_\mathrm{good} = 
\sum_{u \in \cL^{t,\gamma_t}_\mathrm{good} \text{ s.t.\@ } \Delta_u^{t,\gamma_t} > at} 
	\e^{-\gamma_t} Z_\infty^{(u,t,\gamma_t)},
\end{align*}
where, given $\sF_{\cL^{t,\gamma_t}}$, the $Z_\infty^{(u,t,\gamma_t)}$ for $u \in \cL^{t,\gamma_t}_\mathrm{good}$ are i.i.d.\@ copies of $Z_\infty$.
Recalling that $\Psi_{Z_\infty}$ denotes the characteristic function of $Z_\infty$, setting $\lambda_k' \coloneqq \lambda_1 + \dots + \lambda_k$ and $a_{n+1} = \infty$, it gives
\begin{align}
& \Ecsq{\exp \left( i \sum_{k=1}^n \lambda_k \left( 
	\sqrt{t} F^{a_k t,\gamma_t}_\mathrm{good} 
	- \beta_t \e^{-\beta_t} N^{a_k t,\gamma_t}_\mathrm{good}
	\right) \right)}{\sF_{\cL^{t,\gamma_t}}} \nonumber \\
& = \E \Biggl[ \exp \Biggl( i \sum_{k=1}^n \lambda_k'
	\sum_{u \in \cL^{t,\gamma_t}_\mathrm{good} 
		\text{ s.t.\@ } \Delta_u^{t,\gamma_t} \in (a_kt,a_{k+1}t]} 
	\e^{-\beta_t} \left( Z_\infty^{(u,t,\gamma_t)} - \beta_t \right) 
	\Biggr)
	\Bigg| \sF_{\cL^{t,\gamma_t}} \Biggr] \nonumber \\
& = \prod_{k=1}^n 
	\left[\Psi_{Z_\infty} \left( \lambda_k' \e^{-\beta_t} \right)\exp \left( -i \lambda_k' \beta_t \e^{-\beta_t}\right)\right]^{N^{a_kt,\gamma_t}_\mathrm{good} - N^{a_{k+1}t,\gamma_t}_\mathrm{good}}.
	\label{pa}
\end{align}
But, using \eqref{eq:characteristic-function-Z_infty} and Lemma~\ref{lem:Psi_algebra}, we have for every $\lambda \in \R$ and large enough $t$,
\begin{align*}
\Psi_{Z_\infty} \left( \lambda \e^{-\beta_t} \right)\exp \left(- i \lambda \beta_t \e^{-\beta_t}\right)
&= \Psi_{\pi/2,\mu_Z}(\lambda \e^{-\beta_t}) \exp \left(- i \lambda \beta_t \e^{-\beta_t} + \lambda \e^{-\beta_t} g(\lambda \e^{-\beta_t}) \right)\\
&= \exp \left( \e^{-\beta_t}\left[ -\psi_{\pi/2,\mu_Z}(\lambda) + \lambda g(\lambda \e^{-\beta_t})\right] \right),
\end{align*}
with $g(\lambda) \to 0$ as $\lambda \to 0$.
Therefore \eqref{pa} is equal to
\begin{align} \label{pb}
\exp \left( \sum_{k=1}^n \e^{-\beta_t}  \left(N^{a_kt,\gamma_t}_\mathrm{good} - N^{a_{k+1}t,\gamma_t}_\mathrm{good}\right) \left[  
	-\psi_{\pi/2,\mu_Z}(\lambda_k')
	+ \lambda_k' g \left( \lambda_k' \e^{-\beta_t} \right) 
	\right] \right).
\end{align}
Taking the conditional expectation given $\sF_t$ in \eqref{pb} and applying Corollary~\ref{cor:cv-N}, we get that the left-hand side of \eqref{de} converges in probability to
\begin{align*}
& \exp \left(- \sqrt{\frac{2}{\pi}} Z_\infty 
\sum_{k=1}^n \left( \frac{1}{\sqrt{a_k}} - \frac{1}{\sqrt{a_{k+1}}} \right) \psi_{\pi/2,\mu_Z}(\lambda_k') \right).
\end{align*}
But this is exactly the right-hand side of \eqref{de}, since $(S_t)_{t\ge0}$ is a L\'evy process and so has independent increments.
It concludes the proof.
\end{proof}

\section{Proof of Proposition~\ref{prop:control-W_t}}
\label{section:control-W}

In this section, we prove Proposition \ref{prop:control-W_t}, controlling the speed of convergence of $\sqrt{t} W_t$ towards $\sqrt{2/\pi} Z_\infty$. 
We use in the proof the following fact, which is a consequence of
Proposition~\ref{prop:one-dim} combined with \eqref{eq:convergence-W_t}:
\begin{align} \label{eq:first-control-Z_t-Z_infty}
\forall \varepsilon > 0, \quad \limsup_{t\to\infty}
\P \left( \abs{Z_t -Z_\infty} \geq t^{-1/2 + \varepsilon}\right) 
= 0.
\end{align}
The arguments used here are a very rough version of the arguments used in the forthcoming paper \cite{fluctuations2} for the fluctuations of the critical additive martingale.
\begin{proof}[Proof of Proposition \ref{prop:control-W_t}]
Fix some $\alpha \in (0,1)$; its value will later be chosen appropriately. Let
$t\ge2$. We introduce a killing barrier at 0 between times $t^\alpha$ and $t$:
we set, for $s \in [t^\alpha,t]$,
\begin{align*}
\overline{W}_s 
\coloneqq 
\sum_{u \in \cN(s)} \e^{-X_u(s)}
\1_{\forall r \in [t^\alpha,t], X_u(r) > 0}
\quad 
\text{and}
\quad
\overline{Z}_s
\coloneqq 
\sum_{u \in \cN(s)} X_u(s) \e^{-X_u(s)}
\1_{\forall r \in [t^\alpha,s], X_u(r) > 0}.
\end{align*}
The steps of the proof are the following. 
First note that, with high probability, none of the particles is killed, since
$\min_{u \in \cN(s)} X_u(s) \to \infty$ a.s.\@ as $s \to \infty$ by \eqref{eq:minimum}.
Therefore, we have 
\begin{align} \label{ef}
\P\left(W_t \neq \overline{W}_t\right) \xrightarrow[t\to\infty]{} 0
\end{align}
and we can consider $\overline{W}_t$ instead of $W_t$.
We will show that the conditional first moment $\E[\overline{W}_t | \sF_{t^\alpha}]$ is very close to $\sqrt{2/\pi} Z_{t^\alpha}$ and so to $\sqrt{2/\pi} Z_\infty$, by \eqref{eq:first-control-Z_t-Z_infty}.
Then, we prove that $\overline{W}_t$ is close to $\E[\overline{W}_t | \sF_{t^\alpha}]$ by a second moment argument, using results proved with the change of probability in Section~\ref{subsection:truncation-critical-additive-martingale}.
We will use parameters $\alpha$ and $\beta$, that will be fixed at the end, and
the constants $C$ can depend on them.

We first deal with the first moment. 
Recall from Lemma \ref{lem:control-first-moment-W} the definition of the function
$F(y) = \sqrt{2/\pi} \int_0^y \e^{-z^2/2} \diff z$, $y\ge0$. 
Applying the branching property at time
$t^\alpha$ and Lemma \ref{lem:control-first-moment-W}, we get
\begin{align*}
\Ecsq{\overline{W}_t}{\sF_{t^\alpha}}
= \sum_{v \in \cN(t^\alpha)}
	\e^{-X_v(t^\alpha)} 
	F \left( \frac{X_v(t^\alpha)}{\sqrt{t-t^\alpha}} \right) 
	\1_{X_v(t^\alpha) > 0}.
\end{align*}
Since $\lvert F(y) - \sqrt{2/\pi} y \rvert \leq C y^3$, for $y\ge0$, and $t-t^\alpha
\ge Ct$ for $t\ge2$, we get
\begin{align*}
\abs{ \Ecsq{\overline{W}_t}{\sF_{t^\alpha}} 
- \sqrt{\frac{2}{\pi (t- t^\alpha)}} \overline{Z}_{t^\alpha}}
& \leq 
C \sum_{v \in \cN(t^\alpha)}
\e^{-X_v(t^\alpha)} \left( \frac{X_v(t^\alpha)}{\sqrt{t-t^\alpha}} \right)^3
\1_{X_v(t^\alpha) > 0} \\
& \leq 
C t^{\alpha-(3/2)}
\sum_{v \in \cN(t^\alpha)}
\e^{-X_v(t^\alpha)} X_v(t^\alpha) 
\left( \frac{X_v(t^\alpha)}{\sqrt{t^\alpha}} \right)^2
\1_{X_v(t^\alpha) > 0}.
\end{align*}
Using furthermore that $|(t-t^\alpha)^{-1/2} - t^{-1/2}| \leq C t^{\alpha
-(3/2)}$, we have
\begin{align*}
\abs{\sqrt{t} \Ecsq{\overline{W}_t}{\sF_{t^\alpha}} 
- \sqrt{\frac{2}{\pi}} \overline{Z}_{t^\alpha}}
& \leq 
C t^{\alpha-1}
\sum_{v \in \cN(t^\alpha)}
\e^{-X_v(t^\alpha)} X_v(t^\alpha) 
\left(1 + \left( \frac{X_v(t^\alpha)}{\sqrt{t^\alpha}} \right)^2 \right)
\1_{X_v(t^\alpha) > 0}.
\end{align*}
We want to use Markov's inequality to bound the left-hand side of this
inequality in probability, but have to truncate the paths for this.
For any $\varepsilon >0$, there exists $K >0$ such that $\P (\min_{s \geq
0} \min_{u\in\cN(s)} X_u(s) < -K) \leq \varepsilon$, by \eqref{eq:minimum}. Decomposing with respect
to this event, then applying Markov's inequality, the many-to-one
formula and the relation \eqref{eq:link-between-R-and-B} between the Brownian
motion killed at 0 and the 3-dimensional Bessel process, we successively get,
\begin{align*}
& \Pp{\abs{\sqrt{t} \Ecsq{\overline{W}_t}{\sF_{t^\alpha}} 
- \sqrt{\frac{2}{\pi}} \overline{Z}_{t^\alpha}} \geq \frac A {t^{1-\alpha}}} \\
& \leq \varepsilon + \frac{C}{A} \Ec{\sum_{v \in \cN(t^\alpha)}
\e^{-X_v(t^\alpha)} X_v(t^\alpha) 
\left(1 + \left( \frac{X_v(t^\alpha)}{\sqrt{t^\alpha}} \right)^2 \right)
\1_{X_v(t^\alpha) > 0} 
\1_{\forall s \leq t^\alpha, X_u(s) \geq -K}} \\
& \leq \varepsilon + \frac{C}{A} \Eci{K}{B_{t^\alpha}
\left(1 + \left( \frac{B_{t^\alpha}}{\sqrt{t^\alpha}} \right)^2 \right)
\1_{\forall s \leq t^\alpha, B_s \geq 0}} \\
&= \varepsilon + \frac{CK}{A} \Eci{K/\sqrt{t^\alpha}}{1 + R_1^2}
\xrightarrow[t\to\infty]{}
\varepsilon + \frac{CK}{A} \Ec{1 + R_1^2}.
\end{align*}
Combining this with the fact that $\P(Z_{t^\alpha} \neq
\overline{Z}_{t^\alpha}) \to 0$ as $t \to \infty$, we finally get
\begin{align} \label{ea}
\limsup_{A \to \infty}
\limsup_{t \to \infty}
\Pp{\abs{\sqrt{t} \Ecsq{\overline{W}_t}{\sF_{t^\alpha}} 
- \sqrt{\frac{2}{\pi}} Z_{t^\alpha}} \geq \frac{A}{t^{1-\alpha}} }
= 0,
\end{align}
by taking $\varepsilon \to 0$.
We now want to prove that $\E [ \overline{W}_t | \sF_{t^\alpha}]$ is close to $\overline{W}_t$. 
By analogy with Section~\ref{subsection:truncation-critical-additive-martingale}, we introduce, for $\kappa \geq 1$, 
\begin{align*}
\overline{B}_{t,\kappa}
&\coloneqq
\left\{
u \in \cN(t) :
\forall v < u, d_v \geq t^\alpha \Rightarrow O_v \leq \kappa \e^{X_v(d_v)/2}
\right\} \\
\overline{W}_{t,\kappa}
& \coloneqq 
\sum_{u \in \cN(t)} \e^{-X_u(t)} 
\1_{\forall s \in [t^\alpha,t], X_u(s) > 0} \1_{u \in \overline{B}_{t,\kappa}}.
\end{align*}
Then, by the triangle inequality and noting that $\overline{W}_t -
\overline{W}_{t,\kappa} \geq 0$, we have, for any $\varepsilon > 0$,
\begin{align*}
& \Ppsq{\abs{\overline{W}_t - \Ecsq{\overline{W}_t}{\sF_{t^\alpha}}} \geq 3
\varepsilon}
	{\sF_{t^\alpha}}\\
& \leq 
\Ppsq{\overline{W}_t - \overline{W}_{t,\kappa} \geq \varepsilon}
	{\sF_{t^\alpha}}
+ \Ppsq{\abs{\overline{W}_{t,\kappa} -
\Ecsq{\overline{W}_{t,\kappa}}{\sF_{t^\alpha}}} \geq \varepsilon}
	{\sF_{t^\alpha}}
+ \1_{\Ecsq{\overline{W}_t - \overline{W}_{t,\kappa}}{\sF_{t^\alpha}} \geq
\varepsilon}.
\end{align*}
Applying Markov's inequality, Chebyshev's inequality and the inequality
$\1_{x\ge \varepsilon} \le x/\varepsilon$ for $x\ge0$, we get
\begin{align}
\Ppsq{\abs{\overline{W}_t - \Ecsq{\overline{W}_t}{\sF_{t^\alpha}}} \geq 3
\varepsilon}
	{\sF_{t^\alpha}} 
\leq 2 \cdot \frac{1}{\varepsilon} 
	\Ecsq{\overline{W}_t - \overline{W}_{t,\kappa}}{\sF_{t^\alpha}}
+ \frac{1}{\varepsilon^2} \Varsq{\overline{W}_{t,\kappa}}{\sF_{t^\alpha}}.
\label{ee}
\end{align}
In order to bound these two terms, we use the branching property at time
$t^\alpha$ and then Lemma \ref{lem:not-in-B-for-W} for the first term and
Lemma~\ref{lem:second-moment-for-W} for the second. This gives
\begin{align*}
\Ecsq{\overline{W}_t - \overline{W}_{t,\kappa}}{\sF_{t^\alpha}}
& \leq \sum_{v \in \cN(t^\alpha)}
	C h(\kappa) \e^{-X_v(t^\alpha)} 
	\left( \frac{1}{\sqrt{t-t^\alpha}} + \frac{X_v(t^\alpha)}{t-t^\alpha}
\right)
	\1_{X_v(t^\alpha) > 0} \\
& = C h(\kappa) 
	\left( \frac{\overline{W}_{t^\alpha}}{\sqrt{t-t^\alpha}} 
	+ \frac{\overline{Z}_{t^\alpha}}{t-t^\alpha} \right), \\
\Varsq{\overline{W}_{t,\kappa}}{\sF_{t^\alpha}}
&\leq \sum_{v \in \cN(t^\alpha)} 
	\Eci{X_v(t^\alpha)}{(\widetilde{W}_{t-t^\alpha,\kappa})^2}
\leq C \kappa 
	\left( \frac{\overline{W}_{t^\alpha}}{t-t^\alpha} 
	+ \frac{\overline{Z}_{t^\alpha} \ln (t-t^\alpha)}{(t-t^\alpha)^{3/2}} \right).
\end{align*}
We plug these bounds into \eqref{ee} with $\kappa \coloneqq 1$ and $\varepsilon
\coloneqq t^{-\beta - \frac{1}{2}}$ for some $\beta \in
(0,\alpha/4)$. Using again the inequality $t-t^\alpha \ge Ct$ for $t\ge2$, this
gives
\begin{align*}
\Ppsq{\sqrt{t} \abs{\overline{W}_t - \Ecsq{\overline{W}_t}{\sF_{t^\alpha}}} 
	\geq 3 t^{-\beta}}
	{\sF_{t^\alpha}}
& \leq C 
\left( \overline{W}_{t^\alpha} \left(
t^\beta + t^{2\beta}\right)
	+ \overline{Z}_{t^\alpha} 
		\left( t^{\beta-\frac 1 2} + t^{2\beta-\frac 1 2} \ln
t \right) \right)
	\nonumber \\
& \leq C 
t^{2\beta - \frac{\alpha}{2}}
\left( t^{\alpha/2} W_{t^\alpha} 
	+ Z_{t^\alpha}  \right)
\xrightarrow[t \to \infty]{\P} 0,
\end{align*}
using \eqref{eq:convergence-Z_t} and \eqref{eq:convergence-W_t}.
It follows that
\begin{align} \label{ed}
\Pp{\sqrt{t} \abs{\overline{W}_t - \Ecsq{\overline{W}_t}{\sF_{t^\alpha}}} 
	\geq 3 t^{-\beta}}
\xrightarrow[t \to \infty]{} 0.
\end{align}
Finally, we consider some $\theta \in (0, 1/5)$ and choose $\alpha \in (4 \theta, 1-\theta)$ and $\beta \in (\theta, \alpha/4)$.
Then, for $t$ large enough, we have $t^{-\theta} \geq 3 t^{-\beta} +
t^{1-\theta-\alpha}/(3 t^{1-\alpha}) + t^{\frac{\alpha}{2} - \theta}
/(3\sqrt{t^\alpha})$ and, therefore,
\begin{align*}
& \limsup_{t\to\infty}
\P \left( \abs{\sqrt{t} W_t - \sqrt{\frac{2}{\pi}} Z_\infty} 
	\geq t^{-\theta} \right) \\
\begin{split}
& \leq \limsup_{t\to\infty}
\P \left( W_t \neq \overline{W}_t \right)
+ \P \left( \sqrt{t} \abs{\overline{W}_t - 
	\Ecsq{\overline{W}_t}{\sF_{t^\alpha}}} 
	\geq 3 t^{-\beta} \right) \\
& \relphantom{\leq} \hphantom{\limsup_{t\to\infty}} {}
+ \P \left( \abs{\sqrt{t} \Ecsq{\overline{W}_t}{\sF_{t^\alpha}} 
	- \sqrt{\frac{2}{\pi}} Z_{t^\alpha}} 
	\geq \frac{t^{1-\theta-\alpha}}{3 t^{1-\alpha}} \right)
+ \P \left( \sqrt{\frac{2}{\pi}} \abs{Z_\infty - Z_{t^\alpha}} 
	\geq \frac{t^{\frac{\alpha}{2} - \theta}}{3\sqrt{t^\alpha}} \right)
\end{split} \\
& = 0,
\end{align*}
applying \eqref{ef}, \eqref{ed}, \eqref{ea} (because $t^{1-\theta-\alpha} \to
\infty$) and \eqref{eq:first-control-Z_t-Z_infty} (because $\alpha/2-\theta >
0$).
\end{proof}

\appendix

\section{Weak convergence in probability}
\label{section:weak-convergence-in-probability}

We work here on a Polish space $E$ with its Borel algebra $\cE$.
We denote by $\cC_b(E)$ the set of bounded continuous functions from $E \to \R$.
For a finite measure $\xi$ on $(E,\cE)$ and a function $f \in \cC_b(E)$, we set $\xi(f) \coloneqq \int_E f \diff \xi$.
Let $(\mu_n)_{n \in \N}$ be a sequence of random probability measures on $(E,\cE)$. 
We say that $\mu_n$ \textit{converges weakly almost surely} 
to a random probability measure $\mu_\infty$ as $n \to \infty$ if 
\begin{align*}
\text{a.s.},\quad \forall f \in \cC_b(E),\quad
\mu_n(f) \xrightarrow[n\to\infty]{} \mu_\infty(f).
\end{align*}
Berti, Pratelli and Rigo \cite{bpr2006} proved that the two following statements are equivalent:
\begin{enumerate}
\item[(i)] for any $f \in \cC_b(E)$, $\mu_n(f)$ converges a.s. as $n \to \infty$;
\item[(ii)] there exists a random probability measure $\mu_\infty$ such that $\mu_n \to \mu_\infty$ weakly almost surely.
\end{enumerate}
The point here is the interchange of ``a.s.'' and the quantifier ``$\forall f$''.

We say that $\mu_n$ \textit{converges weakly in probability} 
to $\mu_\infty$ as $n \to \infty$ if, for any subsequence $(n_k)_{k\in\N}$, there exists a subsequence $(n_{k_i})_{i\in\N}$ such that $\mu_{n_{k_i}}$ converges to $\mu_\infty$ weakly almost surely as $i\to\infty$.
Then, Berti, Pratelli and Rigo \cite{bpr2006} also showed the equivalence between the following statements:
\begin{enumerate}
\item[(i)'] for any $f \in \cC_b(E)$, $\mu_n(f)$ converges in probability as $n \to \infty$;
\item[(ii)'] there exists a random probability measure $\mu_\infty$ such that $\mu_n \to \mu_\infty$ weakly in probability.
\end{enumerate}
For $n \in [0,\infty)$, we will denote by $\P\mu_n$ 
the \textit{annealed} probability measure defined by 
\[
\forall A \in \cE, \quad \P\mu_n(A) \coloneqq \Ec{\mu_n(A)}.
\]
Note that, if $\mu_n$ converges weakly in probability to $\mu_\infty$,
then $\P\mu_n$ converges weakly to $\P\mu_\infty$.

We now work on the space $E = \R^d$ and establish the following proposition.
An analog result for the weak convergence almost surely has been proved by Berti, Pratelli and Rigo \cite{bpr2006}.
\begin{prop} \label{prop:weak-convergence-in-probability}
Let $\mu_n$ for $n \in \N \cup \{ \infty \}$ be random probability measures on $\R^d$. 
Then, $\mu_n$ converges weakly in probability to $\mu_\infty$ iff, 
for any $\lambda \in \R^d$, we have 
\begin{align*}
\int_{\R^d} \e^{i (\lambda,x)} \diff \mu_n(x) 
\xrightarrow[n\to\infty]{}
\int_{\R^d} \e^{i (\lambda,x)} \diff \mu_\infty(x),
\quad \text{in probability}.
\end{align*}
\end{prop}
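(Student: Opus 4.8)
The plan is to combine the equivalence (i)'$\Leftrightarrow$(ii)' recalled above with an approximation argument: the hard content is to upgrade pointwise-in-$\lambda$ convergence in probability to \emph{local $L^1$-convergence in $\lambda$} via Fubini, and then to invoke an almost-everywhere version of L\'evy's continuity theorem along subsequences. The ``only if'' direction is immediate: if $\mu_n\to\mu_\infty$ weakly in probability, then by (ii)'$\Rightarrow$(i)' we have $\mu_n(f)\to\mu_\infty(f)$ in probability for every $f\in\cC_b(\R^d)$, and taking $f(x)=\cos(\lambda,x)$ and $f(x)=\sin(\lambda,x)$ yields the convergence of $\int\e^{i(\lambda,x)}\diff\mu_n(x)$ in probability. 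For the converse I would first observe that, $\cM_1(\R^d)$ being Polish, weak convergence in probability of $\mu_n$ to $\mu_\infty$ is equivalent to the usual subsequence criterion: every subsequence of $(n)$ has a further subsequence along which $\mu_n\to\mu_\infty$ weakly almost surely. So fix a subsequence $(n_k)_k$; it suffices to produce a further subsequence with weak a.s.\ convergence.

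Write $\hat\mu_n(\lambda)=\int_{\R^d}\e^{i(\lambda,x)}\diff\mu_n(x)$ and, for $r>0$, let $B_r$ be the closed ball of radius $r$ in $\R^d$. Since $|\hat\mu_n(\lambda)-\hat\mu_\infty(\lambda)|\le 2$ for every $\lambda$, the hypothesis gives $\Ec{|\hat\mu_n(\lambda)-\hat\mu_\infty(\lambda)|\wedge 1}\to 0$ for each fixed $\lambda$; by dominated convergence over $B_r$ (dominating function $2\,\1_{B_r}$, which is integrable) together with Fubini's theorem (and joint measurability of $(\omega,\lambda)\mapsto\hat\mu_n(\omega)(\lambda)$), one gets $\Ec{\int_{B_r}(|\hat\mu_n(\lambda)-\hat\mu_\infty(\lambda)|\wedge 1)\diff\lambda}\to 0$, and hence, using $x\le 2(x\wedge 1)$ for $x\in[0,2]$,
\[
\int_{B_r}|\hat\mu_n(\lambda)-\hat\mu_\infty(\lambda)|\diff\lambda\longrightarrow 0\quad\text{in probability.}
\]
Applying this for $r=1,2,\dots$ and extracting diagonally from $(n_k)_k$, I obtain a further subsequence $(n_{k_i})_i$ and an event $\Omega_0$ of full probability on which $\hat\mu_{n_{k_i}}\to\hat\mu_\infty$ in $L^1_{\mathrm{loc}}(\R^d,\diff\lambda)$.

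It remains to check that on $\Omega_0$ one has $\mu_{n_{k_i}}\to\mu_\infty$ weakly. Fix $\omega\in\Omega_0$. Given any subsequence of $(n_{k_i})$, local $L^1$-convergence and a further diagonal extraction produce a sub-subsequence along which $\hat\mu_{\cdot}(\omega)(\lambda)\to\hat\mu_\infty(\omega)(\lambda)$ for Lebesgue-almost every $\lambda\in\R^d$. Since $\hat\mu_\infty(\omega)$ is a genuine characteristic function, hence continuous at $0$, the a.e.-convergence version of L\'evy's continuity theorem applies: the standard tightness estimate bounding $\mu(\{|x|\ge R\})$ by a multiple of the average of $1-\mathrm{Re}\,\hat\mu$ over $B_{1/R}$, combined with dominated convergence, yields tightness of this sub-subsequence, and uniqueness of characteristic functions forces every weak limit point to equal $\mu_\infty(\omega)$. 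Thus $\mu_{\cdot}(\omega)\to\mu_\infty(\omega)$ weakly along the sub-subsequence; since $\cM_1(\R^d)$ is metrizable and this holds for every subsequence, $\mu_{n_{k_i}}(\omega)\to\mu_\infty(\omega)$ weakly. Hence $\mu_{n_{k_i}}\to\mu_\infty$ weakly a.s., and as $(n_k)$ was arbitrary, $\mu_n\to\mu_\infty$ weakly in probability.

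The step I expect to be the genuine obstacle is the last one. Convergence of the characteristic functions in probability \emph{at each individual point} does not a priori prevent mass from escaping to infinity, and a fixed countable set of $\lambda$'s is Lebesgue-null, so one cannot simply pass to subsequences and quote the classical continuity theorem on a dense set (the measures $\tfrac12\delta_{-2\pi n!}+\tfrac12\delta_{2\pi n!}$ are a cautionary example, with characteristic functions converging to $1$ on $\Q$ but not weakly convergent). The Fubini upgrade to $L^1_{\mathrm{loc}}$-convergence is precisely what restores almost-everywhere pointwise convergence along subsequences, and hence tightness, making the a.e.-version of L\'evy's theorem available.
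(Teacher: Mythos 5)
Your argument is correct, but it follows a genuinely different route from the paper's. The paper never integrates over $\lambda$: writing $f_\lambda(x)=\e^{i(\lambda,x)}$, it controls the modulus of continuity of the random characteristic functions through $Y_n(\delta)=\int_{\R^d}(2\wedge\delta\abs{x})\diff\mu_n(x)$, observes that the hypothesis makes the annealed measures $\P\mu_n$ converge weakly to $\P\mu_\infty$, whence $\sup_{n}\Ec{Y_n(\delta)}\to0$ as $\delta\downarrow0$, and then runs a grid argument (lattices $\Lambda_i=[-i,i]^d\cap\delta_i\Z^d$) plus Borel--Cantelli to extract, from any given subsequence, a further subsequence along which almost surely $\mu_{n_{k_i}}(f_\lambda)\to\mu_\infty(f_\lambda)$ for \emph{every} $\lambda\in\R^d$ simultaneously; the classical, everywhere-pointwise L\'evy continuity theorem then applies $\omega$ by $\omega$. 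You instead integrate the (bounded) discrepancy in $\lambda$: bounded convergence plus Fubini upgrades the pointwise-in-$\lambda$ hypothesis to $L^1_{\mathrm{loc}}(\diff\lambda)$-convergence in probability, a diagonal extraction gives almost sure $L^1_{\mathrm{loc}}$-convergence along a subsequence, and you then invoke the almost-everywhere version of L\'evy's theorem, using the multivariate truncation inequality (which indeed bounds $\mu(\{\abs{x}\ge R\})$ by a constant times the average of $1-\mathrm{Re}\,\hat\mu$ over a box of side of order $1/R$) to restore tightness and identify the limit $\omega$ by $\omega$. The escaping-mass danger you rightly flag is thus handled in the paper by the uniform annealed bound on $Y_n(\delta)$ and in your proof by the quenched truncation inequality; your route avoids the grid and the equicontinuity estimate at the price of the slightly heavier a.e.\ continuity theorem, while the paper's stays within the textbook continuity theorem and yields, as a by-product, almost sure convergence of the characteristic functions at all (not merely Lebesgue-almost every) $\lambda$ along the extracted subsequence.
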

\begin{proof}
For $\lambda \in \R^d$, we set $f_\lambda \colon x \in \R^d \mapsto \e^{i (\lambda,x)}$.
The direct implication is obvious, so we prove the reciprocal: 
we assume that, for any $\lambda \in \R^d$, 
$\mu_n(f_\lambda) \to \mu_\infty(f_\lambda)$ in probability.
Let $(n_k)_{k\in\N}$ be a subsequence, we want to prove that there exists a subsequence $(n_{k_i})_{i\in\N}$ of $(n_k)_{k\in\N}$ such that a.s., $\forall \lambda \in \R^d$, 
$\mu_{n_{k_i}} (f_\lambda) \to \mu_\infty(f_\lambda)$ as $i \to \infty$.
For $n \in \N \cup\{\infty\}$ and $\delta > 0$, we set
\begin{align*}
Y_n (\delta) 
\coloneqq 
\int_{\R^d} \left( 2 \wedge \delta \abs{x} \right) \diff \mu_n (x),
\end{align*}
so that, if $\abs{\lambda - \lambda'} \leq \delta$, then $\abs{\mu_n(f_\lambda)-\mu_\infty(f_\lambda)} 
\leq \abs{\mu_n(f_{\lambda'})-\mu_\infty(f_{\lambda'})}
+ Y_n (\delta) + Y_\infty (\delta)$.
The assumption implies that, for any $\lambda \in \R^d$, 
$\P\mu_n(f_\lambda) \to \P\mu_\infty(f_\lambda)$
and therefore that $\P\mu_n$ converges weakly to $\P\mu_\infty$.
Thus, it follows that $\Ec{Y_n (\delta)} \to \Ec{Y_\infty (\delta)}$ as $n \to \infty$. Moreover, by dominated convergence, for any $n \in \N \cup\{\infty\}$, we have $\Ec{Y_n (\delta)} \downarrow 0$ as $\delta \downarrow 0$. 
Combining this and the fact that $Y_n(\delta)$ is increasing in $\delta$ for every $n$, one can easily get that
\begin{align*}
\sup_{n\in\N \cup \{\infty\}} \Ec{Y_n (\delta)} \xrightarrow[\delta \downarrow 0]{} 0.
\end{align*}
Therefore, for any $i \in \N^*$, there exists $\delta_i > 0$ such that 
$\sup_{n\in\N \cup \{\infty\}} \Ec{Y_n (\delta_i)} \leq 2^{-i}$.
Now, we set $\Lambda_i \coloneqq [-i,i]^d \cap \delta_i \Z^d$. 
We construct the subsequence $(n_{k_i})_{i\in\N}$ of $(n_k)_{k\in\N}$, by recurrence: let $n_{k_0} \coloneqq n_0$ and, for $i \geq 1$, we choose $n_{k_i} > n_{k_{i-1}}$ such that
\begin{align*}
\Pp{ \exists \lambda \in \Lambda_i : 
	\abs{\mu_{n_{k_i}}(f_\lambda)-\mu_\infty(f_\lambda)}
	\geq \frac{1}{3i}} 
\leq \frac{1}{2^i},
\end{align*}
using here the assumption.
Then, we get, for any $i \geq 1$,
\begin{align}
& \Pp{ \exists \lambda \in [-i,i]^d : 
	\abs{\mu_{n_{k_i}}(f_\lambda)-\mu_\infty(f_\lambda)}
	\geq \frac{1}{i}} \nonumber \\
& \leq 
\Pp{ \exists \lambda \in \Lambda_i : 
	\abs{\mu_{n_{k_i}}(f_\lambda)-\mu_\infty(f_\lambda)}
	\geq \frac{1}{3i}} 
+ \Pp{Y_{n_{k_i}}(\delta_i) \geq \frac{1}{3i}}
+ \Pp{Y_\infty(\delta_i) \geq \frac{1}{3i}} \nonumber \\
& \leq \frac{1}{2^i} + \frac{3i}{2^i} + \frac{3i}{2^i}, \label{dl}
\end{align}
using Markov's inequality and the definitions of $n_{k_i}$ and of $\delta_i$.
The right-hand side of \eqref{dl} is summable in $i \in \N^*$, so it follows by Borel-Cantelli lemma that a.s.\@ there exists $i_0 \geq 1$ such that, for any $i \geq i_0$ and $\lambda \in [-i,i]^d$, 
$\lvert \mu_{n_{k_i}}(f_\lambda)-\mu_\infty(f_\lambda) \rvert < 1/i$.
This implies that a.s.\@ for any $\lambda \in \R^d$, 
$\mu_{n_{k_i}}(f_\lambda) \to \mu_\infty(f_\lambda)$ as $i \to \infty$.
\end{proof}

As a corollary of this result, we state the following generalization of Slutsky's theorem.
\begin{cor}
Let $\mu_n$ for $n \in \N$ be random probability measures on $\R^{d+d'}$. 
We denote by $\mu_n^1$ the marginal distribution of $\mu_n$ associated with the $d$ first coordinates and $\mu_n^2$ associated with the $d'$ last coordinates.
Assume that $\mu_n^1$ converges weakly in probability to a random probability measure $\mu_\infty^1$ on $\R^d$ and that $\mu_n^2$ converges weakly in probability to $\delta_x$, for some $x \in \R^{d'}$.
Then, $\mu_n$ converges weakly in probability to $\mu_\infty^1 \otimes \delta_x$.
\end{cor}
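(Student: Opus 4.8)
The plan is to reduce the statement to the characteristic-function criterion of Proposition~\ref{prop:weak-convergence-in-probability} and then exploit the factorization of characteristic functions of product measures. Concretely, for $\lambda = (\lambda_1,\lambda_2) \in \R^d \times \R^{d'}$ one computes
\[
 \int_{\R^{d+d'}} \e^{i(\lambda,y)} \diff \mu_n(y) = \int_{\R^{d+d'}} \e^{i(\lambda_1,y_1)} \e^{i(\lambda_2,y_2)} \diff \mu_n(y_1,y_2),
\]
and the goal is to show this converges in probability to $\left(\int_{\R^d}\e^{i(\lambda_1,z)}\diff\mu_\infty^1(z)\right)\e^{i(\lambda_2,x)}$, which is exactly the characteristic function of $\mu_\infty^1 \otimes \delta_x$ evaluated at $\lambda$. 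Since $\mu_n$ is a probability measure on $\R^{d+d'}$, the term $\e^{i(\lambda_2,y_2)}$ inside the integral is not simply the marginal $\mu_n^2(f_{\lambda_2})$, so the factorization is not immediate; the idea is to replace $\e^{i(\lambda_2,y_2)}$ by the constant $\e^{i(\lambda_2,x)}$ up to a small error controlled by $\mu_n^2$ concentrating near $x$.

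The key steps, in order, are as follows. First, write
\[
 \int \e^{i(\lambda_1,y_1)}\e^{i(\lambda_2,y_2)} \diff\mu_n(y) - \e^{i(\lambda_2,x)}\int \e^{i(\lambda_1,y_1)}\diff\mu_n^1(y_1) = \int \e^{i(\lambda_1,y_1)}\left(\e^{i(\lambda_2,y_2)} - \e^{i(\lambda_2,x)}\right)\diff\mu_n(y).
\]
Second, bound the modulus of the right-hand side by $\int \left(2 \wedge \abs{\lambda_2}\,\abs{y_2 - x}\right)\diff\mu_n^2(y_2)$, using $\abs{\e^{i(\lambda_1,y_1)}} = 1$ and the elementary inequality $\abs{\e^{ia}-\e^{ib}} \le 2\wedge\abs{a-b}$. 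Third, observe that $\mu_n^2 \to \delta_x$ weakly in probability forces this last integral to $0$ in probability: by Proposition~\ref{prop:weak-convergence-in-probability} applied to $\mu_n^2$ (or directly by the definition via subsequences), $\P\mu_n^2$ converges weakly to $\delta_x$, and the function $y_2 \mapsto 2\wedge\abs{\lambda_2}\abs{y_2-x}$ is bounded and continuous and vanishes at $x$; combined with the subsequential a.s.\ convergence this gives convergence in probability to $0$. Fourth, since $\int\e^{i(\lambda_1,y_1)}\diff\mu_n^1(y_1) \to \int\e^{i(\lambda_1,z)}\diff\mu_\infty^1(z)$ in probability by hypothesis, and the prefactor $\e^{i(\lambda_2,x)}$ is a deterministic constant of modulus $1$, a product-of-limits argument (Slutsky for convergence in probability, or again passing to subsequences converging a.s.) yields
\[
 \int_{\R^{d+d'}}\e^{i(\lambda,y)}\diff\mu_n(y) \xrightarrow[n\to\infty]{\P} \left(\int_{\R^d}\e^{i(\lambda_1,z)}\diff\mu_\infty^1(z)\right)\e^{i(\lambda_2,x)} = \int_{\R^{d+d'}}\e^{i(\lambda,y)}\diff(\mu_\infty^1\otimes\delta_x)(y).
\]
Finally, invoke Proposition~\ref{prop:weak-convergence-in-probability} once more in the reverse direction to conclude that $\mu_n \to \mu_\infty^1\otimes\delta_x$ weakly in probability.

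I do not expect any genuine obstacle here; the result is a soft consequence of the characteristic-function criterion. The only point requiring a little care is step three: one must make sure the error term $\int(2\wedge\abs{\lambda_2}\abs{y_2-x})\diff\mu_n^2(y_2)$ really goes to zero \emph{in probability} and not merely in expectation. This is handled cleanly by the subsequence principle built into the definition of weak convergence in probability: along any subsequence one extracts a further subsequence along which $\mu_n^2 \to \delta_x$ weakly a.s., hence the bounded continuous test function integrates to its value $0$ at $x$ almost surely; since every subsequence has such a further subsequence, the error tends to $0$ in probability. The rest is bookkeeping with the inequality $\abs{\e^{ia}-\e^{ib}}\le 2\wedge\abs{a-b}$ and the triangle inequality.
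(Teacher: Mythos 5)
Your proof is correct and follows exactly the route the paper intends: the corollary is stated as a direct consequence of Proposition~\ref{prop:weak-convergence-in-probability} with no written proof, and your argument (factorizing the characteristic function, bounding the error by $\int (2\wedge\abs{\lambda_2}\abs{y_2-x})\diff\mu_n^2(y_2)$, and using the subsequence characterization to get convergence in probability) is the natural way to fill in the omitted details.
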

\begin{rem} \label{rem:weak-convergence-in-proba}
One consequence of this corollary that we use repetitively in the paper is the following.
Let $X_n$ and $Y_n$ for $n \in \N$ be random variables taking values in $\R^d$ and $\sF_n$ for $n\in\N$ be $\sigma$-fields. If the conditional law of $X_n$ given $\sF_n$ converges weakly in probability to some random probability $\mu_\infty$ and $Y_n$ converges in probability to 0, then the conditional law of $X_n + Y_n$ given $\sF_n$ converges weakly in probability to $\mu_\infty$.
\end{rem}

\section{Some formulae for the three-dimensional Bessel process}
\label{section:technical-results}


Let $(B_s)_{s\geq 0}$ denote a standard Brownian motion starting from $x$ under $\P_x$ and $\tau \coloneqq \inf \{ r \geq 0 : B_r = 0 \}$.
Using that the Green function of the Brownian motion killed at 0 is $G(x,y) = 2 (x \wedge y) \leq 2y$ for $x,y >0$, one can get the two following bounds, for any $x,a >0$,
\begin{align}
\Eci{x}{\int_0^\tau \1_{B_r < a} \diff r} & \leq a^2, \label{eq:time-killed-BM-under-a} 
\\
\Eci{x}{\int_0^\tau \e^{-B_t/2} \diff t}& \leq 8, \label{eq:integral-killed-BM}
\end{align}
obtained by a direct computation.


Let $(R_s)_{s\geq 0}$ denote a 3-dimensional Bessel process starting from $x$ under $\P_x$. 
Recall that, for $x > 0$, one has the following link between the 3-dimensional Bessel process and the Brownian motion (see Imhof \cite{imhof84}): for any $t \geq 0$ and any measurable function $F \colon \cC([0,t]) \to \R_+$,
\begin{equation} \label{eq:link-between-R-and-B}
\Eci{x}{F(B_s,s\in [0,t]) \1_{\forall s \in [0,t], B_s > 0}}
=
\Eci{x}{\frac{x}{R_s} F(R_s,s\in [0,t])}.
\end{equation}
The density of $R_t$ under $\P_x$ is 
\begin{equation}
 \label{eq:density_R}
 z\mapsto \frac{\e^{-(z-x)^2/2t}}{\sqrt{2\pi}} \1_{z >0}\times
	  \begin{cases}
	    \frac{z}{x\sqrt{t}} (1-\e^{-2 xz/t}) & \text{if $x>0$}\\
	    \frac{2 z^2}{t^{3/2}} & \text{if $x=0$}.
          \end{cases}
\end{equation}
The Green function of the 3-dimensional Bessel process is $G(x,y) = 2 y^2
(x^{-1} \wedge y^{-1})$ for $x,y >0$, and the two following bounds follow: for
any $a,x>0$
\begin{align}
\Eci{x}{\int_0^\infty \1_{R_r < a} \diff r} \leq \frac{a^3}{x}, 
	\label{eq:time-bessel-under-a} 
\\
\Eci{x}{\int_0^\infty R_r \e^{-R_r/2} \diff r} \leq \frac{32}{x}. 
	\label{eq:integral-bessel}
\end{align}
We now  establish three lemmas that are slightly more technical.
\begin{lem} \label{lem:time-bessel-under-a-and-inverse}
There exists $C>0$ such that, for any $s,x,a >0$, we have
\[
\Eci{x}{\frac{1}{R_s} \int_0^s \1_{R_r < a} \diff r}
\leq C a^3 \left( \frac{1}{x \sqrt{s}} + \frac{1}{s} \right).
\]
\end{lem}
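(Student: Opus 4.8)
The plan is to transfer the statement to Brownian motion killed at the origin, using the identity \eqref{eq:link-between-R-and-B} between the $3$-dimensional Bessel process and Brownian motion conditioned to stay positive, and then to split the time integral at $s/2$, estimating each half with the explicit killed transition density and the occupation-time (Green function) bounds recorded in Appendix~\ref{section:technical-results}.

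First I would apply \eqref{eq:link-between-R-and-B} with $F$ the path functional $\omega\mapsto\int_0^s\1_{\omega_r<a}\diff r$ (valid since $x>0$). Noting that on the event $\{\forall r\le s,\ B_r>0\}$ one has $\1_{B_r<a}=\1_{0<B_r<a}$, this reduces the claim to bounding
\[
\frac1x\int_0^s\Eci{x}{\1_{0<B_r<a}\1_{\tau>s}}\diff r,
\]
where $(B_r)_{r\ge0}$ is a Brownian motion from $x$ and $\tau=\inf\{r:B_r=0\}$; the interchange of $\int$ and $\E$ is Tonelli. Conditioning on $\sF_r$ and using the Markov property, the integrand equals $\Eci{x}{\1_{0<B_r<a}\1_{\tau>r}\,\Ppi{B_r}{\tau>s-r}}$; bounding $\Ppi{z}{\tau>u}\le\sqrt{2/\pi}\,z/\sqrt u$ (reflection principle, as in Lemma~\ref{lem:control-first-moment-W}) turns this into $\sqrt{2/\pi}\,(s-r)^{-1/2}\int_0^a z\,q_r(x,z)\diff z$, where $q_r(x,z)=\frac1{\sqrt{2\pi r}}\bigl(\e^{-(z-x)^2/2r}-\e^{-(z+x)^2/2r}\bigr)$ is the sub-density of $B_r$ on $\{\tau>r\}$.

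Then I would split $\int_0^s=\int_0^{s/2}+\int_{s/2}^s$. On $[0,s/2]$ I use $\sqrt{s-r}\ge\sqrt{s/2}$ and, after Tonelli, the Green function identity $\int_0^\infty q_r(x,z)\diff r=2(x\wedge z)\le 2z$ recalled in Appendix~\ref{section:technical-results}; this bounds the first half by $\frac{C}{\sqrt s}\int_0^a 2z^2\diff z=\frac{Ca^3}{\sqrt s}$. On $[s/2,s]$ I use instead the elementary estimate $q_r(x,z)\le\frac1{\sqrt{2\pi r}}\cdot\frac{2xz}r\le\frac{Cxz}{s^{3/2}}$ (from $1-\e^{-2xz/r}\le 2xz/r$ and $r\ge s/2$) together with $\int_{s/2}^s(s-r)^{-1/2}\diff r=\sqrt{2s}$; this bounds the second half by $\frac{Cx}{s^{3/2}}\cdot\sqrt s\cdot\int_0^a z^2\diff z=\frac{Cxa^3}{s}$. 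Adding the two contributions and multiplying by $1/x$ yields $Ca^3\bigl(\frac1{x\sqrt s}+\frac1s\bigr)$, which is the claim.

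The hard part is getting the correct power $a^3$. One must extract a factor $z$ from the survival probability $\Ppi{B_r}{\tau>s-r}$ (so that $\1_{0<B_r<a}$ gets weighted by $B_r$), and a further factor $z$ from either the Green function bound — in the regime $r\le s/2$, where it absorbs the non-integrable singularity of $q_r$ at $r=0$ — or the on-diagonal bound on $q_r$ — in the regime $r\ge s/2$, where the remaining $(s-r)^{-1/2}$ singularity at $r=s$ is harmless. In each regime one then integrates $\int_0^a z\cdot z\,\diff z$, of order $a^3$; a cruder estimate dropping one of these factors gives only $a^2$ (insufficient), and retaining $r^{-3/2}$ near $r=0$ would diverge, which is exactly why the split at $s/2$ and the use of the Green function are needed.
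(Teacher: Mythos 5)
Your proof is correct, and it is structurally parallel to the paper's argument: the same split of the time integral at $s/2$, a Green-function bound on the early part and an on-diagonal density bound on the late part. The difference is the picture in which the computation is done. You transfer everything to Brownian motion killed at the origin via the Imhof relation \eqref{eq:link-between-R-and-B} and then work with the killed heat kernel $q_r(x,z)$, its Green function $2(x\wedge y)$, and the survival bound $\Ppi{z}{\tau>u}\le\sqrt{2/\pi}\,z/\sqrt u$. The paper instead stays with the Bessel process throughout: the factor $1/R_s$ is handled by the Markov property together with $\Eci{y}{1/R_u}\le\sqrt{2/(\pi u)}$, the integral over $[0,s/2]$ is controlled by the Bessel occupation bound \eqref{eq:time-bessel-under-a} (which already carries the factor $a^3/x$), and the integral over $[s/2,s]$ by $\Ppi{x}{R_r<a}\le C a^3/s^{3/2}$ read off from the density \eqref{eq:density_R}. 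The two implementations are essentially equivalent — your survival estimate is exactly $z\,\Eci{z}{1/R_u}$ under the same $h$-transform, so the $1/\sqrt{u}$ decay enters at the same place in both proofs — and what your route buys is that all inputs are elementary facts about killed Brownian motion, at the cost of rederiving by hand estimates the paper simply cites from its appendix on the Bessel side. Your closing discussion of where the three powers of $a$ come from, and why dropping either weighting factor would only give $a^2$ or a divergent integral, is accurate.
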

\begin{proof}
First note that, for any $y,t \geq 0$, $\Eci{y}{1/R_t} \leq \Ec{1/R_t} = \sqrt{2/\pi t}$.
We cut the integral into two pieces and first deal with the part $r \in [0,s/2]$: we have, by Markov's property at time $s/2$,
\begin{align}
\Eci{x}{\frac{1}{R_s} \int_0^\frac{s}{2} \1_{R_r < a} \diff r}
& = \Eci{x}{\Eci{R_{s/2}}{\frac{1}{R_{s/2}}} \int_0^\frac{s}{2} \1_{R_r < a} \diff r} 
	\nonumber \\
& \leq \sqrt{\frac{4}{\pi s}} \Eci{x}{\int_0^\frac{s}{2} \1_{R_r < a} \diff r} 
\leq \frac{2}{\sqrt{\pi s}} \frac{a^3}{x}, \label{cd}
\end{align}
by applying \eqref{eq:time-bessel-under-a}.
Now, we deal with the second part of the integral:
\begin{align} \label{ce}
\Eci{x}{\frac{1}{R_s} \int_\frac{s}{2}^s \1_{R_r < a} \diff r}
& = \int_\frac{s}{2}^s \Eci{x}{\frac{1}{R_s} \1_{R_r < a}} \diff r
= \int_\frac{s}{2}^s \Eci{x}{\Eci{R_r}{\frac{1}{R_{s-r}}} \1_{R_r < a}} \diff r \nonumber \\
& \leq \int_\frac{s}{2}^s \sqrt{\frac{2}{\pi (s-r)}} \Ppi{x}{R_r < a} \diff r.
\end{align}
Moreover, we have
\begin{align} \label{cf}
\Ppi{x}{R_r < a} 
\leq \Pp{R_r < a} 
= \int_0^a \sqrt{\frac{2}{\pi}} \frac{z^2}{r^{3/2}} \e^{-z^2/2r} \diff z
\leq \sqrt{\frac{2}{\pi}} \left( \frac{2}{s} \right)^{3/2} \int_0^a z^2 \diff z
= \frac{4 a^3}{3 \sqrt{\pi} s^{3/2}},
\end{align}
by using that $r \geq s/2$.
Thus, we get
\begin{align*}
\Eci{x}{\frac{1}{R_s} \int_\frac{s}{2}^s \1_{R_r < a} \diff r}
& \leq \frac{4 \sqrt{2} a^3}{3 \pi s^{3/2}} \int_\frac{s}{2}^s \frac{\diff r}{\sqrt{s-r}}
= \frac{8 a^3}{3 \pi s},
\end{align*}
and it concludes the proof.
\end{proof}
\begin{lem} \label{lem:integral-bessel-and-inverse}
There exists $C>0$ such that, for any $x,a >0$ and $s \geq 2$, we have
\[
\Eci{x}{\frac{1}{R_s} \int_0^s \left(\frac{R_r}{\sqrt{s-r}} \wedge 1 \right) 
	\e^{-R_r/2} \diff r}
\leq C \left( \frac{1}{xs} + \frac{\ln s}{s^{3/2}} \right).
\]
\end{lem}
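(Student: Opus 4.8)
The plan is to mirror the proof of Lemma~\ref{lem:time-bessel-under-a-and-inverse}: I would split the time integral at $s/2$, handle $r\in[0,s/2]$ by conditioning at time $s/2$, and handle $r\in[s/2,s]$ by Tonelli and the Markov property at time $r$. Throughout one uses the uniform bound $\Eci{y}{1/R_t}\le\Eci{0}{1/R_t}=\sqrt{2/(\pi t)}$ (a $3$-dimensional Bessel process from $y>0$ dominates one from $0$), together with \eqref{eq:integral-bessel} and the density bound following from \eqref{eq:density_R}.

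For $r\in[0,s/2]$ one has $s-r\ge s/2$, hence $(R_r/\sqrt{s-r})\wedge 1\le\sqrt{2/s}\,R_r$; since $\int_0^{s/2}(\cdots)\,\diff r$ is measurable with respect to the path up to time $s/2$, the Markov property at $s/2$ gives
\[
\Eci{x}{\frac{1}{R_s}\int_0^{s/2}\Bigl(\frac{R_r}{\sqrt{s-r}}\wedge 1\Bigr)\e^{-R_r/2}\,\diff r}
\le\sqrt{\frac{4}{\pi s}}\,\sqrt{\frac{2}{s}}\,\Eci{x}{\int_0^\infty R_r\e^{-R_r/2}\,\diff r}
\le\frac{C}{xs},
\]
using \eqref{eq:integral-bessel} in the last step.

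For $r\in[s/2,s]$, Tonelli and the Markov property at time $r$ turn the contribution into $\int_{s/2}^s\Eci{x}{\Eci{R_r}{1/R_{s-r}}\bigl(\tfrac{R_r}{\sqrt{s-r}}\wedge 1\bigr)\e^{-R_r/2}}\,\diff r\le\int_{s/2}^s\sqrt{\tfrac{2}{\pi(s-r)}}\,\Eci{x}{\bigl(\tfrac{R_r}{\sqrt{s-r}}\wedge 1\bigr)\e^{-R_r/2}}\,\diff r$. I would then bound the density of $R_r$ under $\P_x$ by $Cz^2/s^{3/2}$ for $r\ge s/2$, uniformly in $x\ge0$ (as already used in the proof of Lemma~\ref{lem:second-moment-for-W}), so that the inner expectation is at most $\tfrac{C}{s^{3/2}}\int_0^\infty\bigl(\tfrac{z}{\sqrt{s-r}}\wedge 1\bigr)z^2\e^{-z/2}\,\diff z\le\tfrac{C}{s^{3/2}}\bigl(1\wedge(s-r)^{-1/2}\bigr)$, the two alternatives being obtained by bounding $\tfrac{z}{\sqrt{s-r}}\wedge 1$ by $1$ and by $z/\sqrt{s-r}$. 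Substituting $v=s-r$, this piece is at most $\tfrac{C}{s^{3/2}}\int_0^{s/2}v^{-1/2}(1\wedge v^{-1/2})\,\diff v=\tfrac{C}{s^{3/2}}\bigl(2+\ln(s/2)\bigr)\le C\ln s/s^{3/2}$ for $s\ge2$. Adding the two regimes gives the claim.

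The one subtle point — and the source of the $\ln s$ factor — is that in the second regime the factor $(s-r)^{-1/2}$ coming from $\Eci{R_r}{1/R_{s-r}}$ and the factor $(s-r)^{-1/2}$ coming from the prefactor $R_r/\sqrt{s-r}$ combine into an integrand $\sim 1/(s-r)$ near $r=s$; it is essential that the truncation $\wedge 1$ takes effect once $s-r\le 1$, so that the singularity is only logarithmic rather than a genuine divergence. The remaining estimates are routine and parallel those in Lemma~\ref{lem:time-bessel-under-a-and-inverse}.
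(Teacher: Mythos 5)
Your proof is correct, and it shares the paper's skeleton (split the time integral at $s/2$, apply the Markov property at $s/2$ resp.\ at $r$, and use the uniform bound $\Eci{y}{1/R_t}\le\sqrt{2/(\pi t)}$), but it handles the weight $R_r\e^{-R_r/2}$ differently. The paper first decomposes over integer bands, writing $\bigl(\tfrac{R_r}{\sqrt{s-r}}\wedge1\bigr)\e^{-R_r/2}\le\sum_{k\ge0}\bigl(\tfrac{k+1}{\sqrt{s-r}}\wedge1\bigr)\e^{-k/2}\1_{R_r\in[k,k+1)}$, so that each band is treated exactly as in Lemma~\ref{lem:time-bessel-under-a-and-inverse}: the occupation bound \eqref{eq:time-bessel-under-a} for $r\le s/2$ and the tail estimate $\Ppi{x}{R_r<k+1}\le C(k+1)^3/s^{3/2}$ for $r\ge s/2$, followed by a summation over $k$ with weights $(k+1)^4\e^{-k/2}$. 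You instead keep the exponential weight intact, invoking \eqref{eq:integral-bessel} directly on $[0,s/2]$ and, on $[s/2,s]$, the uniform density bound $Cz^2/s^{3/2}$ from \eqref{eq:density_R} together with explicit moment integrals of $z^2\e^{-z/2}$ and $z^3\e^{-z/2}$; the final logarithmic integral $\int_{s/2}^s(s-r)^{-1/2}\bigl(1\wedge(s-r)^{-1/2}\bigr)\diff r\le C\ln s$ is the same in both arguments, and your remark that the truncation $\wedge\,1$ is what tames the $1/(s-r)$ singularity is exactly the point. Your route is slightly more direct (no discretization in $k$, no summation at the end), at the cost of computing the density-moment bound by hand rather than reusing the intermediate estimates \eqref{ce}--\eqref{cf} already established in Lemma~\ref{lem:time-bessel-under-a-and-inverse}; both yield the claimed bound $C\bigl(\tfrac{1}{xs}+\tfrac{\ln s}{s^{3/2}}\bigr)$ for $s\ge2$.
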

\begin{proof}
We have
\begin{align*}
\Eci{x}{\frac{1}{R_s} \int_0^s \left(\frac{R_r}{\sqrt{s-r}} \wedge 1 \right) 
	\e^{-R_r/2} \diff r}
& \leq \Eci{x}{\frac{1}{R_s} \int_0^s 
	\sum_{k \geq 0} \left(\frac{k+1}{\sqrt{s-r}} \wedge 1 \right) 
	\e^{-k/2} \1_{R_r \in [k,k+1)} \diff r} \\
& \leq \sum_{k \geq 0} (k+1) \e^{-k/2}
\Eci{x}{\frac{1}{R_s} \int_0^s \left(\frac{1}{\sqrt{s-r}} \wedge 1 \right) 
	\1_{R_r < k+1} \diff r}.
\end{align*}
As in the proof of Lemma \ref{lem:time-bessel-under-a-and-inverse}, we split the integral into two pieces: we have
\begin{align*}
& \Eci{x}{\frac{1}{R_s} \int_0^s \left(\frac{1}{\sqrt{s-r}} \wedge 1 \right) 
	\1_{R_r < k+1} \diff r} \\
& \leq \Eci{x}{\frac{1}{R_s} \int_0^\frac{s}{2} \sqrt{\frac{2}{s}} \1_{R_r < k+1} \diff r}
+ \int_\frac{s}{2}^s \left(\frac{1}{\sqrt{s-r}} \wedge 1 \right) 
	\Eci{x}{\frac{1}{R_s} \1_{R_r < k+1}} \diff r \\
& \leq \sqrt{\frac{2}{s}} \frac{2}{\sqrt{\pi s}} \frac{(k+1)^3}{x}
+ \int_\frac{s}{2}^s \left(\frac{1}{\sqrt{s-r}} \wedge 1 \right)  
	\frac{4 \sqrt{2} (k+1)^3}{3 \pi s^{3/2}\sqrt{s-r}} \diff r,
\end{align*}
using \eqref{cd} for the first term and \eqref{ce} with \eqref{cf} for the second term.
Noting that, for $s \geq 2$,
\begin{align*}
\int_\frac{s}{2}^s \left(\frac{1}{\sqrt{s-r}} \wedge 1 \right) \frac{1}{\sqrt{s-r}} \diff r
= \int_0^1 \frac{\diff r}{\sqrt{r}} + \int_1^\frac{s}{2} \frac{\diff r}{r}
\leq C \ln s,
\end{align*}
we finally get
\begin{align*}
\Eci{x}{\frac{1}{R_s} \int_0^s \left(\frac{R_r}{\sqrt{s-r}} \wedge 1 \right) 
	R_r \e^{-R_r/2} \diff r} 
& \leq C \sum_{k \geq 0} (k+1)^4 \e^{-k/2}
	\left( \frac{1}{x s} + \frac{\ln s}{s^{3/2}} \right),
\end{align*}
and it concludes the proof.
\end{proof}
\begin{lem} \label{lem:min-bessel-and-inverse}
There exists $C>0$ such that, for any $x,y >0$, $t > 0$ and $s\ge0$, we have
\begin{align*}
\Eci{x}{\frac{1}{R_{t+s}} \1_{\min_{r \in [t,t+s]} R_r \leq y}}
& \leq C\left(\frac{y^2}{t^{3/2}} 
+ \frac 1 {\sqrt t}\left(\frac{y}{\sqrt{s}} \wedge 1 \right)\1_{s>0}\right).
\end{align*}
\end{lem}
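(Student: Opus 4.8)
The plan is to condition on $\sF_t$, apply the Markov property at time $t$, and reduce the estimate to a one-step computation for a $3$-dimensional Bessel process, treating separately the case where the process sits at height $O(y)$ at time $t$ and the case where it is higher. First one may assume $y<\sqrt t$: otherwise $1/\sqrt t\le y^2/t^{3/2}$ and the left-hand side is at most $\Eci{x}{1/R_{t+s}}\le\sqrt{2/(\pi t)}\le Cy^2/t^{3/2}$, using $\Ec{1/R_{t+s}}=\sqrt{2/(\pi(t+s))}$ and monotonicity of $R$ in its starting point. By the Markov property at time $t$ it suffices to bound $\Eci{x}{g(R_t)}$, where for $\ell>0$ we set $g(\ell):=\Eci{\ell}{R_s^{-1}\1_{\min_{r\in[0,s]}R_r\le y}}$. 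On $\{R_t\le 2y\}$ one drops the minimum indicator and uses that $(1/R_r)_{r\ge0}$ is a non-negative supermartingale (a standard fact for the Bessel process), so $g(\ell)\le\Eci{\ell}{1/R_s}\le 1/\ell$; since \eqref{eq:density_R} and $1-\e^{-u}\le u$ bound the density of $R_t$ under $\P_x$ at the point $z$ by $\tfrac{2z^2}{\sqrt{2\pi}\,t^{3/2}}$, one gets $\Eci{x}{\1_{R_t\le 2y}\,g(R_t)}\le\int_0^{2y}z^{-1}\tfrac{2z^2}{\sqrt{2\pi}\,t^{3/2}}\diff z\le Cy^2/t^{3/2}$, the first term of the bound; this already settles $s=0$.

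It remains to control $\Eci{x}{\1_{R_t>2y}\,g(R_t)}$, which is where the work lies. For $\ell>y$, writing $\tau_y:=\inf\{r\ge0:R_r=y\}$, the strong Markov property at $\tau_y$ gives $g(\ell)=\Eci{\ell}{\1_{\tau_y\le s}\,\Eci{y}{1/R_{s-\tau_y}}}$, and the explicit Bessel hitting-time law $\Ppi{\ell}{\tau_y\in\diff r}=\tfrac y\ell\,\tfrac{\ell-y}{\sqrt{2\pi r^3}}\e^{-(\ell-y)^2/2r}\diff r$ (which follows from the Bessel process being the Doob $h$-transform of Brownian motion killed at $0$ by $h(x)=x$, applied with optional stopping at $\tau_y$; see also \cite{borodinsalminen2002}) turns this into
\[
 g(\ell)=\frac y\ell\int_0^s\frac{\ell-y}{\sqrt{2\pi r^3}}\,\e^{-(\ell-y)^2/2r}\,\Eci{y}{1/R_{s-r}}\diff r .
\]
I would then use $\Eci{y}{1/R_v}\le\min(1/y,\sqrt{2/(\pi v)})$ (from \eqref{eq:density_R} and comparison with a Bessel process started at $0$). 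When $s\le y^2$ one bounds $\Eci{y}{1/R_{s-r}}\le 1/y$ throughout, so for $\ell>2y$, using $\ell-y\ge\ell/2$, $g(\ell)\le\tfrac1\ell\,\Ppi{\ell}{\tau_y\le s}\le\tfrac1\ell\,\e^{-\ell^2/8s}$; integrating against $\tfrac{2z^2}{\sqrt{2\pi}\,t^{3/2}}$ gives $\Eci{x}{\1_{R_t>2y}g(R_t)}\le\tfrac{8s}{\sqrt{2\pi}\,t^{3/2}}\e^{-y^2/2s}\le Cy^2/t^{3/2}$, and since $s\le y^2$ is exactly $\tfrac y{\sqrt s}\wedge 1=1$ this is the required form. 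When $s>y^2$ one splits the $r$-integral at $s/2$ and at $s-y^2$, using $\Eci{y}{1/R_{s-r}}\le\sqrt{4/(\pi s)}$ on $[0,s/2]$, $\Eci{y}{1/R_{s-r}}=\sqrt{2/(\pi(s-r))}$ on $[s/2,s-y^2]$ (where $s-r\ge y^2$), and $\Eci{y}{1/R_{s-r}}\le 1/y$ on $[s-y^2,s]$; bounding the hitting density by $\tfrac y\ell$ in total mass and by $C(\ell-y)s^{-3/2}\e^{-(\ell-y)^2/2s}$ on $[s/2,s]$, one arrives (for $\ell>2y$) at a bound of the shape $g(\ell)\le\tfrac{Cy}{\ell\sqrt s}+\tfrac{Cy}{s}\,\e^{-\ell^2/8s}+\tfrac{Cy^2}{s^{3/2}}\,\e^{-\ell^2/8s}$.

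Integrating over $R_t$ then produces the second term of the bound: the first summand contributes $\tfrac{Cy}{\sqrt s}\,\Eci{x}{R_t^{-1}\1_{R_t>2y}}\le\tfrac{Cy}{\sqrt s}\sqrt{2/(\pi t)}=\tfrac1{\sqrt t}\cdot\tfrac{Cy}{\sqrt s}$; the third summand, integrated against $\tfrac{2z^2}{\sqrt{2\pi}\,t^{3/2}}$, contributes $\tfrac{Cy^2}{s^{3/2}t^{3/2}}\int_0^\infty z^2\e^{-z^2/8s}\diff z=Cy^2/t^{3/2}$; and the middle one contributes at most $Cy\sqrt s/t^{3/2}$ (integrating against $\tfrac{2z^2}{\sqrt{2\pi}\,t^{3/2}}$) and at most $Cy/s$ (using $\Eci{x}{\e^{-(R_t-y)^2/2s}}\le 1$), hence at most $Cy/\sqrt{ts}$ in all cases; since $s>y^2$ means $\tfrac y{\sqrt s}\wedge 1=\tfrac y{\sqrt s}$, this is again the claimed bound. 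The main obstacle, and the reason a careless argument collapses, is precisely the estimate of $g(\ell)$ for $\ell>2y$: one must keep the decay $\Eci{y}{1/R_{s-\tau_y}}\lesssim(s-\tau_y)^{-1/2}$ and not simply replace it by $1/y$, because $s$ may be of order $t$ — exactly the regime in which the lemma is applied in Lemma~\ref{lem:cv-for-L_init} — and the crude bound would yield an estimate of order $s/t^{3/2}$, far too large. Controlling the convolution of the Bessel hitting density with $\min(1/y,\sqrt{2/(\pi v)})$, i.e.\@ choosing the split points $s/2$ and $s-y^2$ and carrying the Gaussian tails through the integration against the law of $R_t$, is the delicate step; everything after it reduces to elementary Gaussian integrals.
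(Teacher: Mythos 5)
Your argument is correct, and while the overall skeleton (Markov property at time $t$, splitting according to whether $R_t$ is of order $y$ or larger, and bounding the density of $R_t$ by $Cz^2/t^{3/2}$ to produce the $y^2/t^{3/2}$ term) coincides with the paper's, your treatment of the main term is genuinely different. The paper handles the regime $R_t>y$ by quoting the closed-form expression of Borodin and Salminen (Eq.\@ 5.1.2.8) for $\Eci{z}{\frac1{R_s}\1_{\min_{r\in[0,s]}R_r\le y}}$, which after a one-line manipulation is bounded by $\frac1z$ times the probability that a Brownian motion started at $y$ stays positive up to time $s$, i.e.\@ by $\frac Cz\left(\frac y{\sqrt s}\wedge1\right)$; combined with $\Eci{x}{1/R_t}\le C/\sqrt t$ this immediately gives the second term. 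You instead derive the same estimate from scratch: strong Markov property at $\tau_y$, the explicit Bessel(3) first-passage density obtained via the $h$-transform (total mass $y/\ell$), the bound $\Eci{y}{1/R_v}\le\min(1/y,\sqrt{2/(\pi v)})$ (note this is an inequality, not the equality you state on $[s/2,s-y^2]$ — harmless, since you only use the upper bound), and a convolution estimate with splits at $s/2$ and $s-y^2$ in the regime $s>y^2$. Your route is longer but self-contained, resting only on Gaussian computations rather than a table formula; the paper's route buys brevity at the price of the external reference. Two further inessential differences: the paper first reduces to $x=0$ by stochastic monotonicity whereas you work with the uniform density bound valid for all $x\ge0$, and the paper splits at level $y$ while you split at $2y$. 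Your initial reduction to $y<\sqrt t$ is correct but never actually used afterwards.
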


\begin{proof}
First note that the law of the process $(R_t)_{t\ge0}$ is stochastically increasing in $x$, as can be seen by a coupling argument. Moreover, the term in the expectation is a decreasing function of the path $(R_t)_{t\ge0}$. We can therefore bound the expectation by its value for $x=0$, which we assume from now on.

Using Markov's property at time $t$, we get
\begin{align}
\Ec{\frac{1}{R_{t+s}} \1_{\min_{r \in [t,t+s]} R_r \leq y}} 
&= \Ec{\Eci{R_t}{\frac{1}{R_s} \1_{\min_{r \in [0,s]} R_r \leq y}}}\nonumber\\
&= \Ec{\Eci{R_t}{\frac{1}{R_s}}\1_{R_t \leq y}} + \Ec{\Eci{R_t}{\frac{1}{R_s} \1_{\min_{r \in [0,s]} R_r \leq y}}\1_{R_t > y}}\nonumber\\
&\eqqcolon T_1+T_2.
	\label{yd}
\end{align}
Since $x\mapsto 1/x$ is a superharmonic function for the Bessel process, the first term is bounded by
\begin{align}
T_1 &\le \Ec{\frac 1 {R_t} \1_{R_t \leq y}} = \frac 1 {\sqrt t} \Ec{\frac 1 {R_1} \1_{R_1 \leq y/\sqrt t}} = \frac 1 {\sqrt t} \int_0^{y/\sqrt t} \frac{1}{z} \sqrt{\frac{2}{\pi}} z^2 \e^{-z^2/2} \diff z 
\leq C \frac{y^2}{t^{3/2}}.
 \label{yf}
\end{align}
As for the term $T_2$ in \eqref{yd}, first note that it vanishes if $s=0$. If $s>0$, we use that for $z \geq y > 0$, by  Equation 5.1.2.8 of Borodin and Salminen \cite{borodinsalminen2002},
\begin{align*}
\Eci{z}{\frac{1}{R_s} \1_{\min_{r \in [0,s]} R_r \leq y}}
& = \frac{1}{\sqrt{2\pi s} z} 
\int_0^\infty \left( \e^{-(u+z-2y)^2/2s} - \e^{-(u+z)^2/2s} \right) \diff u \\
& = \frac{1}{\sqrt{2\pi s} z} 
\int_{z-y}^\infty \left( \e^{-(u-y)^2/2s} - \e^{-(u+y)^2/2s} \right)  \diff u \\
& \leq \frac 1 z \times \frac{1}{\sqrt{2\pi s}} 
\int_0^\infty   \left( \e^{-(u-y)^2/2s} - \e^{-(u+y)^2/2s} \right) \diff u,
\end{align*}
using that $z\ge y$ in the last inequality. The second term in the product on the right-hand side equals the probability that a Brownian motion starting from $y$ does not hit 0 by time $s$ (see e.g. Appendix 1.3 in Borodin and Salminen \cite{borodinsalminen2002}) and is easily bounded by $C(y/\sqrt s \wedge 1)$.
Therefore, if $s>0$,
\begin{align*}
T_2 
\leq C \Ec{\frac 1 {R_t}} \left( \frac{y}{\sqrt{s}} \wedge 1 \right) 
\leq \frac{C}{\sqrt{t}} \left( \frac{y}{\sqrt{s}} \wedge 1  \right) ,
\end{align*}
by the scaling invariance of the Bessel process. Together with \eqref{yd} and \eqref{yf}, this proves the lemma.
\end{proof}
\section{Asymptotic of \texorpdfstring{$\Psi_{Z_\infty}$}{Psi\_\{Z\_infty\}}}
\label{section:asymptotic_Psi}

\def\ep{\varepsilon}

The following lemma can be deduced from classical results about the domain of attraction of (1-)stable laws, but it is difficult to find good references where all constants are explicit. We therefore prove it here for convenience.
\begin{lem}
\label{lem:asymptotic}
 Let $Z\ge0$ be a random variable satisfying
 \begin{enumerate}
  \item $\P(Z>x)\sim 1/x$ as $x\to\infty$,
  \item $\int_0^x \P(Z > y)\diff y - \log x \to c$ as $x\to\infty$, for some $c\in\R$.
 \end{enumerate}
Let $\gamma$ denote the Euler--Mascheroni constant. Then, as $\lambda \to 0$ in $\R$,
\begin{equation}
\label{eq:eilZ}
 \Ec{\e^{i\lambda Z}} = \exp\left(-\frac \pi 2 |\lambda| + i \lambda (-\log |\lambda| + c - \gamma) + o(|\lambda|)\right),
\end{equation}
where $o(|\lambda|)$ is a (complex-valued) term that satisfies $|o(|\lambda|)|/|\lambda| \to0$ as $\lambda \to 0$.
\end{lem}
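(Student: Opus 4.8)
The plan is to route everything through the tail $\bar F(x):=\P(Z>x)$. A standard computation (Fubini on bounded domains, then Dirichlet's test, using that $\bar F$ is nonincreasing with $\bar F(x)\to0$ and $Z<\infty$ a.s.) gives the convergent improper integral representation $\E[\e^{i\lambda Z}]=1+i\lambda J(\lambda)$ with $J(\lambda):=\int_0^\infty \e^{i\lambda x}\bar F(x)\,\diff x$. Since $Z$ is real, $\E[\e^{i\lambda Z}]=\overline{\E[\e^{i|\lambda|Z}]}$, so it suffices to treat $\lambda\to0^+$. By hypothesis~(1) one has $\bar F(x)\sim1/x$, and the core of the proof is to establish
\[
J(\lambda)=-\log\lambda+(c-\gamma)+\tfrac{i\pi}{2}+o(1),\qquad\lambda\to0^+.
\]
Granting this, $\lambda J(\lambda)=O(\lambda\log(1/\lambda))\to0$, so $\log\E[\e^{i\lambda Z}]=\log(1+i\lambda J(\lambda))=i\lambda J(\lambda)+O\big((\lambda\log(1/\lambda))^2\big)=-\tfrac\pi2\lambda+i\lambda(-\log\lambda+c-\gamma)+o(\lambda)$, which is \eqref{eq:eilZ} (for $\lambda<0$ one conjugates).

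To analyze $J(\lambda)$ I would write $\bar F(x)=\frac1{1+x}+r(x)$ and treat the two pieces separately. For the first piece, the substitutions $v=1+x$ and then $u=\lambda v$ give $\int_0^\infty \e^{i\lambda x}(1+x)^{-1}\,\diff x=\e^{-i\lambda}\int_\lambda^\infty u^{-1}\e^{iu}\,\diff u$; splitting into real and imaginary parts and invoking the classical asymptotics $\int_0^\infty u^{-1}\sin u\,\diff u=\pi/2$ and $\mathrm{Ci}(\varepsilon)=\gamma+\log\varepsilon+O(\varepsilon^2)$ yields $\int_\lambda^\infty u^{-1}\e^{iu}\,\diff u=-\gamma-\log\lambda+\tfrac{i\pi}{2}+o(1)$, and the factor $\e^{-i\lambda}=1+O(\lambda)$ changes this only by $o(1)$. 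It then remains to show $\int_0^\infty \e^{i\lambda x}r(x)\,\diff x\to c$.

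This is where hypothesis~(2) enters. Since $\int_0^x(1+y)^{-1}\,\diff y=\log(1+x)=\log x+o(1)$, hypothesis~(2) says precisely that $\rho(x):=\int_0^x r(y)\,\diff y-c\to0$; moreover $\rho$ is absolutely continuous with $\rho(0)=-c$ and $\rho'=r$. Integrating by parts, $\int_0^X \e^{i\lambda x}r(x)\,\diff x=\e^{i\lambda X}\rho(X)+c-i\lambda\int_0^X \e^{i\lambda x}\rho(x)\,\diff x$, and letting $X\to\infty$ (so the boundary term vanishes) reduces the claim to showing $\lambda\int_0^\infty \e^{i\lambda x}\rho(x)\,\diff x\to0$.

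The last step is the crux, and I expect it to be the only genuinely delicate point; the obstacle is that $\rho$ need not be integrable, so after the scaling $u=\lambda x$ one cannot simply invoke dominated convergence. The plan is a double-limit argument: fix $a>0$ and split $\int_0^\infty \e^{iu}\rho(u/\lambda)\,\diff u=\int_0^a+\int_a^\infty$. On $[0,a]$, $\rho$ is bounded and $\rho(u/\lambda)\to0$ pointwise, so dominated convergence makes this piece $\to0$ as $\lambda\to0$ for fixed $a$. On $[a,\infty)$, integrating by parts and undoing the substitution turns the integral into a vanishing boundary term plus $\int_{a/\lambda}^\infty \e^{i\lambda x}r(x)\,\diff x$; writing $r=\bar F-(1+x)^{-1}$ as a difference of two nonnegative nonincreasing functions and applying Bonnet's second mean value theorem to each, together with $\bar F(a/\lambda)\sim\lambda/a$, bounds this by $C/a$ uniformly for small $\lambda$. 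Taking $\limsup_{\lambda\to0^+}$ and then $a\to\infty$ gives the desired convergence to $0$. Assembling the three computations yields the displayed asymptotic for $J(\lambda)$, and hence the lemma.
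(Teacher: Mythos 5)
Your proposal is correct, and it takes a genuinely different route from the paper. The paper works with the truncated variable $Z\wedge(\varepsilon\lambda)^{-1}$, writes its characteristic function as $1+i\lambda\int_0^{\cdot}\e^{i\lambda x}\P(Z>x)\diff x$, and splits the integral at $\varepsilon/\lambda$ and $1/(\varepsilon\lambda)$: hypothesis (2) is used directly on the near region (where $\e^{i\lambda x}=1+O(\lambda x)$), hypothesis (1) plus dominated convergence reduces the middle region to $\int_\varepsilon^{1/\varepsilon}\e^{ix}\diff x/x$, evaluated via the exponential integral $E_1$, and the far tail is discarded at cost $O(\varepsilon\lambda)$; the double limit is $\lambda\to0$ then $\varepsilon\to0$. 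You instead keep the full (conditionally convergent) integral $J(\lambda)=\int_0^\infty\e^{i\lambda x}\P(Z>x)\diff x$, subtract the explicit model tail $(1+x)^{-1}$ — which carries the $-\log\lambda$, $-\gamma$ and $i\pi/2$ via the $\mathrm{Ci}/\mathrm{Si}$ asymptotics, the same special-function input as the paper's $E_1$ — and convert hypothesis (2) into the vanishing of the primitive $\rho$ of the remainder, which you then control by integration by parts together with a Bonnet second-mean-value bound (this is where hypothesis (1) re-enters, through $\P(Z>a/\lambda)\sim\lambda/a$), with a double limit in the auxiliary parameter $a$. Both arguments are sound; the paper's truncation is somewhat shorter and avoids improper-integral/Abel-summation technicalities, while your decomposition cleanly isolates where each hypothesis is used and makes the origin of the constants $\gamma$ and $\pi/2$ transparent. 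The delicate steps you flag (convergence of the improper integrals, the boundary terms, the uniform $C/a$ bound) all check out as you describe them.
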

\begin{proof}
Since $\E[\e^{-i\lambda Z}] = \overline{\E[\e^{i\lambda Z}]}$, it suffices to consider $\lambda > 0$. We will separate the regions where $Z \ll \lambda^{-1}$, $Z \asymp \lambda^{-1}$ and $Z \gg \lambda^{-1}$. For this, fix $\ep > 0$. Throughout the proof, we use the Landau symbols $o$ and $O$ which have their usual meaning, with the slight twist that the $o$ symbol may depend on $\ep$.
We first recall the following formula easily obtained by integration by parts (formally, using Fubini's theorem):
\[
\forall y\ge0, \quad \Ec{\e^{i\lambda (Z\wedge y)}} = 1 + i\lambda \int_0^y \e^{i\lambda x} \P(Z > x)\diff x.
\]
This allows to split up $\E[\e^{i\lambda Z}]$ as follows: using hypothesis 1,
\begin{align}
\Ec{\e^{i\lambda Z}} 
&= \Ec{\e^{i\lambda (Z\wedge (\ep\lambda)^{-1})}}
	+ O \left(\P \left(Z > (\ep\lambda)^{-1} \right) \right)
= 1 + i\lambda(I_1+I_2) + O(\ep\lambda), \label{eq:eilZ2}
\end{align}
where we set
\begin{align*}
I_1 \coloneqq \int_0^{(\ep\lambda)^{-1}} \e^{i\lambda x}\P(Z>x)\diff x
\quad \text{and} \quad
I_2 \coloneqq \int_{\ep\lambda^{-1}}^{(\ep\lambda)^{-1}} \e^{i\lambda x}\P(Z>x)\diff x.
\end{align*}
It remains to estimate the integrals $I_1$ and $I_2$. For the first integral, we have by hypotheses 1 and 2, as $\lambda \to 0$,
\begin{equation}
I_1 = \int_0^{\ep\lambda^{-1}} (1+O(\lambda x)) \P(Z>x)\diff x
= \log \left( \ep\lambda^{-1} \right) + c + o(1) + O(\ep).
\label{eq:I1}
\end{equation}
For the second integral, we have by a change of variables,
\[
I_2 = \int_\ep^{\ep^{-1}} \e^{ix} \Pp{Z > \lambda^{-1} x} \frac{\diff x}{\lambda}.
\]
Together with hypothesis 1 and dominated convergence, this gives as $\lambda \to 0$,
\begin{equation}
I_2 = \int_\ep^{\ep^{-1}} \e^{ix} \frac{\diff x}{x} + o(1).
\label{eq:I2}
\end{equation}
Recall (\cite[pp.~228ff]{Abramowitz1964}) that the \emph{exponential integral} $E_1$ is defined for $\lvert \arg(z) \rvert < \pi$ by $E_1(z) = \int_z^\infty \e^{-t}\frac{\diff t}{t}$, and that it satisfies
\[
E_1(z) = -\gamma - \log z + O(z)\quad (|z|\to0),\qquad E_1(z) \sim \frac{\e^{-z}}z\quad (|z|\to\infty).
\]
In particular, this gives
\[
\int_\ep^{\ep^{-1}} \e^{ix}\frac{\diff x}{x} = E_1(- i\ep) - E_1(- i\ep^{-1}) = -\gamma - \log \ep + \frac \pi 2 i + O(\ep),
\]
and plugging this into \eqref{eq:I2} gives
\begin{equation}
I_2 = -\gamma - \log \ep + \frac \pi 2 i + O(\ep) + o(1).
\label{eq:I2bis}
\end{equation}
Equations \eqref{eq:eilZ2}, \eqref{eq:I1} and \eqref{eq:I2bis} now give as $\lambda \to 0$,
\[
\Ec{\e^{i\lambda Z}} = 1 + i\lambda\left(- \log(\lambda) + c -\gamma +  \frac \pi 2 i + O(\ep) + o(1)\right).
\]
Letting first $\lambda\to0$ then $\ep\to0$ gives the desired result.
\end{proof}

\section{Rate of convergence of the derivative martingale}
\label{section:rate-of-cv-derivative-martingale}

In this last section, we prove Proposition \ref{prop:speed-of-CV-Z_t}.
For this, we need to recall two explicit bounds.
The first one concerns the global minimum of the BBM: for any $L > 0$,
\begin{align} \label{eq:global-min-of-the-BBM}
\Pp{\exists s \geq 0 : \min_{u \in \cN(s)} X_u(s) \leq - L}
\leq \e^{-L}.
\end{align}
This follows easily from Doob's inequality applied to the martingale $(W_t)_{t\ge0}$. We remark that more precise estimates have been proved recently under additional assumptions by Madaule \cite{madaule2016arxiv} (for the branching random walk) and Berestycki et al.\@ \cite{bbhm2016arxiv} (for the binary branching Brownian motion).

The second bound deals with the minimum of the BBM at time $t$: for any $x \in [0,\sqrt{t}]$ and $t \geq 2$, we have
\begin{align} \label{eq:local-min-of-the-BBM}
\Pp{\min_{u \in \cN(t)} X_u(t) \leq \frac{3}{2} \log t - x}
\leq C (1+x)^2 \e^{-x}.
\end{align}
This has been proved by Bramson \cite[Proposition 3]{bramson78} in the case $\E[L^2]<\infty$, but since the proof relies only on first moments arguments, it holds also under \eqref{hypothese 1}.

\begin{proof}[Proof of Proposition \ref{prop:speed-of-CV-Z_t}]
First note that we can assume that $\delta \geq t^{-1/2}$, because otherwise it is enough to bound the probability by 1.
We keep the notation of Section \ref{section:proof-of-theorem}, but take here $\beta_t = \frac 1 2 \log t$, so that $\gamma_t = \log t$.
Using \eqref{eq:decompo-Z_infty-2}, it is sufficient to prove the following inequalities
\begin{align}
\Pp{\abs{Z_t - \widetilde{Z}_t^{t,\gamma_t}} \geq \delta}
& \leq C \frac{(\log t)^2}{\delta \sqrt{t}}, \label{xz} \\
\Pp{\abs{Z_t^{t,\gamma_t} - \widetilde{Z}_\infty^{t,\gamma_t}} \geq \delta}
& \leq C \frac{\log t}{\delta \sqrt{t}}, \label{xa} \\
\Pp{F_\mathrm{good}^{t,\gamma_t} \geq \delta}
& \leq C \frac{(\log t)^2}{\delta \sqrt{t}}, \label{xb} \\
\Pp{\cL^{t,\gamma_t}_\text{bad} \neq \varnothing} 
& \leq C \frac{(\log t)^2}{\sqrt{t}}, \label{xc}
\end{align}
noting that \eqref{xc} is sufficient because $\delta \leq 1$.
We start with \eqref{xc}.
Using \eqref{eq:local-min-of-the-BBM}, we get
\begin{align*}
\Pp{\cL^{t,\gamma_t}_\text{bad} \neq \varnothing}
& = \Pp{\min_{u \in \cN(t)} X_u(t) \leq \log t}
\leq C \left( 1 + \frac{1}{2}\log t \right)^2 \frac{1}{\sqrt{t}},
\end{align*}
and it proves \eqref{xc}.

Now, we prove \eqref{xz}. 
On the event $\{\cL^{t,\gamma_t}_\text{bad} = \varnothing\}$, we have $\widetilde{Z}_t^{t,\gamma_t} = Z_t - (\log t) W_t$.
Therefore, using \eqref{eq:global-min-of-the-BBM} with $L= \log t > 0$ and using \eqref{xc}, we get
\begin{align*}
\Pp{\abs{Z_t - \widetilde{Z}_t^{t,\gamma_t}} \geq \delta}
& \leq C \frac{\log t}{\sqrt{t}} + \frac{1}{t}
	+ \Pp{(\log t) W_t \geq \delta,
		\min_{s \geq 0} \min_{u \in \cN(s)} X_u(s) \geq - \log t}.
\end{align*}
But, using the many-to-one formula, we have, for any $L > 0$,
\begin{align} \label{eq:control-first-moment-W_t-barrier-at--L}
\Ec{W_t \1_{	\min_{s \geq 0} \min_{u \in \cN(s)} X_u(s) \geq - L}}
\leq \Pp{\min_{s \in [0,t]} B_s \geq - L}
= \Pp{\abs{B_t} \leq L}
\leq C \frac{L}{\sqrt{t}}
\end{align}
and it shows \eqref{xz} with Markov's inequality.

We now deal with \eqref{xa}. 
For this, applying \eqref{ad} with $\kappa = 1$ and $\varepsilon = \delta\sqrt{t}$, we get 
\begin{align*}
\Ppsq{\abs{\widetilde{Z}_\infty^{t,\gamma_t} - \widetilde{Z}_t^{t,\gamma_t}} \geq \delta}{\sF_t}
& \leq C \sqrt{t} W_t \left( \frac{h(1)}{\delta\sqrt{t}} + \frac{\e^{-\beta_t}}{\delta^2 t} \right)	
\leq \frac{C}{\delta} W_t,
\end{align*}
using that $\delta \geq t^{-1/2}$ and $t \geq 2$.
Applying again \eqref{eq:global-min-of-the-BBM} with $L = \log t$ and \eqref{eq:control-first-moment-W_t-barrier-at--L}, we get
\begin{align*}
\Pp{\abs{\widetilde{Z}_s^{t,\gamma_t} - \widetilde{Z}_t^{t,\gamma_t}} \geq \delta}
& \leq \frac{1}{t}
+ \frac{C}{\delta}
	\Ec{W_t	\1_{\min_{s \geq 0} \min_{u \in \cN(s)} X_u(s) \geq - \log t}} 
\leq \frac{1}{t}
+ C \frac{\log t}{\delta \sqrt{t}}.
\end{align*}
It proves \eqref{xa}.

Finally, we deal with \eqref{xb}. 
By Markov's inequality,
\begin{align*}
\Ppsq{F_\mathrm{good}^{t,\gamma_t} \geq \delta}{\sF_{\cL^{t,\gamma_t}}}
& \leq \frac 1 \delta \Ecsq{\Biggl(\sum_{u \in \cL^{t,\gamma_t}_\mathrm{good}} 
	\frac{1}{t} 
	Z_\infty^{(u,t,\gamma_t)}\Biggr) \wedge 1}{\sF_{\cL^{t,\gamma_t}}}\\
& \leq \frac{\# \cL^{t,\gamma_t}_\mathrm{good}}{\delta t} 
	\Ec{Z_\infty \wedge t}\\
& \leq C N^{t,\gamma_t}_\mathrm{good} \frac{\log t}{\delta t},
\end{align*}
using \eqref{eq:tail-Z_infty-3}.
Applying again \eqref{eq:global-min-of-the-BBM} with $L = \log t$, we get
\begin{align*}
\Pp{F_\mathrm{good}^{t,\gamma_t} \geq \delta}
& \leq \frac{1}{t}
+ C \frac{\log t}{\delta t} 
	\Ec{N^{t,\gamma_t}_\mathrm{good} 
		\1_{\min_{s \in [0,t]} \min_{u \in \cN(s)} X_u(s) \geq - \log t}}.
\end{align*}
But, using \eqref{eq:first-moment-N_(1,infty)} with here $\beta_t = (\log t)/2$, we have 
$\E [N^{t,\gamma_t}_\mathrm{good} |\sF_t] = t \widetilde{W}^{t,\gamma_t}_t \leq t W_t$.
Therefore, using \eqref{eq:control-first-moment-W_t-barrier-at--L}, it follows that
\begin{align*}
\Pp{F_\mathrm{good}^{t,\gamma_t} \geq \delta}
& \leq \frac{1}{t}
+ C \frac{\log t}{\delta t} t \frac{\log t}{\sqrt{t}}
\end{align*}
and it proves \eqref{xb}.
\end{proof}

\label{FIN}
\addcontentsline{toc}{section}{References}
\bibliographystyle{abbrv}
\bibliography{biblio,fluctuations_mendeley}

\end{document}